\begin{document}
\newcommand {\emptycomment}[1]{} 

\newcommand{\tabincell}[2]{\begin{tabular}{@{}#1@{}}#2\end{tabular}}

\newcommand{\nc}{\newcommand}
\newcommand{\delete}[1]{}

\nc{\mlabel}[1]{\label{#1}}  
\nc{\mcite}[1]{\cite{#1}}  
\nc{\mref}[1]{\ref{#1}}  
\nc{\meqref}[1]{~\eqref{#1}} 
\nc{\mbibitem}[1]{\bibitem{#1}} 

\delete{
\nc{\mlabel}[1]{\label{#1}  
{\hfill \hspace{1cm}{\bf{{\ }\hfill(#1)}}}}
\nc{\mcite}[1]{\cite{#1}{{\bf{{\ }(#1)}}}}  
\nc{\mref}[1]{\ref{#1}{{\bf{{\ }(#1)}}}}  
\nc{\meqref}[1]{~\eqref{#1}{{\bf{{\ }(#1)}}}} 
\nc{\mbibitem}[1]{\bibitem[\bf #1]{#1}} 
}

\newtheorem{thm}{Theorem}[section]
\newtheorem{lem}[thm]{Lemma}
\newtheorem{cor}[thm]{Corollary}
\newtheorem{pro}[thm]{Proposition}
\newtheorem{conj}[thm]{Conjecture}
\theoremstyle{definition}
\newtheorem{defi}[thm]{Definition}
\newtheorem{ex}[thm]{Example}
\newtheorem{rmk}[thm]{Remark}
\newtheorem{pdef}[thm]{Proposition-Definition}
\newtheorem{condition}[thm]{Condition}

\renewcommand{\labelenumi}{{\rm(\alph{enumi})}}
\renewcommand{\theenumi}{\alph{enumi}}
\renewcommand{\labelenumii}{{\rm(\roman{enumii})}}
\renewcommand{\theenumii}{\roman{enumii}}

\nc{\tred}[1]{\textcolor{red}{#1}}
\nc{\tblue}[1]{\textcolor{blue}{#1}}
\nc{\tgreen}[1]{\textcolor{green}{#1}}
\nc{\tpurple}[1]{\textcolor{purple}{#1}}
\nc{\btred}[1]{\textcolor{red}{\bf #1}}
\nc{\btblue}[1]{\textcolor{blue}{\bf #1}}
\nc{\btgreen}[1]{\textcolor{green}{\bf #1}}
\nc{\btpurple}[1]{\textcolor{purple}{\bf #1}}


\newcommand{\End}{\text{End}}

\nc{\calb}{\mathcal{B}}
\nc{\call}{\mathcal{L}}
\nc{\calo}{\mathcal{O}}
\nc{\frakg}{\mathfrak{g}}
\nc{\frakh}{\mathfrak{h}}
\nc{\ad}{\mathrm{ad}}

\nc{\ccred}[1]{\tred{\textcircled{#1}}}


\newcommand{\cm}[1]{\textcolor{purple}{\underline{CM:}#1 }}
\newcommand{\gl}[1]{\textcolor{blue}{\underline{GL:}#1 }}

\title[New splittings of operations of Poisson  and transposed Poisson algebras]
{New splittings of operations of Poisson algebras and transposed Poisson algebras and related algebraic structures}

\author{Guilai Liu}
\address{Chern Institute of Mathematics \& LPMC, Nankai University, Tianjin 300071, China}
\email{1120190007@mail.nankai.edu.cn}

\author{Chengming Bai}
\address{Chern Institute of Mathematics \& LPMC, Nankai University, Tianjin 300071, China}
\email{baicm@nankai.edu.cn}


\begin{abstract}
There are two kinds of splittings of operations, namely,
the classical splitting which is  interpreted operadically as
taking successors and another splitting which we call the second
splitting giving the anti-structures of the successors' algebras.
The algebraic structures corresponding to them respectively are
characterized in terms of representations. Due to the appearance
of the two bilinear operations in Poisson algebras and transposed Poisson algebras,
we commence to study new splittings of operations in the ``mixed"
sense that the commutative associative products and Lie brackets
are splitted in different manners respectively, that is, they are
splitted interlacedly in three manners: the classical
splitting, the second splitting and the un-splitting. Accordingly
the corresponding algebraic structures are given. More explicitly,
there are 8 algebraic structures interpreted in terms of
representations of Poisson algebras illustrating the mixed
splittings of operations of Poisson algebras respectively, including the known pre-Poisson algebras. For
illustrating the mixed splittings of operations of transposed
Poisson algebras, there are 8 algebraic structures interpreted in
terms of representations
of transposed Poisson algebras
 on the spaces themselves and another 8
algebraic structures interpreted in terms of representations
of transposed Poisson algebras
on the dual spaces. Moreover, such a phenomenon 
exhibits  an
obvious difference between Poisson algebras and transposed Poisson
algebras.
\end{abstract}

\subjclass[2020]{
    17A36,  
    17A40,  
    17B10, 
    17B40, 
    17B60, 
    17B63,  
    17D25.  
}

\keywords{Poisson algebras; transposed Poisson algebras; splitting}

\maketitle


\tableofcontents

\allowdisplaybreaks

\section{Introduction}
\label{sec:0}\

 This paper aims to introduce and interpret new splittings of
    operations of Poisson algebras and transposed Poisson algebras  in terms of their
    representations, giving various related algebraic
    structures.


\subsection{Classical splitting of operations of Lie algebras and associative algebras}\label{sec:0.1}\

There are many algebraic structures having a property of
``splitting operations", that is, expressing each product of an
algebraic structure as the sum or the (anti)-commutator of the sum
of a string of operations. The   typical examples are pre-Lie
algebras and dendriform algebras which illustrate the spitting of
operations of Lie algebras and associative algebras respectively
``in a coherent way".

\begin{defi}
A  \textbf{pre-Lie algebra} is a pair $(A,\circ)$, such that $A$
is a vector space, and $\circ:A\otimes A\rightarrow A$ is a
bilinear operation satisfying
    \begin{equation}
        (x\circ y)\circ z-x\circ(y\circ z)=(y\circ x)\circ z-y\circ(x\circ z),\;\;\forall x,y,z\in A.
    \end{equation}
\end{defi}

Pre-Lie algebras, also called left-symmetric algebras,  originated
from diverse areas of study, including convex homogeneous cones
 \cite{Vin}, affine manifolds and affine structures on Lie groups
\cite{Ko}, and deformation of associative algebras \cite{Ger}.
They also appear in many fields in mathematics and mathematical
physics, such as symplectic and K\"{a}hler structures on Lie
groups \cite{Chu,Lic1988},  vertex algebras \cite{BK}, quantum
field theory \cite{CK} and operads \cite{CL}, see
\cite{Bai2021,Bur} and the references therein.

Pre-Lie algebras are Lie-admissible algebras, that is, the
commutator of a pre-Lie algebra is a Lie algebra. Hence the
operation of a pre-Lie algebra expresses a kind of splitting the
Lie bracket of a Lie algebra. Moreover, the left multiplication
operators of a pre-Lie algebra give a representation of the
commutator Lie algebra, characterizing the so-called ``coherent
way".

\begin{defi}
A \textbf{dendriform algebra} is a triple $(A,\succ,\prec)$, such
that $A$ is a vector space, and $\succ,\prec:A\otimes A\rightarrow
A$ are bilinear operations satisfying
\begin{equation}\label{dendriform algebra}
    x\succ(y\succ z)=(x\cdot y)\succ z,\ \ (x\prec y)\prec z=x\prec(y\cdot z),\ \ (x\succ y)\prec z=x\succ (y\prec z),
\end{equation}
where $x\cdot y=x\succ y+x\prec y$, for all $x,y,z\in A$. In particular, for a dendriform algebra $(A,\succ,\prec)$, if
\begin{equation}
x\succ y=y\prec x,\;\;\forall x,y\in A,
\end{equation}
then $(A,\star:=\succ)$ is called a {\bf Zinbiel algebra}.
\end{defi}

The notion of dendriform algebras was introduced by Loday in the
study of algebraic K-theory \cite{Lod}, and  they appear in a
lot of fields in mathematics and physics, such as arithmetic
\cite {Lo1}, combinatorics \cite{LR1}, Hopf algebras
\cite{Cha, Ho, Ho1, LR,Ro}, homology \cite{Fra, Fra1}, operads
\cite{Lo2}, Lie and Leibniz algebras \cite{Fra1} and quantum
field theory \cite{Fo}.

The sum of two bilinear operations in a dendriform algebra
$(A,\succ,\prec)$ gives an associative algebra $(A,\cdot)$. Hence
dendriform algebras have a property of splitting the
associativity, that is, expressing the product of an associative
algebra as the sum of two bilinear operations. Such a
decomposition or splitting of the product of an associative
algebra is coherent in the sense that the left and right
multiplication operators of a dendriform algebra give a
representation of the sum associative algebra. Note that in this
sense, pre-Lie algebras and dendriform algebras play similar roles
in the splitting of operations of Lie algebras and associative algebras
respectively.

Furthermore, there is a general theory on the splitting of
operations in the above sense (the so-called coherent way) in
terms of operads in \cite{BBGN}. The notions of successors and
trisuccessors were introduced to interpret the splitting of
operations into the sum of two or three pieces respectively. In
this sense, the operad of pre-Lie algebras is the successor of the
operad of Lie algebras and the operad of dendriform algebras is
the successor of the operad of associative algebras.

To avoid the possible confusion, we refer to this kind of
splitting as the {\bf classical splitting}, that is, the operations
of pre-Lie algebras and dendriform algebras give the classical
splitting of operations of Lie algebras and associative algebras
respectively.

\subsection{Second splitting of operations of Lie algebras and associative algebras}\label{sec:0.2}\

There is another approach of splitting operations introduced as
the ``anti-structures" of the successors' algebras. The first
example is anti-pre-Lie algebras introduced in \cite{LB2022},
giving another splitting of operations of Lie algebras.

\begin{defi} An \textbf{anti-pre-Lie algebra} is a pair $(A,\circ)$, such
that $A$ is a vector space, and $\circ:A\otimes A \rightarrow A$
is a bilinear operation satisfying
    \begin{equation}\label{eq:defi:anti-pre-Lie algebras1}
        x\circ(y\circ z)-y\circ(x\circ z)=[y,x]\circ z,
    \end{equation}
    \begin{equation}\label{eq:defi:anti-pre-Lie algebras2}
        [x,y]\circ z+[y,z]\circ x+[z,x]\circ y=0,
    \end{equation}
    for all $x,y,z\in A$, where the operation $[-,-]:A\otimes A\rightarrow A$ is defined by
    \begin{equation}\label{sub-adj2}
        [x,y]=x\circ y-y\circ x, \;\;\forall x,y\in A.
    \end{equation}
\end{defi}

Anti-pre-Lie algebras are characterized as the Lie-admissible
algebras whose negative left multiplication operators give
representations of their commutator Lie algebras, justifying the
notion due to the comparison with pre-Lie algebras. Hence in this
sense, the operations of anti-pre-Lie algebras give a splitting
of operations of Lie algebras as a kind of ``anti-structures" of
pre-Lie algebras.

Similarly, the notion of anti-dendriform algebras was introduced
in \cite{Gao} as the anti-structures of dendriform algebras, whose
operations give
a splitting of operations of associative algebras which is different from the classical splitting.

\begin{defi}
An \textbf{anti-dendriform algebra}  is a triple $(A,\succ,\prec)$, such that $A$ is a vector space, and $\succ,\prec:A\otimes A\rightarrow A$ are bilinear operations satisfying
\begin{equation}\label{anti-dend 1}
x\succ(y\succ z)=-(x\cdot y)\succ z=-x\prec(y\cdot z)=(x\prec y)\prec z,
\end{equation}
\begin{equation}\label{anti-dend 2}
    (x\succ y)\prec z=x\succ(y\prec z),
\end{equation}
where $x\cdot y=x\succ y+x\prec y$, for all $x,y,z\in A$. In particular, for an anti-dendriform algebra $(A,\succ,\prec)$, if
\begin{equation}
x\succ y=y\prec x,\;\;\forall x,y\in A,
\end{equation}
then $(A,\star:=\succ)$ is called an {\bf anti-Zinbiel algebra}.
\end{defi}

Anti-dendriform algebras keep the property of splitting
associativity, that is, the sum of the two bilinear operations in an anti-dendriform algebra $(A,\succ,\prec)$ gives an associative algebra $(A,\cdot)$.
However it is the negative left and right
multiplication operators of an anti-dendriform algebra that compose a representation of the
sum associative algebra, instead of the left and right
multiplication operators doing so for a dendriform algebra.

To avoid the possible confusion, we refer to this kind of
splitting as the {\bf second splitting}, that is, the operations of
anti-pre-Lie algebras and anti-dendriform algebras give the second splitting
of operations of Lie algebras and associative algebras respectively.

\subsection{New splittings of operations of Poisson algebras and transposed Poisson algebras}\label{sec:0.3}\

Poisson algebras arose in the study of Poisson geometry \cite{BV1,Li77,Wei77}, and are closely related to
a lot of topics in mathematics and physics.
\begin{defi}
    A \textbf{Poisson algebra}  is a triple
    $(A,\cdot,[-,-])$, where $(A,\cdot)$ is a commutative associative
    algebra, $(A,[-,-])$ is a Lie algebra,  and they are compatible in the sense of the Leibniz
    rule:
    \begin{equation}\label{eq:PA}
        [z,x\cdot y]=[z,x]\cdot y+x\cdot[z,y],\;\;\forall x,y,z\in A.
    \end{equation}
\end{defi}

The notion of transposed Poisson algebras was introduced in \cite{Bai2020} as the dual notion of Poisson algebras,
which exchanges the roles of the two bilinear operations in the Leibniz rule defining  Poisson algebras.
They closely relate to a
lot of other algebraic structures such as Novikov-Poisson algebras
\cite{Xu1997} and 3-Lie algebras \cite{Fil} and further
studies are given in \cite{BFK,FKL,LS,YH}.

\begin{defi}\label{defi:transposed Poisson algebra}\
    A \textbf{transposed Poisson algebra} is a triple
    $(A,\cdot,[-,-])$, where $(A,\cdot)$ is a commutative associative
    algebra, $(A,[-,-])$ is a Lie algebra, and they are compatible in the sense of the transposed Leibniz rule:
    \begin{equation}\label{eq:defi:transposed Poisson algebra}
        2z\cdot[x,y]=[z\cdot x,y]+[x,z\cdot y],\;\; \forall x,y,z\in A.
    \end{equation}
\end{defi}

The notion of pre-Poisson algebras was introduced in
\cite{Agu2000.0} to give the classical splitting of operations of
Poisson algebras, that is, they are  the algebraic structures that combine pre-Lie algebras and Zinbiel algebras satisfying certain compatible conditions.

In this paper, we extend this classical splitting of operations of
Poisson algebras to a wide extent, by introducing new splittings
of operations of both Poisson algebras and transposed Poisson algebras.
Note that both Poisson algebras and transposed Poisson algebras have two
bilinear operations, and hence variations of splitting operations
become possible.

In fact, due to the existence of two bilinear operations for Poisson algebras and
transposed Poisson algebras, we consider the new splittings as
``mixed splittings" in the sense that the commutative associative products and Lie brackets
are splitted in different manners respectively. More explicitly,
the commutative associative products and Lie brackets in Poisson algebras and
transposed Poisson algebras are splitted interlacedly in three manners: the classical
splitting, the second splitting and the un-splitting, giving variations of splitting operations.

Since the algebraic structures corresponding to the classical splitting and the second splitting are
characterized in terms of representations, we also characterize the algebraic structures corresponding to the new
splittings of operations of Poisson algebras and transposed Poisson
algebras in terms of representations. Note that a representation
of a Poisson algebra has a natural dual representation. Hence the
    characterization of algebraic structures corresponding to the new splittings of operations of Poisson
    algebras in terms of representations of Poisson algebras on the spaces themselves is the same as that in terms of representations of Poisson algebras on the dual
    spaces. However, the situation is different for transposed Poisson
algebras, that is, one should consider the characterization of the algebraic structures corresponding to the
new splittings of operations of transposed Poisson algebras in terms of representations of transposed Poisson algebras on the
spaces themselves and representations of transposed Poisson algebras on the dual spaces respectively. Such a phenomenon is partly  due to the
fact that there might not exist automatically dual representations
for representations of transposed Poisson algebras (see  Proposition \ref{pro:dual rep TPA}),
exhibiting  an
obvious difference between Poisson algebras and transposed Poisson
algebras.

Therefore there are 8 algebraic structures interpreted in terms of
representations of Poisson algebras illustrating the mixed
splittings of operations of Poisson algebras respectively, including the known pre-Poisson algebras. For
illustrating the mixed splittings of operations of transposed
Poisson algebras, there are 8 algebraic structures interpreted in
terms of representations
of transposed Poisson algebras
on the spaces themselves and another 8
algebraic structures interpreted in terms of representations
of transposed Poisson algebras
on the dual spaces. Note that some of them also correspond to the
 Poisson algebras and transposed Poisson algebras with nondegenerate bilinear forms satisfying certain conditions respectively.

\subsection{Organization of the paper}\label{sec:0.4}\

This paper is organized as follows.

In Section \ref{sec:1}, we recall some facts  on pre-Lie algebras and Zinbiel algebras as well as anti-pre-Lie algebras and anti-Zinbiel algebras,
as the algebraic structures corresponding to the splittings of operations of Lie algebras and commutative associative algebras, which are interpreted in terms of representations of Lie algebras and commutative associative algebras respectively.

In Section \ref{sec:2}, we introduce 8 algebraic structures respectively corresponding to the mixed splittings of operations of Poisson algebras interlacedly in three manners, in terms of representations of Poisson algebras.
Some of them are closely related to the Poisson algebras with nondegenerate bilinear forms satisfying certain conditions.

In Section \ref{sec:3}, we introduce 8 algebraic structures respectively corresponding to the mixed splittings of operations of transposed Poisson algebras interlacedly in three manners, in terms of the representations of transposed Poisson algebras on the spaces themselves.

In Section \ref{sec:4}, we introduce 8 algebraic structures respectively corresponding to the mixed splittings of operations of transposed Poisson algebras interlacedly in three manners, in terms of the representations of transposed Poisson algebras on the dual spaces.
Some of them are closely related to the transposed Poisson algebras with nondegenerate bilinear forms satisfying certain conditions.

Throughout this paper,  unless otherwise specified, all the vector
spaces and algebras are finite-dimensional over a field  of characteristic zero, although many
results and notions remain valid in the infinite-dimensional case.

\section{Splittings of operations of Lie algebras and commutative associative algebras and related algebraic structures}\label{sec:1}\

We recall some facts on pre-Lie algebras and anti-pre-Lie algebras exhibiting the classical splitting and the second splitting of operations of Lie algebras respectively, which are interpreted in terms
of representations of Lie algebras. Similarly, we do so for commutative associative algebras by recalling  some facts on Zinbiel algebras and anti-Zinbiel algebras.

\subsection{Pre-Lie algebras and anti-pre-Lie algebras}\

Recall some basic facts on representations of Lie algebras. A
\textbf{representation of a Lie algebra} $(\mathfrak{g},[-,-])$ is
a pair $(\rho,V)$, such that $V$ is a vector space and
$\rho:\mathfrak{g}\rightarrow\mathfrak{gl}(V)$ is a Lie algebra
homomorphism for the natural Lie algebra structure on
$\mathfrak{gl}(V)=\mathrm{End}(V)$.
In particular, the linear map ${\rm ad}:\mathfrak{g}\rightarrow
\mathfrak{gl}(\mathfrak{g})$  defined by ${\rm ad}(x)(y)=[x,y]$
for all $x,y\in \mathfrak{g}$, gives a representation
$(\mathrm{ad},\mathfrak{g})$, called the \textbf{adjoint
    representation} of $(\mathfrak{g},[-,-])$.

For a vector space $V$ and a linear map $\rho:\mathfrak{g}\rightarrow\mathfrak{gl}(V)$, the pair $(\rho,V)$ is a representation of a Lie algebra $(\mathfrak{g},[-,-])$ if and only if 
$\mathfrak{g}\oplus V$
is a ({\bf semi-direct product}) Lie algebra  by defining the
multiplication
on $\mathfrak{g}\oplus V$ by
\begin{equation}\label{eq:SDLie}
    [(x,u),(y,v)]=([x,y],\rho(x)v-\rho(y)u),\;\;\forall x,y\in \mathfrak{g}, u,v\in V.
\end{equation}
We denote it by $\mathfrak{g}\ltimes_{\rho}V$.

Let $A$ and $V$ be vector spaces. For a linear map $\rho:A\rightarrow\mathrm{End}(V)$, we set $\rho^{*}:A\rightarrow\mathrm{End}(V^{*})$ by
\begin{equation}
    \langle\rho^{*}(x)v^{*},u\rangle=-\langle v^{*},\rho(x)u\rangle,\;\;\forall x\in A, u\in V, v^{*}\in V^{*}.
\end{equation}
Here $\langle\ ,\ \rangle$ is the usual pairing between $V$ and $V^*$.
If $(\rho,V)$ is a representation of a Lie algebra $(\mathfrak{g},[-,-])$, then $(\rho^{*},V^{*})$ is also a representation of $(\mathfrak{g},[-,-])$.
In particular, $(\mathrm{ad}^{*},\mathfrak{g}^{*})$ is a representation of $(\mathfrak{g},[-,-])$.

Recall that a bilinear form $\mathcal{B}$ on a Lie algebra $(\mathfrak{g},[-,-])$ is called \textbf{invariant} if
\begin{equation}\label{Lie invariance}
    \mathcal{B}([x,y],z)=\mathcal{B}(x,[y,z]),\;\;\forall x,y,z\in\mathfrak{g}.
\end{equation}

Suppose that $(\mathfrak{g},[-,-])$ is a Lie algebra. Then the natural nondegenerate symmetric bilinear form $\mathcal{B}_{d}$
on $\mathfrak{g}\oplus\mathfrak{g}^{*}$ defined by
\begin{equation}\label{B_{d}}
    \mathcal{B}_{d}((x,a^{*}),(y,b^{*}))=\langle x,b^{*}\rangle+\langle a^{*},y\rangle,\;\;\forall   x,y\in \mathfrak{g}, a^{*}, b^{*}\in \mathfrak{g}^{*}
\end{equation}
 is invariant on the Lie algebra $\mathfrak{g}\ltimes_{\mathrm{ad}^{*}}\mathfrak{g}^{*}$.

For a vector space $A$ with a bilinear operation $\circ:A\otimes A\rightarrow A$, $(A,\circ)$ is called a \textbf{Lie-admissible algebra} if the bilinear operation $[-,-]:A\otimes A\rightarrow A$ defined by
\begin{equation}\label{sub-adj}
    [x,y]=x\circ y-y\circ x, \;\;\forall x,y\in A
\end{equation}
equips $A$ with a Lie algebra structure. In this case, $(A,[-,-])$ is called the \textbf{sub-adjacent Lie algebra} of $(A,\circ)$. 

For a vector space $A$ together with a bilinear operation $\circ:A\otimes A\rightarrow A$, denote a linear map
$\mathcal{L}_{\circ}:A\rightarrow\mathrm{End}(A)$ by
\begin{equation}
    \mathcal{L}_{\circ}(x)y:=x\circ y,\;\;\forall x,y\in A.
\end{equation}

There is the following characterization of pre-Lie algebras.

\begin{pro} \cite{Bai2021,Bur} \label{pro:pre-Lie}\ Let $A$ be a vector space together with a bilinear operation $\circ:A\otimes A\rightarrow A$. Then the following conditions are equivalent:
    \begin{enumerate}
        \item $(A,\circ)$ is a pre-Lie algebra.
        \item $(A,\circ)$ is a Lie-admissible algebra such that $(\mathcal{L}_{\circ},A)$ is a representation of the sub-adjacent Lie algebra $(A,[-,-])$.
        \item There is a Lie algebra structure on $A\oplus A$ defined by
        \begin{equation}\label{pre-Lie axiom}
            [(x,a),(y,b)]=(x\circ y-y\circ x, x\circ b-y\circ a),\;\;\forall x,y,a,b\in A.
        \end{equation}
    \end{enumerate}
\end{pro}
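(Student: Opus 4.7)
The plan is to establish the equivalences by reformulating the pre-Lie identity in operator form, then invoking the semi-direct product criterion for representations of Lie algebras that was recalled just above the statement.

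First I would prove the implication (1) $\Rightarrow$ (2). Starting from the pre-Lie axiom $(x\circ y)\circ z-x\circ(y\circ z)=(y\circ x)\circ z-y\circ(x\circ z)$, I would rearrange terms to get $(x\circ y-y\circ x)\circ z=x\circ(y\circ z)-y\circ(x\circ z)$, i.e.\ $\mathcal{L}_{\circ}([x,y])z=[\mathcal{L}_{\circ}(x),\mathcal{L}_{\circ}(y)]z$ for all $x,y,z\in A$, where $[-,-]$ is defined by \eqref{sub-adj}. This single identity is doing double duty. To get Lie-admissibility, I would then verify that $[-,-]$ satisfies the Jacobi identity; antisymmetry is obvious, and the Jacobi identity follows from cycling the operator identity above: since $\mathcal{L}_{\circ}([x,y])=[\mathcal{L}_{\circ}(x),\mathcal{L}_{\circ}(y)]$ holds in $\mathrm{End}(A)$, summing cyclically over $(x,y,z)$ and applying to a fixed element (or observing that right multiplication by a generic element collapses the Jacobi sum) yields $[[x,y],z]+[[y,z],x]+[[z,x],y]=0$. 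Once the sub-adjacent Lie algebra exists, the same operator identity says exactly that $\mathcal{L}_{\circ}$ is a Lie algebra homomorphism into $\mathfrak{gl}(A)$, so $(\mathcal{L}_{\circ},A)$ is a representation. This also shows (2) $\Rightarrow$ (1), since evaluating $\mathcal{L}_{\circ}([x,y])z=[\mathcal{L}_{\circ}(x),\mathcal{L}_{\circ}(y)]z$ recovers the pre-Lie axiom after rearrangement.

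For (2) $\Leftrightarrow$ (3) I would simply invoke the semi-direct product construction recorded in \eqref{eq:SDLie}. Taking $\mathfrak{g}=(A,[-,-])$, $V=A$, and $\rho=\mathcal{L}_{\circ}$, the bracket $[(x,a),(y,b)]=([x,y],\mathcal{L}_{\circ}(x)b-\mathcal{L}_{\circ}(y)a)=(x\circ y-y\circ x,\ x\circ b-y\circ a)$ coincides with the bracket in item (3). By the general fact recalled before \eqref{eq:SDLie}, this formula defines a Lie algebra on $A\oplus A$ if and only if $(A,[-,-])$ is a Lie algebra and $(\mathcal{L}_{\circ},A)$ is a representation of it, which is exactly (2). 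A small subtlety is that (3) on its face only asserts the existence of some Lie algebra structure via the displayed formula, so I would be careful to check that the antisymmetry and Jacobi identity for this bracket force both the Lie-admissibility of $\circ$ (read off from the first component on diagonal elements $(x,0),(y,0)$) and the representation property (read off from the second component on mixed elements).

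The main obstacle, and really the only nontrivial step, is the derivation of the Jacobi identity for $[-,-]$ in the passage (1) $\Rightarrow$ (2); all the remaining content is essentially bookkeeping, either a direct rewriting of the pre-Lie axiom as an operator identity or an application of the standard semi-direct product lemma. I would present the Jacobi identity verification cleanly by observing that the cyclic sum $\sum_{\mathrm{cyc}}[[x,y],z]$ equals $\sum_{\mathrm{cyc}}\bigl(\mathcal{L}_{\circ}(x)\mathcal{L}_{\circ}(y)z-\mathcal{L}_{\circ}(y)\mathcal{L}_{\circ}(x)z-\mathcal{L}_{\circ}(z)(x\circ y-y\circ x)\bigr)$ and then using the pre-Lie identity term by term, which makes the cancellation transparent.
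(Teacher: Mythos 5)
Your proposal is correct, and since the paper states this proposition without proof (it is recalled from \cite{Bai2021,Bur}), your route is exactly the standard one behind the citation: rewriting the pre-Lie identity as $\mathcal{L}_{\circ}([x,y])=[\mathcal{L}_{\circ}(x),\mathcal{L}_{\circ}(y)]$, getting Lie-admissibility from the cyclic cancellation, and matching item (3) with the semi-direct product criterion of Eq.~\eqref{eq:SDLie}. Your explicit care that (3) forces both the Jacobi identity on diagonal elements $(x,0)$ and the representation identity on mixed triples is the right way to close the loop, so there is no gap.
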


If a Lie algebra $(\mathfrak g,[-,-])$ is the sub-adjacent Lie algebra of a pre-Lie algebra $(\mathfrak g,\circ)$, then $(\mathfrak g,\circ)$ is called a {\bf compatible pre-Lie algebra} of
$(\mathfrak g,[-,-])$.

Let $\mathcal{B}$ be a nondegenerate skew-symmetric bilinear form on a Lie algebra $(\mathfrak{g},[-,-])$. If $\mathcal{B}$ satisfies
\begin{equation}\label{2-cocycle}
    \mathcal{B}([x,y],z)+\mathcal{B}([y,z],x)+\mathcal{B}([z,x],y)=0,\;\;\forall x,y,z\in\mathfrak{g},
\end{equation}
then we say $\mathcal{B}$ is a \textbf{symplectic form} \cite{Chu,Lic} on $(\mathfrak{g},[-,-])$, and we call the triple $(\mathfrak{g},[-,-],\mathcal{B})$ a \textbf{symplectic Lie algebra}.

\begin{pro} \cite{Chu, Ku1}
    Let $(\mathfrak{g},[-,-],\mathcal{B})$ be a symplectic Lie algebra. Then there exists a compatible pre-Lie algebra $(\mathfrak{g},\circ)$ of $(\mathfrak{g},[-,-])$ defined by
    \begin{equation}\label{bilinear form pre-Lie}
        \mathcal{B}(x\circ y,z)=-\mathcal{B}(y,[x,z]),\;\;\forall x,y,z\in  \mathfrak{g}.
    \end{equation}
Conversely, let $(A,\circ)$ be a pre-Lie algebra and $(A,[-,-])$ be the sub-adjacent Lie algebra. Then the natural nondegenerate skew-symmetric bilinear form $\mathcal{B}_{p}$ defined by
\begin{equation}\label{B_{p}}
    \mathcal{B}_{p}((x,a^{*}),(y,b^{*}))=\langle x,b^{*}\rangle-\langle a^{*},y\rangle,\;\;\forall x,y\in A, a^{*}, b^{*}\in A^{*}
\end{equation}
is a symplectic form on the Lie algebra $A\ltimes_{\mathcal{L}^{*}_{\circ}}A^{*}$.
\end{pro}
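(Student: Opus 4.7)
The plan is to prove the two directions separately: first, use nondegeneracy of $\mathcal{B}$ to extract a compatible pre-Lie structure from a symplectic Lie algebra and verify both axioms; second, verify that the canonical form $\mathcal{B}_p$ on the semi-direct product $A\ltimes_{\mathcal{L}_{\circ}^{*}}A^{*}$ satisfies skew-symmetry, nondegeneracy, and the 2-cocycle condition \eqref{2-cocycle}.

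For the forward direction, nondegeneracy of $\mathcal{B}$ ensures that for each pair $x,y\in\mathfrak{g}$ the linear functional $z\mapsto -\mathcal{B}(y,[x,z])$ is represented by a unique element $x\circ y\in\mathfrak{g}$, so the defining formula gives a well-defined bilinear operation. I first show $(\mathfrak{g},\circ)$ is Lie-admissible with sub-adjacent bracket $[-,-]$: pairing $x\circ y-y\circ x$ against arbitrary $z$ via $\mathcal{B}$ gives $-\mathcal{B}(y,[x,z])+\mathcal{B}(x,[y,z])$, which by skew-symmetry of $\mathcal{B}$ equals $\mathcal{B}([x,z],y)-\mathcal{B}([y,z],x)$, and this coincides with $\mathcal{B}([x,y],z)$ by the 2-cocycle identity \eqref{2-cocycle}; nondegeneracy then forces $x\circ y-y\circ x=[x,y]$. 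By Proposition \ref{pro:pre-Lie}, it remains to prove that $(\mathcal{L}_{\circ},\mathfrak{g})$ is a representation of the sub-adjacent Lie algebra, i.e., $x\circ(y\circ z)-y\circ(x\circ z)=[x,y]\circ z$. Iterating the defining relation gives
\begin{equation*}
\mathcal{B}(x\circ(y\circ z),w)=\mathcal{B}(z,[y,[x,w]]),\qquad \mathcal{B}([x,y]\circ z,w)=-\mathcal{B}(z,[[x,y],w]),
\end{equation*}
for any $w\in\mathfrak{g}$, and similarly for $y\circ(x\circ z)$; the required identity then reduces, via nondegeneracy of $\mathcal{B}$, to $[y,[x,w]]-[x,[y,w]]=-[[x,y],w]$, which is the Jacobi identity applied to $x,y,w$.

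For the converse, skew-symmetry and nondegeneracy of $\mathcal{B}_p$ are immediate from its definition \eqref{B_{p}}. To verify \eqref{2-cocycle}, I expand $\mathcal{B}_p([(x,a^{*}),(y,b^{*})],(z,c^{*}))$ using the semi-direct product bracket \eqref{eq:SDLie} together with the adjunction $\langle\mathcal{L}_{\circ}^{*}(x)a^{*},y\rangle=-\langle a^{*},x\circ y\rangle$, obtaining $\langle [x,y],c^{*}\rangle+\langle b^{*},x\circ z\rangle-\langle a^{*},y\circ z\rangle$. Forming the full cyclic sum and regrouping terms by the starred variables, each group collapses to an expression of the form $\langle a^{*},[y,z]-(y\circ z-z\circ y)\rangle$ (and its cyclic analogues), which vanishes because $(A,\circ)$ is Lie-admissible with sub-adjacent bracket $[-,-]$.

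The main obstacle is really only bookkeeping: keeping track of the signs introduced by skew-symmetry of $\mathcal{B}$ and by the adjunction defining $\mathcal{L}_{\circ}^{*}$, so that the nested-bracket expressions arrange themselves correctly into a single use of the Jacobi identity in each direction. Once the sign conventions are pinned down, the verification is mechanical.
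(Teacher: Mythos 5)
Your proof is correct, and it follows the standard argument behind this recalled result (the paper itself cites \cite{Chu,Ku1} and gives no proof): in the forward direction you verify Lie-admissibility from the 2-cocycle identity and the representation property from the Jacobi identity, then invoke Proposition \ref{pro:pre-Lie}, and in the converse you check the cyclic condition \eqref{2-cocycle} for $\mathcal{B}_p$ directly from the semi-direct product bracket and the definition of $\mathcal{L}^{*}_{\circ}$. All the sign computations you outline check out, so nothing further is needed.
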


Similarly, anti-pre-Lie algebras are also characterized in terms of representations of the sub-adjacent Lie algebras.

\begin{pro} \cite{LB2022} \label{pro:anti-pre-Lie}\
    Let $A$ be a vector space together with a bilinear operation $\circ:A\otimes A\rightarrow A$. Then the following conditions are equivalent:
    \begin{enumerate}
        \item $(A,\circ)$ is an anti-pre-Lie algebra.
        \item $(A,\circ)$ is a Lie-admissible algebra such that $(-\mathcal{L}_{\circ},A)$ is a representation of the sub-adjacent Lie algebra $(A,[-,-])$.
        \item There is a Lie algebra structure on $A\oplus A$ defined by
        \begin{equation}\label{anti-pre-Lie axiom}
            [(x,a),(y,b)]=(x\circ y-y\circ x, y\circ a-x\circ b),\;\;\forall x,y,a,b\in A.
        \end{equation}
    \end{enumerate}
\end{pro}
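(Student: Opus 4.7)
The plan is to establish (1) $\Leftrightarrow$ (2) by direct translation of the defining axioms of anti-pre-Lie algebras into representation-theoretic statements, and then to obtain (2) $\Leftrightarrow$ (3) by recognizing the bracket in \eqref{anti-pre-Lie axiom} as the semi-direct product attached to the candidate representation $(-\mathcal{L}_{\circ},A)$. The whole argument parallels the corresponding proof for pre-Lie algebras (Proposition~\ref{pro:pre-Lie}), with the sign in $-\mathcal{L}_{\circ}$ tracking the anti-homomorphism nature of $\mathcal{L}_{\circ}$.

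For (1) $\Leftrightarrow$ (2), I would first rewrite \eqref{eq:defi:anti-pre-Lie algebras1} in operator form as $[\mathcal{L}_{\circ}(x),\mathcal{L}_{\circ}(y)] = -\mathcal{L}_{\circ}([x,y])$, equivalently
\begin{equation*}
[-\mathcal{L}_{\circ}(x),-\mathcal{L}_{\circ}(y)] \;=\; -\mathcal{L}_{\circ}([x,y]),
\end{equation*}
which is exactly the Lie-algebra-homomorphism property of $-\mathcal{L}_{\circ}$ once $(A,[-,-])$ is known to be a Lie algebra. Hence \eqref{eq:defi:anti-pre-Lie algebras1} carries precisely the representation half of (2), modulo Lie-admissibility. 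To recover Lie-admissibility, I would expand the Jacobi sum as
\begin{equation*}
\sum_{\mathrm{cyc}}[[x,y],z] \;=\; \sum_{\mathrm{cyc}}[x,y]\circ z \;-\; \sum_{\mathrm{cyc}}z\circ[x,y],
\end{equation*}
and apply \eqref{eq:defi:anti-pre-Lie algebras1} cyclically to the second sum, which regroups into $[x,z]\circ y + [y,x]\circ z + [z,y]\circ x$; by skew-symmetry of $[-,-]$ this equals $-\sum_{\mathrm{cyc}}[x,y]\circ z$. The Jacobi sum therefore collapses to $2\sum_{\mathrm{cyc}}[x,y]\circ z$, which vanishes exactly when \eqref{eq:defi:anti-pre-Lie algebras2} holds. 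This gives (1) $\Rightarrow$ (2); for (2) $\Rightarrow$ (1), I would use the Jacobi identity supplied by Lie-admissibility together with the same cyclic identity derived from the representation condition to force \eqref{eq:defi:anti-pre-Lie algebras2}.

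For (2) $\Leftrightarrow$ (3), I would compare \eqref{anti-pre-Lie axiom} with the semi-direct product formula \eqref{eq:SDLie}. The first slot of \eqref{anti-pre-Lie axiom} is $[x,y]$, while the second slot rewrites as $-\mathcal{L}_{\circ}(x)b - (-\mathcal{L}_{\circ}(y))a$, identifying \eqref{anti-pre-Lie axiom} with the bracket of $(A,[-,-])\ltimes_{-\mathcal{L}_{\circ}}A$. The characterization quoted before \eqref{eq:SDLie} then says this is a Lie bracket iff $(A,[-,-])$ is a Lie algebra and $(-\mathcal{L}_{\circ},A)$ is a representation of it, which is exactly (2).

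The main obstacle is the cyclic bookkeeping in the Jacobi computation: the key observation is that iterating \eqref{eq:defi:anti-pre-Lie algebras1} around the cycle yields $\sum_{\mathrm{cyc}}z\circ[x,y] = -\sum_{\mathrm{cyc}}[x,y]\circ z$, which turns the Jacobi identity into a statement equivalent to \eqref{eq:defi:anti-pre-Lie algebras2}. Once this sign-reversal symmetry is identified, the remaining steps are routine substitutions or a direct appeal to the semi-direct product formula.
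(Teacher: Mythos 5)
Your proof is correct: the operator reformulation of Eq.~\eqref{eq:defi:anti-pre-Lie algebras1} as the homomorphism property of $-\mathcal{L}_{\circ}$, the cyclic identity $\sum_{\mathrm{cyc}}z\circ[x,y]=-\sum_{\mathrm{cyc}}[x,y]\circ z$ reducing the Jacobi sum to $2\sum_{\mathrm{cyc}}[x,y]\circ z$ (harmless since the characteristic is zero), and the identification of \eqref{anti-pre-Lie axiom} with the semi-direct product $A\ltimes_{-\mathcal{L}_{\circ}}A$ all check out. The paper itself gives no proof (it cites \cite{LB2022}), and your argument is essentially the standard one used there, so there is nothing further to compare.
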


Similarly, if a Lie algebra $(\mathfrak g,[-,-])$ is the sub-adjacent Lie algebra of an anti-pre-Lie algebra $(\mathfrak g,\circ)$, then $(\mathfrak g,\circ)$ is called a {\bf compatible anti-pre-Lie algebra} of
$(\mathfrak g,[-,-])$.

Recall that a symmetric bilinear form $\mathcal{B}$ on a Lie algebra $(\mathfrak{g},[-,-])$ is called a \textbf{commutative 2-cocycle} \cite{Dzh} if Eq.~(\ref{2-cocycle}) holds, which in the nondegenerate case is the ``symmetric" version of a symplectic form on the Lie algebra $(\mathfrak{g},[-,-])$.

\begin{pro}\cite{LB2022}\label{bf anti-pre-Lie}\
Let $\mathcal{B}$ be a nondegenerate commutative 2-cocycle on a Lie algebra $(\mathfrak{g},[-,-])$. Then there exists a compatible anti-pre-Lie algebra $(\mathfrak{g},\circ)$ of $(\mathfrak{g},[-,-])$ defined by
\begin{equation}\label{bilinear form anti-pre-Lie}
    \mathcal{B}(x\circ y,z)=\mathcal{B}(y,[x,z]),\;\;\forall x,y,z\in \mathfrak{g}.
\end{equation}
Conversely, let $(A,\circ)$ be an anti-pre-Lie algebra and $(A,[-,-])$ be the sub-adjacent Lie algebra. Then the natural nondegenerate  symmetric bilinear form $\mathcal{B}_{d}$ defined by Eq.~(\ref{B_{d}})
is a commutative 2-cocycle on the Lie algebra $A\ltimes_{-\mathcal{L}^{*}_{\circ}}A^{*}$.
    \end{pro}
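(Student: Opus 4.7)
My plan is to leverage the nondegeneracy of $\mathcal{B}$ throughout the forward direction: since every element of $\mathfrak{g}$ is determined by its values under $\mathcal{B}$, each proposed identity can be verified by pairing against an arbitrary $w \in \mathfrak{g}$. The defining equation $\mathcal{B}(x\circ y, z) = \mathcal{B}(y, [x,z])$ thus uniquely determines $\circ$, and I would check in order the compatibility $[x,y] = x\circ y - y\circ x$, the first anti-pre-Lie axiom, and the second anti-pre-Lie axiom.

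First I would verify Eq.~(\ref{eq:defi:anti-pre-Lie algebras1}). Iterating the defining relation twice on the left side and once on the right reduces it to $[y,[x,w]] - [x,[y,w]] = [[y,x],w]$, which is just the Jacobi identity. The compatibility follows from a short calculation that uses both the symmetry of $\mathcal{B}$ (with the skew-symmetry of the bracket) and the commutative 2-cocycle identity applied to the triple $(x,y,z)$.

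The hard part will be Eq.~(\ref{eq:defi:anti-pre-Lie algebras2}). Pairing its cyclic sum with $w$ produces
\[
S := \mathcal{B}(z,[[x,y],w]) + \mathcal{B}(x,[[y,z],w]) + \mathcal{B}(y,[[z,x],w]).
\]
The key idea is to apply the 2-cocycle identity together with the symmetry of $\mathcal{B}$ to each summand, which re-expresses $S$ as $-T$, where
\[
T := \mathcal{B}([x,y],[w,z]) + \mathcal{B}([y,z],[w,x]) + \mathcal{B}([z,x],[w,y]).
\]
Applying the 2-cocycle identity once more to $T$ and then collapsing the resulting nested brackets via the Jacobi identity recovers $T = S$, so $S = -S = 0$. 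The challenge is structural rather than computational: identifying the two symmetrizations that close the loop.

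For the converse, the symmetry and nondegeneracy of $\mathcal{B}_{d}$ are immediate from Eq.~(\ref{B_{d}}). It remains to verify the commutative 2-cocycle identity on $A\ltimes_{-\mathcal{L}^{*}_{\circ}}A^{*}$, which is a Lie algebra by Proposition~\ref{pro:anti-pre-Lie}. By trilinearity and symmetry of $\mathcal{B}_{d}$, I would split into cases according to how many of the three arguments lie in $A^{*}$: the cases with two or three factors in $A^{*}$ vanish (as $A^{*}$ is abelian inside the semi-direct product), the case with all three factors in $A$ vanishes (as $\mathcal{B}_{d}$ restricts to zero on $A\times A$), and the only informative case -- two factors in $A$ and one in $A^{*}$ -- unpacks, using the definition of the dual representation $-\mathcal{L}^{*}_{\circ}$, to the single identity $\langle [x,y] - (x\circ y - y\circ x),\,c^{*}\rangle = 0$, which is exactly Eq.~(\ref{sub-adj2}) built into the anti-pre-Lie structure on $A$.
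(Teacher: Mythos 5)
Your argument is correct. The paper itself only quotes this proposition from \cite{LB2022} and gives no proof, so there is no in-paper argument to compare against; judged on its own, your direct verification goes through. Pairing against an arbitrary $w$ and invoking nondegeneracy, the first anti-pre-Lie identity does reduce to the Jacobi identity, the compatibility $x\circ y-y\circ x=[x,y]$ follows from the symmetry of $\mathcal{B}$ together with the 2-cocycle identity, and your loop for the cyclic identity closes: $S=-T$ and $T=S$, hence $2S=0$ and $S=0$ in characteristic zero. Two small points to make explicit when you write it out. First, in the step $S=-T$, applying the 2-cocycle identity to each summand also produces the extra term $-\mathcal{B}\big(w,[z,[x,y]]+[x,[y,z]]+[y,[z,x]]\big)$, which must be (and is) killed by the Jacobi identity; so Jacobi enters both halves of the loop, not only the passage from $T$ back to $S$. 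Second, in the converse, for the case of one factor in $A$ and two in $A^{*}$, the two nonzero brackets land in $A^{*}$ and the corresponding terms vanish because $\mathcal{B}_{d}$ restricts to zero on $A^{*}\times A^{*}$, not merely because $A^{*}$ is abelian (abelianness only disposes of the bracket of the two $A^{*}$-factors); the informative case with two factors in $A$ indeed reduces, via the definition of $-\mathcal{L}^{*}_{\circ}$, to Eq.~(\ref{sub-adj2}). Finally, since the first anti-pre-Lie identity is stated with the commutator of $\circ$, it is cleanest to establish the compatibility $x\circ y-y\circ x=[x,y]$ before that identity, as in your opening plan rather than in the order of your second paragraph. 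None of these points affects the correctness of the proposal.
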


\subsection{Zinbiel algebras and anti-Zinbiel algebras}\

Recall some basic facts on representations of commutative associative algebras.
A \textbf{representation of a commutative associative
algebra} $(A,\cdot)$ is a pair $(\mu,V)$, where $V$ is a vector
space and $\mu:A\rightarrow\mathrm{End}(V)$ is a linear map
satisfying
\begin{equation}
    \mu(x\cdot y)=\mu(x)\mu(y),\;\;\forall x,y\in A.
\end{equation}
For a commutative associative algebra $(A,\cdot)$, $(\mathcal{L}_{\cdot},A)$ is a representation of $(A,\cdot)$,
called the \textbf{adjoint representation} of $(A,\cdot)$.

In fact, $(\mu,V)$ is a representation of a commutative
associative algebra $(A,\cdot)$  if and only if the direct sum
$A\oplus V$ of vector spaces is a ({\bf
    semi-direct product}) commutative associative algebra  by defining the multiplication on $A\oplus
V$ by
\begin{equation}\label{eq:SDASSO}
    (x,u)\cdot(y,v)=(x\cdot y,\mu(x)v+\mu(y)u),\;\;\forall x,y\in A, u,v\in V.
\end{equation}
We denote it by $A\ltimes_{\mu}V$.

If $(\mu,V)$ is a representation of a commutative associative
algebra $(A,\cdot)$, then $(-\mu^{*},V^{*})$ is also a
representation of $(A,\cdot)$. In particular,
$(-\mathcal{L}^{*}_{\cdot},A^{*})$ is a representation of
$(A,\cdot)$.

Recall that a bilinear form $\mathcal{B}$ on a (commutative) associative algebra $(A,\cdot)$ is called \textbf{invariant} if
\begin{equation}\label{comm asso invariance}
    \mathcal{B}(x\cdot y,z)=\mathcal{B}(x,y\cdot z),\;\;\forall x,y,z\in A.
\end{equation}

    Let $(A,\cdot)$ be a commutative associative algebra.
    Then the natural nondegenerate symmetric bilinear form $\mathcal{B}_{d}$ defined by Eq.~(\ref{B_{d}}) is invariant on the commutative associative algebra $A\ltimes_{-\mathcal{L}^{*}_{\cdot}}A^{*}$.

For a vector space $A$ together with a bilinear operation $\star:A\otimes A\rightarrow A$, if the bilinear operation  $\cdot:A\otimes A\rightarrow A$ defined by
\begin{equation}\label{comm op}
    x\cdot y=x\star y+y\star x,\;\;\forall x,y\in A
\end{equation}
equips $A$ with a commutative associative algebra structure, then we say $(A,\cdot)$ is the \textbf{sub-adjacent commutative associative algebra} of $(A,\star)$.

Zinbiel algebras and anti-Zinbiel algebras play a similar role for commutative associative algebras as pre-Lie algebras and anti-pre-Lie algebras do for Lie algebras respectively. The notion of Zinbiel algebras is rewritten
in a more straightforward manner as follows.

\begin{defi} \cite{Lod}
    Let $A$ be a vector space together with a bilinear operation $\star:A\otimes A\rightarrow A$. $(A,\star)$ is called a \textbf{Zinbiel algebra} if
    \begin{equation}
        x\star(y\star z)=(x\star y)\star z+(y\star x)\star z,\;\;\forall x,y,z\in A.
    \end{equation}
\end{defi}


\begin{pro} \cite{Bai2010} \label{pro:Zinbiel}\
    Let $A$ be a vector space together with a bilinear operation $\star:A\otimes A\rightarrow A$. Then the following conditions are equivalent:
    \begin{enumerate}
        \item $(A,\star)$ is a Zinbiel algebra.
        \item $(A,\cdot)$ with the bilinear operation $\cdot$ defined by Eq.~(\ref{comm op}) is a commutative associative algebra and
        $(\mathcal{L}_{\star},A)$ is a representation of $(A,\cdot)$.
        \item There is a commutative associative algebra structure on $A\oplus A$ defined by
        \begin{equation}\label{Zinbiel axiom}
        (x,a)\cdot(y,b)=(x\star y+y\star x,x\star b+y\star a),\;\;\forall x,y,a,b\in A.
        \end{equation}
    \end{enumerate}
\end{pro}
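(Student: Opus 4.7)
The plan is to prove (a) $\Leftrightarrow$ (b) by rewriting the Zinbiel identity operator-theoretically, and then to deduce (b) $\Leftrightarrow$ (c) as a direct instance of the general semi-direct product construction in Eq.~(\ref{eq:SDASSO}) with $\mu=\mathcal{L}_{\star}$ and $V=A$.

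For (a) $\Rightarrow$ (b), I would recast the Zinbiel axiom as
\begin{equation*}
\mathcal{L}_{\star}(x)\mathcal{L}_{\star}(y)z=\mathcal{L}_{\star}(x\cdot y)z,\;\;\forall x,y,z\in A,
\end{equation*}
i.e.\ $\mathcal{L}_{\star}(x\cdot y)=\mathcal{L}_{\star}(x)\mathcal{L}_{\star}(y)$, which is precisely the representation condition for $(\mathcal{L}_{\star},A)$ provided $(A,\cdot)$ is commutative associative. Commutativity of $\cdot$ is immediate from its symmetric definition. For associativity, rewrite $a\cdot b=\mathcal{L}_{\star}(a)b+\mathcal{L}_{\star}(b)a$ and apply $\mathcal{L}_{\star}(u\cdot v)=\mathcal{L}_{\star}(u)\mathcal{L}_{\star}(v)$ repeatedly: both $(x\cdot y)\cdot z$ and $x\cdot(y\cdot z)$ expand to
\begin{equation*}
\mathcal{L}_{\star}(x)\mathcal{L}_{\star}(y)z+\mathcal{L}_{\star}(x\cdot z)y+\mathcal{L}_{\star}(y\cdot z)x,
\end{equation*}
so the two sides agree. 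Conversely, (b) $\Rightarrow$ (a) is immediate: evaluating the representation identity $\mathcal{L}_{\star}(x\cdot y)=\mathcal{L}_{\star}(x)\mathcal{L}_{\star}(y)$ on any $z\in A$ recovers the Zinbiel axiom verbatim.

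For (b) $\Leftrightarrow$ (c), I would specialize the general semi-direct product description recalled in Eq.~(\ref{eq:SDASSO}) with $\mu=\mathcal{L}_{\star}$ and $V=A$: the multiplication prescribed in (c) is exactly the multiplication of $A\ltimes_{\mathcal{L}_{\star}}A$, and the previously recorded equivalence that $(\mu,V)$ is a representation of a commutative associative algebra $(A,\cdot)$ if and only if $A\ltimes_{\mu}V$ is a commutative associative algebra yields (b) $\Leftrightarrow$ (c) at once; note that from (c) one recovers commutative associativity of $(A,\cdot)$ by restriction to the first summand.

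The only step that requires genuine calculation is the derivation of associativity of $\cdot$ from the Zinbiel axiom; this is the main (and essentially only) technical obstacle, and is settled by the operator identity above. Everything else is a routine translation between the Zinbiel identity, the representation condition for $(\mathcal{L}_{\star},A)$, and the semi-direct product formalism.
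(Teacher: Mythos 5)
Your proof is correct and all steps check out: the Zinbiel identity is indeed equivalent to the operator identity $\mathcal{L}_{\star}(x\cdot y)=\mathcal{L}_{\star}(x)\mathcal{L}_{\star}(y)$, both triple products do expand to $\mathcal{L}_{\star}(x)\mathcal{L}_{\star}(y)z+\mathcal{L}_{\star}(x\cdot z)y+\mathcal{L}_{\star}(y\cdot z)x$ (giving associativity of $\cdot$ without any circularity, since the operator identity needs no associativity), and the equivalence with (c) is exactly the semi-direct product characterization via Eq.~(\ref{eq:SDASSO}) with $\mu=\mathcal{L}_{\star}$, $V=A$, with commutative associativity of $(A,\cdot)$ recovered from the subalgebra $A\oplus 0$. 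The paper itself gives no proof, citing \cite{Bai2010}, and your argument is the standard one that the cited reference and the analogous Propositions \ref{pro:pre-Lie} and \ref{pro:anti-Zinbiel} follow.
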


If a commutative associative  algebra $(A,\cdot)$ is the sub-adjacent commutative associative algebra of a Zinbiel algebra $(A,\star)$, then $(A,\star)$ is  called a {\bf compatible Zinbiel algebra} of
$(A,\cdot)$.

Recall that a   skew-symmetric bilinear form $\mathcal{B}$ on a (commutative) associative algebra is called a \textbf{Connes cocycle} \cite{Bai2010} if
\begin{equation}\label{Connes cocycle}
    \mathcal{B}(x\cdot y,z)+\mathcal{B}(y\cdot z,x)+\mathcal{B}(z\cdot x,y)=0,\;\;\forall x,y,z\in A.
\end{equation}

\begin{pro} \cite{Bai2010}
Let $\mathcal{B}$ be a nondegenerate Connes cocycle on a commutative associative algebra $(A,\cdot)$. Then there is a compatible Zinbiel algebra $(A,\star)$ of $(A,\cdot)$ defined by
\begin{equation}\label{bilinear form Zinbiel}
    \mathcal{B}(x\star y,z)=\mathcal{B}(y,x\cdot z),\;\;\forall x,y,z\in A.
\end{equation}
Conversely, let $(A,\star)$ be a Zinbiel algebra and $(A,\cdot)$ be the sub-adjacent commutative associative algebra. Then the natural nondegenerate  skew-symmetric bilinear form $\mathcal{B}_{p}$ defined by Eq.~(\ref{B_{p}})
is a Connes cocycle on the commutative associative algebra $A\ltimes_{-\mathcal{L}^{*}_{\star}}A^{*}$.
\end{pro}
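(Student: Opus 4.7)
The plan is to treat the two halves of the statement separately. For the forward direction, since $\mathcal{B}$ is nondegenerate, the formula $\mathcal{B}(x\star y,z)=\mathcal{B}(y,x\cdot z)$ uniquely determines a bilinear operation $\star$ on $A$, so the task reduces to verifying two things: first, that the symmetrization $x\star y+y\star x$ coincides with $x\cdot y$ (so $(A,\cdot)$ is indeed the sub-adjacent algebra of $(A,\star)$), and second, that $(A,\star)$ satisfies the Zinbiel axiom.

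For the sub-adjacent property, I would pair against an arbitrary $z$ and compute
\begin{equation*}
\mathcal{B}(x\star y+y\star x,z)=\mathcal{B}(y,x\cdot z)+\mathcal{B}(x,y\cdot z)=-\mathcal{B}(x\cdot z,y)-\mathcal{B}(y\cdot z,x),
\end{equation*}
using skew-symmetry; commutativity $x\cdot z=z\cdot x$ together with the Connes cocycle identity \eqref{Connes cocycle} then collapses the right-hand side to $\mathcal{B}(x\cdot y,z)$, and nondegeneracy yields $x\star y+y\star x=x\cdot y$. For the Zinbiel axiom, I would again pair the identity against an arbitrary $w$, apply the defining formula iteratively, and observe that
\begin{equation*}
\mathcal{B}(x\star(y\star z),w)=\mathcal{B}(z,y\cdot(x\cdot w)),\qquad \mathcal{B}((x\star y)\star z+(y\star x)\star z,w)=\mathcal{B}(z,(x\star y+y\star x)\cdot w),
\end{equation*}
so the Zinbiel identity reduces to $y\cdot(x\cdot w)=(x\cdot y)\cdot w$, which follows from the sub-adjacent identity just proved together with associativity and commutativity of $(A,\cdot)$.

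For the converse, on the semidirect product $A\ltimes_{-\mathcal{L}_\star^*}A^*$ I would write out $\mathcal{B}_p((x,a^*)\cdot(y,b^*),(z,c^*))$ explicitly using \eqref{eq:SDASSO} and the definition of $-\mathcal{L}_\star^*$, obtaining four kinds of terms: $\langle x\cdot y,c^*\rangle$, $\langle z,?\rangle$-type terms, and pairings of $a^*,b^*,c^*$ against $\star$-products of $x,y,z$. Cycling the triple $(x,a^*),(y,b^*),(z,c^*)$ and summing, the terms of the form $\langle x\cdot y,c^*\rangle$ cancel in cyclic pairs by skew-symmetry of $\mathcal{B}_p$ combined with commutativity of $\cdot$, while the remaining terms group into three expressions of the shape $\langle a^*,y\star z+z\star y-y\cdot z\rangle$, each of which vanishes since $(A,\cdot)$ is the sub-adjacent algebra of $(A,\star)$.

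The main obstacle I anticipate is bookkeeping: in the converse direction the signs introduced by $-\mathcal{L}_\star^*$ and by the skew-symmetry of $\mathcal{B}_p$ must be tracked carefully, and one must be attentive to the order of arguments in each pairing, since switching slots of $\mathcal{B}_p$ changes a sign while switching slots of $x\cdot y$ does not. A cleaner alternative I would keep in mind is to verify the identity abstractly by noting that under the correspondence $(x,a^*)\leftrightarrow$ its pairing with $\mathcal{B}_p$, the Connes cocycle condition on $\bar A$ is tautologically equivalent to the associated product on $A\oplus A^*$ being a Zinbiel algebra in the sense of Proposition~\ref{pro:Zinbiel}(c), which in turn follows directly from $(A,\star)$ being Zinbiel and $(-\mathcal{L}_\star^*,A^*)$ being a representation of $(A,\cdot)$.
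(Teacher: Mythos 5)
Your proof is correct, and it is the same kind of argument the paper relies on: the paper itself does not reprove this proposition (it is recalled from \cite{Bai2010}), but its proofs of the analogous statements, e.g.\ Proposition~\ref{bf PCP}, proceed by exactly this pairing-against-an-arbitrary-element computation. Your forward direction is complete: skew-symmetry plus the Connes cocycle identity (with commutativity) gives $x\star y+y\star x=x\cdot y$, and pairing the Zinbiel identity with $w$ reduces it to $y\cdot(x\cdot w)=(x\cdot y)\cdot w$ after using the sub-adjacency just established, so nondegeneracy finishes it.

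Two small remarks on the converse. First, your bookkeeping description is internally inconsistent: the terms $\langle x\cdot y,c^{*}\rangle$, $\langle y\cdot z,a^{*}\rangle$, $\langle z\cdot x,b^{*}\rangle$ occur once each in the cyclic sum and do \emph{not} cancel in pairs by skew-symmetry; rather they are exactly the $\cdot$-terms inside the three groups you write next. The cyclic sum equals
\begin{equation*}
\langle a^{*},\,y\cdot z-y\star z-z\star y\rangle+\langle b^{*},\,z\cdot x-z\star x-x\star z\rangle+\langle c^{*},\,x\cdot y-x\star y-y\star x\rangle,
\end{equation*}
each summand vanishing because $\cdot$ is the symmetrization of $\star$; since this is precisely your final grouping, the earlier sentence about pairwise cancellation is superfluous and should be deleted rather than relied on. Second, the ``cleaner alternative'' is not tautological as stated: the precise abstract fact is that for a nondegenerate skew-symmetric form on a commutative associative algebra, the cyclic (Connes) condition is equivalent to the operation defined by $\mathcal{B}(x\star y,z)=\mathcal{B}(y,x\cdot z)$ having symmetrization $x\cdot y$ — i.e.\ your first computation read in both directions — and one would still have to identify that operation on $A\oplus A^{*}$ concretely, which amounts to the same calculation you already did. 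Neither point affects the validity of the proof.
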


Similarly, the notion of anti-Zinbiel algebras is rewritten
in a more straightforward manner as follows.

\begin{defi}
    Let $A$ be a vector space together with a bilinear operation $\star:A\otimes A\rightarrow A$. $(A,\star)$ is called an \textbf{anti-Zinbiel algebra} if
    \begin{equation}
        x\star(y\star z)=-(x\star y+y\star x)\star z=x\star(z\star y),\;\;\forall x,y,z\in A.
    \end{equation}
\end{defi}



\begin{pro}\label{pro:anti-Zinbiel} \cite{Gao} \
    Let $A$ be a vector space together with a bilinear operation $\star:A\otimes A\rightarrow A$. Then the following conditions are equivalent:
    \begin{enumerate}
        \item $(A,\star)$ is an anti-Zinbiel algebra.
        \item $(A,\cdot)$ with the bilinear operation $\cdot$ defined by Eq.~(\ref{comm op}) is a commutative associative algebra and
        $(-\mathcal{L}_{\star},A)$ is a representation of $(A,\cdot)$.
        \item There is a commutative associative algebra structure on $A\oplus A$ defined by
        \begin{equation}\label{anti-Zinbiel axiom}
            (x,a)\cdot(y,b)=(x\star y+y\star x,-x\star b-y\star a),\;\;\forall x,y,a,b\in A.
        \end{equation}
    \end{enumerate}
\end{pro}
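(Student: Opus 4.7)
The plan is to prove the three implications $(1)\Rightarrow(2)\Rightarrow(3)\Rightarrow(1)$ by mirroring the proof of Proposition \ref{pro:Zinbiel}, while tracking sign changes and an extra symmetry condition. Observe first that the anti-Zinbiel identity splits into two pieces: (I) $x\star(y\star z)=-(x\cdot y)\star z$ and (II) $x\star(y\star z)=x\star(z\star y)$, which will play distinct roles in what follows.

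For $(1)\Leftrightarrow(2)$, I would first rewrite (I) in terms of $\mathcal{L}_\star$: it reads exactly $-\mathcal{L}_\star(x\cdot y)=\mathcal{L}_\star(x)\mathcal{L}_\star(y)$, which is the homomorphism condition $(-\mathcal{L}_\star)(x\cdot y)=(-\mathcal{L}_\star)(x)(-\mathcal{L}_\star)(y)$ defining a representation of $(A,\cdot)$, provided $\cdot$ is commutative associative. Commutativity of $\cdot$ is immediate from (\ref{comm op}). For associativity, I would expand $(x\cdot y)\cdot z$ and $x\cdot(y\cdot z)$ using (\ref{comm op}), eliminate all nested $\star$-products via (I), use commutativity of $\cdot$, and verify that the resulting identity collapses to $(x\cdot y)\star z=(y\cdot z)\star x$. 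By a renaming of variables and commutativity of $\cdot$, this last identity is nothing but (II) in the equivalent form $(x\cdot y)\star z=(x\cdot z)\star y$. Hence, under (I), associativity of $\cdot$ and the validity of (II) are equivalent, which closes $(1)\Leftrightarrow(2)$.

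For $(2)\Leftrightarrow(3)$, I would apply the general semidirect product correspondence recalled in (\ref{eq:SDASSO}): a pair $(\mu,V)$ is a representation of a commutative associative algebra $(A,\cdot)$ if and only if $A\ltimes_\mu V$ is commutative associative. Specializing to $V=A$ and $\mu=-\mathcal{L}_\star$, the resulting multiplication $(x,a)\cdot(y,b)=(x\cdot y,-x\star b-y\star a)$ agrees term by term with (\ref{anti-Zinbiel axiom}), so the two statements are tautologically equivalent.

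The main obstacle is the subtle fact that (II) is not a consequence of (I) alone — it corresponds exactly to the associativity of $\cdot$, not to any piece of the representation condition. This is a structural departure from the Zinbiel case (Proposition \ref{pro:Zinbiel}), where associativity of $\cdot$ follows automatically from the single Zinbiel identity thanks to commutativity of $\cdot$; here the extra minus signs in (I) break that coincidence, forcing one to invoke both (I) and (II) simultaneously when expanding $(x\cdot y)\cdot z$ and $x\cdot(y\cdot z)$. Careful sign bookkeeping in this expansion is therefore the essential calculation, with the remaining verifications being routine.
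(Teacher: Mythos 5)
Your proof is correct: identity (I) is exactly the condition that $-\mathcal{L}_{\star}$ satisfies the representation axiom, your expansion showing that under (I) and the commutativity of $\cdot$ the associativity of $\cdot$ collapses to $(x\cdot y)\star z=(y\cdot z)\star x$, which together with the symmetry of $\cdot$ is indeed equivalent to the form $(x\cdot y)\star z=(x\cdot z)\star y$ of (II), is sound, and (2)$\Leftrightarrow$(3) is just the semidirect-product correspondence of Eq.~(\ref{eq:SDASSO}). The paper states this proposition without proof (it is quoted from \cite{Gao}), and your route is essentially the standard one paralleling Proposition \ref{pro:Zinbiel}; your observation that the second equality (II) in the anti-Zinbiel identity is precisely what encodes associativity of $\cdot$ — in contrast to the Zinbiel case, where associativity is automatic — is accurate.
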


Similarly, if a commutative associative  algebra $(A,\cdot)$ is the sub-adjacent commutative associative algebra of an anti-Zinbiel algebra $(A,\star)$, then $(A,\star)$ is  called a {\bf compatible anti-Zinbiel algebra} of
$(A,\cdot)$.

A   symmetric bilinear form $\mathcal{B}$ on a (commutative) associative algebra $(A,\cdot)$ is called a \textbf{commutative Connes cocycle} \cite{Gao} if Eq.~(\ref{Connes cocycle}) holds.

\begin{pro}\cite{Gao}
    Let $\mathcal{B}$ be a nondegenerate commutative Connes cocycle on a commutative associative algebra $(A,\cdot)$. Then there is a compatible anti-Zinbiel algebra $(A,\star)$ of $(A,\cdot)$ defined by
    \begin{equation}\label{bilinear form anti-Zinbiel}
        \mathcal{B}(x\star y,z)=-\mathcal{B}(y,x\cdot z),\;\;\forall x,y,z\in A.
    \end{equation}
Conversely, let $(A,\star)$ be an anti-Zinbiel algebra and $(A,\cdot)$ be the sub-adjacent commutative associative algebra. Then the natural nondegenerate   symmetric bilinear form $\mathcal{B}_{d}$ defined by Eq.~(\ref{B_{d}})
is a commutative Connes cocycle on the commutative associative algebra $A\ltimes_{\mathcal{L}^{*}_{\star}}A^{*}$.
\end{pro}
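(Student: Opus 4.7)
The plan is to handle the two directions separately, with the forward direction being the more delicate one.

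For the forward direction, I would first use the nondegeneracy of $\mathcal{B}$ to define $\star$ uniquely via the formula $\mathcal{B}(x\star y,z)=-\mathcal{B}(y,x\cdot z)$. The cleanest strategy is then to apply Proposition \ref{pro:anti-Zinbiel}, so that proving $(A,\star)$ is anti-Zinbiel and compatible with $(A,\cdot)$ reduces to establishing two things:
\begin{enumerate}
\item[(i)] the compatibility $x\star y+y\star x=x\cdot y$, and
\item[(ii)] that $(-\mathcal{L}_\star,A)$ is a representation of $(A,\cdot)$, i.e., $x\star(y\star z)=-(x\cdot y)\star z$, together with the symmetry $x\star(y\star z)=x\star(z\star y)$.
\end{enumerate}
For (i), pair both sides with an arbitrary $z$ against $\mathcal{B}$: the left side becomes $-\mathcal{B}(y,x\cdot z)-\mathcal{B}(x,y\cdot z)$, and the commutative Connes cocycle identity (\ref{Connes cocycle}) together with the symmetry of $\mathcal{B}$ rewrites $\mathcal{B}(x\cdot y,z)$ in exactly this form, so nondegeneracy gives equality. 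For the first half of (ii), apply $\mathcal{B}(-,w)$ to both sides and use associativity of $\cdot$; both reduce to $\mathcal{B}(z,(x\cdot y)\cdot w)$ up to sign.

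The main obstacle lies in the remaining symmetry $x\star(y\star z)=x\star(z\star y)$. Testing against $\mathcal{B}(-,w)$ reduces it to the identity
\[
\mathcal{B}((x\cdot z)\cdot w, y) = \mathcal{B}((x\cdot y)\cdot w, z),
\]
which is not immediately visible. The plan is to obtain it by applying the Connes cocycle twice in two different ways: once to the triples $(x\cdot z,w,y)$ and $(x\cdot y,w,z)$, and once to $(y,x,z\cdot w)$ and $(z,x,y\cdot w)$. The first pair expresses the target difference as $\mathcal{B}(w\cdot z,x\cdot y)-\mathcal{B}(w\cdot y,x\cdot z)$, while the second pair, after using commutativity and associativity of $\cdot$ and symmetry of $\mathcal{B}$, expresses the same difference as its negative. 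Adding the two forces the difference to vanish, which completes (ii).

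For the converse direction, I would first note that $(-\mathcal{L}_\star,A)$ being a representation (from Proposition \ref{pro:anti-Zinbiel}) implies that $(\mathcal{L}^*_\star,A^*)$ is a representation of $(A,\cdot)$, so the semidirect product $A\ltimes_{\mathcal{L}^*_\star}A^*$ is a well-defined commutative associative algebra. The bilinear form $\mathcal{B}_d$ is manifestly symmetric from its definition (\ref{B_{d}}). To check the commutative Connes cocycle condition, expand each term of the cyclic sum using (\ref{eq:SDASSO}) and the definition of $\mathcal{B}_d$; the three terms assemble into
\[
\langle x\cdot y - x\star y - y\star x,\,c^*\rangle + \langle y\cdot z - y\star z - z\star y,\,a^*\rangle + \langle z\cdot x - z\star x - x\star z,\,b^*\rangle,
\]
which vanishes termwise by the definition of the sub-adjacent product (\ref{comm op}). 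This is a routine verification with no real obstacle, mirroring the corresponding computation for Zinbiel algebras.
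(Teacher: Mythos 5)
Your proposal is correct. Note that the paper itself gives no proof of this statement --- it is recalled from \cite{Gao} --- so there is nothing in-paper to compare line by line; the closest models are the proofs the paper does include (e.g.\ Proposition \ref{bf PCP} and its analogues), and your strategy follows exactly that pattern: define $\star$ by nondegeneracy via Eq.~\eqref{bilinear form anti-Zinbiel}, test each desired identity against $\mathcal{B}$, and reduce it to the cocycle identity \eqref{Connes cocycle} together with commutativity/associativity and the symmetry of $\mathcal{B}$. I checked your computations: the compatibility $x\star y+y\star x=x\cdot y$ and the identity $x\star(y\star z)=-(x\cdot y)\star z$ come out exactly as you say, and your treatment of the only genuinely non-routine point, the symmetry $x\star(y\star z)=x\star(z\star y)$, is sound: writing $D=\mathcal{B}((x\cdot z)\cdot w,y)-\mathcal{B}((x\cdot y)\cdot w,z)$ and $E=\mathcal{B}(w\cdot z,x\cdot y)-\mathcal{B}(w\cdot y,x\cdot z)$, your first pair of cocycle instances gives $D=E$ (the terms $\mathcal{B}(x\cdot y\cdot z,w)$ cancel), the second gives $D=-E$, hence $2D=0$ and $D=0$ in characteristic zero; proving this directly (rather than trying to extract it from the equivalence in Proposition \ref{pro:anti-Zinbiel}) is the safer route, since the second equality is part of the definition of an anti-Zinbiel algebra. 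The converse direction is also fine: $(\mathcal{L}^{*}_{\star},A^{*})$ is a representation of $(A,\cdot)$ by Proposition \ref{pro:anti-Zinbiel2} (or by dualizing $(-\mathcal{L}_{\star},A)$), $\mathcal{B}_{d}$ is symmetric by construction, and the cyclic sum collapses termwise to $\langle x\cdot y-x\star y-y\star x,c^{*}\rangle$ and its two cyclic companions, which vanish by Eq.~\eqref{comm op}. In short, a complete and self-contained proof, consistent in method with the paper's own proofs of the corresponding statements with bilinear forms.
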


\section{Mixed splittings of operations of Poisson algebras and related algebraic structures}\label{sec:2}\

 At first we recall some facts on representations of Poisson algebras. Then we introduce 8 algebraic structures respectively corresponding to the mixed splitting of  the commutative associative
 products and Lie brackets of Poisson algebras interlacedly in three manners: the classical splitting, the second splitting and the un-splitting, in terms of representations of Poisson algebras. Finally the relationships between
Poisson algebras with nondegenerate bilinear forms satisfying certain conditions and  some algebraic structures are given.

    \begin{defi}
        A \textbf{representation of a Poisson algebra} $(A,\cdot,[-,-])$ is a triple $(\mu,\rho,V)$, such that $(\mu,V)$ is a representation of the commutative associative algebra $(A,\cdot)$, $(\rho,V)$ is a representation of the Lie algebra $(A,[-,-])$, and the following compatible conditions hold:
        \begin{equation}\label{2}
            \rho(x\cdot y)=\mu(x)\rho(y)+\mu(y)\rho(x),
        \end{equation}
        \begin{equation}\label{3}
            \mu([x,y])=\rho(x)\mu(y)-\mu(y)\rho(x),
        \end{equation}
     for all $x,y\in A$.
    \end{defi}

Let $(A,\cdot,[-,-])$ be a Poisson algebra. Then $(\mathcal{L}_{\cdot},\mathrm{ad},A)$ is a representation of $(A,\cdot,[-,-])$, called the \textbf{adjoint representation} of $(A,\cdot,[-,-])$.
Moreover, $(\mu,\rho,V)$ is a representation of a   Poisson algebra $(A,\cdot,[-,-])$ if and only if the direct sum $A\oplus V$ of vector spaces is a (\textbf{semi-direct product}) Poisson algebra by defining the multiplications on $A\oplus V$ by Eqs.~(\ref{eq:SDASSO}) and (\ref{eq:SDLie})   respectively. We denote it by $A\ltimes_{\mu,\rho}V$.

\begin{pro} \cite{NB} \label{pro:Poisson rep}\
        Let $(A,\cdot,[-,-])$ be a Poisson algebra. If $(\mu,\rho,V)$ is a representation of $(A,\cdot,[-,-])$, then $(-\mu^{*},\rho^{*},V^{*})$ is also a representation of $(A,\cdot,[-,-])$.
\end{pro}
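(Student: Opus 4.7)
The plan is to verify, directly from the definitions, the three conditions required for $(-\mu^{*},\rho^{*},V^{*})$ to be a representation of $(A,\cdot,[-,-])$: that $(-\mu^{*},V^{*})$ represents the commutative associative algebra $(A,\cdot)$, that $(\rho^{*},V^{*})$ represents the Lie algebra $(A,[-,-])$, and that the two compatibility equations \eqref{2} and \eqref{3} hold with $(\mu,\rho)$ replaced by $(-\mu^{*},\rho^{*})$. The first two of these are not new: the fact that the dual of a commutative associative representation (with a sign) remains a representation, and that the dual of a Lie representation remains a representation, were both recorded in Section~\ref{sec:1}. So the whole task reduces to checking the two compatibility conditions after dualizing, and the natural method is to pair everything with an arbitrary $u\in V$ and push the operators through via $\langle\rho^{*}(x)v^{*},u\rangle=-\langle v^{*},\rho(x)u\rangle$.

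For the first compatibility, I need to show $\rho^{*}(x\cdot y)=-\mu^{*}(x)\rho^{*}(y)-\mu^{*}(y)\rho^{*}(x)$. Dualizing the left-hand side using \eqref{2} turns this into the identity $\mu(x)\rho(y)+\mu(y)\rho(x)=\rho(x)\mu(y)+\rho(y)\mu(x)$ in $\mathrm{End}(V)$. This is the step that I expect to be the main (if mild) obstacle, since it does \emph{not} follow from \eqref{2} alone. The idea is to extract it from \eqref{3}: writing \eqref{3} both for $[x,y]$ and for $[y,x]=-[x,y]$ and adding the two gives exactly $\rho(x)\mu(y)+\rho(y)\mu(x)-\mu(x)\rho(y)-\mu(y)\rho(x)=0$. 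Thus \eqref{2} and \eqref{3} together feed the dualized \eqref{2}.

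For the second compatibility, I need $-\mu^{*}([x,y])=\rho^{*}(x)(-\mu^{*}(y))-(-\mu^{*}(y))\rho^{*}(x)$, i.e.\ $\mu^{*}([x,y])=\rho^{*}(x)\mu^{*}(y)-\mu^{*}(y)\rho^{*}(x)$. Pairing both sides with $u\in V$ and applying the definition of $(-)^{*}$ twice on each composition, this reduces to \eqref{3} itself: the two sign flips introduced by the dual pairing combine to give exactly $\mu([x,y])u=\rho(x)\mu(y)u-\mu(y)\rho(x)u$. No new input is needed beyond \eqref{3}.

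Putting these three pieces together — the two already-known representation facts from Section~\ref{sec:1} and the two compatibility verifications above — gives that $(-\mu^{*},\rho^{*},V^{*})$ is a representation of $(A,\cdot,[-,-])$.
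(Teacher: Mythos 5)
Your proposal is correct. Note that the paper itself gives no proof of this proposition — it is quoted from \cite{NB} — so there is nothing to diverge from; your direct dual-pairing verification is the standard argument one would write out. You also correctly isolate the only non-trivial point: the dualized version of \eqref{2} amounts to $\rho(x\cdot y)=\rho(x)\mu(y)+\rho(y)\mu(x)$, which does not follow from \eqref{2} alone but does follow by combining \eqref{2} with the identity $\rho(x)\mu(y)+\rho(y)\mu(x)=\mu(x)\rho(y)+\mu(y)\rho(x)$ obtained from \eqref{3} applied to $(x,y)$ and $(y,x)$ and the antisymmetry of $[-,-]$, while the dualized version of \eqref{3} reduces, after the two sign flips from the pairing, to \eqref{3} itself.
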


Hence we get the following conclusion.

\begin{cor}
Let $(A,\cdot,[-,-])$ be a Poisson algebra. Then $(-\mathcal{L}^{*}_{\cdot},\mathrm{ad}^{*},A^{*})$ is a representation of $(A,\cdot,[-,-])$, and the natural nondegenerate symmetric bilinear form $\mathcal{B}_{d}$ defined by Eq.~(\ref{B_{d}}) on the resulting Poisson algebra $A\ltimes_{-\mathcal{L}^{*}_{\cdot},\mathrm{ad}^{*}}A^{*}$ is invariant on both the commutative associative algebra $A\ltimes_{-\mathcal{L}^{*}_{\cdot}}A^{*}$ and the Lie algebra $A\ltimes_{\mathrm{ad}^{*}}A^{*}$.
    \end{cor}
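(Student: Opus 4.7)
The plan is to derive the corollary as a direct consequence of Proposition~\ref{pro:Poisson rep} combined with the two invariance facts already recorded earlier in the excerpt (one for commutative associative algebras, one for Lie algebras).

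First I would handle the representation claim. By definition, $(\mathcal{L}_{\cdot},\mathrm{ad},A)$ is the adjoint representation of the Poisson algebra $(A,\cdot,[-,-])$. Applying Proposition~\ref{pro:Poisson rep} to this representation with $\mu=\mathcal{L}_{\cdot}$, $\rho=\mathrm{ad}$, and $V=A$ immediately yields that $(-\mathcal{L}^{*}_{\cdot},\mathrm{ad}^{*},A^{*})$ is a representation of $(A,\cdot,[-,-])$. This step is essentially a one-line invocation and should require no further computation.

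Next I would address the invariance of $\mathcal{B}_{d}$ on the resulting semi-direct product Poisson algebra $A\ltimes_{-\mathcal{L}^{*}_{\cdot},\mathrm{ad}^{*}}A^{*}$. The underlying commutative associative algebra of this semi-direct product is, by the defining formula \eqref{eq:SDASSO}, exactly the commutative associative semi-direct product $A\ltimes_{-\mathcal{L}^{*}_{\cdot}}A^{*}$, and its underlying Lie algebra is, by \eqref{eq:SDLie}, the Lie semi-direct product $A\ltimes_{\mathrm{ad}^{*}}A^{*}$. The earlier statements in Section~\ref{sec:1} already assert that $\mathcal{B}_{d}$ defined by \eqref{B_{d}} is invariant on each of these two structures (in the commutative associative sense \eqref{comm asso invariance} for the former, and in the Lie sense \eqref{Lie invariance} for the latter). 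Thus both invariances transfer to the Poisson algebra $A\ltimes_{-\mathcal{L}^{*}_{\cdot},\mathrm{ad}^{*}}A^{*}$ verbatim, completing the proof.

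There is essentially no obstacle here: the corollary is a packaging of Proposition~\ref{pro:Poisson rep} (for the representation statement) with the two previously noted invariance results (for the bilinear form statement). The only ``care'' step is verifying that the associative and Lie parts of the Poisson semi-direct product coincide with the respective semi-direct products for which invariance of $\mathcal{B}_{d}$ has already been established, which is immediate by comparing \eqref{eq:SDASSO} and \eqref{eq:SDLie} with the constructions recalled in Section~\ref{sec:1}. If a referee demanded a direct verification, the computations are the routine pairing manipulations
\[
\langle x\cdot y, c^{*}\rangle + \langle b^{*}, x\cdot z\rangle + \langle a^{*}, y\cdot z\rangle
\]
on both sides of the invariance identity for $\cdot$, and the analogous manipulation with $[-,-]$; but these are subsumed by the results already cited.
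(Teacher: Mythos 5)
Your proposal is correct and matches the paper's own (implicit) argument: the paper gives no separate proof, presenting the corollary as the immediate consequence of applying Proposition~\ref{pro:Poisson rep} to the adjoint representation $(\mathcal{L}_{\cdot},\mathrm{ad},A)$ together with the invariance facts for $\mathcal{B}_{d}$ on $A\ltimes_{-\mathcal{L}^{*}_{\cdot}}A^{*}$ and $\mathfrak{g}\ltimes_{\mathrm{ad}^{*}}\mathfrak{g}^{*}$ recalled in Section~\ref{sec:1}. Your identification of the associative and Lie parts of the semi-direct product Poisson algebra via Eqs.~(\ref{eq:SDASSO}) and (\ref{eq:SDLie}) is exactly the glue the paper leaves tacit, and your sketched direct verification is also accurate.
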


Next we introduce 8 algebraic structures corresponding to the mixed splitting of the commutative associative
 products and Lie brackets of Poisson algebras interlacedly in three manners: the classical splitting, the second splitting and the un-splitting, in terms of representations of Poisson algebras.
Note that due to Proposition \ref{pro:Poisson rep}, the characterization of these algebraic structures in terms of representations of Poisson algebras on the dual spaces is the same as that on the spaces themselves.
Before we introduce these various algebraic structures, we give the following ``principle" to name them.
\begin{enumerate}
\item Every algebraic structure here is named by 3 capital letters.
\item The first letter is unified to be ``P" since these algebras are related to Poisson algebras.
\item The second letter denotes the operation corresponding to the splitting of the commutative associative products. Explicitly,  the capital letters ``C", ``Z" and ``A" respectively denote the operations of
 commutative associative algebras, Zinbiel algebras and anti-Zinbiel algebras, corresponding to the un-splitting, the classical splitting and the second splitting.
\item The third letter denotes the operation corresponding to the splitting of the Lie brackets. Explicitly,  the capital letters ``L", ``P" and ``A" respectively denote the operations of
 Lie algebras, pre-Lie algebras and anti-pre-Lie algebras, corresponding to the un-splitting, the classical splitting and the second splitting.
\end{enumerate}

Note that the PZP algebras combining Zinbiel algebras and pre-Lie algebras are exactly the pre-Poisson algebras introduced in \cite{Agu2000.0}.


\subsection{PCP algebras}

\begin{defi}
    A \textbf{PCP algebra} is a triple $(A,\cdot,\circ)$, such that $(A,\cdot)$ is a commutative associative algebra, $(A,\circ)$ is a pre-Lie algebra, and the following equations hold:
    \begin{equation}\label{PCP1}
        (x\cdot y)\circ z=x\cdot(y\circ z)+y\cdot(x\circ z),
    \end{equation}
    \begin{equation}\label{PCP2}
        (x\circ y-y\circ x)\cdot z=x\circ(y\cdot z)-y\cdot(x\circ z),
    \end{equation}
    \begin{equation}\label{PCP3}
        z\circ(x\cdot y)-z\cdot(x\circ y)-z\cdot(y\circ x)=0,
    \end{equation}
 for all $x,y,z\in A$.
\end{defi}

\begin{pro}
    Let $(A,\cdot,[-,-])$ be a Poisson algebra and $(A,\circ)$ be a compatible pre-Lie algebra of $(A,[-,-])$. If $(\mathcal{L}_{\cdot},\mathcal{L}_{\circ},A)$ is a representation of $(A,\cdot,[-,-])$, then $(A,\cdot,\circ)$ is a PCP algebra. Conversely, let $(A,\cdot,\circ)$ be a PCP algebra and $(A,[-,-])$ be the sub-adjacent Lie algebra of $(A,\circ)$. Then $(A,\cdot,[-,-])$ is a Poisson algebra with a representation $(\mathcal{L}_{\cdot},\mathcal{L}_{\circ},A)$. In this case, we say $(A,\cdot,[-,-])$ is the \textbf{sub-adjacent Poisson algebra} of $(A,\cdot,\circ)$, and $(A,\cdot,\circ)$ is a \textbf{compatible  PCP algebra} of $(A,\cdot,[-,-])$.
    \end{pro}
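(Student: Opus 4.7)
The plan is to verify both directions by unpacking the representation conditions (2.2) and (2.3) of a Poisson algebra for the specific pair $(\mathcal{L}_\cdot,\mathcal{L}_\circ)$ and matching them against the PCP axioms, with the Poisson Leibniz rule providing the remaining link to PCP3.

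For the forward direction, I would first write out what (2.2) and (2.3) say for $\mu=\mathcal{L}_\cdot$ and $\rho=\mathcal{L}_\circ$. Evaluated on an arbitrary $z\in A$, (2.2) becomes $(x\cdot y)\circ z=x\cdot(y\circ z)+y\cdot(x\circ z)$, which is exactly PCP1; and (2.3) becomes $[x,y]\cdot z=x\circ(y\cdot z)-y\cdot(x\circ z)$, which upon substituting $[x,y]=x\circ y-y\circ x$ (from the compatible pre-Lie structure) is exactly PCP2. To obtain PCP3, I would then expand the Poisson Leibniz identity $[z,x\cdot y]=[z,x]\cdot y+x\cdot[z,y]$ in terms of $\circ$ and use PCP1 to simplify $(x\cdot y)\circ z$, which yields $z\circ(x\cdot y)=y\cdot(z\circ x)+x\cdot(z\circ y)$. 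To convert the right-hand side into $z\cdot(x\circ y)+z\cdot(y\circ x)$, I would extract the auxiliary identity
\[
y\cdot(z\circ x)=z\cdot(y\circ x)
\]
by applying PCP2 once in the variable order $(x,y,z)$ and once in the order $(x,z,y)$, equating the two resulting expressions for $x\circ(y\cdot z)$ (using the commutativity $y\cdot z=z\cdot y$), and cancelling.

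For the converse, the commutative associative structure of $(A,\cdot)$ is given, and $(A,[-,-])$ is a Lie algebra by Proposition \ref{pro:pre-Lie} applied to the pre-Lie algebra $(A,\circ)$. The representation conditions for $(\mathcal{L}_\cdot,\mathcal{L}_\circ,A)$ then reduce, as above, to PCP1 and PCP2, together with the automatic fact that $(\mathcal{L}_\cdot,A)$ represents $(A,\cdot)$ and $(\mathcal{L}_\circ,A)$ represents $(A,[-,-])$. So the only nontrivial task is to verify the Leibniz rule. I would expand $[z,x\cdot y]-[z,x]\cdot y-x\cdot[z,y]$ in terms of $\circ$, apply PCP3 to $z\circ(x\cdot y)$ and PCP1 to $(x\cdot y)\circ z$, and observe that the remaining expression $z\cdot(x\circ y)+z\cdot(y\circ x)-y\cdot(z\circ x)-x\cdot(z\circ y)$ vanishes by the same auxiliary identity $y\cdot(z\circ x)=z\cdot(y\circ x)$ extracted from PCP2.

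The main obstacle is recognizing that PCP3 is not itself one of the representation axioms but rather encodes the Poisson Leibniz rule modulo PCP1 and PCP2. The key technical step, used symmetrically in both directions, is the derivation of $y\cdot(z\circ x)=z\cdot(y\circ x)$ from PCP2; this is the bridge that makes PCP3 and the Leibniz rule interchangeable once PCP1 is in hand.
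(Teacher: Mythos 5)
Your proposal is correct and takes essentially the same route as the paper: both unpack the representation conditions as Eqs.~(\ref{PCP1})--(\ref{PCP2}) and then show that Eq.~(\ref{PCP3}) and the Leibniz rule are interchangeable modulo an auxiliary consequence of Eq.~(\ref{PCP2}). The only cosmetic differences are that you use the identity $y\cdot(z\circ x)=z\cdot(y\circ x)$ (obtained by permuting $y$ and $z$ in Eq.~(\ref{PCP2}) and using commutativity of $\cdot$) where the paper uses the antisymmetrized form Eq.~(\ref{PCP4}), and that you spell out the converse which the paper leaves as ``proved similarly.''
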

\begin{proof}
   Since $(\mathcal{L}_{\cdot},\mathcal{L}_{\circ},A)$ is a representation of $(A,\cdot,[-,-])$, we get Eqs.~(\ref{PCP1})-(\ref{PCP2}). Moreover,
    by Eq.~(\ref{PCP2}), we have
    \begin{equation}\label{PCP4}
        x\circ(y\cdot z)-y\cdot(x\circ z)=-y\circ(x\cdot z)+x\cdot(y\circ z),\;\;\forall x,y,z\in A.
    \end{equation}
    Thus for all $x,y,z\in A$, we have
    \begin{eqnarray*}
        0&&=[z,x\cdot y]+[x,z]\cdot y+[y,z]\cdot x\\
        &&\overset{(\ref{PCP2})}{=}z\circ(x\cdot y)-(x\cdot y)\circ z+x\circ(z\cdot y)-z\cdot(x\circ y)+y\circ(z\cdot x)-z\cdot(y\circ x)\\
        &&\overset{(\ref{PCP4})}{=}z\circ(x\cdot y)-(x\cdot y)\circ z+x\cdot(y\circ z)+y\cdot(x\circ z)-z\cdot(x\circ y)-z\cdot(y\circ x)\\
        &&\overset{(\ref{PCP1})}{=}z\circ(x\cdot y)-z\cdot(x\circ y)-z\cdot(y\circ x).
    \end{eqnarray*}
Hence Eq.~(\ref{PCP3}) holds. Thus $(A,\cdot,\circ)$ is a PCP algebra. The converse part is proved similarly.
\end{proof}

Hence we get the following conclusion.

\begin{cor}
Let $A$ be a vector space with two bilinear operations $\cdot,\circ:A\otimes A\rightarrow A$. Then the following conditions are equivalent:
\begin{enumerate}
    \item\label{pro:PCP1} $(A,\cdot,\circ)$ is a   PCP algebra.
    \item\label{pro:PCP2} The triple $(A,\cdot,[-,-])$ is a Poisson algebra with a representation $(\mathcal{L}_{\cdot},\mathcal{L}_{\circ},A)$, where $[-,-]$ is defined by Eq.~(\ref{sub-adj}).
    \item\label{pro:PCP3} There is a Poisson algebra structure on $A\oplus A$ in which  the commutative associative product $\cdot$ is defined by
    \begin{equation}\label{comm asso axiom}
    (x,a)\cdot(y,b)=(x\cdot y, x\cdot b+a\cdot y),\;\;\forall x,y,a,b\in A,
    \end{equation}
and the Lie bracket $[-,-]$ is defined by Eq.~(\ref{pre-Lie axiom}).
\end{enumerate}
\end{cor}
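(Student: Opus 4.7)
The proof is short once one combines two ingredients already established in the paper: the Proposition immediately preceding the corollary, and the general semi-direct product construction for representations of Poisson algebras recalled at the start of Section \ref{sec:2}.

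For the equivalence (a) $\Leftrightarrow$ (b), essentially nothing new is required beyond repackaging the preceding Proposition. Given a PCP algebra $(A,\cdot,\circ)$, the pre-Lie axiom on $(A,\circ)$ guarantees via Proposition \ref{pro:pre-Lie} that Eq.~(\ref{sub-adj}) defines a Lie bracket on $A$ for which $(\mathcal{L}_\circ,A)$ is a representation; the preceding Proposition then yields that $(A,\cdot,[-,-])$ is Poisson and $(\mathcal{L}_\cdot,\mathcal{L}_\circ,A)$ is a representation of it. Conversely, if (b) holds, then $(\mathcal{L}_\circ,A)$ being a Lie representation of the bracket defined by Eq.~(\ref{sub-adj}) forces $(A,\circ)$ to be pre-Lie (again by Proposition \ref{pro:pre-Lie}), and the converse half of the preceding Proposition recovers the three PCP identities.

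For (b) $\Leftrightarrow$ (c), I invoke the general fact recalled in Section \ref{sec:2} that a triple $(\mu,\rho,V)$ is a representation of a Poisson algebra $(A,\cdot,[-,-])$ if and only if $A \oplus V$ equipped with the operations (\ref{eq:SDASSO}) and (\ref{eq:SDLie}) is a semi-direct product Poisson algebra. Specializing to $V = A$, $\mu = \mathcal{L}_\cdot$, $\rho = \mathcal{L}_\circ$, those template formulas reduce verbatim to Eqs.~(\ref{comm asso axiom}) and (\ref{pre-Lie axiom}); thus (b) is merely the representation-theoretic formulation of (c), and the two are equivalent.

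The key observation — and the only point requiring attention — is simply the pattern match: plugging the adjoint-like representation $(\mathcal{L}_\cdot,\mathcal{L}_\circ,A)$ into the general semi-direct product template reproduces precisely the two formulas appearing in (c). Once this is recognized, both equivalences follow mechanically from results already in hand, and I anticipate no genuine obstacle in writing the argument out in full.
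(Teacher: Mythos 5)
Your proposal is correct and matches the paper's own (implicit) argument: the paper derives this corollary exactly as you do, by combining the preceding Proposition on PCP algebras with Proposition \ref{pro:pre-Lie} for the equivalence of (a) and (b), and by specializing the semi-direct product characterization of representations of Poisson algebras (Eqs.~(\ref{eq:SDASSO}) and (\ref{eq:SDLie}) with $V=A$, $\mu=\mathcal{L}_{\cdot}$, $\rho=\mathcal{L}_{\circ}$) for the equivalence of (b) and (c). The only slip is terminological: the direction (b)~$\Rightarrow$~(a) uses the \emph{first} half of the preceding Proposition (Poisson algebra with representation $(\mathcal{L}_{\cdot},\mathcal{L}_{\circ},A)$ implies PCP), while (a)~$\Rightarrow$~(b) uses its ``Conversely'' half, i.e.\ the opposite of how you labeled them, which does not affect the validity of the argument.
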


\subsection{PCA algebras}

\begin{defi}
    A \textbf{PCA algebra} is a triple $(A,\cdot,\circ)$, such that $(A,\cdot)$ is a commutative associative algebra, $(A,\circ)$ is an anti-pre-Lie algebra, and
    Eq.~(\ref{PCP1}) and the following equations hold:
    \begin{equation}\label{PCA2}
        (x\circ y-y\circ x)\cdot z=y\cdot(x\circ z)-x\circ(y\cdot z),
    \end{equation}
\begin{equation}\label{PCA3}
    z\circ(x\cdot y)+z\cdot (x\circ y)+z\cdot(y\circ x)-2(x\cdot y)\circ z=0,
\end{equation}
for all $x,y,z\in A$.
\end{defi}

\begin{pro}
    Let $(A,\cdot,[-,-])$ be a Poisson algebra and $(A,\circ)$ be a compatible anti-pre-Lie algebra of $(A,[-,-])$. If $(\mathcal{L}_{\cdot},-\mathcal{L}_{\circ},A)$ is a representation of $(A,\cdot,[-,-])$, then $(A,\cdot,\circ)$ is a PCA algebra. Conversely, let $(A,\cdot,\circ)$ be a PCA algebra and $(A,[-,-])$ be the sub-adjacent Lie algebra of $(A,\circ)$. Then $(A,\cdot,[-,-])$ is a Poisson algebra with a representation $(\mathcal{L}_{\cdot},-\mathcal{L}_{\circ},A)$. In this case, we say $(A,\cdot,[-,-])$ is the \textbf{sub-adjacent Poisson algebra} of $(A,\cdot,\circ)$, and $(A,\cdot,\circ)$ is a \textbf{compatible  PCA algebra} of $(A,\cdot,[-,-])$.
\end{pro}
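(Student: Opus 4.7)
The plan is to mirror the proof of the preceding PCP proposition, adapting the signs that arise from the representation $(\mathcal{L}_{\cdot},-\mathcal{L}_{\circ},A)$ instead of $(\mathcal{L}_{\cdot},\mathcal{L}_{\circ},A)$. For the forward implication, by Proposition~\ref{pro:anti-pre-Lie} the anti-pre-Lie axiom is exactly the statement that $(-\mathcal{L}_{\circ},A)$ is a representation of the sub-adjacent Lie algebra $(A,[-,-])$, so only the two Poisson compatibility conditions \eqref{2} and \eqref{3} need to be unpacked. With $\mu=\mathcal{L}_{\cdot}$ and $\rho=-\mathcal{L}_{\circ}$, condition \eqref{2} becomes Eq.~(\ref{PCP1}) and condition \eqref{3} becomes Eq.~(\ref{PCA2}). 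Consequently, Eq.~(\ref{PCA3}) is the only identity with content beyond the representation axioms, and it must be extracted from the Leibniz rule of the Poisson structure.

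To derive Eq.~(\ref{PCA3}), I would write the Leibniz rule in the symmetric form
\begin{equation*}
[z,x\cdot y]+[x,z]\cdot y+[y,z]\cdot x=0,
\end{equation*}
expand each bracket via $[a,b]=a\circ b-b\circ a$, and then apply Eq.~(\ref{PCA2}) to the two terms of shape $(u\circ v-v\circ u)\cdot w$. This reduces the relation to
\begin{equation*}
z\circ(x\cdot y)-(x\cdot y)\circ z+z\cdot(x\circ y)+z\cdot(y\circ x)-x\circ(z\cdot y)-y\circ(z\cdot x)=0.
\end{equation*}
The key auxiliary identity $(x\cdot y)\circ z=x\circ(y\cdot z)+y\circ(x\cdot z)$ follows by antisymmetrizing Eq.~(\ref{PCA2}) in $x,y$ and then applying Eq.~(\ref{PCP1}); using commutativity of $\cdot$, it allows $x\circ(z\cdot y)+y\circ(z\cdot x)$ to be replaced by $(x\cdot y)\circ z$, upon which Eq.~(\ref{PCA3}) drops out.

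For the converse direction, assume $(A,\cdot,\circ)$ is a PCA algebra. Then $(A,[-,-])$ is the sub-adjacent Lie algebra of the anti-pre-Lie algebra $(A,\circ)$, and Eqs.~(\ref{PCP1}) and (\ref{PCA2}) translate back into the compatibility conditions \eqref{2} and \eqref{3} for the representation $(\mathcal{L}_{\cdot},-\mathcal{L}_{\circ},A)$, so it only remains to verify the Leibniz rule. For this every step of the derivation above is reversible: starting from Eq.~(\ref{PCA3}) and applying the auxiliary identity together with Eq.~(\ref{PCA2}), one retraces the computation to produce $[z,x\cdot y]=[z,x]\cdot y+x\cdot[z,y]$. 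The main technical point — both directions — is spotting the antisymmetrization trick that promotes Eq.~(\ref{PCA2}) to the auxiliary identity; once that is in hand, the remaining argument is routine sign bookkeeping, exactly parallel to the PCP proof.
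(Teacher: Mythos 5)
Your proposal is correct and follows essentially the paper's own route: the paper likewise reads Eqs.~(\ref{PCP1}) and (\ref{PCA2}) off the representation conditions, extracts from Eq.~(\ref{PCA2}) the identity~(\ref{PCP4}) (which combined with Eq.~(\ref{PCP1}) is precisely your auxiliary identity $(x\cdot y)\circ z=x\circ(y\cdot z)+y\circ(x\cdot z)$), and substitutes into $[z,x\cdot y]+[x,z]\cdot y+[y,z]\cdot x=0$ to obtain Eq.~(\ref{PCA3}), the converse being the same computation read backwards. The only nitpick is terminological: the auxiliary identity comes from \emph{symmetrizing} Eq.~(\ref{PCA2}) in $x,y$ (adding its $x\leftrightarrow y$ swap, the antisymmetric left-hand sides cancelling), not antisymmetrizing.
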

\begin{proof}
  Since $(\mathcal{L}_{\cdot},-\mathcal{L}_{\circ},A)$ is a representation of $(A,\cdot,[-,-])$, we get Eqs.~(\ref{PCP1}) and (\ref{PCA2}). By Eq.~(\ref{PCA2}), Eq.~(\ref{PCP4}) holds. Thus for all $x,y,z\in A$, we have
    \begin{eqnarray*}
        0&&=[z,x\cdot y]+[x,z]\cdot y+[y,z]\cdot x\\
        &&\overset{(\ref{PCA2})}{=}z\circ(x\cdot y)-(x\cdot y)\circ z-x\circ(z\cdot y)+z\cdot(x\circ y)-y\circ(z\cdot x)+z\cdot(y\circ x)\\
        &&\overset{(\ref{PCP4})}{=}z\circ(x\cdot y)-(x\cdot y)\circ z-x\cdot(y\circ z)-y\cdot(x\circ z)+z\cdot(x\circ y)+z\cdot(y\circ x)\\
        &&\overset{(\ref{PCP1})}{=}z\circ(x\cdot y)+z\cdot (x\circ y)+z\cdot(y\circ x)-2(x\cdot y)\circ z.
    \end{eqnarray*}
Hence Eq.~(\ref{PCA3}) holds. So $(A,\cdot,\circ)$ is a PCA algebra. The converse part is proved similarly.
\end{proof}

Hence we get the following conclusion.

\begin{cor}
Let $A$ be a vector space with two bilinear operations $\cdot,\circ:A\otimes A\rightarrow A$. Then the following conditions are equivalent:
    \begin{enumerate}
        \item\label{pro:PCA1} $(A,\cdot,\circ)$ is a  PCA algebra.
        \item\label{pro:PCA2} The triple $(A,\cdot,[-,-])$ is a Poisson algebra with a representation $(\mathcal{L}_{\cdot},-\mathcal{L}_{\circ},A)$, where $[-,-]$ is defined by Eq.~(\ref{sub-adj}).
        \item\label{pro:PCA3} There is a Poisson algebra structure on $A\oplus A$ in which the commutative associative product $\cdot$ is defined by Eq.~(\ref{comm asso axiom})
        and the Lie bracket $[-,-]$ is defined by Eq.~(\ref{anti-pre-Lie axiom}).
    \end{enumerate}
\end{cor}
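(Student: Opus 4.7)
The plan is to reduce the corollary to the preceding proposition together with the semi-direct product characterization of Poisson algebra representations. The equivalence (a) $\Leftrightarrow$ (b) is already essentially contained in the proposition immediately above, so the main new content is (b) $\Leftrightarrow$ (c), which I will handle by specializing the general semi-direct product construction to the representation $(\mathcal{L}_{\cdot},-\mathcal{L}_{\circ},A)$.

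First I would dispatch (a) $\Leftrightarrow$ (b). For (a) $\Rightarrow$ (b), observe that in a PCA algebra the sub-adjacent Lie algebra $(A,[-,-])$ of the anti-pre-Lie algebra $(A,\circ)$ exists by Proposition~\ref{pro:anti-pre-Lie}, and the preceding proposition then yields that $(A,\cdot,[-,-])$ is a Poisson algebra with representation $(\mathcal{L}_{\cdot},-\mathcal{L}_{\circ},A)$. For (b) $\Rightarrow$ (a), note that $(-\mathcal{L}_{\circ},A)$ being a representation of the Lie algebra defined by Eq.~(\ref{sub-adj}) forces $(A,\circ)$ to be an anti-pre-Lie algebra (Proposition~\ref{pro:anti-pre-Lie}), so $(A,[-,-])$ is automatically the sub-adjacent Lie algebra of $(A,\circ)$, and the preceding proposition applies directly.

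Next I would prove (b) $\Leftrightarrow$ (c). The general principle recorded in Section~\ref{sec:2} is that $(\mu,\rho,V)$ is a representation of a Poisson algebra $(A,\cdot,[-,-])$ if and only if the direct sum vector space $A\oplus V$ carries a semi-direct product Poisson structure with multiplications given by Eqs.~(\ref{eq:SDASSO}) and (\ref{eq:SDLie}). Specializing to $V=A$, $\mu=\mathcal{L}_{\cdot}$, $\rho=-\mathcal{L}_{\circ}$, the commutative associative product from Eq.~(\ref{eq:SDASSO}) becomes precisely Eq.~(\ref{comm asso axiom}), while the Lie bracket from Eq.~(\ref{eq:SDLie}) becomes
\begin{equation*}
[(x,a),(y,b)]=([x,y],-\mathcal{L}_{\circ}(x)b+\mathcal{L}_{\circ}(y)a)=(x\circ y-y\circ x,\,y\circ a-x\circ b),
\end{equation*}
which is exactly the bracket in Eq.~(\ref{anti-pre-Lie axiom}). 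So (b) $\Rightarrow$ (c) follows by direct application of the semi-direct product Poisson structure, while (c) $\Rightarrow$ (b) follows by reading off the representation from the two semi-direct product structures on $A\oplus A$ and invoking the ``if'' direction of the same general principle.

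I do not expect any serious obstacle: the entire argument is a bookkeeping exercise in translating between the PCA axioms, the representation compatibility Eqs.~(\ref{2})--(\ref{3}), and the two semi-direct product formulas. The only subtle point is a matter of signs and placement, namely verifying that the $-\mathcal{L}_{\circ}$ in the representation produces the bracket of Eq.~(\ref{anti-pre-Lie axiom}) (with $y\circ a$ and $-x\circ b$) rather than the bracket of Eq.~(\ref{pre-Lie axiom}), so that the correct ``anti'' version of the construction is invoked. Once this is checked the corollary follows immediately from the preceding proposition and the semi-direct product characterization.
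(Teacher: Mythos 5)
Your proposal is correct and matches the paper's intended argument: the equivalence (a) $\Leftrightarrow$ (b) is exactly the content of the preceding proposition (together with Proposition \ref{pro:anti-pre-Lie} identifying anti-pre-Lie structures), and (b) $\Leftrightarrow$ (c) is the specialization of the semi-direct product characterization of Poisson representations to $(\mathcal{L}_{\cdot},-\mathcal{L}_{\circ},A)$, with your sign check confirming that Eq.~(\ref{eq:SDLie}) yields the bracket of Eq.~(\ref{anti-pre-Lie axiom}) rather than Eq.~(\ref{pre-Lie axiom}). The paper leaves this verification implicit ("Hence we get the following conclusion"), so your write-up simply makes the same reasoning explicit.
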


\begin{pro}\label{bf PCP}
    Let $(A,\cdot,[-,-])$ be a Poisson algebra. Suppose that $\mathcal{B}$ is a nondegenerate symmetric bilinear form on $A$ such that it is  invariant on $(A,\cdot)$ and a commutative 2-cocycle on $(A,[-,-])$. Then there is a compatible PCA algebra $(A,\cdot,\circ)$ in which $\circ$ is defined by Eq.~(\ref{bilinear form anti-pre-Lie}).
    Conversely, let $(A,\cdot,\circ)$ be a  PCA algebra and the sub-adjacent Poisson algebra be $(A,\cdot,[-,-])$. Then there is a Poisson algebra $A\ltimes_{-\mathcal{L}^{*}_{\cdot},-\mathcal{L}^{*}_{\circ}}A^{*}$, and the natural nondegenerate symmetric bilinear form $\mathcal{B}_{d}$ defined by Eq.~(\ref{B_{d}})
is invariant on the commutative associative algebra $A\ltimes_{-\mathcal{L}^{*}_{\cdot}}A^{*}$ and a commutative 2-cocycle on the Lie algebra $A\ltimes_{-\mathcal{L}^{*}_{\circ}}A^{*}$.
    \end{pro}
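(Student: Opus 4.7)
Proof proposal.

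The plan is to reduce both directions to the representation-theoretic characterization of PCA algebras given earlier, together with Proposition~\ref{bf anti-pre-Lie} (the analogous bilinear-form statement for anti-pre-Lie algebras).

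For the forward direction, define $\circ$ by $\mathcal{B}(x\circ y,z)=\mathcal{B}(y,[x,z])$. By Proposition~\ref{bf anti-pre-Lie}, $(A,\circ)$ is an anti-pre-Lie algebra whose sub-adjacent Lie algebra is the original $(A,[-,-])$. Since $(A,\cdot)$ is commutative associative, it suffices (by the representation characterization of PCA algebras) to show that $(\mathcal{L}_{\cdot},-\mathcal{L}_{\circ},A)$ is a representation of the Poisson algebra; that is, I must verify Eqs.~(\ref{PCP1}) and (\ref{PCA2}). Both will follow by pairing against an arbitrary $w\in A$ via $\mathcal{B}$, moving everything through the invariance of $\mathcal{B}$ on $(A,\cdot)$ and the definition of $\circ$, and then applying the Poisson Leibniz rule
\begin{equation*}
[x,y\cdot w]=[x,y]\cdot w+y\cdot[x,w].
\end{equation*}
For Eq.~(\ref{PCP1}) the computation reduces to the identity $[x\cdot y,w]=[x,y\cdot w]+[y,x\cdot w]$, which is immediate from Leibniz and skew-symmetry; for Eq.~(\ref{PCA2}) it reduces directly to one application of Leibniz.

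For the converse direction, I start from the representation $(\mathcal{L}_{\cdot},-\mathcal{L}_{\circ},A)$ of the sub-adjacent Poisson algebra provided by the PCA structure. Applying Proposition~\ref{pro:Poisson rep} yields that $(-\mathcal{L}_{\cdot}^{*},-\mathcal{L}_{\circ}^{*},A^{*})$ is also a representation of $(A,\cdot,[-,-])$, hence the semi-direct product Poisson algebra $A\ltimes_{-\mathcal{L}_{\cdot}^{*},-\mathcal{L}_{\circ}^{*}}A^{*}$ exists. The invariance of $\mathcal{B}_{d}$ on the commutative associative part $A\ltimes_{-\mathcal{L}_{\cdot}^{*}}A^{*}$ is the standard fact recalled in Section~\ref{sec:1} (applied to $(A,\cdot)$); the commutative 2-cocycle property of $\mathcal{B}_{d}$ on the Lie part $A\ltimes_{-\mathcal{L}_{\circ}^{*}}A^{*}$ is exactly the second half of Proposition~\ref{bf anti-pre-Lie} applied to the anti-pre-Lie algebra $(A,\circ)$.

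The main obstacle is simply the bookkeeping for Eqs.~(\ref{PCP1}) and (\ref{PCA2}): one must pair carefully through $\mathcal{B}$, use invariance on $(A,\cdot)$ in the right slot, and recognize that the resulting identity in the Lie bracket is precisely the Poisson Leibniz rule (or its skew-symmetrized consequence). Note that Eq.~(\ref{PCA3}) need not be verified separately, since in the proof of the PCA characterization it was derived from Eqs.~(\ref{PCP1}), (\ref{PCA2}) together with the Jacobi identity on the sub-adjacent Lie algebra.
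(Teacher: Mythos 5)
Your proposal is correct and follows essentially the same route as the paper's proof: in the forward direction both verify Eqs.~(\ref{PCP1}) and (\ref{PCA2}) by pairing with an arbitrary element through $\mathcal{B}$, using invariance on $(A,\cdot)$ and the defining relation (\ref{bilinear form anti-pre-Lie}) to reduce to the Poisson identities, and in the converse both pass through the representation $(\mathcal{L}_{\cdot},-\mathcal{L}_{\circ},A)$, dualize via Proposition~\ref{pro:Poisson rep}, and check $\mathcal{B}_{d}$ on the semi-direct product. The only minor slip is attributional: in the PCA characterization, Eq.~(\ref{PCA3}) is deduced from Eqs.~(\ref{PCP1})--(\ref{PCA2}) together with the Poisson Leibniz identity $[z,x\cdot y]+[x,z]\cdot y+[y,z]\cdot x=0$, not the Jacobi identity, but since the Poisson structure is given in the forward direction this does not affect the argument.
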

\begin{proof}
Since $(A,\cdot,[-,-])$ is a Poisson algebra, the following equation holds:
\begin{equation}\label{Poisson equality}
    [x,y\cdot z]+[y,z\cdot x]+[z,x\cdot y]=0,\;\;\forall x,y,z\in A.
\end{equation}
Let $\mathcal{B}$ be a nondegenerate symmetric bilinear form on $A$ such that it is  invariant on $(A,\cdot)$ and a commutative 2-cocycle on $(A,[-,-])$. Then
\begin{eqnarray*}
&&\mathcal{B}((x\cdot y)\circ z-x\cdot(y\circ z)-y\cdot(x\circ z), w)\\
&&\overset{(\ref{bilinear form anti-pre-Lie}),(\ref{comm asso invariance})}{=}\mathcal{B}(z,[x\cdot y,w]-[y,x\cdot w]-[x,y\cdot w])\overset{(\ref{Poisson equality})}{=}0,\\
&&\mathcal{B}((x\circ y-y\circ x)\cdot z-y\cdot(x\circ z)-x\circ(y\cdot z),w)\\
&&\overset{(\ref{bilinear form anti-pre-Lie}),(\ref{comm asso invariance})}{=}\mathcal{B}(z,[x,y]\cdot w-[x,y\cdot w]-y\cdot[x,w])\overset{(\ref{eq:PA})}{=}0.
\end{eqnarray*}
Hence Eqs.~(\ref{PCP1}) and (\ref{PCA2}) hold by the nondegeneracy of $\mathcal{B}$. Thus Eq.~(\ref{PCA3}) holds such that $(A,\cdot,\circ)$ is a PCA algebra.   Conversely, let $(A,\cdot,\circ)$ be a   PCA algebra  and the sub-adjacent Poisson algebra be $(A,\cdot,[-,-])$. Then $(\mathcal{L}_{\cdot},-\mathcal{L}_{\circ},A)$ is a representation of $(A,\cdot,[-,-])$. By Proposition \ref{pro:Poisson rep}, $(-\mathcal{L}^{*}_{\cdot},-\mathcal{L}^{*}_{\circ},A^{*})$ is also a representation of $(A,\cdot,[-,-])$. Thus there is a Poisson algebra structure $A\ltimes_{-\mathcal{L}^{*}_{\cdot},-\mathcal{L}^{*}_{\circ}}A^{*}$.
It is straightforward to show that $\mathcal{B}_{d}$
is  invariant on the commutative associative algebra $A\ltimes_{-\mathcal{L}^{*}_{\cdot}}A^{*}$ and a commutative 2-cocycle on the Lie algebra $A\ltimes_{-\mathcal{L}^{*}_{\circ}}A^{*}$.
    \end{proof}

\subsection{PZL algebras}

\begin{defi}
    A \textbf{PZL algebra} is a triple $(A,\star,[-,-])$, such that $(A,\star)$ is a Zinbiel algebra, $(A,[-,-])$ is a Lie algebra, and the following equations hold:
    \begin{equation}\label{ZLP1}
        [x\star y+y\star x,z]=x\star[y,z]+y\star[x,z],
    \end{equation}
    \begin{equation}\label{ZLP2}
        [x,y]\star z=[x,y\star z]-y\star[x,z],
    \end{equation}
    for all $x,y,z\in A$.
\end{defi}

\begin{pro}\label{pro:PZL}
    Let $(A,\cdot,[-,-])$ be a Poisson algebra and $(A,\star)$ be a compatible Zinbiel algebra of $(A,\cdot)$. If $(\mathcal{L}_{\star},\mathrm{ad},A)$ is a representation of $(A,\cdot,[-,-])$, then $(A,\star,[-,-])$ is a PZL algebra.
    Conversely, let $(A,\star,[-,-])$ be a PZL algebra and $(A,\cdot)$ be the sub-adjacent commutative associative algebra of $(A,\star)$. Then $(A,\cdot,[-,-])$ is a Poisson algebra with a representation $(\mathcal{L}_{\star},\mathrm{ad},A)$. In this case, we say $(A,\cdot,[-,-])$ is the \textbf{sub-adjacent Poisson algebra} of $(A,\star,[-,-])$, and $(A,\star,[-,-])$ is a \textbf{compatible PZL algebra} of $(A,\cdot,[-,-])$.
\end{pro}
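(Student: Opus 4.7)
The plan is to unpack the four axioms defining a representation $(\mu,\rho,V) = (\mathcal{L}_{\star}, \mathrm{ad}, A)$ of a Poisson algebra and match them against the Zinbiel identity, the Jacobi identity, and the two PZL identities \eqref{ZLP1} and \eqref{ZLP2}. Once this axiom-by-axiom correspondence is in place, the only additional task in the converse direction is verifying the Leibniz rule, which turns out to follow from \eqref{ZLP2} alone together with the symmetry $x\cdot y = x\star y + y\star x$.

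For the forward direction, I would start from a Poisson algebra $(A,\cdot,[-,-])$ with a compatible Zinbiel structure $(A,\star)$. The multiplicativity axiom $\mathcal{L}_{\star}(x\cdot y) = \mathcal{L}_{\star}(x)\mathcal{L}_{\star}(y)$ is exactly the Zinbiel identity rewritten via the definition of the sub-adjacent commutative associative product, while $\mathrm{ad}$ being a Lie representation is the Jacobi identity. The two mixed Poisson compatibility axioms \eqref{2} and \eqref{3}, evaluated on a test vector $z$, read
\begin{equation*}
[x\cdot y, z] = x\star[y,z] + y\star[x,z], \qquad [x,y]\star z = [x, y\star z] - y\star[x,z],
\end{equation*}
which are precisely \eqref{ZLP1} and \eqref{ZLP2}. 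Hence $(A,\star,[-,-])$ is a PZL algebra.

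For the converse, let $(A,\star,[-,-])$ be a PZL algebra. Proposition \ref{pro:Zinbiel} supplies the commutative associative product $x\cdot y = x\star y + y\star x$. To establish the Leibniz rule $[z, x\cdot y] = [z,x]\cdot y + x\cdot[z,y]$, I apply \eqref{ZLP2} to each of $[z,x]\star y$ and $[z,y]\star x$ and sum to obtain
\begin{equation*}
[z,x]\star y + [z,y]\star x = [z, x\cdot y] - x\star[z,y] - y\star[z,x].
\end{equation*}
Expanding $[z,x]\cdot y + x\cdot[z,y] = \bigl([z,x]\star y + [z,y]\star x\bigr) + \bigl(y\star[z,x] + x\star[z,y]\bigr)$ and substituting the previous line yields $[z,x]\cdot y + x\cdot[z,y] = [z, x\cdot y]$, so $(A,\cdot,[-,-])$ is a Poisson algebra. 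The representation property of $(\mathcal{L}_{\star},\mathrm{ad},A)$ then follows by reading the chain of identifications from the forward direction in reverse.

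The argument is essentially bookkeeping, so there is no serious obstacle; the only point requiring care is noticing that \eqref{ZLP2} alone suffices for the Leibniz rule, because the ``excess'' terms $x\star[z,y]$ and $y\star[z,x]$ produced when applying \eqref{ZLP2} are precisely the pieces needed to complete the sub-adjacent commutative products on the other side. Axiom \eqref{ZLP1} plays no role in that derivation, yet it is indispensable for recognising $\mathrm{ad}$ as part of a Poisson representation, which is what promotes the pair of separate Lie and commutative associative representations into a single Poisson one.
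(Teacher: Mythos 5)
Your proposal is correct and takes essentially the same route as the paper: the converse is settled by observing that \eqref{ZLP2} alone, symmetrized over the two arguments of the sub-adjacent product, yields the Leibniz rule (the paper verifies the equivalent identity $[z,x\cdot y]+[x,z]\cdot y+[y,z]\cdot x=0$), while \eqref{ZLP1}--\eqref{ZLP2} are exactly the compatibility conditions \eqref{2}--\eqref{3} for the representation $(\mathcal{L}_{\star},\mathrm{ad},A)$, with the Zinbiel and Lie parts handled by Proposition \ref{pro:Zinbiel} and the adjoint representation. Your closing remark that \eqref{ZLP1} is not needed for the Leibniz rule but is indispensable for the representation statement matches the structure of the paper's argument.
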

\begin{proof}
    We only prove the latter. For all $x,y,z\in A$, we have
    \begin{eqnarray*}
        &&[z,x\cdot y]+[x,z]\cdot y+[y,z]\cdot x\\
        &&=[z,x\star y]+[z,y\star x]+[x,z]\star y+y\star[x,z]+[y,z]\star x+x\star[y,z]\overset{(\ref{ZLP2})}{=}0.
    \end{eqnarray*}
    Thus $(A,\cdot,[-,-])$ is a Poisson algebra. Moreover, by Eqs.~(\ref{ZLP1})-(\ref{ZLP2}), $(\mathcal{L}_{\star},\mathrm{ad},A)$ is a representation of $(A,\cdot,[-,-])$.
\end{proof}

Hence we get the following conclusion.

\begin{cor}
    Let $A$ be a vector space with two bilinear operations $\star,[-,-]:A\otimes A\rightarrow A$. Then the following conditions are equivalent:
    \begin{enumerate}
        \item $(A,\star,[-,-])$ is a PZL algebra.
        \item The triple $(A,\cdot,[-,-])$  is a Poisson algebra with a representation $(\mathcal{L}_{\star},\mathrm{ad},A)$, where $\cdot$ is defined by Eq.~(\ref{comm op}).
        \item There is a Poisson algebra structure on $A\oplus A$ in which the commutative associative product $\cdot$ is
        defined by Eq.~(\ref{Zinbiel axiom}) and the Lie bracket $[-,-]$ is defined by
        \begin{equation}\label{Lie axiom}
            [(x,a),(y,b)]=([x,y],[x,b]-[y,a]),\;\;\forall x,y,a,b\in A.
        \end{equation}
    \end{enumerate}
    \end{cor}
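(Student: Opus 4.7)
The plan is to deduce the three-way equivalence by chaining two simpler equivalences: (1)$\Leftrightarrow$(2) is a direct repackaging of Proposition \ref{pro:PZL}, while (2)$\Leftrightarrow$(3) follows from the standard semi-direct product construction for Poisson algebras. I do not anticipate any serious obstacle; the work is essentially a dictionary translation between three equivalent formulations of the same structure, and the only care needed is to match the formulas exactly.

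For (1)$\Leftrightarrow$(2), I would first unpack condition (2). Assuming $\cdot$ is defined by Eq.~(\ref{comm op}), saying that $(A,\cdot,[-,-])$ is a Poisson algebra packages three sub-statements: $(A,\cdot)$ is a commutative associative algebra (which by Proposition \ref{pro:Zinbiel}, combined with the requirement that $(\mathcal{L}_{\star},A)$ be a representation of $(A,\cdot)$, is equivalent to $(A,\star)$ being a Zinbiel algebra), $(A,[-,-])$ is a Lie algebra (which, together with $\mathrm{ad}$ being a representation, is automatic from the Jacobi identity), and the Leibniz rule holds. The remaining compatibility conditions in the definition of a representation of a Poisson algebra, namely Eqs.~(\ref{2}) and (\ref{3}) for $(\mathcal{L}_{\star},\mathrm{ad},A)$, unfold precisely into Eqs.~(\ref{ZLP1}) and (\ref{ZLP2}), the defining axioms of a PZL algebra. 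This is exactly what Proposition \ref{pro:PZL} asserts in both directions, so (1)$\Leftrightarrow$(2) is immediate.

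For (2)$\Leftrightarrow$(3), I would use the general fact recorded in the excerpt that $(\mu,\rho,V)$ is a representation of a Poisson algebra $(A,\cdot,[-,-])$ if and only if $A\oplus V$ is a Poisson algebra under the semi-direct product multiplications Eqs.~(\ref{eq:SDASSO}) and (\ref{eq:SDLie}). Specializing $V=A$, $\mu=\mathcal{L}_{\star}$, and $\rho=\mathrm{ad}$, the associative product becomes
\[
(x,a)\cdot(y,b)=(x\cdot y,\; x\star b + y\star a)=(x\star y + y\star x,\; x\star b + y\star a),
\]
which is Eq.~(\ref{Zinbiel axiom}), and the Lie bracket becomes
\[
[(x,a),(y,b)]=([x,y],\; [x,b]-[y,a]),
\]
which is Eq.~(\ref{Lie axiom}). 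Hence the Poisson algebra in (3) is literally the semi-direct product $A\ltimes_{\mathcal{L}_{\star},\mathrm{ad}}A$, and the general semi-direct product correspondence delivers (2)$\Leftrightarrow$(3) without further calculation.

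The main (mild) subtlety is checking that condition (2) genuinely forces $(A,\star)$ to be Zinbiel rather than merely having a commutative associative commutator; this is handled by invoking Proposition \ref{pro:Zinbiel} as above. Apart from this observation, the proof is a straightforward assembly of Proposition \ref{pro:PZL} and the semi-direct product characterization of representations, exactly parallel to the analogous corollaries already proved for PCP and PCA algebras in this section.
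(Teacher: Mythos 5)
Your proposal is correct and follows essentially the same route the paper intends: (a)$\Leftrightarrow$(b) is Proposition \ref{pro:PZL} together with Proposition \ref{pro:Zinbiel} (to see that condition (b) forces $(A,\star)$ to be Zinbiel), and (b)$\Leftrightarrow$(c) is the semi-direct product characterization of representations of Poisson algebras specialized to $(\mathcal{L}_{\star},\mathrm{ad},A)$, whose formulas are exactly Eqs.~(\ref{Zinbiel axiom}) and (\ref{Lie axiom}). The only detail left implicit is that in (c)$\Rightarrow$(b) one first recovers that $(A,\cdot,[-,-])$ itself is a Poisson algebra by restricting to the subalgebra $A\oplus\{0\}$, which is immediate.
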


\subsection{PZP algebras or pre-Poisson algebras}


\begin{defi}\cite{Agu2000.0}
    A \textbf{pre-Poisson algebra} or a {\bf PZP algebra} is a triple $(A,\star,\circ)$, such that $(A,\star)$ is a Zinbiel algebra, $(A,\circ)$ is a pre-Lie algebra, and the following equations hold:
    \begin{equation}\label{pre-Poisson 1}
        (x\star y+y\star x)\circ z=x\star(y\circ z)+y\star(x\circ z),
    \end{equation}
    \begin{equation}\label{pre-Poisson 2}
        (x\circ y-y\circ x)\star z=x\circ(y\star z)-y\star(x\circ z),
    \end{equation}
    for all $x,y,z\in A$.
\end{defi}

\begin{pro}\cite{Agu2000.0}
    Let $(A,\cdot,[-,-])$ be a Poisson algebra, $(A,\star)$ be a compatible Zinbiel algebra of $(A,\cdot)$ and $(A,\circ)$ be a compatible pre-Lie algebra of $(A,[-,-])$. If $(\mathcal{L}_{\star},\mathcal{L}_{\circ}, A)$ is a representation of $(A,\cdot,[-,-])$, then $(A,\star,\circ)$ is a pre-Poisson algebra.
    Conversely, let $(A,\star,\circ)$ be a pre-Poisson algebra, $(A,\cdot)$ be the sub-adjacent commutative associative algebra of $(A,\star)$ and $(A,[-,-])$ be the sub-adjacent Lie algebra of $(A,\circ)$. Then $(A,\cdot,[-,-])$ is a Poisson algebra with a representation $(\mathcal{L}_{\star},\mathcal{L}_{\circ}, A)$. In this case, we say $(A,\cdot,[-,-])$ is the \textbf{sub-adjacent Poisson algebra} of $(A,\cdot,\circ)$, and $(A,\cdot,\circ)$ is a \textbf{compatible pre-Poisson algebra} of $(A,\cdot,[-,-])$.
\end{pro}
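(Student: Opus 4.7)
The plan is to mirror the strategy used for the PCP, PCA, and PZL cases, proving both directions simultaneously by translating the Poisson representation axioms (\ref{2}) and (\ref{3}) directly into the defining identities of a pre-Poisson algebra.

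For the forward direction, I would start with the compatible Zinbiel algebra $(A,\star)$ and compatible pre-Lie algebra $(A,\circ)$ and specialize Eqs.~(\ref{2}) and (\ref{3}) by setting $\mu = \mathcal{L}_{\star}$, $\rho = \mathcal{L}_{\circ}$, $x\cdot y = x\star y + y\star x$ and $[x,y] = x\circ y - y\circ x$. Applying both sides to a generic element $z\in A$, Eq.~(\ref{2}) immediately becomes $(x\star y + y\star x)\circ z = x\star(y\circ z) + y\star(x\circ z)$, i.e.\ Eq.~(\ref{pre-Poisson 1}), and Eq.~(\ref{3}) becomes $(x\circ y - y\circ x)\star z = x\circ(y\star z) - y\star(x\circ z)$, i.e.\ Eq.~(\ref{pre-Poisson 2}). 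So this direction is essentially just a rewriting.

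For the converse, assume $(A,\star,\circ)$ is a pre-Poisson algebra. The sub-adjacent $(A,\cdot)$ is commutative associative by Proposition~\ref{pro:Zinbiel} and the sub-adjacent $(A,[-,-])$ is Lie by Proposition~\ref{pro:pre-Lie}, so only the Leibniz rule $[z,x\cdot y] = [z,x]\cdot y + x\cdot [z,y]$ needs checking. I would expand both sides in terms of $\star$ and $\circ$, then apply Eq.~(\ref{pre-Poisson 1}) to rewrite $(x\star y + y\star x)\circ z$ on the left and apply Eq.~(\ref{pre-Poisson 2}) to rewrite each term of the form $(u\circ v - v\circ u)\star w$ on the right; the terms $z\circ(x\star y)+z\circ(y\star x)$ survive on both sides while the remaining $\star$-monomials cancel pairwise. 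Finally, Eqs.~(\ref{pre-Poisson 1}) and (\ref{pre-Poisson 2}) themselves yield precisely the representation axioms (\ref{2}) and (\ref{3}) for $(\mathcal{L}_{\star},\mathcal{L}_{\circ},A)$.

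The main obstacle is the bookkeeping in the Leibniz rule verification: there are eight $\star$-terms on the right-hand side, and one has to group them into two copies of $(u\circ v-v\circ u)\star w$ and apply Eq.~(\ref{pre-Poisson 2}) correctly so that the resulting $\circ$-on-$\star$ terms cancel what comes from Eq.~(\ref{pre-Poisson 1}) on the left. This is entirely parallel to the computation carried out in Proposition~\ref{pro:PZL} for PZL algebras and presents no conceptual difficulty; the structural observation is simply that the two compatibility conditions of a pre-Poisson algebra split the Poisson Leibniz rule in exactly the way required. The corollary listing the three equivalent conditions (including the semi-direct Poisson algebra on $A\oplus A$ built from Eqs.~(\ref{Zinbiel axiom}) and (\ref{pre-Lie axiom})) would then follow immediately, as in the PCP and PCA cases.
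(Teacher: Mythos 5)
Your proposal is correct and takes essentially the same approach as the paper: the forward direction is indeed just the rewriting of the representation axioms (\ref{2}) and (\ref{3}) with $\mu=\mathcal{L}_{\star}$, $\rho=\mathcal{L}_{\circ}$ into Eqs.~(\ref{pre-Poisson 1}) and (\ref{pre-Poisson 2}), and your converse computation of the Leibniz rule (expanding $[z,x\cdot y]$ via Eq.~(\ref{pre-Poisson 1}) and the eight $\star$-terms of $[z,x]\cdot y+x\cdot[z,y]$ via Eq.~(\ref{pre-Poisson 2})) does cancel exactly as claimed. The paper itself omits the proof of this particular proposition (citing Aguiar), but your argument is precisely the pattern it uses for the analogous PCP, PCA and PZL propositions, so there is nothing to add.
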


Hence we get the following conclusion.

\begin{cor}
    Let $A$ be a vector space with two bilinear operations $\star,\circ:A\otimes A\rightarrow A$. Then the following conditions are equivalent:
    \begin{enumerate}
        \item $(A,\star,\circ)$ is a pre-Poisson algebra.
        \item The triple $(A,\cdot,[-,-])$is a Poisson algebra with a representation $(\mathcal{L}_{\star},\mathcal{L}_{\circ},A)$, where $\cdot$ and $[-,-]$ are respectively defined by Eqs.~(\ref{comm op}) and (\ref{sub-adj}).
        \item There is a Poisson algebra structure on $A\oplus A$ in which the commutative associative product $\cdot$ is defined by Eq.~(\ref{Zinbiel axiom}) and the Lie bracket $[-,-]$ is defined by Eq.~(\ref{pre-Lie axiom}).
    \end{enumerate}
    \end{cor}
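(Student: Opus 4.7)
The plan is to establish the three-way equivalence by closing a short cycle, leveraging heavily the preceding proposition on pre-Poisson algebras together with the earlier characterizations of Zinbiel algebras (Proposition \ref{pro:Zinbiel}) and pre-Lie algebras (Proposition \ref{pro:pre-Lie}), and finally the semi-direct product construction for representations of Poisson algebras (Eqs.~\eqref{eq:SDASSO} and \eqref{eq:SDLie}). Concretely, I will prove $(1)\Leftrightarrow(2)$ by repackaging the preceding proposition, and then $(2)\Leftrightarrow(3)$ by recognizing the structure on $A\oplus A$ as the semi-direct product Poisson algebra $A\ltimes_{\mathcal{L}_{\star},\mathcal{L}_{\circ}}A$.

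For $(1)\Leftrightarrow(2)$: If $(A,\star,\circ)$ is a pre-Poisson algebra, then the preceding proposition directly gives that $(A,\cdot,[-,-])$ (with $\cdot$ and $[-,-]$ defined by Eqs.~\eqref{comm op} and \eqref{sub-adj}) is a Poisson algebra with representation $(\mathcal{L}_{\star},\mathcal{L}_{\circ},A)$. Conversely, if $(A,\cdot,[-,-])$ is such a Poisson algebra with this representation, then $(\mathcal{L}_{\star},A)$ being a representation of $(A,\cdot)$ gives, by Proposition \ref{pro:Zinbiel}, that $(A,\star)$ is a Zinbiel algebra; similarly $(\mathcal{L}_{\circ},A)$ being a representation of $(A,[-,-])$ gives, by Proposition \ref{pro:pre-Lie}, that $(A,\circ)$ is a pre-Lie algebra; and the two mixed compatibility conditions (Eqs.~\eqref{2})--(\ref{3}) for a representation of a Poisson algebra translate directly (upon substituting $\mu=\mathcal{L}_{\star}$, $\rho=\mathcal{L}_{\circ}$, and the definitions of $\cdot$ and $[-,-]$) into Eqs.~\eqref{pre-Poisson 1}--\eqref{pre-Poisson 2}.

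For $(2)\Leftrightarrow(3)$: Substituting $\mu=\mathcal{L}_{\star}$ and $x\cdot y=x\star y+y\star x$ into the semi-direct product formula \eqref{eq:SDASSO} yields precisely Eq.~\eqref{Zinbiel axiom}, and substituting $\rho=\mathcal{L}_{\circ}$ and $[x,y]=x\circ y-y\circ x$ into \eqref{eq:SDLie} yields precisely Eq.~\eqref{pre-Lie axiom}. Hence the structure on $A\oplus A$ in $(3)$ is exactly the semi-direct product $A\ltimes_{\mathcal{L}_{\star},\mathcal{L}_{\circ}}A$, which by the general semi-direct product characterization of representations of Poisson algebras is a Poisson algebra if and only if $(A,\cdot,[-,-])$ is a Poisson algebra and $(\mathcal{L}_{\star},\mathcal{L}_{\circ},A)$ is a representation of it.

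The argument is essentially a routine bookkeeping exercise once the preceding proposition and the classical characterizations are in hand. The only part that requires a moment's care is the conversion between the representation axioms Eqs.~\eqref{2}--\eqref{3} for Poisson algebras and the pre-Poisson identities \eqref{pre-Poisson 1}--\eqref{pre-Poisson 2}; this is the same verification already carried out in the proof of the preceding proposition (by Aguiar), so no new difficulty arises here. Since everything reduces to identifying axioms under substitution, there is no serious obstacle in this corollary; it is purely an assembly of earlier results into a single equivalence statement parallel to those already established for PCP, PCA, and PZL algebras.
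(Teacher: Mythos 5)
Your proposal is correct and follows exactly the route the paper intends: the corollary is stated as an immediate consequence of the preceding proposition together with Propositions \ref{pro:Zinbiel} and \ref{pro:pre-Lie} and the semi-direct product characterization of representations of Poisson algebras, which is precisely your assembly of $(1)\Leftrightarrow(2)$ and $(2)\Leftrightarrow(3)$. The only point worth making explicit (and which the paper also leaves tacit) is that in $(3)\Rightarrow(2)$ one first restricts the Poisson structure on $A\oplus A$ to the subalgebra $A\oplus 0$ to see that $(A,\cdot,[-,-])$ is itself a Poisson algebra before invoking the semi-direct product equivalence.
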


\begin{pro}
Let $(A,\cdot,[-,-])$ be a Poisson algebra. Suppose that $\mathcal{B}$ is a nondegenerate skew-symmetric bilinear form on $A$ such that it is a Connes cocycle on $(A,\cdot)$ and a symplectic form on $(A,[-,-])$. Then there is a compatible pre-Poisson algebra $(A,\star,\circ)$ in which $\star$ and $\circ$ are respectively defined by Eqs.~(\ref{bilinear form Zinbiel}) and (\ref{bilinear form pre-Lie}). Conversely, let $(A,\star,\circ)$ be a pre-Poisson algebra  and the sub-adjacent Poisson algebra be $(A,\cdot,[-,-])$. Then there is a Poisson algebra $A\ltimes_{-\mathcal{L}^{*}_{\star},\mathcal{L}^{*}_{\circ}}A^{*}$, and the natural nondegenerate skew-symmetric bilinear form $\mathcal{B}_{p}$ defined by Eq.~(\ref{B_{p}})
is a Connes cocycle on the commutative associative algebra $A\ltimes_{-\mathcal{L}^{*}_{\star}}A^{*}$ and a symplectic form on the Lie algebra $A\ltimes_{\mathcal{L}^{*}_{\circ}}A^{*}$.
    \end{pro}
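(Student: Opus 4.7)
The plan is to reduce both halves to the one-operation results already recalled, together with two short bilinear-form computations whose only new input is the Poisson Leibniz rule Eq.~(\ref{eq:PA}).

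For the forward direction, nondegeneracy of $\mathcal{B}$ and its being a Connes cocycle on $(A,\cdot)$ give, by the Zinbiel proposition above, a compatible Zinbiel algebra $(A,\star)$ via Eq.~(\ref{bilinear form Zinbiel}); similarly, $\mathcal{B}$ being a symplectic form on $(A,[-,-])$ yields a compatible pre-Lie algebra $(A,\circ)$ via Eq.~(\ref{bilinear form pre-Lie}). What remains is to verify the two compatibility axioms Eqs.~(\ref{pre-Poisson 1}) and (\ref{pre-Poisson 2}). I would pair each identity with an arbitrary $w\in A$ under $\mathcal{B}$ and push the $\star$'s and $\circ$'s outward by the defining formulas. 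For Eq.~(\ref{pre-Poisson 1}), the left-hand side becomes $-\mathcal{B}(z,[x\cdot y,w])$ (using $x\star y+y\star x=x\cdot y$), while the right-hand side collapses, after the cancellation $[y,x]+[x,y]=0$, to $-\mathcal{B}(z,x\cdot[y,w]+y\cdot[x,w])$; equality then follows from the Leibniz rule. For Eq.~(\ref{pre-Poisson 2}), both sides reduce to $\mathcal{B}(z,[x,y]\cdot w)$ by the same maneuver. Nondegeneracy of $\mathcal{B}$ then delivers the identities themselves.

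For the converse, a pre-Poisson algebra $(A,\star,\circ)$ already comes with the representation $(\mathcal{L}_{\star},\mathcal{L}_{\circ},A)$ of its sub-adjacent Poisson algebra $(A,\cdot,[-,-])$ by the preceding corollary. Proposition~\ref{pro:Poisson rep} on dual representations then yields that $(-\mathcal{L}_{\star}^{*},\mathcal{L}_{\circ}^{*},A^{*})$ is again a representation, whence $A\ltimes_{-\mathcal{L}_{\star}^{*},\mathcal{L}_{\circ}^{*}}A^{*}$ carries a Poisson algebra structure. The Connes cocycle and symplectic properties of $\mathcal{B}_{p}$ on the commutative associative algebra $A\ltimes_{-\mathcal{L}_{\star}^{*}}A^{*}$ and the Lie algebra $A\ltimes_{\mathcal{L}_{\circ}^{*}}A^{*}$ respectively are exactly the dual statements of the Zinbiel and pre-Lie propositions recalled above, so no additional computation is required.

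The only delicate point will be sign tracking in the forward computation: Eqs.~(\ref{bilinear form Zinbiel}) and (\ref{bilinear form pre-Lie}) carry opposite signs, and rewriting $[x\cdot y,w]$ in the standard form of Eq.~(\ref{eq:PA}) costs another sign that must cancel against the symmetrization $x\star y+y\star x$ appearing in Eq.~(\ref{pre-Poisson 1}). Apart from this bookkeeping, both directions are routine.
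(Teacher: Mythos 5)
Your proposal is correct and follows essentially the same route as the paper: the paper proves this proposition by declaring it ``similar to the proof of Proposition~\ref{bf PCP}'', i.e.\ exactly your scheme of obtaining the Zinbiel and pre-Lie structures from the recalled one-operation results, verifying Eqs.~(\ref{pre-Poisson 1})--(\ref{pre-Poisson 2}) by pairing with $w$ under $\mathcal{B}$ and invoking the Leibniz rule, and handling the converse via the representation $(\mathcal{L}_{\star},\mathcal{L}_{\circ},A)$ together with Proposition~\ref{pro:Poisson rep} and the dual semidirect product. Your sign bookkeeping checks out, and citing the converse halves of the recalled Zinbiel/pre-Lie bilinear-form propositions for the properties of $\mathcal{B}_{p}$ is a harmless shortcut where the paper instead notes a direct verification.
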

\begin{proof}
It is similar to the proof of Proposition \ref{bf PCP}.
\end{proof}

\subsection{PZA algebras}

\begin{defi}
    A \textbf{PZA algebra} is a triple $(A,\star,\circ)$, such that $(A,\star)$ is a Zinbiel algebra, $(A,\circ)$ is an anti-pre-Lie algebra, and Eq.~(\ref{pre-Poisson 1}) and the following equations hold:
    \begin{equation}\label{PZA2}
        (x\circ y-y\circ x)\star z=-x\circ(y\star z)+y\star(x\circ z),
    \end{equation}
\begin{equation}\label{PZA3}
    z\circ(x\star y+y\star x)+z\star(x\circ y+y\circ x)-y\star(z\circ x)-x\star(z\circ y)-x\circ(z\star y)-y\circ(z\star x)=0,
\end{equation}
for all $x,y,z\in A$.
\end{defi}

\begin{pro}
    Let $(A,\cdot,[-,-])$ be a Poisson algebra, $(A,\star)$ be a compatible Zinbiel algebra of $(A,\cdot)$  and $(A,\circ)$ be a compatible anti-pre-Lie algebra of $(A,[-,-])$. If $(\mathcal{L}_{\star},-\mathcal{L}_{\circ},A)$ is a representation of $(A,\cdot,[-,-])$, then $(A,\star,\circ)$ is a PZA algebra. Conversely, let  $(A,\star,\circ)$ be a PZA algebra, $(A,\cdot)$ be the sub-adjacent commutative associative algebra of $(A,\star)$  and $(A,[-,-])$ be the sub-adjacent Lie algebra of $(A,\circ)$. Then $(A,\cdot,[-,-])$ is a Poisson algebra with a representation $(\mathcal{L}_{\star},-\mathcal{L}_{\circ},A)$. In this case, we say $(A,\cdot,[-,-])$ is the \textbf{sub-adjacent Poisson algebra} of $(A,\star,\circ)$, and $(A,\star,\circ)$ is a \textbf{compatible   PZA algebra} of $(A,\cdot,[-,-])$.
\end{pro}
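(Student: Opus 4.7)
The plan is to recognize that the three identities defining a PZA algebra correspond exactly to: (i) the compatibility equation \eqref{2} for the representation $(\mathcal{L}_{\star},-\mathcal{L}_{\circ},A)$, (ii) the compatibility equation \eqref{3} for that same representation, and (iii) the Leibniz rule \eqref{eq:PA} modulo (i) and (ii). Note that the Zinbiel hypothesis on $(A,\star)$ already gives that $(\mathcal{L}_{\star},A)$ is a representation of $(A,\cdot)$ (Proposition~\ref{pro:Zinbiel}) and the anti-pre-Lie hypothesis on $(A,\circ)$ gives that $(-\mathcal{L}_{\circ},A)$ is a representation of $(A,[-,-])$ (Proposition~\ref{pro:anti-pre-Lie}), so only the ``cross'' conditions remain.

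Writing \eqref{2} with $\mu=\mathcal{L}_{\star}$ and $\rho=-\mathcal{L}_{\circ}$ and rearranging signs yields exactly Eq.~\eqref{pre-Poisson 1}; writing \eqref{3} in the same way yields exactly Eq.~\eqref{PZA2}. Thus for the forward direction all that remains is to derive Eq.~\eqref{PZA3} from the Poisson structure. I would start from the rewritten Leibniz identity
\begin{equation*}
[z,x\cdot y]+[x,z]\cdot y+[y,z]\cdot x=0,
\end{equation*}
substitute $x\cdot y=x\star y+y\star x$ and $[x,y]=x\circ y-y\circ x$, use \eqref{pre-Poisson 1} to replace $(x\star y+y\star x)\circ z$ by $x\star(y\circ z)+y\star(x\circ z)$, and apply \eqref{PZA2} (with suitable permutations of variables) to rewrite each of $(x\circ z-z\circ x)\star y$ and $(y\circ z-z\circ y)\star x$. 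The terms $\pm x\star(y\circ z)$ and $\pm y\star(x\circ z)$ cancel in pairs, and what survives regroups into precisely \eqref{PZA3}.

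For the converse, starting from a PZA algebra, Propositions~\ref{pro:Zinbiel} and \ref{pro:anti-pre-Lie} give that $(A,\cdot)$ is commutative associative and $(A,[-,-])$ is Lie, that $(\mathcal{L}_{\star},A)$ is a representation of $(A,\cdot)$, and that $(-\mathcal{L}_{\circ},A)$ is a representation of $(A,[-,-])$. Equations \eqref{pre-Poisson 1} and \eqref{PZA2} translate back into compatibilities \eqref{2} and \eqref{3}, so once the Leibniz rule is established, $(A,\cdot,[-,-])$ is a Poisson algebra with representation $(\mathcal{L}_{\star},-\mathcal{L}_{\circ},A)$. To obtain the Leibniz rule, I would simply run the forward computation in reverse: $[z,x\cdot y]+[x,z]\cdot y+[y,z]\cdot x$ is expanded using \eqref{pre-Poisson 1} and \eqref{PZA2}, producing the left-hand side of \eqref{PZA3}, which vanishes.

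The main obstacle is purely bookkeeping: the derivation of \eqref{PZA3} involves twelve terms after the substitutions, and one must verify that the four pairs $\pm x\star(y\circ z)$, $\pm y\star(x\circ z)$ (and the analogous pairs produced by the two applications of \eqref{PZA2}) cancel so that the residue matches \eqref{PZA3} term-by-term, including the correct signs on $z\circ(x\star y+y\star x)$, $z\star(x\circ y+y\circ x)$, $x\circ(z\star y)$, and $y\circ(z\star x)$. This is parallel to the PCA computation already carried out in the excerpt, so no new ideas are required beyond careful substitution and bookkeeping.
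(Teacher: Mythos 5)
Your proposal is correct and follows essentially the same route as the paper: the representation conditions \eqref{2} and \eqref{3} for $(\mathcal{L}_{\star},-\mathcal{L}_{\circ},A)$ are exactly Eqs.~(\ref{pre-Poisson 1}) and (\ref{PZA2}), and Eq.~(\ref{PZA3}) is then extracted from $[z,x\cdot y]+[x,z]\cdot y+[y,z]\cdot x=0$ by the same substitutions and cancellations, with the converse obtained by reversing the computation. One minor correction to your bookkeeping remark: only the two pairs $\pm x\star(y\circ z)$ and $\pm y\star(x\circ z)$ cancel, while the terms produced by the two applications of Eq.~(\ref{PZA2}) do not cancel but survive and form part of Eq.~(\ref{PZA3}).
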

\begin{proof}
   Since $(\mathcal{L}_{\star},-\mathcal{L}_{\circ},A)$ is a representation of $(A,\cdot,[-,-])$, we get Eqs.~(\ref{pre-Poisson 1}) and (\ref{PZA2}).
    Thus for all $x,y,z\in A$, we have
    \begin{eqnarray*}
        0&&=[z,x\cdot y]+[x,z]\cdot y+[y,z]\cdot x\\
        &&=z\circ(x\cdot y)-(x\cdot y)\circ z+[x,z]\star y+y\star [x,z]+[y,z]\star x+x\star[y,z]\\
        &&\overset{(\ref{PZA2})}{=}z\circ(x\cdot y)-(x\cdot y)\circ z+z\star(x\circ y)-x\circ(z\star y)+y\star(x\circ z)-y\star(z\circ x)\\
        &&\ \ \ \ +z\star(y\circ x)-y\circ(z\star x)+x\star(y\circ z)-x\star(z\circ y)\\
        &&\overset{(\ref{pre-Poisson 1})}{=}z\circ(x\star y)+z\circ(y\star x) +z\star(x\circ y)-x\circ(z\star y)\\
        &&\ \ \ \ -y\star(z\circ x)+z\star(y\circ x)-y\circ(z\star x) -x\star(z\circ y).
    \end{eqnarray*}
Hence Eq.~(\ref{PZA3}) holds and thus $(A,\star,\circ)$ is a PZA algebra. The converse part is proved similarly.
\end{proof}

Hence we get the following conclusion.

\begin{cor}
Let $A$ be a vector space with two bilinear operations $\star,\circ:A\otimes A\rightarrow A$. Then the following conditions are equivalent:
\begin{enumerate}
    \item $(A,\star,\circ)$ is a  PZA algebra.
    \item The triple $(A,\cdot,[-,-])$  is a Poisson algebra with a representation $(\mathcal{L}_{\star},-\mathcal{L}_{\circ},A)$, where $\cdot$ and $[-,-]$ are respectively defined by Eqs.~(\ref{comm op}) and (\ref{sub-adj}).
    \item There is a Poisson algebra structure on $A\oplus A$ in which the commutative associative product $\cdot$ is defined by Eq.~(\ref{Zinbiel axiom}) and the Lie bracket $[-,-]$ is defined by Eq.~(\ref{anti-pre-Lie axiom}).
\end{enumerate}
    \end{cor}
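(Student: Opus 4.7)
The plan is to derive all three equivalences by stitching together results already established in Section~\ref{sec:2}, together with the general semi-direct product characterization of representations of Poisson algebras recalled at the start of that section. The bulk of the work is already packaged in the proposition immediately preceding this corollary; what remains is to unwind the semi-direct product formulas.

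For $(1) \Leftrightarrow (2)$, I would simply invoke the preceding proposition. In one direction, if $(A,\star,\circ)$ is a PZA algebra then by Proposition~\ref{pro:Zinbiel} the sub-adjacent operation $\cdot$ of Eq.~(\ref{comm op}) gives a commutative associative algebra with $(\mathcal{L}_{\star},A)$ a representation, and by Proposition~\ref{pro:anti-pre-Lie} the sub-adjacent bracket $[-,-]$ of Eq.~(\ref{sub-adj}) gives a Lie algebra with $(-\mathcal{L}_{\circ},A)$ a representation; the preceding proposition then upgrades this to a Poisson algebra together with the Poisson representation $(\mathcal{L}_{\star},-\mathcal{L}_{\circ},A)$. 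The converse direction is the other half of the same proposition combined with the same two characterizations run in reverse.

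For $(2) \Leftrightarrow (3)$, I would use the fact recalled earlier that $(\mu,\rho,V)$ is a representation of a Poisson algebra $(A,\cdot,[-,-])$ if and only if $A\oplus V$ carries the semi-direct product Poisson structure given by Eqs.~(\ref{eq:SDASSO}) and (\ref{eq:SDLie}). Taking $V=A$, $\mu=\mathcal{L}_{\star}$ and $\rho=-\mathcal{L}_{\circ}$ and expanding both formulas yields precisely the commutative associative product of Eq.~(\ref{Zinbiel axiom}) and the Lie bracket of Eq.~(\ref{anti-pre-Lie axiom}). Since the first components reproduce the sub-adjacent operations from Eqs.~(\ref{comm op}) and (\ref{sub-adj}), this identification is just direct substitution.

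I do not expect a substantive obstacle, since every step is already available from the preceding proposition, the semi-direct product construction, and the equivalences recorded in Propositions~\ref{pro:Zinbiel} and \ref{pro:anti-pre-Lie}. The only point that requires a little care is the sign-tracking when translating $\rho=-\mathcal{L}_{\circ}$ through Eq.~(\ref{eq:SDLie}): the second component $\rho(x)v-\rho(y)u$ becomes $-x\circ b+y\circ a$, which after reordering is exactly the expression $y\circ a - x\circ b$ appearing in Eq.~(\ref{anti-pre-Lie axiom}). Once this is verified, the three conditions are visibly just three repackagings of the same data.
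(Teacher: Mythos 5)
Your proposal is correct and follows exactly the route the paper intends: the equivalence $(1)\Leftrightarrow(2)$ is the preceding PZA proposition combined with Propositions~\ref{pro:Zinbiel} and \ref{pro:anti-pre-Lie}, and $(2)\Leftrightarrow(3)$ is the semi-direct product characterization with $\mu=\mathcal{L}_{\star}$, $\rho=-\mathcal{L}_{\circ}$, whose expansion (including the sign check $\rho(x)b-\rho(y)a=y\circ a-x\circ b$) recovers Eqs.~(\ref{Zinbiel axiom}) and (\ref{anti-pre-Lie axiom}). Nothing further is needed.
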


\subsection{PAL algebras}

\begin{defi}
    A \textbf{PAL algebra} is a triple $(A,\star,[-,-])$, such that $(A,\star)$ is an anti-Zinbiel algebra, $(A,[-,-])$ is a Lie algebra, and Eq.~(\ref{ZLP2}) and the following equation hold:
    \begin{equation}\label{PAL1}
        [z,x\star y+y\star x]=x\star[y,z]+y\star[x,z],\;\;\forall x,y,z\in A.
    \end{equation}
\end{defi}

\begin{pro}\label{pro:PAL}
    Let $(A,\cdot,[-,-])$ be a Poisson algebra  and $(A,\star)$ be a compatible anti-Zinbiel algebra of $(A,\cdot)$. If $(-\mathcal{L}_{\star},\mathrm{ad},A)$ is a representation of $(A,\cdot,[-,-])$, then $(A,\star,[-,-])$ is a PAL algebra. Conversely,
    let $(A,\star,[-,-])$ be a PAL algebra and $(A,\cdot)$ be the sub-adjacent commutative associative algebra of $(A,\star)$. Then $(A,\cdot,[-,-])$ is a Poisson algebra with a representation $(-\mathcal{L}_{\star},\mathrm{ad},A)$. In this case, we say $(A,\cdot,[-,-])$ is the \textbf{sub-adjacent Poisson algebra} of $(A,\star,[-,-])$, and $(A,\star,[-,-])$ is a \textbf{compatible PAL algebra} of $(A,\cdot,[-,-])$.
\end{pro}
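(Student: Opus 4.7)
The plan is to observe that the two axioms (\ref{ZLP2}) and (\ref{PAL1}) defining a PAL algebra are literal rewritings of the two compatibility conditions (\ref{2}) and (\ref{3}) required for $(-\mathcal{L}_\star,\mathrm{ad},A)$ to be a representation of $(A,\cdot,[-,-])$, once one uses the identity $x\cdot y = x\star y + y\star x$ supplied by the anti-Zinbiel data. Accordingly, both implications reduce to unwinding definitions in a manner parallel to the proof of Proposition \ref{pro:PZL}, with one extra verification of the Leibniz rule in the converse.

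For the forward direction, I would note first that since $(A,\star)$ is a compatible anti-Zinbiel algebra of $(A,\cdot)$, Proposition \ref{pro:anti-Zinbiel} guarantees that $(-\mathcal{L}_\star,A)$ is a representation of $(A,\cdot)$, and $(\mathrm{ad},A)$ is automatically a representation of $(A,[-,-])$. Hence $(A,\star)$ is anti-Zinbiel and $(A,[-,-])$ is Lie by hypothesis; it remains to derive (\ref{PAL1}) and (\ref{ZLP2}). Evaluating (\ref{2}) with $\mu=-\mathcal{L}_\star$ and $\rho=\mathrm{ad}$ at $z$ gives $[x\cdot y,z] = -x\star[y,z] - y\star[x,z]$; negating and substituting $x\cdot y = x\star y+y\star x$ yields precisely (\ref{PAL1}). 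A parallel evaluation of (\ref{3}) at $z$ produces $-[x,y]\star z = -[x,y\star z] + y\star[x,z]$, which rearranges to (\ref{ZLP2}).

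For the converse direction, assume $(A,\star,[-,-])$ is a PAL algebra. By Proposition \ref{pro:anti-Zinbiel}, $(A,\cdot)$ defined by (\ref{comm op}) is a commutative associative algebra for which $(-\mathcal{L}_\star,A)$ is a representation; and $(\mathrm{ad},A)$ is the adjoint representation of the Lie algebra $(A,[-,-])$. The only non-trivial check is the Leibniz rule (\ref{eq:PA}). Mimicking the proof of Proposition \ref{pro:PZL}, I would apply (\ref{ZLP2}) in the rearranged form $[z,x\star y] = [z,x]\star y + x\star[z,y]$ and $[z,y\star x] = [z,y]\star x + y\star[z,x]$, add these two identities, and regroup terms to obtain $[z,x\cdot y] = [z,x]\cdot y + x\cdot[z,y]$. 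This shows that $(A,\cdot,[-,-])$ is a Poisson algebra. Finally, reversing the reformulations of the forward direction identifies (\ref{PAL1}) with (\ref{2}) and (\ref{ZLP2}) with (\ref{3}) for the triple $(-\mathcal{L}_\star,\mathrm{ad},A)$, so this triple is a representation of the resulting Poisson algebra.

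The main obstacle is simply sign bookkeeping: one must carefully track the minus signs introduced by replacing $\mathcal{L}_\star$ with $-\mathcal{L}_\star$ and the symmetrization encoded by $x\cdot y=x\star y+y\star x$, and recognize that (\ref{PAL1}) is exactly what remains of (\ref{2}) after these substitutions. No deeper conceptual step is needed, which is why the template afforded by Propositions \ref{pro:PZL} and \ref{pro:anti-Zinbiel} suffices.
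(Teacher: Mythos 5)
Your proposal is correct and follows essentially the same route as the paper, which simply refers to the proof of Proposition \ref{pro:PZL}: the converse reduces to verifying the Leibniz rule (\ref{eq:PA}) from (\ref{ZLP2}) exactly as you do, and both representation conditions (\ref{2})--(\ref{3}) for $(-\mathcal{L}_{\star},\mathrm{ad},A)$ are direct rewritings of (\ref{PAL1}) and (\ref{ZLP2}). Your sign bookkeeping in both directions checks out.
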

\begin{proof}
    It is similar to the proof of Proposition \ref{pro:PZL}.
\end{proof}

Hence we get the following conclusion.

\begin{cor}
    Let $A$ be a vector space  with two bilinear operations $\star,[-,-]:A\otimes A\rightarrow A$. Then the following conditions are equivalent:
    \begin{enumerate}
        \item $(A,\star,[-,-])$ is a PAL algebra.
        \item The triple $(A,\cdot,[-,-])$ is a Poisson algebra with a representation $(-\mathcal{L}_{\star},\mathrm{ad},A)$, where $\cdot$ is defined by Eq.~(\ref{comm op}).
        \item There is a Poisson algebra structure on $A\oplus A$ in which the commutative associative product $\cdot$ is defined by
        Eq.~(\ref{anti-Zinbiel axiom}) and the Lie bracket $[-,-]$ is defined by  Eq.~(\ref{Lie axiom}).
    \end{enumerate}
    \end{cor}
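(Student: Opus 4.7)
The plan is to follow the template of Proposition \ref{pro:PZL}, with the only adjustments coming from the sign change $\mu = -\mathcal{L}_{\star}$ and from using the anti-Zinbiel identity in place of the Zinbiel identity. By Proposition \ref{pro:anti-Zinbiel}, the anti-Zinbiel hypothesis already delivers that $(A,\cdot)$ with $x\cdot y := x\star y + y\star x$ is a commutative associative algebra and that $(-\mathcal{L}_{\star},A)$ is a representation of $(A,\cdot)$. This settles the commutative associative half of the structure on both sides of the equivalence.

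For the forward direction, assuming $(-\mathcal{L}_{\star},\mathrm{ad},A)$ is a representation of the Poisson algebra $(A,\cdot,[-,-])$, I would substitute $\mu = -\mathcal{L}_{\star}$ and $\rho = \mathrm{ad}$ into the two Poisson-representation compatibilities \eqref{2} and \eqref{3}. A direct rewrite then shows that \eqref{2} becomes exactly \eqref{PAL1} and \eqref{3} becomes exactly \eqref{ZLP2}, so $(A,\star,[-,-])$ satisfies the axioms of a PAL algebra.

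For the converse, the Lie and commutative associative structures are already in place, and the representation compatibilities \eqref{2} and \eqref{3} are just \eqref{PAL1} and \eqref{ZLP2} read backwards, so they hold automatically. The one point still to establish is the Leibniz rule \eqref{eq:PA}. Just as in the proof of Proposition \ref{pro:PZL}, this can be extracted from \eqref{ZLP2} alone: apply \eqref{ZLP2} in the rearranged form $[z, u\star v] = [z,u]\star v + u\star[z,v]$ to both $[z, x\star y]$ and $[z, y\star x]$, and then regroup the four resulting summands into $([z,x]\star y + y\star[z,x]) + (x\star[z,y] + [z,y]\star x)$, which equals $[z,x]\cdot y + x\cdot[z,y]$ by commutativity of $\cdot$.

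I do not anticipate any real obstacle; the entire argument is bookkeeping. The only subtlety worth flagging is tracking the sign carefully when translating the factor $-\mathcal{L}_{\star}$ through axiom \eqref{2}: the corresponding identity in the PZL case is \eqref{ZLP1}, while the correct identity for PAL is \eqref{PAL1}, which differs from \eqref{ZLP1} precisely by a swap of the two arguments of the Lie bracket on the left-hand side. Once this sign flip is absorbed, the PAL proof is verbatim the PZL proof.
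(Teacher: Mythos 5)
Your proof is correct and follows essentially the paper's route: the paper proves the underlying proposition by the same computation used for PZL algebras, and your derivation of the Leibniz rule from \eqref{ZLP2} together with the identification of \eqref{2} with \eqref{PAL1} and of \eqref{3} with \eqref{ZLP2} under $\mu=-\mathcal{L}_{\star}$, $\rho=\mathrm{ad}$ (plus Proposition \ref{pro:anti-Zinbiel} for the commutative associative half) is exactly that argument, sign bookkeeping included. The only point you leave implicit is condition (c), which should be tied to (b) by the standard fact recalled before Proposition \ref{pro:Poisson rep} that $(\mu,\rho,V)$ is a representation of a Poisson algebra if and only if $A\ltimes_{\mu,\rho}V$ is a Poisson algebra; specializing to $\mu=-\mathcal{L}_{\star}$, $\rho=\mathrm{ad}$, $V=A$ yields precisely the products \eqref{anti-Zinbiel axiom} and \eqref{Lie axiom}.
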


\begin{pro}
    Let $(A,\cdot,[-,-])$ be a Poisson algebra. Suppose that $\mathcal{B}$ is a nondegenerate symmetric bilinear form on $A$ such that it is a commutative Connes cocycle on $(A,\cdot)$ and invariant on $(A,[-,-])$. Then there is a compatible PAL algebra $(A,\star,[-,-])$ in which $\star$ is defined by Eq.~(\ref{bilinear form anti-Zinbiel}). Conversely, let $(A,\star,[-,-])$ be a PAL algebra  and the sub-adjacent Poisson algebra be $(A,\cdot,[-,-])$. Then there is a Poisson algebra $A\ltimes_{\mathcal{L}^{*}_{\star},\mathrm{ad}^{*}}A^{*}$, and the natural nondegenerate  symmetric bilinear form $\mathcal{B}_{d}$ defined by Eq.~(\ref{B_{d}})
    is a commutative Connes cocycle on the commutative associative algebra $A\ltimes_{\mathcal{L}^{*}_{\star}}A^{*}$ and invariant on the Lie algebra $A\ltimes_{\mathrm{ad}^{*}}A^{*}$.
    \end{pro}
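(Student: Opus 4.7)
My plan is to mirror Proposition \ref{bf PCP}, with the roles of $\cdot$ and $[-,-]$ (and their corresponding bilinear-form conditions of invariance versus Connes-type cocycle) interchanged.

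For the forward direction, the earlier proposition on commutative Connes cocycles immediately yields, from the nondegenerate commutative Connes cocycle $\mathcal{B}$ on $(A, \cdot)$, a compatible anti-Zinbiel algebra $(A, \star)$ of $(A, \cdot)$ via Eq.~(\ref{bilinear form anti-Zinbiel}). It remains to verify the two compatibility axioms of a PAL algebra, Eqs.~(\ref{PAL1}) and (\ref{ZLP2}). I would pair the defect of each axiom with an arbitrary $w \in A$ and invoke nondegeneracy of $\mathcal{B}$. For Eq.~(\ref{PAL1}), I would apply Eq.~(\ref{bilinear form anti-Zinbiel}) to rewrite each $\star$-term, then use Lie invariance of $\mathcal{B}$ together with antisymmetry of $[-,-]$ and symmetry of $\mathcal{B}$ to bring every outer argument to $z$. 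The expression reduces to
\[
\mathcal{B}\bigl(z,\, [x\cdot y, w] - [y, x\cdot w] - [x, y\cdot w]\bigr),
\]
whose second slot vanishes identically by the Poisson Leibniz rule (expand each bracket and cancel using commutativity of $\cdot$ and antisymmetry of $[-,-]$). An entirely analogous chain of manipulations reduces Eq.~(\ref{ZLP2}) to
\[
\mathcal{B}\bigl(z,\, [x, y\cdot w] - [x, y]\cdot w - y\cdot[x, w]\bigr),
\]
which again vanishes by Leibniz.

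For the converse, from a PAL algebra $(A, \star, [-,-])$ with sub-adjacent Poisson algebra $(A, \cdot, [-,-])$, Proposition \ref{pro:PAL} supplies the representation $(-\mathcal{L}_{\star}, \mathrm{ad}, A)$, and then Proposition \ref{pro:Poisson rep} provides the dual representation $(\mathcal{L}^{*}_{\star}, \mathrm{ad}^{*}, A^{*})$, producing the semi-direct product Poisson algebra $A \ltimes_{\mathcal{L}^{*}_{\star}, \mathrm{ad}^{*}} A^{*}$. The two remaining assertions about $\mathcal{B}_{d}$ are then direct verifications from Eqs.~(\ref{B_{d}}), (\ref{eq:SDASSO}) and (\ref{eq:SDLie}): for the commutative Connes cocycle on $A \ltimes_{\mathcal{L}^{*}_{\star}} A^{*}$, cyclic summation organizes the dual-space contributions so that each of $a^{*}, b^{*}, c^{*}$ pairs with an expression of the form $y\star z + z\star y = y\cdot z$ and cancels; invariance on $A \ltimes_{\mathrm{ad}^{*}} A^{*}$ is the classical verification for coadjoint-type extensions.

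The only step requiring genuine care is the forward verification of Eqs.~(\ref{PAL1}) and (\ref{ZLP2}). The key technique is to convert each $\star$-term via Eq.~(\ref{bilinear form anti-Zinbiel}), then apply Lie invariance (in both its direct form and in combination with bracket antisymmetry) until every outer argument has been normalized to the common factor $z$; once this is achieved, what remains is precisely a Leibniz identity of the Poisson algebra. This mirrors the structural pattern of Proposition \ref{bf PCP}, so no essentially new obstacle is expected.
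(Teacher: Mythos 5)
Your proposal is correct and is essentially the paper's own argument: the paper proves this proposition by declaring it ``similar to the proof of Proposition \ref{bf PCP}'', and your adaptation (anti-Zinbiel structure from the nondegenerate commutative Connes cocycle, then pairing the defects of Eqs.~(\ref{PAL1}) and (\ref{ZLP2}) with $w$ and reducing via Lie invariance to the Poisson identities, plus the dual representation $(\mathcal{L}^{*}_{\star},\mathrm{ad}^{*},A^{*})$ and the direct $\mathcal{B}_{d}$ verifications for the converse) is exactly that adaptation, and the stated reductions do check out.
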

\begin{proof}
It is similar to the proof of Proposition \ref{bf PCP}.
\end{proof}

\subsection{PAP algebras}

\begin{defi}
    A \textbf{PAP algebra} is a triple $(A,\star,\circ)$, such that $(A,\star)$ is an anti-Zinbiel algebra, $(A,\circ)$ is a pre-Lie algebra, and Eq.~(\ref{pre-Poisson 2}) and the following equations hold:
    \begin{equation}\label{PAP}
        x\star(y\circ z)+y\star(x\circ z)=0,
    \end{equation}
\begin{equation}\label{PAP3}
    (x\star y)\circ z+(y\star x)\circ z=0,
\end{equation}
for all $x,y,z\in A$.
\end{defi}

\begin{pro}
    Let $(A,\cdot,[-,-])$ be a Poisson algebra, $(A,\star)$ be a compatible anti-Zinbiel algebra of $(A,\cdot)$  and $(A,\circ)$ be a compatible pre-Lie algebra of $(A,[-,-])$. If $(-\mathcal{L}_{\star},\mathcal{L}_{\circ}$,
    $A)$ is a representation of $(A,\cdot,[-,-])$, then $(A,\star,\circ)$ is a PAP algebra. Conversely, let $(A,\star,\circ)$ be a PAP algebra, $(A,\cdot)$ be the sub-adjacent commutative associative algebra  and $(A,[-,-])$ be the sub-adjacent Lie algebra of $(A,\circ)$. Then $(A,\cdot,[-,-])$ is a Poisson algebra with a representation $(-\mathcal{L}_{\star},\mathcal{L}_{\circ},A)$.
    In this case, we say $(A,\cdot,[-,-])$ is the \textbf{sub-adjacent Poisson algebra} of $(A,\star,\circ)$, and $(A,\star,\circ)$ is a \textbf{compatible PAP algebra} of $(A,\cdot,[-,-])$.
\end{pro}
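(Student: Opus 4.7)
\noindent\emph{Proof plan.} The strategy mirrors that used for the PCP, PCA, and PZA cases above: translate the representation condition into identities in $\star$ and $\circ$, then use the Poisson Leibniz rule to produce the remaining PAP axioms. Throughout, the anti-Zinbiel hypothesis supplies the sub-adjacent commutative associative product $x\cdot y=x\star y+y\star x$, and the pre-Lie hypothesis supplies the sub-adjacent Lie bracket $[x,y]=x\circ y-y\circ x$.

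For the forward direction, I first unpack what it means for $(-\mathcal{L}_{\star},\mathcal{L}_{\circ},A)$ to be a representation of $(A,\cdot,[-,-])$. The Lie-side compatibility~\eqref{3}, evaluated on $z\in A$ and combined with $[x,y]=x\circ y-y\circ x$, is exactly Eq.~\eqref{pre-Poisson 2}; the associative-side compatibility~\eqref{2} unpacks to
\[
(x\star y+y\star x)\circ z=-x\star(y\circ z)-y\star(x\circ z).\qquad(\ast)
\]
I then feed the Poisson Leibniz rule $[z,x\cdot y]=[z,x]\cdot y+x\cdot[z,y]$ through these structures. The left-hand side becomes $z\circ(x\star y)+z\circ(y\star x)-(x\star y+y\star x)\circ z$ and is simplified using $(\ast)$. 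On the right-hand side, I apply Eq.~\eqref{pre-Poisson 2} to the two subterms $[z,x]\star y$ and $[z,y]\star x$, while leaving $y\star[z,x]$ and $x\star[z,y]$ in expanded form. After cancellation, the Leibniz identity collapses to
\[
2\bigl(x\star(y\circ z)+y\star(x\circ z)\bigr)=0,
\]
which in characteristic zero gives Eq.~\eqref{PAP}. Substituting Eq.~\eqref{PAP} back into $(\ast)$ produces $(x\star y+y\star x)\circ z=0$, which is Eq.~\eqref{PAP3}.

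The converse runs the same computation in reverse. The sub-adjacent $(A,\cdot)$ and $(A,[-,-])$ come for free from the PAP hypotheses. Representation compatibility~\eqref{3} is exactly Eq.~\eqref{pre-Poisson 2}, while compatibility~\eqref{2} holds because Eqs.~\eqref{PAP} and~\eqref{PAP3} force both sides of $(\ast)$ to vanish. For the Poisson Leibniz rule I repeat the forward calculation: Eq.~\eqref{PAP3} eliminates $(x\cdot y)\circ z$, Eq.~\eqref{pre-Poisson 2} rewrites $[z,x]\star y$ and $[z,y]\star x$, and Eq.~\eqref{PAP} absorbs the residual $x\star(y\circ z)+y\star(x\circ z)$, so that both sides of the Leibniz rule reduce to $z\circ(x\star y)+z\circ(y\star x)$.

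The main obstacle is the bookkeeping in the forward Leibniz step. Expanding $[z,x]\cdot y+x\cdot[z,y]$ produces eight terms; only two of them (the ones of the form $[u,v]\star w$) fit the pattern of Eq.~\eqref{pre-Poisson 2}, and the remaining six must be matched against the $(\ast)$-rewritten terms from the left in precisely the right order for the factor of $2$ to materialize. The sign pattern --- $\mu=-\mathcal{L}_{\star}$ from anti-Zinbiel and $\rho=+\mathcal{L}_{\circ}$ from pre-Lie --- is what dictates that exactly Eqs.~\eqref{pre-Poisson 2}, \eqref{PAP}, and \eqref{PAP3} emerge, rather than any other combination of signs.
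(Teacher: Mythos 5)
Your proof is correct and takes essentially the same route as the paper: both translate the representation conditions into Eq.~\eqref{pre-Poisson 2} together with $(x\star y+y\star x)\circ z=-x\star(y\circ z)-y\star(x\circ z)$ (the paper's Eq.~\eqref{PAP2}), then push the Poisson Leibniz identity through these to extract the remaining axioms, the only immaterial difference being that the paper's substitution order yields Eq.~\eqref{PAP3} first and then Eq.~\eqref{PAP}, while yours yields Eq.~\eqref{PAP} first. Your converse correctly fills in the paper's ``proved similarly'' step, with Eq.~\eqref{PAP3} and Eq.~\eqref{PAP} verifying the compatibility \eqref{2} and both sides of the Leibniz rule reducing to $z\circ(x\star y)+z\circ(y\star x)$.
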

\begin{proof}
  Since $(-\mathcal{L}_{\star},\mathcal{L}_{\circ},A)$ is a representation of $(A,\cdot,[-,-])$, we get Eqs.~(\ref{pre-Poisson 2}) and the following equation:
   \begin{equation}\label{PAP2}
    (x\star y+y\star x)\circ z=-x\star(y\circ z)-y\star(x\circ z), \;\;\forall x,y,z\in A.
   \end{equation}
   Thus for all $x,y,z\in A$, we have
    \begin{eqnarray*}
        0&&=[z,x\cdot y]+[x,z]\cdot y+[y,z]\cdot x\\
        &&=z\circ(x\star y)+z\circ(y\star x)-(x\star y)\circ z-(y\star x)\circ z+(x\circ z-z\circ x)\star y\\
        &&\ \ \ \ +y\star(x\circ z-z\circ x)+(y\circ z-z\circ y)\star x+x\star(y\circ z-z\circ y)\\
        &&\overset{(\ref{pre-Poisson 2}),(\ref{PAP2})}{=}-2(x\star y)\circ z-2(y\star x)\circ z.
    \end{eqnarray*}
Hence Eq.~(\ref{PAP3}) holds, and by Eq.~(\ref{PAP2}), Eq.~(\ref{PAP}) holds. Thus $(A,\star,\circ)$ is a PAP algebra. The converse part is proved similarly.
\end{proof}

Hence we get the following conclusion.

\begin{cor}
Let $A$ be a vector space with two bilinear operations $\star,\circ:A\otimes A\rightarrow A$. Then the following conditions are equivalent:
\begin{enumerate}
    \item $(A,\star,\circ)$ is a  PAP algebra.
    \item The triple $(A,\cdot,[-,-])$ is a Poisson algebra with a representation $(-\mathcal{L}_{\star},\mathcal{L}_{\circ},A)$, where $\cdot$ and $[-,-]$ are respectively defined by Eqs.~(\ref{comm op}) and (\ref{sub-adj}).
    \item There is a Poisson algebra structure on $A\oplus A$ in which the commutative associative product $\cdot$ is defined by Eq.~(\ref{anti-Zinbiel axiom}) and the Lie bracket $[-,-]$ is defined by Eq.~(\ref{pre-Lie axiom}).
\end{enumerate}
    \end{cor}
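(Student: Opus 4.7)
The plan is to prove the chain (a) $\Leftrightarrow$ (b) $\Leftrightarrow$ (c), both links being essentially translational, by reusing the machinery already built up in the paper.

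For (a) $\Leftrightarrow$ (b), the key observation is that this corollary is a packaged reformulation of the immediately preceding proposition, so most of the work is identifying the induced structures on $A$. Assuming (a), since $(A,\star)$ is an anti-Zinbiel algebra, Proposition \ref{pro:anti-Zinbiel} furnishes the commutative associative structure $(A,\cdot)$ with $\cdot$ defined by Eq.~(\ref{comm op}) and tells us that $(-\mathcal{L}_{\star},A)$ represents it; similarly, since $(A,\circ)$ is a pre-Lie algebra, Proposition \ref{pro:pre-Lie} supplies the Lie bracket $(A,[-,-])$ defined by Eq.~(\ref{sub-adj}) and the representation $(\mathcal{L}_{\circ},A)$ of it. What remains is to verify the two Poisson–representation compatibilities Eq.~(\ref{2}) and Eq.~(\ref{3}) for $(\mu,\rho)=(-\mathcal{L}_{\star},\mathcal{L}_{\circ})$: evaluating on an arbitrary element reduces Eq.~(\ref{2}) to the identity Eq.~(\ref{PAP2}) derived inside the preceding proposition from Eqs.~(\ref{PAP}) and (\ref{PAP3}), while Eq.~(\ref{3}) is exactly Eq.~(\ref{pre-Poisson 2}). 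The Leibniz rule for $(A,\cdot,[-,-])$ then follows by running the chain of equalities in the proof of the preceding proposition in reverse. The converse implication (b) $\Rightarrow$ (a) is the forward direction of that same proposition, applied to the induced $\cdot$ and $[-,-]$.

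For (b) $\Leftrightarrow$ (c), I would invoke the general semi-direct product principle recorded right after the definition of a Poisson representation: a triple $(\mu,\rho,V)$ is a representation of a Poisson algebra $(A,\cdot,[-,-])$ precisely when $A\oplus V$ equipped with the operations of Eqs.~(\ref{eq:SDASSO}) and (\ref{eq:SDLie}) is itself a Poisson algebra, namely $A\ltimes_{\mu,\rho} V$. Specializing this to $V=A$, $\mu=-\mathcal{L}_{\star}$, $\rho=\mathcal{L}_{\circ}$, one reads off that Eq.~(\ref{eq:SDASSO}) becomes exactly Eq.~(\ref{anti-Zinbiel axiom}) and Eq.~(\ref{eq:SDLie}) becomes exactly Eq.~(\ref{pre-Lie axiom}); hence the semi-direct product Poisson algebra of (b) coincides with the one posited in (c), and conversely a Poisson structure on $A\oplus A$ with these formulas is automatically such a semi-direct product, so (c) forces (b).

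The only step that is not a pure bookkeeping exercise is the translation between the three PAP axioms (\ref{PAP}), (\ref{PAP3}), (\ref{pre-Poisson 2}) and the pair of Poisson–representation compatibilities (\ref{2}), (\ref{3}); this is the main obstacle, but it is already essentially carried out in the proof of the preceding proposition, where Eq.~(\ref{PAP3}) is obtained by combining the Leibniz rule with Eqs.~(\ref{pre-Poisson 2}) and (\ref{PAP2}). Consequently no further nontrivial computation is needed, and the corollary follows.
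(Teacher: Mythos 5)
Your proposal is correct and follows essentially the same route as the paper, which states this corollary as a direct consequence of the preceding PAP proposition (for the equivalence of (a) and (b), with Eq.~(\ref{2}) becoming Eq.~(\ref{PAP2}) and Eq.~(\ref{3}) becoming Eq.~(\ref{pre-Poisson 2})) together with the semi-direct product characterization of representations of Poisson algebras specialized to $(\mu,\rho,V)=(-\mathcal{L}_{\star},\mathcal{L}_{\circ},A)$ (for the equivalence of (b) and (c)). Your only slip is cosmetic: in the paper Eq.~(\ref{PAP2}) is derived from the representation hypothesis, while in your direction (a)$\Rightarrow$(b) it follows trivially from Eqs.~(\ref{PAP}) and (\ref{PAP3}) since both sides vanish, which is what you in fact use.
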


\subsection{PAA algebras}

\begin{defi}
    A \textbf{PAA algebra} is a triple $(A,\star,\circ)$, such that $(A,\star)$ is an anti-Zinbiel algebra, $(A,\circ)$ is an anti-pre-Lie algebra, and Eqs.~(\ref{PZA2}),(\ref{PAP2}) and the following equation   hold:
\begin{equation}\label{PAA3}
z\circ(x\star y+y\star x)-(x\star y+y\star x)\circ z-x\star(z\circ y)-y\star(z\circ x)=0,
\end{equation}
for all $x,y,z\in A$.
\end{defi}

\begin{pro}
    Let $(A,\cdot,[-,-])$ be a Poisson algebra, $(A,\star)$ be a compatible anti-Zinbiel algebra of $(A,\cdot)$  and $(A,\circ)$ be a compatible anti-pre-Lie algebra of $(A,[-,-])$. If $(-\mathcal{L}_{\star},-\mathcal{L}_{\circ},A)$ is a representation of $(A,\cdot,[-,-])$, then $(A,\star,\circ)$ is a PAA algebra. Conversely, let $(A,\star,\circ)$ be a PAA algebra, $(A,\cdot)$ be the sub-adjacent commutative associative algebra of $(A,\star)$  and $(A,[-,-])$ be the sub-adjacent Lie algebra of $(A,\circ)$. Then $(A,\cdot,[-,-])$ is a Poisson algebra with a representation $(-\mathcal{L}_{\star},-\mathcal{L}_{\circ},A)$.
    In this case, we say $(A,\cdot,[-,-])$ is the \textbf{sub-adjacent Poisson algebra} of $(A,\star,\circ)$, and $(A,\star,\circ)$ is a \textbf{compatible PAA algebra} of $(A,\cdot,[-,-])$.
\end{pro}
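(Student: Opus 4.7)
The plan is to follow the pattern already established in the PCP, PCA, PZA, and PAP propositions above, the only novelty being that both the associative and Lie parts are now taken ``anti,'' so sign bookkeeping must be handled with care. For the forward direction, assume $(A,\cdot,[-,-])$ is a Poisson algebra, that $(A,\star)$ is a compatible anti-Zinbiel algebra (so $\cdot$ is recovered from $\star$ by \eqref{comm op}) and that $(A,\circ)$ is a compatible anti-pre-Lie algebra (so $[-,-]$ is recovered from $\circ$ by \eqref{sub-adj}), and finally that $(-\mathcal{L}_{\star},-\mathcal{L}_{\circ},A)$ is a representation of the Poisson algebra. Specializing the compatibility equation \eqref{2} to $\mu=-\mathcal{L}_{\star}$ and $\rho=-\mathcal{L}_{\circ}$ and evaluating on $z$ reproduces exactly Eq.~\eqref{PAP2}; specializing \eqref{3} similarly reproduces Eq.~\eqref{PZA2}. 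Together with the standing anti-Zinbiel and anti-pre-Lie assumptions, these are the first two defining relations of a PAA algebra.

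The remaining axiom \eqref{PAA3} will come from the Leibniz rule \eqref{eq:PA} applied to $[z,x\cdot y]=[z,x]\cdot y+x\cdot[z,y]$. Expanding each product via \eqref{comm op} and \eqref{sub-adj} yields twelve $\star/\circ$ monomials in $x,y,z$. I would then apply \eqref{PZA2} twice, once to rewrite $(z\circ x-x\circ z)\star y$ and once to rewrite $(z\circ y-y\circ z)\star x$, gathering all the $z\circ(x\star y+y\star x)$ contributions; after that, the surviving terms $x\star(y\circ z)+y\star(x\circ z)$ are replaced by $-(x\star y+y\star x)\circ z$ using \eqref{PAP2}. Collecting terms produces the same relation $z\circ(x\star y+y\star x)-(x\star y+y\star x)\circ z-x\star(z\circ y)-y\star(z\circ x)=0$ (with an overall factor of $2$ that cancels), which is precisely \eqref{PAA3}. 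This exactly mirrors the derivation of \eqref{PZA3} in the PZA proof and of \eqref{PAP3} in the PAP proof, so no new conceptual ingredient is needed.

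For the converse, let $(A,\star,\circ)$ be a PAA algebra and define $\cdot,[-,-]$ by \eqref{comm op} and \eqref{sub-adj}. By the anti-Zinbiel and anti-pre-Lie hypotheses, $(A,\cdot)$ is a commutative associative algebra and $(A,[-,-])$ is a Lie algebra. Relations \eqref{PAP2} and \eqref{PZA2} are precisely the conditions \eqref{2} and \eqref{3} for $(-\mathcal{L}_{\star},-\mathcal{L}_{\circ},A)$, so once the Leibniz rule \eqref{eq:PA} is established $(A,\cdot,[-,-])$ is a Poisson algebra and $(-\mathcal{L}_{\star},-\mathcal{L}_{\circ},A)$ is a representation of it. To obtain \eqref{eq:PA}, I would run the calculation above in reverse: start from \eqref{PAA3}, substitute \eqref{PZA2} and \eqref{PAP2} to reintroduce the missing $\star/\circ$ monomials, and recognize the outcome as the expanded form of $[z,x\cdot y]-[z,x]\cdot y-x\cdot[z,y]=0$. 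The only real obstacle is keeping the twelve terms, their signs, and the factor of $2$ straight; there is no structural difficulty beyond careful bookkeeping.
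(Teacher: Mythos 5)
Your proposal is correct and follows essentially the same route as the paper's proof: extract Eqs.~(\ref{PAP2}) and (\ref{PZA2}) by specializing the representation conditions (\ref{2})--(\ref{3}) to $(-\mathcal{L}_{\star},-\mathcal{L}_{\circ},A)$, then expand the Poisson identity $[z,x\cdot y]+[x,z]\cdot y+[y,z]\cdot x=0$ via Eqs.~(\ref{comm op}) and (\ref{sub-adj}) and substitute (\ref{PZA2}) (twice) and (\ref{PAP2}) to obtain twice the left-hand side of Eq.~(\ref{PAA3}), with the converse obtained by reversing the same computation. Your term-by-term bookkeeping (rewriting $(z\circ x-x\circ z)\star y$ and $(z\circ y-y\circ z)\star x$ by (\ref{PZA2}), then absorbing $x\star(y\circ z)+y\star(x\circ z)$ by (\ref{PAP2})) reproduces exactly the paper's displayed calculation, including the factor of $2$.
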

\begin{proof}
   Since $(-\mathcal{L}_{\star},-\mathcal{L}_{\circ},A)$ is a representation of $(A,\cdot,[-,-])$, we get Eqs.~(\ref{PZA2}) and (\ref{PAP2}). Thus  for all $x,y,z\in A$, we have
    \begin{eqnarray*}
        0&&=[z,x\cdot y]+[x,z]\cdot y+[y,z]\cdot x\\
        &&=z\circ(x\star y)+z\circ(y\star x)-(x\star y)\circ z-(y\star x)\circ z+(x\circ z-z\circ x)\star y\\
        &&\ \ \ \ +y\star(x\circ z-z\circ x)+(y\circ z-z\circ y)\star x+x\star(y\circ z-z\circ y)\\
        &&\overset{(\ref{PZA2}),(\ref{PAP2})}{=}2(z\circ(x\star y+y\star x)-(x\star y+y\star x)\circ z-x\star(z\circ y)-y\star(z\circ x)).
    \end{eqnarray*}
Hence Eq.~(\ref{PAA3}) holds, and thus $(A,\star,\circ)$ is a PAA algebra.
The converse part is proved similarly.
\end{proof}

Hence we get the following conclusion.

\begin{cor}
    Let $A$ be a vector space with two bilinear operations $\star,\circ:A\otimes A\rightarrow A$. Then the following conditions are equivalent:
    \begin{enumerate}
        \item $(A,\star,\circ)$ is a   PAA algebra.
        \item The triple $(A,\cdot,[-,-])$ is a Poisson algebra with a representation $(-\mathcal{L}_{\star},-\mathcal{L}_{\circ},A)$, where $\cdot$ and $[-,-]$ are respectively defined by Eqs.~(\ref{comm op}) and (\ref{sub-adj}).
        \item There is a Poisson algebra structure on $A\oplus A$ in which the commutative associative product $\cdot$ is defined by Eq.~(\ref{anti-Zinbiel axiom}) and the Lie bracket $[-,-]$ is defined by Eq.~(\ref{anti-pre-Lie axiom}).
    \end{enumerate}
\end{cor}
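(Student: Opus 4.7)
The plan is to obtain the corollary by combining the preceding proposition with the standard semi-direct product construction for representations of Poisson algebras, so no new computation on the axioms is needed.

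First I would note that the equivalence (1) $\Leftrightarrow$ (2) is essentially the content of the proposition that immediately precedes this corollary: a PAA algebra $(A,\star,\circ)$ is, by that proposition, precisely the data of an anti-Zinbiel algebra and an anti-pre-Lie algebra whose underlying sub-adjacent commutative associative product (given by Eq.~(\ref{comm op})) and sub-adjacent Lie bracket (given by Eq.~(\ref{sub-adj})) form a Poisson algebra $(A,\cdot,[-,-])$ such that $(-\mathcal{L}_\star,-\mathcal{L}_\circ,A)$ is a representation of this Poisson algebra. So (1) $\Leftrightarrow$ (2) is obtained for free.

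Next I would handle (2) $\Leftrightarrow$ (3) by invoking the general semi-direct product principle recalled just after the definition of representations of Poisson algebras: given a Poisson algebra $(A,\cdot,[-,-])$ and a linear pair $(\mu,\rho)$ on a vector space $V$, the data $(\mu,\rho,V)$ is a representation if and only if $A\oplus V$ becomes a Poisson algebra with operations defined by Eqs.~(\ref{eq:SDASSO}) and (\ref{eq:SDLie}). Specializing to $V=A$, $\mu=-\mathcal{L}_\star$, $\rho=-\mathcal{L}_\circ$, the commutative product in Eq.~(\ref{eq:SDASSO}) reads
\begin{equation*}
(x,a)\cdot(y,b)=\bigl(x\cdot y,\,-\mathcal{L}_\star(x)b-\mathcal{L}_\star(y)a\bigr)=\bigl(x\star y+y\star x,\,-x\star b-y\star a\bigr),
\end{equation*}
which is exactly Eq.~(\ref{anti-Zinbiel axiom}), and similarly the Lie bracket in Eq.~(\ref{eq:SDLie}) becomes
\begin{equation*}
[(x,a),(y,b)]=\bigl([x,y],\,-\mathcal{L}_\circ(x)b+\mathcal{L}_\circ(y)a\bigr)=\bigl(x\circ y-y\circ x,\,y\circ a-x\circ b\bigr),
\end{equation*}
which is Eq.~(\ref{anti-pre-Lie axiom}). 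Therefore (2) and (3) say the same thing.

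There is no serious obstacle: the work has already been done in the preceding proposition and in the general semi-direct product setup for Poisson algebras. The only thing to be careful about is bookkeeping the signs coming from $\mu=-\mathcal{L}_\star$ and $\rho=-\mathcal{L}_\circ$, ensuring they match the signs appearing in Eqs.~(\ref{anti-Zinbiel axiom}) and (\ref{anti-pre-Lie axiom}), which is exactly why the anti-Zinbiel and anti-pre-Lie axioms (and not their pre- counterparts) appear on the two summands of $A\oplus A$.
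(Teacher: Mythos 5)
Your proposal is correct and takes essentially the same route the paper intends: (1) $\Leftrightarrow$ (2) comes from the preceding proposition (together with Propositions \ref{pro:anti-Zinbiel} and \ref{pro:anti-pre-Lie}, which recover the anti-Zinbiel and anti-pre-Lie structures from the representation data in (2)), and (2) $\Leftrightarrow$ (3) is the semi-direct product characterization of representations of Poisson algebras specialized to $\mu=-\mathcal{L}_{\star}$, $\rho=-\mathcal{L}_{\circ}$, whose formulas you correctly identify with Eqs.~(\ref{anti-Zinbiel axiom}) and (\ref{anti-pre-Lie axiom}). No further argument is needed.
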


\begin{pro}
    Let $(A,\cdot,[-,-])$ be a Poisson algebra. Suppose that $\mathcal{B}$ is a nondegenerate symmetric bilinear form on $A$ such that it is a commutative Connes cocycle on $(A,\cdot)$ and a commutative 2-cocycle on $(A,[-,-])$. Then there is a compatible PAA algebra $(A,\star,\circ)$ in which $\star$ and $\circ$ are respectively defined by Eqs.~(\ref{bilinear form anti-Zinbiel}) and (\ref{bilinear form anti-pre-Lie}). Conversely, let $(A,\star,\circ)$ be a   PAA algebra and the sub-adjacent Poisson algebra be $(A,\cdot,[-,-])$. Then there is a Poisson algebra $A\ltimes_{\mathcal{L}^{*}_{\star},-\mathcal{L}^{*}_{\circ}}A^{*}$, and the natural nondegenerate symmetric bilinear form $\mathcal{B}_{d}$ defined by Eq.~(\ref{B_{d}})
    is a commutative Connes cocycle on the commutative associative algebra $A\ltimes_{\mathcal{L}^{*}_{\star}}A^{*}$ and a commutative 2-cocycle on the Lie algebra $A\ltimes_{-\mathcal{L}^{*}_{\circ}}A^{*}$.
\end{pro}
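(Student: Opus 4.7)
The plan is to mirror the proof of Proposition \ref{bf PCP} (which handled the PCA case), adapting the signs appropriately for the anti-Zinbiel/anti-pre-Lie pair.

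For the forward direction, I first invoke the two existing bilinear-form results to avoid redoing work: the proposition on nondegenerate commutative Connes cocycles gives that Eq.~(\ref{bilinear form anti-Zinbiel}) defines a compatible anti-Zinbiel algebra $(A,\star)$ of $(A,\cdot)$, and the proposition on nondegenerate commutative 2-cocycles (Proposition \ref{bf anti-pre-Lie}) gives that Eq.~(\ref{bilinear form anti-pre-Lie}) defines a compatible anti-pre-Lie algebra $(A,\circ)$ of $(A,[-,-])$. What remains is to check the three mixed axioms Eqs.~(\ref{PZA2}), (\ref{PAP2}), (\ref{PAA3}).

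I would verify Eqs.~(\ref{PZA2}) and (\ref{PAP2}) by pairing each against an arbitrary $w \in A$ with $\mathcal{B}$ and using nondegeneracy. For Eq.~(\ref{PZA2}), note that in the sub-adjacent structures $x\circ y - y\circ x = [x,y]$, so the left-hand side becomes $-\mathcal{B}(z,[x,y]\cdot w)$ after applying the defining formula for $\star$ once and the commutativity of $\cdot$; the right-hand side transforms via the defining formulas for $\circ$ and $\star$ into $\mathcal{B}(z,\,y\cdot[x,w]-[x,y\cdot w])$. The two agree exactly because of the Poisson Leibniz rule $[x,y\cdot w] = [x,y]\cdot w + y\cdot[x,w]$. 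For Eq.~(\ref{PAP2}), the identity $x\star y + y\star x = x\cdot y$ reduces the left-hand side (again after pairing with $w$) to $\mathcal{B}(z,[x\cdot y,w])$, while the right-hand side yields $\mathcal{B}(z,[y,x\cdot w]+[x,y\cdot w])$; the Leibniz rule together with skew-symmetry closes the identity. With Eqs.~(\ref{PZA2}) and (\ref{PAP2}) in hand, Eq.~(\ref{PAA3}) follows by exactly the computation used in the proof of the preceding characterization proposition for PAA algebras (expanding $[z,x\cdot y] + [x,z]\cdot y + [y,z]\cdot x = 0$ using the two identities just proved).

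For the converse direction, $(A,\star,\circ)$ being a PAA algebra means $(-\mathcal{L}_\star, -\mathcal{L}_\circ, A)$ is a representation of the sub-adjacent Poisson algebra $(A,\cdot,[-,-])$. Applying Proposition \ref{pro:Poisson rep} to dualize, $(\mathcal{L}_\star^*, -\mathcal{L}_\circ^*, A^*)$ is again a representation, producing the semi-direct product Poisson algebra $A \ltimes_{\mathcal{L}_\star^*, -\mathcal{L}_\circ^*} A^*$. The two cocycle statements for $\mathcal{B}_d$ on this Poisson algebra are then routine unwinding of the definitions of $\mathcal{B}_d$, $\mathcal{L}_\star^*$ and $-\mathcal{L}_\circ^*$; they are in fact instances of the converse halves of the bilinear-form propositions for anti-Zinbiel and anti-pre-Lie algebras quoted earlier in the paper, applied to the semi-direct product structure.

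The only real obstacle is sign bookkeeping: the defining formula for $\star$ carries a minus sign while the one for $\circ$ does not, and the dual representation flips the sign on the commutative-associative side but not on the Lie side, so one must be careful when transposing operators across $\mathcal{B}$ to ensure the Leibniz rule closes the identities without a spurious sign. Other than this, the argument is a direct parallel of Proposition \ref{bf PCP}.
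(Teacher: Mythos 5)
Your proposal is correct and follows essentially the same route as the paper, which proves this proposition simply by declaring it ``similar to the proof of Proposition \ref{bf PCP}'': pair the candidate identities \eqref{PZA2} and \eqref{PAP2} against $\mathcal{B}$, close them with the Leibniz rule \eqref{eq:PA} and nondegeneracy, deduce \eqref{PAA3} from the PAA characterization computation, and for the converse dualize the representation $(-\mathcal{L}_{\star},-\mathcal{L}_{\circ},A)$ via Proposition \ref{pro:Poisson rep} and check $\mathcal{B}_{d}$ directly. Your sign bookkeeping ($\mathcal{B}(x\star y,z)=-\mathcal{B}(y,x\cdot z)$ versus $\mathcal{B}(x\circ y,z)=\mathcal{B}(y,[x,z])$, giving the dual representation $(\mathcal{L}^{*}_{\star},-\mathcal{L}^{*}_{\circ},A^{*})$) is consistent with the paper's statement.
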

\begin{proof}
It is similar to the proof of Proposition \ref{bf PCP}.
\end{proof}


\subsection{Summary}\

We summarize some facts on the 8 algebraic structures in the previous subsections respectively corresponding to the mixed splittings of operations of Poisson algebras in Table 1.
\begin{small}
\begin{table}[!t]
    \caption{Splittings of Poisson algebras}
    %
    %
    \begin{tabular}{p{2.5cm}p{2.5cm}p{2.5cm}p{2.5cm}p{5cm}}
        \hline
        Algebras & Notations & Representations

            of Poisson

            algebras  on

            the spaces

            themselves & Representations

            of Poisson

            algebras  on

            the dual spaces & {Corresponding nondegenerate

                bilinear forms

            on Poisson algebras}  \\
         PCP   & $(A,\cdot,\circ)$  & $(\mathcal{L}_{\cdot},\mathcal{L}_{\circ},A)$ & $(-\mathcal{L}^{*}_{\cdot},\mathcal{L}^{*}_{\circ},A^{*})$  & - \\

        PCA  & $(A,\cdot,\circ)$ & $(\mathcal{L}_{\cdot},-\mathcal{L}_{\circ},A)$ & $(-\mathcal{L}^{*}_{\cdot},-\mathcal{L}^{*}_{\circ},A^{*})$  & invariant,

        commutative 2-cocycle  \\

        PZL & $(A,\star,[-,-])$ & $(\mathcal{L}_{\star},\mathrm{ad},A)$ & $(-\mathcal{L}^{*}_{\star},\mathrm{ad}^{*},A^{*})$ & - \\

        pre-Poisson & $(A,\star,\circ)$ & $(\mathcal{L}_{\star},\mathcal{L}_{\circ},A)$ & $(-\mathcal{L}^{*}_{\star},\mathcal{L}^{*}_{\circ},A^{*})$ & Connes cocycle,

        symplectic form\\

        PZA & $(A,\star,\circ)$ & $(\mathcal{L}_{\star},-\mathcal{L}_{\circ},A)$ & $(-\mathcal{L}^{*}_{\star},-\mathcal{L}^{*}_{\circ},A^{*})$ & -\\

        PAL & $(A,\star,[-,-])$ & $(-\mathcal{L}_{\star},\mathrm{ad},A)$ &$(\mathcal{L}^{*}_{\star},\mathrm{ad}^{*},A^{*})$ & commutative Connes cocycle,

        invariant\\

        PAP & $(A,\star,\circ)$ & $(-\mathcal{L}_{\star},\mathcal{L}_{\circ},A)$ & $(\mathcal{L}^{*}_{\star},\mathcal{L}^{*}_{\circ},A^{*})$ & -\\

        PAA & $(A,\star,\circ)$ & $(-\mathcal{L}_{\star},-\mathcal{L}_{\circ},A)$ & $(\mathcal{L}^{*}_{\star},-\mathcal{L}^{*}_{\circ},A^{*})$ & commutative Connes cocycle,

        commutative 2-cocycle\\
       \hline
    \end{tabular}
\end{table}
\end{small}

\section{Mixed splittings of operations of transposed Poisson algebras in terms of representations on the spaces themselves
and related algebraic structures}\label{sec:3}\

We introduce 8 algebraic structures respectively corresponding to the mixed splitting of the commutative associative
 products and Lie brackets of transposed Poisson algebras interlacedly in three manners: the classical splitting, the second splitting and the un-splitting, in terms of representations of transposed Poisson algebras on the spaces themselves.

\begin{defi}\cite{LB2023}
    A \textbf{representation of a transposed Poisson algebra} $(A,\cdot,[-,-])$ is a triple $(\mu,\rho,V)$, such that $(\mu,V)$ is a representation of the commutative associative algebra $(A,\cdot)$, $(\rho,V)$ is a representation of the Lie algebra $(A,[-,-])$, and the following compatible conditions hold:
    \begin{equation}\label{eq:defi:TPA REP1}
    2\mu(x)\rho(y)=\rho(x\cdot y)+\rho(y)\mu(x),
    \end{equation}
    \begin{equation}\label{eq:defi:TPA REP2}
    2\mu([x,y])=\rho(x)\mu(y)-\rho(y)\mu(x),
    \end{equation}
 for all $x,y\in A$.
\end{defi}

Let $(A,\cdot,[-,-])$ be a transposed Poisson algebra. Then $(\mathcal{L}_{\cdot},\mathrm{ad},A)$ is a representation of $(A,\cdot,[-,-])$, called the \textbf{adjoint representation} of $(A,\cdot,[-,-])$.
Moreover, $(\mu,\rho,V)$ is a representation of a transposed Poisson algebra $(A,\cdot,[-,-])$ if and only if the direct sum $A\oplus V$ of vector spaces is a (\textbf{semi-direct product}) transposed Poisson algebra by defining the multiplications on $A\oplus V$ by Eqs.~(\ref{eq:SDASSO}) and (\ref{eq:SDLie})   respectively. We denote it by $A\ltimes_{\mu,\rho}V$.

Unlike the case of Poisson algebras in Proposition \ref{pro:Poisson rep}, for a representation $(\mu,\rho,V)$ of a transposed Poisson algebra $(A,\cdot,[-,-])$, $(-\mu^{*},\rho^{*},V^{*})$ is not necessarily a representation of $(A,\cdot,[-,-])$ (see Proposition \ref{pro:dual rep TPA}). Thus for transposed Poisson algebras, we shall divide into two cases according to the representations of transposed Poisson algebras on the spaces themselves
and the dual spaces  respectively.


Next we introduce 8 algebraic structures in the rest of this section  corresponding to the mixed splitting of  the commutative associative
 products and Lie brackets of transposed Poisson algebras interlacedly in three manners: the classical splitting, the second splitting and the un-splitting, in terms of representations of transposed Poisson algebras on the spaces themselves,
 whereas another 8 algebraic structures are introduced in the next section in terms of representations of transposed Poisson algebras on the dual spaces.
Before we introduce these various algebraic structures, we give the following ``principle" to name them.
\begin{enumerate}
\item Every such algebraic structure in this section and the next section  is named by 4 capital letters.
\item The first letter is unified to be ``T" since these algebras are related to transposed Poisson algebras.
\item The second letter denotes the operation corresponding to the splitting of the commutative associative products. Explicitly,  the capital letters ``C", ``Z" and ``A" respectively denote the operations of
 commutative associative algebras, Zinbiel algebras and anti-Zinbiel algebras, corresponding to the un-splitting, the classical splitting and the second splitting.
\item The third letter denotes the operation corresponding to the splitting of the Lie brackets. Explicitly,  the capital letters ``L", ``P" and ``A" respectively denote the operations of
 Lie algebras, pre-Lie algebras and anti-pre-Lie algebras, corresponding to the un-splitting, the classical splitting and the second splitting.
\item The last letter is ``O" in the case of the representations of transposed Poisson algebras on the spaces themselves and ``D" in the case of the representations of transposed Poisson algebras on the dual spaces.
\end{enumerate}

\subsection{TCPO algebras}

\begin{defi}
A \textbf{TCPO algebra} is a triple $(A,\cdot,\circ)$, where $(A,\cdot)$ is a commutative associative algebra, $(A,\circ)$ is a pre-Lie algebra, and
\begin{equation}\label{TCPO1}
    2x\cdot (y\circ z)=(x\cdot y)\circ z+y\circ(x\cdot z),
\end{equation}
\begin{equation}\label{TCPO2}
    2z\cdot(x\circ y-y\circ x)=x\circ(z\cdot y)-y\circ(z\cdot x),
\end{equation}
for all $x,y,z\in A.$
\end{defi}

\begin{pro}
    Let $(A,\cdot,[-,-])$ be a transposed Poisson algebra and  $(A,\circ)$ be a compatible pre-Lie algebra of $(A,[-,-])$. If $(\mathcal{L}_{\cdot},\mathcal{L}_{\circ},A)$ is a representation of $(A,\cdot,[-,-])$, then $(A,\cdot,\circ)$ is a TCPO algebra.
Conversely, let $(A,\cdot,\circ)$ be a TCPO algebra and $(A,[-,-])$ be the sub-adjacent Lie algebra of $(A,\circ)$. Then $(A,\cdot,[-,-])$ is a transposed Poisson algebra with a representation $(\mathcal{L}_{\cdot},\mathcal{L}_{\circ},A)$. In this case, we say $(A,\cdot,[-,-])$ is the \textbf{sub-adjacent transposed Poisson algebra} of $(A,\cdot,\circ)$, and $(A,\cdot,\circ)$ is a \textbf{compatible TCPO algebra} of $(A,\cdot,[-,-])$.
\end{pro}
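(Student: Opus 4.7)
The plan is to treat both directions as translations of the compatibility axioms (3.1)--(3.2) for a representation of a transposed Poisson algebra, specialized to $\mu=\mathcal{L}_{\cdot}$ and $\rho=\mathcal{L}_{\circ}$.

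For the forward direction, I would evaluate (3.1) on an arbitrary $z\in A$ with $\mu=\mathcal{L}_{\cdot}$ and $\rho=\mathcal{L}_{\circ}$; unpacking yields $2x\cdot(y\circ z)=(x\cdot y)\circ z+y\circ(x\cdot z)$, which is TCPO1. Then I would evaluate (3.2) on $z$ in the same way, obtaining $2[x,y]\cdot z=x\circ(y\cdot z)-y\circ(x\cdot z)$; using the commutativity of $\cdot$ and the relation $[x,y]=x\circ y-y\circ x$ (available because $(A,\circ)$ is a compatible pre-Lie algebra of $(A,[-,-])$), this rewrites as TCPO2. That $(A,\cdot)$ is commutative associative and $(A,\circ)$ is pre-Lie is given by hypothesis, so this direction is purely unpacking.

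For the converse, I would proceed in three steps. First, since $(A,\circ)$ is pre-Lie, Proposition~1.13 already supplies a Lie algebra $(A,[-,-])$ with $[x,y]=x\circ y-y\circ x$ and shows that $(\mathcal{L}_{\circ},A)$ is a representation of it; and $(\mathcal{L}_{\cdot},A)$ is the adjoint representation of $(A,\cdot)$. Second, I would verify the transposed Leibniz rule $2z\cdot[x,y]=[z\cdot x,y]+[x,z\cdot y]$. Substituting $[u,v]=u\circ v-v\circ u$, the right-hand side becomes
\[
(z\cdot x)\circ y-y\circ(z\cdot x)+x\circ(z\cdot y)-(z\cdot y)\circ x,
\]
while TCPO2 identifies the left-hand side with $x\circ(z\cdot y)-y\circ(z\cdot x)$. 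Matching the two therefore reduces to the auxiliary identity $(z\cdot x)\circ y=(z\cdot y)\circ x$. I would prove this by applying TCPO1 in the two forms $(z\cdot x)\circ y=2z\cdot(x\circ y)-x\circ(z\cdot y)$ and $(z\cdot y)\circ x=2z\cdot(y\circ x)-y\circ(z\cdot x)$, subtracting them, and eliminating the residual $2z\cdot(x\circ y-y\circ x)$ via TCPO2. Third, once $(A,\cdot,[-,-])$ is established as a transposed Poisson algebra, conditions (3.1) and (3.2) for $(\mathcal{L}_{\cdot},\mathcal{L}_{\circ},A)$ are exactly TCPO1 and TCPO2 read backwards, giving the representation claim.

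The main obstacle is the verification of the transposed Leibniz rule, since TCPO2 in isolation is not strong enough: what is needed is the commutation-type identity $(z\cdot x)\circ y=(z\cdot y)\circ x$, and the nontrivial observation is that this identity is already hidden in the axioms via the interplay of TCPO1 and TCPO2. Everything else in the converse is formal, and the forward direction amounts to rewriting.
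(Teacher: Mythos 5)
Your proof is correct and follows essentially the same route as the paper: the forward direction is pure unpacking of the representation axioms, and for the converse your key auxiliary identity $(z\cdot x)\circ y=(z\cdot y)\circ x$, obtained by subtracting two instances of Eq.~\eqref{TCPO1} and eliminating the commutator term via Eq.~\eqref{TCPO2}, is exactly the intermediate identity \eqref{TCPO3} the paper derives before checking the transposed Leibniz rule. The only difference is organizational (you reduce the Leibniz check to the identity first, the paper proves the identity first), which is immaterial.
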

\begin{proof}
    We only prove the latter.
Let $x,y,z\in A$. We have
\begin{equation}\label{TCPO3}
    (x\cdot y)\circ z-(x\cdot z)\circ y
    \overset{(\ref{TCPO1})}{=}
    2x\cdot(y\circ z)-y\circ(x\cdot z)-2x\cdot(z\circ y)+z\circ(x\cdot y)\overset{(\ref{TCPO2})}{=}0.
\end{equation}
Then
\begin{eqnarray*}
    &&2[x,y]\cdot z-[z\cdot x,y]-[x,z\cdot y]\\
    &&\overset{(\ref{TCPO2})}{=}x\circ(y\cdot z)-y\circ(x\cdot z)+y\circ(z\cdot x)-(z\cdot x)\circ y-x\circ(z\cdot y)+(z\cdot y)\circ x\\
    &&=(z\cdot y)\circ x-(z\cdot x)\circ y\overset{(\ref{TCPO3})}{=}0.
\end{eqnarray*}
Thus $(A,\cdot,[-,-])$ is a transposed Poisson algebra, and by Eqs.~(\ref{TCPO1})-(\ref{TCPO2}), $(\mathcal{L}_{\cdot},\mathcal{L}_{\circ},A)$ is a representation of $(A,\cdot,[-,-])$.
\end{proof}

Hence we get the following conclusion.

\begin{cor}
Let $A$ be a vector space with  two bilinear operations $\cdot,\circ:A\otimes A\rightarrow A$. Then the following conditions are equivalent:
\begin{enumerate}
    \item $(A,\cdot,\circ)$ is a TCPO algebra.
    \item The triple $(A,\cdot,[-,-])$ is a transposed Poisson algebra with a representation $(\mathcal{L}_{\cdot},\mathcal{L}_{\circ},A)$, where $[-,-]$ is defined by Eq.~(\ref{sub-adj}).
    \item There is a transposed Poisson algebra structure on $A\oplus A$ in which the commutative associative product $\cdot$ is defined by Eq.~(\ref{comm asso axiom}) and the Lie bracket $[-,-]$ is defined by Eq.~(\ref{pre-Lie axiom}).
\end{enumerate}
    \end{cor}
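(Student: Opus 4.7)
The plan is to reduce the three-way equivalence to two separate observations, both essentially routine once one identifies the semi-direct product in disguise.

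First, the equivalence (1) $\Leftrightarrow$ (2) is precisely the content of the preceding proposition: the proposition states that a TCPO algebra is exactly a compatible TCPO algebra of a transposed Poisson algebra, meaning a pair of operations on $A$ for which $(A,\cdot,[-,-])$ with $[-,-]$ defined by Eq.~(\ref{sub-adj}) is a transposed Poisson algebra and $(\mathcal{L}_{\cdot},\mathcal{L}_{\circ},A)$ is a representation. So no additional argument is needed for this direction beyond citing what was just proved.

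For (2) $\Leftrightarrow$ (3), the plan is to apply the general fact (recorded right after Definition of a representation of a transposed Poisson algebra) that a triple $(\mu,\rho,V)$ is a representation of a transposed Poisson algebra $(A,\cdot,[-,-])$ if and only if $A\oplus V$ carries the semi-direct product transposed Poisson structure $A\ltimes_{\mu,\rho}V$, where the commutative associative product and Lie bracket are given by Eqs.~(\ref{eq:SDASSO}) and (\ref{eq:SDLie}) respectively. Specializing to $V=A$, $\mu=\mathcal{L}_{\cdot}$, $\rho=\mathcal{L}_{\circ}$, one computes directly
\begin{equation*}
(x,a)\cdot(y,b)=(x\cdot y,\mathcal{L}_{\cdot}(x)b+\mathcal{L}_{\cdot}(y)a)=(x\cdot y,x\cdot b+a\cdot y),
\end{equation*}
\begin{equation*}
[(x,a),(y,b)]=([x,y],\mathcal{L}_{\circ}(x)b-\mathcal{L}_{\circ}(y)a)=(x\circ y-y\circ x,x\circ b-y\circ a),
\end{equation*}
which are exactly Eqs.~(\ref{comm asso axiom}) and (\ref{pre-Lie axiom}). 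Hence (2) immediately gives (3) via the semi-direct product construction, and conversely, the existence of a transposed Poisson structure on $A\oplus A$ of the specified form forces $(\mathcal{L}_{\cdot},\mathcal{L}_{\circ},A)$ to be a representation of $(A,\cdot,[-,-])$ and in particular forces $(A,\cdot,[-,-])$ to be a transposed Poisson algebra (by projecting to the first component).

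There is no substantive obstacle here; the only thing to check is the bookkeeping that Eqs.~(\ref{comm asso axiom}) and (\ref{pre-Lie axiom}) match the semi-direct product formulas under the chosen $\mu$ and $\rho$, which is an immediate substitution. Thus the corollary follows by stringing together the preceding proposition with the semi-direct product characterization of representations of transposed Poisson algebras.
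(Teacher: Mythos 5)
Your proposal is correct and matches the paper's intent: the paper states this corollary without proof ("Hence we get the following conclusion"), deriving it exactly as you do from the preceding proposition (for (1)$\Leftrightarrow$(2)) together with the semi-direct product characterization of representations of transposed Poisson algebras specialized to $(\mathcal{L}_{\cdot},\mathcal{L}_{\circ},A)$ (for (2)$\Leftrightarrow$(3)). Your spelled-out bookkeeping, including recovering $(A,\cdot,[-,-])$ from the first component in the direction (3)$\Rightarrow$(2), is just the explicit version of the same argument.
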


\subsection{TCAO algebras}

\begin{defi}
    A \textbf{TCAO algebra} is a triple $(A,\cdot,\circ)$, where $(A,\cdot)$ is a commutative associative algebra, $(A,\circ)$ is an anti-pre-Lie algebra, and Eq.~(\ref{TCPO1}) and the following equation hold:
    \begin{equation}\label{TCAO2}
        2z\cdot(x\circ y-y\circ x)=-x\circ(z\cdot y)+y\circ(z\cdot x),\;\;\forall x,y,z\in A.
    \end{equation}
\end{defi}

\begin{pro}
    Let $(A,\cdot,[-,-])$ be a transposed Poisson algebra and $(A,\circ)$ be a compatible anti-pre-Lie algebra of $(A,[-,-])$. If $(\mathcal{L}_{\cdot},-\mathcal{L}_{\circ},A)$ is a representation of $(A,\cdot,[-,-])$, then $(A,\cdot,\circ)$ is a TCAO algebra.
    Conversely, let $(A,\cdot,\circ)$ be a TCAO algebra  and $(A,[-,-])$ be the sub-adjacent Lie algebra of $(A,\circ)$. Then $(A,\cdot,[-,-])$ is a transposed Poisson algebra with a representation $(\mathcal{L}_{\cdot},-\mathcal{L}_{\circ},A)$. In this case, we say $(A,\cdot,[-,-])$ is the \textbf{sub-adjacent transposed Poisson algebra} of $(A,\cdot,\circ)$, and $(A,\cdot,\circ)$ is a \textbf{compatible TCAO algebra} of $(A,\cdot,[-,-])$.
\end{pro}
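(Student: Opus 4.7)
The plan is to mirror the argument given for the preceding TCPO-algebra proposition, exploiting the fact that the representation compatibilities (\ref{eq:defi:TPA REP1}) and (\ref{eq:defi:TPA REP2}), specialized to $\mu=\mathcal{L}_{\cdot}$ and $\rho=-\mathcal{L}_{\circ}$, translate directly into the two defining identities of a TCAO algebra.

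For the forward direction, I substitute $\mu=\mathcal{L}_{\cdot}$ and $\rho=-\mathcal{L}_{\circ}$ into (\ref{eq:defi:TPA REP1}) and (\ref{eq:defi:TPA REP2}) and evaluate on an arbitrary $z\in A$. The first identity becomes $-2x\cdot(y\circ z)=-(x\cdot y)\circ z-y\circ(x\cdot z)$, which is Eq.~(\ref{TCPO1}) after clearing the overall sign. The second, after using $[x,y]=x\circ y-y\circ x$ and commutativity of $\cdot$, becomes $2z\cdot(x\circ y-y\circ x)=-x\circ(z\cdot y)+y\circ(z\cdot x)$, which is precisely Eq.~(\ref{TCAO2}). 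Together with the standing hypotheses that $(A,\cdot)$ is commutative associative and $(A,\circ)$ is anti-pre-Lie, this shows $(A,\cdot,\circ)$ is a TCAO algebra.

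For the converse, two things need verification: (i) that $(A,\cdot,[-,-])$ satisfies the transposed Leibniz rule (\ref{eq:defi:transposed Poisson algebra}), and (ii) that $(\mathcal{L}_{\cdot},-\mathcal{L}_{\circ},A)$ satisfies (\ref{eq:defi:TPA REP1})--(\ref{eq:defi:TPA REP2}). Point (ii) is automatic since these two identities are exactly Eqs.~(\ref{TCPO1}) and (\ref{TCAO2}) re-expressed in operator form with the negative sign on $\mathcal{L}_{\circ}$ absorbed. For point (i), I expand
\[
2z\cdot[x,y]-[z\cdot x,y]-[x,z\cdot y]=2z\cdot(x\circ y)-2z\cdot(y\circ x)-(z\cdot x)\circ y+y\circ(z\cdot x)-x\circ(z\cdot y)+(z\cdot y)\circ x,
\]
and apply Eq.~(\ref{TCPO1}) twice—once in the form $2z\cdot(x\circ y)=(z\cdot x)\circ y+x\circ(z\cdot y)$ and once in the form $2z\cdot(y\circ x)=(z\cdot y)\circ x+y\circ(z\cdot x)$. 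The eight resulting terms cancel in four pairs, yielding the Leibniz identity directly.

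The main potential obstacle is purely organisational: keeping the six terms of the Leibniz defect aligned so that the two TCPO1 substitutions visibly produce the required cancellation. It is worth noting that the Leibniz rule is derivable from Eq.~(\ref{TCPO1}) alone, with Eq.~(\ref{TCAO2}) imposed separately to encode the second representation compatibility; this parallels the situation in the TCPO case and confirms that neither axiom is redundant in the definition.
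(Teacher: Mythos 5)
Your proof is correct, and its converse takes a genuinely shorter route than the paper's. The forward direction is exactly what the paper leaves implicit ("we only prove the latter"): specializing Eqs.~(\ref{eq:defi:TPA REP1})--(\ref{eq:defi:TPA REP2}) to $\mu=\mathcal{L}_{\cdot}$, $\rho=-\mathcal{L}_{\circ}$ and using commutativity of $\cdot$ yields precisely Eqs.~(\ref{TCPO1}) and (\ref{TCAO2}). For the converse, the paper first derives the auxiliary identity $(x\cdot y)\circ z-(x\cdot z)\circ y=2z\circ(x\cdot y)-2y\circ(x\cdot z)$ from Eqs.~(\ref{TCPO1}) and (\ref{TCAO2}), and then verifies the transposed Leibniz rule by combining this with Eq.~(\ref{TCAO2}) once more; you instead antisymmetrize Eq.~(\ref{TCPO1}) in its last two arguments, i.e.\ use $2z\cdot(x\circ y)=(z\cdot x)\circ y+x\circ(z\cdot y)$ and $2z\cdot(y\circ x)=(z\cdot y)\circ x+y\circ(z\cdot x)$, after which the six terms of the Leibniz defect cancel in pairs, and Eq.~(\ref{TCAO2}) is never used at this stage. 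I checked the cancellation; it is correct. What your route buys is the observation that the transposed Leibniz rule of the sub-adjacent structure already follows from the first compatibility (\ref{TCPO1}) alone (indeed it is just the skew-symmetrization of (\ref{TCPO1}) in $y,z$), a remark that applies verbatim to the TCPO and TZPO cases as well, where the paper's proofs also invoke the second axiom; Eq.~(\ref{TCAO2}) is then only needed, as you say, to obtain the second representation identity. One small point you should make explicit when calling step (ii) ``automatic'': besides (\ref{TCPO1})--(\ref{TCAO2}) giving the two compatibility identities, one needs that $(\mathcal{L}_{\cdot},A)$ is a representation of $(A,\cdot)$ (the adjoint representation) and that $(-\mathcal{L}_{\circ},A)$ is a representation of the sub-adjacent Lie algebra $(A,[-,-])$, the latter being Proposition \ref{pro:anti-pre-Lie} applied to the anti-pre-Lie algebra $(A,\circ)$; the paper leaves this equally implicit, so it is a matter of bookkeeping rather than a gap.
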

\begin{proof}
    We only prove the latter.
    Let $x,y,z\in A$. We have
    \begin{equation}\label{TCAO3}
        (x\cdot y)\circ z-(x\cdot z)\circ y
        \overset{(\ref{TCPO1})}{=}
        2x\cdot(y\circ z)-y\circ(x\cdot z)-2x\cdot(z\circ y)+z\circ(x\cdot y)\overset{(\ref{TCAO2})}{=}2z\circ(x\cdot y)-2y\circ(x\cdot z).
    \end{equation}
    Then
    \begin{eqnarray*}
        &&2[x,y]\cdot z-[z\cdot x,y]-[x,z\cdot y]\\
        &&\overset{(\ref{TCAO2})}{=}y\circ(x\cdot z)-x\circ(y\cdot z)+y\circ(z\cdot x)-(z\cdot x)\circ y-x\circ(z\cdot y)+(z\cdot y)\circ x\\
        &&=2y\circ(z\cdot x)-2x\circ(y\cdot z)-(z\cdot x)\circ y+(z\cdot y)\circ x\overset{(\ref{TCAO3})}{=}0.
    \end{eqnarray*}
    Thus $(A,\cdot,[-,-])$ is a transposed Poisson algebra, and by Eqs.~(\ref{TCPO1}) and (\ref{TCAO2}), $(\mathcal{L}_{\cdot},-\mathcal{L}_{\circ},A)$ is a representation of $(A,\cdot,[-,-])$.
\end{proof}

Hence we get the following conclusion.

\begin{cor}
    Let $A$ be a vector space with  two bilinear operations $\cdot,\circ:A\otimes A\rightarrow A$. Then the following conditions are equivalent:
    \begin{enumerate}
        \item $(A,\cdot,\circ)$ is a TCAO algebra.
        \item The triple $(A,\cdot,[-,-])$ is a transposed Poisson algebra with a representation $(\mathcal{L}_{\cdot},-\mathcal{L}_{\circ}$,
        $A)$, where $[-,-]$ is defined by Eq.~(\ref{sub-adj}).
        \item There is a transposed Poisson algebra structure on $A\oplus A$ in which the commutative associative product $\cdot$ is defined by Eq.~(\ref{comm asso axiom}) and the Lie bracket $[-,-]$ is defined by Eq.~(\ref{anti-pre-Lie axiom}).
    \end{enumerate}
    \end{cor}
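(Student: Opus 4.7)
The plan is to derive this corollary directly from the proposition that immediately precedes it, together with the standard semi-direct product characterization of representations of transposed Poisson algebras recalled at the beginning of Section \ref{sec:3}.

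For the equivalence (1) $\Leftrightarrow$ (2), I would argue as follows. Assume (1). Since $(A,\circ)$ is an anti-pre-Lie algebra, Proposition \ref{pro:anti-pre-Lie} guarantees that the commutator $[-,-]$ defined by Eq.~(\ref{sub-adj}) equips $A$ with a Lie algebra structure for which $(A,\circ)$ is a compatible anti-pre-Lie algebra, i.e.\ $(-\mathcal{L}_{\circ},A)$ is a representation of $(A,[-,-])$. Combined with the fact that $(\mathcal{L}_{\cdot},A)$ is automatically the adjoint representation of the commutative associative algebra $(A,\cdot)$, the direct part of the preceding proposition lets us conclude that $(A,\cdot,[-,-])$ is a transposed Poisson algebra with representation $(\mathcal{L}_{\cdot},-\mathcal{L}_{\circ},A)$. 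Conversely, (2) is exactly the hypothesis of the converse part of the preceding proposition, which yields (1).

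For (2) $\Leftrightarrow$ (3), I would apply the semi-direct product characterization of representations of transposed Poisson algebras: $(\mu,\rho,V)$ is a representation of $(A,\cdot,[-,-])$ if and only if $A\oplus V$ is a transposed Poisson algebra under the multiplications defined by Eqs.~(\ref{eq:SDASSO}) and (\ref{eq:SDLie}). Specializing to $V=A$, $\mu=\mathcal{L}_{\cdot}$ and $\rho=-\mathcal{L}_{\circ}$, Eq.~(\ref{eq:SDASSO}) becomes exactly Eq.~(\ref{comm asso axiom}), and Eq.~(\ref{eq:SDLie}) becomes
\begin{equation*}
[(x,a),(y,b)]=(x\circ y-y\circ x,\,-x\circ b+y\circ a),
\end{equation*}
which is precisely Eq.~(\ref{anti-pre-Lie axiom}). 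Thus (2) and (3) express the same data, using that the first-coordinate projections of these semi-direct product operations recover $(A,\cdot)$ as a commutative associative algebra and $(A,[-,-])$ as a Lie algebra with $[-,-]$ given by Eq.~(\ref{sub-adj}).

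There is no serious obstacle here; the only point requiring minor care is tracking the sign coming from $\rho=-\mathcal{L}_{\circ}$ so that the bracket on $A\oplus A$ agrees literally with Eq.~(\ref{anti-pre-Lie axiom}) (including the swap of the roles of $a$ and $b$ relative to the pre-Lie case), and dually that the commutative associative product keeps its usual form because $\mu=+\mathcal{L}_{\cdot}$ has no sign change. Once these identifications are made, the three conditions tautologically say the same thing, given the preceding proposition.
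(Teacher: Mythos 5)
Your proposal is correct and is essentially the paper's (unwritten) argument: the corollary is just the preceding proposition combined with the semi-direct product characterization of representations of transposed Poisson algebras, and your sign check that $\mu=\mathcal{L}_{\cdot}$, $\rho=-\mathcal{L}_{\circ}$ turn Eqs.~(\ref{eq:SDASSO}) and (\ref{eq:SDLie}) into Eqs.~(\ref{comm asso axiom}) and (\ref{anti-pre-Lie axiom}) is accurate. One small slip: you have the two halves of the preceding proposition swapped --- (1)$\Rightarrow$(2) is its \emph{converse} part (TCAO $\Rightarrow$ transposed Poisson with representation $(\mathcal{L}_{\cdot},-\mathcal{L}_{\circ},A)$), while (2)$\Rightarrow$(1) uses its \emph{direct} part, after invoking Proposition \ref{pro:anti-pre-Lie} to see from (2) that $(A,\circ)$ is a compatible anti-pre-Lie algebra of $(A,[-,-])$.
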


\subsection{TZLO algebras}

\begin{defi}
    A \textbf{TZLO algebra} is a triple $(A,\star,[-,-])$, such that $(A,\star)$ is a Zinbiel algebra, $(A,[-,-])$ is a Lie algebra, and the following equation  holds:
    \begin{equation}\label{TZLO}
    x\star[y,z]=[x,y]\star z=[x,y\star z]=0,\;\;\forall x,y,z\in A.
    \end{equation}
\end{defi}

\begin{pro}
    Let $(A,\cdot,[-,-])$ be a transposed Poisson algebra and $(A,\star)$ be a compatible Zinbiel algebra of $(A,\cdot)$. If $(\mathcal{L}_{\star},\mathrm{ad},A)$ is a representation of $(A,\cdot,[-,-])$, then    $(A,\star,[-,-])$ is a TZLO algebra. Conversely, let $(A,\star,[-,-])$ be a TZLO algebra and $(A,\cdot)$ be the sub-adjacent commutative associative algebra of $(A,\star)$. Then $(A,\cdot,[-,-])$ is a transposed Poisson algebra with a representation $(\mathcal{L}_{\star},\mathrm{ad},A)$. In this case, we say $(A,\cdot,[-,-])$ is the \textbf{sub-adjacent transposed Poisson algebra} of $(A,\star,[-,-])$, and $(A,\star,[-,-])$ is a \textbf{compatible   TZLO algebra} of $(A,\cdot,[-,-])$.
\end{pro}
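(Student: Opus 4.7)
The plan is to prove both directions directly from the defining axioms, exploiting the interplay between the transposed Leibniz rule and the two compatibility conditions \eqref{eq:defi:TPA REP1}--\eqref{eq:defi:TPA REP2} of a representation of a transposed Poisson algebra.

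For the forward direction, I first evaluate the two compatibility conditions for $(\mathcal{L}_\star,\mathrm{ad},A)$ at an arbitrary $z \in A$ to obtain
\begin{equation}\label{plan:comp1}
2\, x \star [y,z] = [x \cdot y, z] + [y, x \star z],
\end{equation}
\begin{equation}\label{plan:comp2}
2\, [x,y] \star z = [x, y \star z] - [y, x \star z].
\end{equation}
Expanding the transposed Leibniz rule $2 z \cdot [x,y] = [z \cdot x, y] + [x, z \cdot y]$ via $u \cdot v = u \star v + v \star u$ yields
\begin{equation}\label{plan:tL}
2\, z \star [x,y] + 2\, [x,y] \star z = [z \star x, y] + [x \star z, y] + [x, z \star y] + [x, y \star z].
\end{equation}
Specializing \eqref{plan:comp1} to the relabeling $(x,y,z) \mapsto (z,x,y)$ gives $2 z \star [x,y] = [z \star x, y] + [x \star z, y] + [x, z \star y]$; subtracting this from \eqref{plan:tL} produces $2 [x,y] \star z = [x, y \star z]$. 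Comparing with \eqref{plan:comp2} yields the crucial identity $[y, x \star z] = 0$ for all $x,y,z \in A$. Feeding this back into \eqref{plan:comp2} gives $[x,y] \star z = 0$; it also forces $[x \cdot y, z] = -[z, x \star y] - [z, y \star x] = 0$, whence \eqref{plan:comp1} gives $x \star [y,z] = 0$. Together these are the three defining identities of a TZLO algebra.

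For the converse, given a TZLO algebra $(A,\star,[-,-])$, Proposition \ref{pro:Zinbiel} ensures that $x \cdot y := x \star y + y \star x$ defines a commutative associative algebra. Both sides of the transposed Leibniz rule, as well as both compatibility conditions for $(\mathcal{L}_\star,\mathrm{ad},A)$, then vanish term by term: every expression that appears has one of the forms $u \star [v,w]$, $[u,v] \star w$, $[u, v \star w]$, or its skew-symmetric reversal, all of which are zero by the TZLO identities. Hence $(A,\cdot,[-,-])$ is a transposed Poisson algebra and $(\mathcal{L}_\star,\mathrm{ad},A)$ is a representation.

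The main obstacle is the key extraction in the forward direction: one must recognize that comparing the cyclically relabeled form of \eqref{plan:comp1} with the expanded transposed Leibniz rule \eqref{plan:tL} isolates the single identity $[y, x \star z] = 0$, after which the remaining two TZLO identities follow immediately from \eqref{plan:comp1}--\eqref{plan:comp2}.
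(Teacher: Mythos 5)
Your proof is correct and follows essentially the same route as the paper: both arguments combine the expanded transposed Leibniz rule with the two compatibility identities for $(\mathcal{L}_{\star},\mathrm{ad},A)$ to isolate the key relation $[y,x\star z]=0$, and then feed it back into those identities to obtain all three defining equations of a TZLO algebra. Your term-by-term verification of the converse matches what the paper leaves as ``proved similarly.''
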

\begin{proof}
    Since $(\mathcal{L}_{\star},\mathrm{ad},A)$ is a representation of $(A,\cdot,[-,-])$, the following equations hold:
    \begin{equation}\label{TZLO1}
        2x\star[y,z]=[x\star y+y\star x,z]+[y,x\star z],
    \end{equation}
    \begin{equation}\label{TZLO2}
        2[x,y]\star z=[x,y\star z]-[y,x\star z],
    \end{equation}
    for all $x,y,z\in A$.
    Thus for all $x,y,z\in A$, we have
    \begin{eqnarray*}
        0&&=2[x,y]\cdot z-[z\cdot x,y]-[x,z\cdot y]\\
        &&\overset{(\ref{TZLO1}),(\ref{TZLO2})}{=}[x,y\star z]-[y,x\star z]+[z\cdot x,y]+[x,z\star y]-[z\cdot x,y]-[x,z\cdot y]\\
        &&=-[y,x\star z].
    \end{eqnarray*}
By Eqs.~(\ref{TZLO1})-(\ref{TZLO2}) again, we get Eq.~(\ref{TZLO}). Hence $(A,\star,[-,-])$ is a TZLO algebra. The converse part is proved similarly.
\end{proof}

\begin{rmk}
    Let $(A,\star,[-,-])$ be a TZLO algebra. Then the sub-adjacent transposed Poisson algebra $(A,\cdot,[-,-])$ is trivial in the sense that
    \begin{equation}\label{trivial TPA}
        [x,y\cdot z]=x\cdot[y,z]=0,\;\;\forall x,y,z\in A.
    \end{equation}
Note that in this case, it is also a Poisson algebra \cite{Bai2020}.
    \end{rmk}

Moreover we get the following conclusion.

\begin{cor}
    Let $A$ be a vector space with  two bilinear operations $\star,[-,-]:A\otimes A\rightarrow A$. Then the following conditions are equivalent:
    \begin{enumerate}
        \item $(A,\star,[-,-])$ is a   TZLO algebra.
        \item The triple $(A,\cdot,[-,-])$ is a transposed Poisson algebra with a representation $(\mathcal{L}_{\star},\mathrm{ad}$,
        $A)$, where $\cdot$ is defined by Eq.~(\ref{comm op}).
        \item There is a transposed Poisson algebra structure on $A\oplus A$ in which the commutative associative product $\cdot$ is defined by Eq.~(\ref{Zinbiel axiom}) and the Lie bracket $[-,-]$ is defined by Eq.~(\ref{Lie axiom}).
    \end{enumerate}
\end{cor}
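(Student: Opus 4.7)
The proof essentially reduces to assembling facts already established in the excerpt. The equivalence (1) $\Leftrightarrow$ (2) is exactly the content of the preceding proposition, so nothing new needs to be shown there. For the remaining equivalence (2) $\Leftrightarrow$ (3), the plan is to invoke the general semi-direct product characterization of representations of transposed Poisson algebras recalled just before Section~3.1: a triple $(\mu,\rho,V)$ is a representation of a transposed Poisson algebra $(A,\cdot,[-,-])$ if and only if $A\oplus V$, endowed with the products~\eqref{eq:SDASSO} and~\eqref{eq:SDLie}, is itself a transposed Poisson algebra $A\ltimes_{\mu,\rho}V$.

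Concretely, I would specialize this to $V=A$, $\mu=\mathcal{L}_{\star}$, $\rho=\mathrm{ad}$, with $\cdot$ defined by~\eqref{comm op} and $[-,-]$ the given Lie bracket. Substituting into~\eqref{eq:SDASSO} gives
\[
(x,a)\cdot(y,b)=\bigl(x\cdot y,\ \mathcal{L}_{\star}(x)b+\mathcal{L}_{\star}(y)a\bigr)=\bigl(x\star y+y\star x,\ x\star b+y\star a\bigr),
\]
which is precisely~\eqref{Zinbiel axiom}. Likewise, substituting into~\eqref{eq:SDLie} yields
\[
[(x,a),(y,b)]=\bigl([x,y],\ \mathrm{ad}(x)b-\mathrm{ad}(y)a\bigr)=\bigl([x,y],\ [x,b]-[y,a]\bigr),
\]
which is precisely~\eqref{Lie axiom}. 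Thus the transposed Poisson algebra structure on $A\oplus A$ described in (3) is literally the semi-direct product $A\ltimes_{\mathcal{L}_{\star},\mathrm{ad}}A$.

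Hence (2) $\Rightarrow$ (3) is immediate by forming the semi-direct product, and conversely (3) $\Rightarrow$ (2) follows because the semi-direct product being a transposed Poisson algebra forces both $(A,\cdot,[-,-])$ (as the quotient $A\oplus A/\{0\}\oplus A$, or equivalently the first components) to be a transposed Poisson algebra and $(\mathcal{L}_{\star},\mathrm{ad},A)$ to be a representation. Combining the two equivalences yields the corollary. There is no substantial obstacle: the only mildly delicate point is making sure that condition (3) is read as asserting both the commutative associative and Lie algebra structures together satisfy the transposed Leibniz rule~\eqref{eq:defi:transposed Poisson algebra}, so that the semi-direct product machinery applies intact.
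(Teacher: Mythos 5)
Your proposal is correct and follows exactly the route the paper intends: the paper states this corollary without proof as an immediate consequence of the preceding TZLO proposition (giving (1)$\Leftrightarrow$(2)) together with the semi-direct product characterization of representations of transposed Poisson algebras via Eqs.~(\ref{eq:SDASSO}) and (\ref{eq:SDLie}) (giving (2)$\Leftrightarrow$(3)), which is precisely your argument.
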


\subsection{TZPO algebras}

\begin{defi}
    A \textbf{TZPO algebra} is a triple $(A,\star,\circ)$, such that $(A,\star)$ is a Zinbiel algebra, $(A,\circ)$ is a pre-Lie algebra, and the following equations hold:
    \begin{equation}\label{PTPA1}
        2x\star(y\circ z)=(x\star y+y\star x)\circ z+y\circ(x\star z),
    \end{equation}
    \begin{equation}\label{PTPA2}
        2(x\circ y-y\circ x)\star z=x\circ(y\star z)-y\circ(x\star z),
    \end{equation}
    for all $x,y,z\in A$.
\end{defi}

\begin{rmk}
     In fact, TZPO algebras might be named as  ``pre-transposed Poisson algebras" since the operad of TZPO algebras is the successor of the operad of transposed Poisson algebras,
     illustrating the classical splitting of operations of transposed Poisson algebras.
     \end{rmk}

\begin{pro}
    Let $(A,\cdot,[-,-])$ be a transposed Poisson algebra, $(A,\star)$ be a compatible Zinbiel algebra of $(A,\cdot)$ and $(A,\circ)$ be a compatible pre-Lie algebra of $(A,[-,-])$. If $(\mathcal{L}_{\star},\mathcal{L}_{\circ},A)$ is a representation of $(A,\cdot,[-,-])$, then $(A,\star,\circ)$ is a TZPO algebra.
    Conversely,
    let $(A,\star,\circ)$ be a TZPO algebra, $(A,\cdot)$ be the sub-adjacent commutative associative algebra of $(A,\star)$  and $(A,[-,-])$ be the sub-adjacent Lie algebra of $(A,\circ)$. Then $(A,\cdot,[-,-])$ is a transposed Poisson algebra with a representation $(\mathcal{L}_{\star},\mathcal{L}_{\circ},A)$. In this case, we say $(A,\cdot,[-,-])$ is the \textbf{sub-adjacent transposed Poisson algebra} of $(A,\star,\circ)$, and $(A,\star,\circ)$ is a \textbf{compatible TZPO algebra} of $(A,\cdot,[-,-])$.
\end{pro}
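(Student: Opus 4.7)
The proposition has two directions: extracting the TZPO axioms from the representation hypothesis, and reconstructing the transposed Poisson structure from a TZPO algebra.

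For the forward direction, I would simply unpack the definitions. By Proposition \ref{pro:Zinbiel}, saying $(A,\star)$ is a compatible Zinbiel algebra of $(A,\cdot)$ means that $(A,\star)$ is Zinbiel, $x\cdot y=x\star y+y\star x$, and $(\mathcal{L}_{\star},A)$ is a representation of $(A,\cdot)$; similarly $(\mathcal{L}_{\circ},A)$ is a representation of $(A,[-,-])$ by Proposition \ref{pro:pre-Lie}. With $\mu=\mathcal{L}_{\star}$ and $\rho=\mathcal{L}_{\circ}$, the two cross-compatibility conditions \eqref{eq:defi:TPA REP1} and \eqref{eq:defi:TPA REP2}, evaluated at an arbitrary $z\in A$ and rewritten via $x\cdot y=x\star y+y\star x$ and $[x,y]=x\circ y-y\circ x$, become precisely Eqs.~\eqref{PTPA1} and \eqref{PTPA2}.

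The converse is the substantial part. The Zinbiel and pre-Lie hypotheses immediately yield the commutative associative and Lie algebra structures on $A$, so the only real task is to verify the transposed Leibniz rule $2z\cdot[x,y]=[z\cdot x,y]+[x,z\cdot y]$. My plan is first to antisymmetrize \eqref{PTPA1} in $(y,z)$, obtaining
\begin{equation*}
2x\star[y,z]=(x\cdot y)\circ z-(x\cdot z)\circ y+y\circ(x\star z)-z\circ(x\star y).
\end{equation*}
Applying the cyclic permutation $(x,y,z)\mapsto(z,x,y)$ yields an explicit formula for $2z\star[x,y]$, whose right-hand side, after expanding $z\cdot x=z\star x+x\star z$ and $z\cdot y=z\star y+y\star z$, breaks into six summands. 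Adding to it the identity $2[x,y]\star z=x\circ(y\star z)-y\circ(x\star z)$ obtained directly from \eqref{PTPA2} gives an eight-term expression for $2z\cdot[x,y]=2z\star[x,y]+2[x,y]\star z$. I would then expand $[z\cdot x,y]+[x,z\cdot y]$ analogously into eight terms and check that they coincide term-by-term. Once the transposed Leibniz rule is in hand, reading the forward argument backwards shows that $(\mathcal{L}_{\star},\mathcal{L}_{\circ},A)$ is a representation of $(A,\cdot,[-,-])$.

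The main obstacle is purely combinatorial bookkeeping: ensuring that the eight-term expansions on the two sides of the transposed Leibniz rule align. Notably, unlike the proof of the TCPO analogue, which required the auxiliary identity $(x\cdot y)\circ z=(x\cdot z)\circ y$, I anticipate that no Zinbiel, pre-Lie, or intermediate identity will be needed here: the Zinbiel product contributes independently to both $z\star[x,y]$ and $[x,y]\star z$, so the cancellation should be direct from \eqref{PTPA1} and \eqref{PTPA2} alone.
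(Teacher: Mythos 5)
Your proposal is correct and follows essentially the same route as the paper: the forward direction is definition unpacking, and the converse is the direct verification of the transposed Leibniz rule by writing $2z\cdot[x,y]=2z\star[x,y]+2[x,y]\star z$, evaluating these via Eqs.~(\ref{PTPA1}) and (\ref{PTPA2}), expanding $z\cdot x$ and $z\cdot y$ in terms of $\star$, and cancelling against $[z\cdot x,y]+[x,z\cdot y]$ — exactly the computation in the paper, with your antisymmetrize-then-cyclically-permute bookkeeping being only a cosmetic reorganization. Your observation that no auxiliary identity (unlike the TCPO case) is needed is also consistent with the paper's proof.
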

\begin{proof}
    We only prove the latter. For all $x,y,z\in A$, we have
    \begin{eqnarray*}
        &&2[x,y]\cdot z-[z\cdot x,y]-[x,z\cdot y]\\
        &&\overset{(\ref{PTPA1}),(\ref{PTPA2})}{=}x\circ(y\star z)-y\circ(x\star z)+(z\cdot x)\circ y+x\circ(z\star y)-(z\cdot y)\circ x\\
        &&\ \ \ \ -y\circ(z\star x)-(z\cdot x)\circ y+y\circ(z\cdot x)-x\circ(z\cdot y)+(z\cdot y)\circ x\\
        &&=0.
    \end{eqnarray*}
Hence $(A,\cdot,[-,-])$ is a transposed Poisson algebra, and by Eqs.~(\ref{PTPA1})-(\ref{PTPA2}), $(\mathcal{L}_{\star},\mathcal{L}_{\circ},A)$ is a representation of $(A,\cdot,[-,-])$.
\end{proof}

\begin{ex}
    Let $(A,\star)$ be a Zinbiel algebra. Suppose $P$ is a derivation of $(A,\star)$, that is, $P$ satisfies
    \begin{equation}
        P(x\star y)=P(x)\star y+x\star P(y),\;\;\forall x,y\in A.
    \end{equation}
Then $(A,\circ)$ is a pre-Lie algebra, where
\begin{equation}
    x\circ y=P(x)\star y-x\star P(y),\;\;\forall x,y\in A.
\end{equation}
Moreover, $(A,\star,\circ)$ is a TZPO algebra. Note that for the sub-adjacent transposed Poisson algebra $(A,\cdot,[-,-])$, where
$\cdot$ and $[-,-]$ are respectively defined by Eqs.~(\ref{comm op}) and (\ref{sub-adj}), the following equation holds:
\begin{equation}\label{eq:Lie from diff comm asso}
    [x,y]=P(x)\cdot y-x\cdot P(y),\;\;\forall x,y\in A.
\end{equation}
    \end{ex}

Moreover we get the following conclusion.

\begin{cor}
    Let $A$ be a vector space with  two bilinear operations $\star,\circ:A\otimes A\rightarrow A$. Then the following conditions are equivalent:
    \begin{enumerate}
        \item $(A,\star,\circ)$ is a TZPO algebra.
        \item The triple $(A,\cdot,[-,-])$  is a transposed Poisson algebra with a representation $(\mathcal{L}_{\star},\mathcal{L}_{\circ}$,
        $A)$, where $\cdot$ and $[-,-]$ are respectively defined by Eqs.~(\ref{comm op}) and (\ref{sub-adj}).
        \item There is a transposed Poisson algebra structure on $A\oplus A$ in which the commutative associative product $\cdot$ is defined by Eq.~(\ref{Zinbiel axiom}) and the Lie bracket $[-,-]$ is defined by Eq.~(\ref{pre-Lie axiom}).
    \end{enumerate}
\end{cor}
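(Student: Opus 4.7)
The plan is to prove the equivalence of (1), (2) and (3) by reducing to the preceding Proposition together with the representation-theoretic characterizations of Zinbiel algebras and pre-Lie algebras, and the semi-direct product construction for transposed Poisson algebras.

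For (1) $\Leftrightarrow$ (2), I would first unpack what each condition requires on $\star$ and $\circ$. Condition (1) asks that $(A,\star)$ be Zinbiel, $(A,\circ)$ be pre-Lie, and that Eqs.~(\ref{PTPA1})--(\ref{PTPA2}) hold. By Proposition~\ref{pro:Zinbiel}, $(A,\star)$ is Zinbiel if and only if $(A,\cdot)$ defined by Eq.~(\ref{comm op}) is commutative associative and $(\mathcal{L}_\star,A)$ is a representation of $(A,\cdot)$. By Proposition~\ref{pro:pre-Lie}, $(A,\circ)$ is pre-Lie if and only if $(A,[-,-])$ defined by Eq.~(\ref{sub-adj}) is a Lie algebra and $(\mathcal{L}_\circ,A)$ is a representation of $(A,[-,-])$. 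The remaining content of (2) is precisely that the Leibniz-type compatibilities Eqs.~(\ref{eq:defi:TPA REP1}) and (\ref{eq:defi:TPA REP2}) hold between $\mathcal{L}_\star$ and $\mathcal{L}_\circ$; unravelling these definitions, they are exactly Eqs.~(\ref{PTPA1}) and (\ref{PTPA2}). Thus (1) and (2) are the same statement once the preceding Proposition is invoked.

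For (2) $\Leftrightarrow$ (3), I would appeal directly to the general semi-direct product construction recalled just after the definition of representation of a transposed Poisson algebra, namely that $(\mu,\rho,V)$ is a representation of a transposed Poisson algebra $(A,\cdot,[-,-])$ if and only if $A\oplus V$ carries a (semi-direct product) transposed Poisson algebra structure with multiplications given by Eqs.~(\ref{eq:SDASSO}) and (\ref{eq:SDLie}). Substituting $V=A$, $\mu=\mathcal{L}_\star$, $\rho=\mathcal{L}_\circ$ into Eq.~(\ref{eq:SDASSO}) yields Eq.~(\ref{Zinbiel axiom}), and analogously Eq.~(\ref{eq:SDLie}) yields Eq.~(\ref{pre-Lie axiom}). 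Thus the transposed Poisson algebra structure on $A\oplus A$ described in (3) coincides with the semi-direct product $A\ltimes_{\mathcal{L}_\star,\mathcal{L}_\circ}A$, which exists exactly when (2) holds.

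There is no real obstacle: the corollary is a formal packaging of the preceding Proposition combined with the characterizations of Zinbiel algebras (Proposition~\ref{pro:Zinbiel}) and pre-Lie algebras (Proposition~\ref{pro:pre-Lie}) and the semi-direct product interpretation of representations. The only care needed is to verify that the two compatibility identities appearing implicitly in (2), when written out in terms of $\mathcal{L}_\star$ and $\mathcal{L}_\circ$ applied to a generic element of $A$, reproduce Eqs.~(\ref{PTPA1})--(\ref{PTPA2}) on the nose; this is a direct substitution with no hidden coefficients, and follows the same template used earlier for the PCP, PCA, PZL, PZA, PAL, PAP, PAA and pre-Poisson corollaries.
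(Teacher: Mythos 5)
Your proposal is correct and matches the paper's (implicit) argument: the corollary is stated there without proof precisely because it is the formal combination of the preceding TZPO proposition with Propositions \ref{pro:Zinbiel} and \ref{pro:pre-Lie} and the semi-direct product characterization of representations of transposed Poisson algebras, which is exactly how you argue it. The only point to state carefully is that the transposed Leibniz rule for the sub-adjacent $(A,\cdot,[-,-])$ in (2) is not one of the TZPO axioms but is supplied by the converse part of the preceding proposition, which you do invoke, so there is no gap.
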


\subsection{TZAO algebras}

\begin{defi}
    A \textbf{TZAO algebra} is a triple $(A,\star,\circ)$, such that $(A,\star)$ is a Zinbiel algebra, $(A,\circ)$ is an anti-pre-Lie algebra, and Eq.~(\ref{PTPA1}) and the following equations  hold:
    \begin{equation}\label{TZAO4}
        (x\circ y)\star z-(y\circ x)\star z=0,
    \end{equation}
     \begin{equation}\label{TZAO3}
        x\circ(y\star z)-y\circ(x\star z)=0,
    \end{equation}
    for all $x,y,z\in A$.
\end{defi}

\begin{pro}
    Let $(A,\cdot,[-,-])$ be a transposed Poisson algebra, $(A,\star)$ be a compatible Zinbiel algebra of $(A,\cdot)$ and $(A,\circ)$ be a compatible anti-pre-Lie algebra of $(A,[-,-])$. If $(\mathcal{L}_{\star},-\mathcal{L}_{\circ},A)$ is a representation of $(A,\cdot,[-,-])$, then $(A,\star,\circ)$ is a TZAO algebra.
    Conversely, let $(A,\star,\circ)$ be a TZAO algebra, $(A,\cdot)$ be the sub-adjacent commutative associative algebra of $(A,\star)$ and $(A,[-,-])$ be the sub-adjacent Lie algebra of $(A,\circ)$. Then $(A,\cdot,[-,-])$ is a transposed Poisson algebra with a representation $(\mathcal{L}_{\star},-\mathcal{L}_{\circ},A)$. In this case, we say $(A,\cdot,[-,-])$ is the \textbf{sub-adjacent transposed Poisson algebra} of $(A,\star,\circ)$, and $(A,\star,[-,-])$ is a \textbf{compatible TZAO algebra} of $(A,\cdot,[-,-])$.
\end{pro}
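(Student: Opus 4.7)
The plan is to split the proof into the forward and converse directions, mirroring the pattern used for TZPO algebras above.

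For the forward direction, I first unpack the two compatibility axioms of the representation $(\mathcal{L}_{\star},-\mathcal{L}_{\circ},A)$ of $(A,\cdot,[-,-])$. Setting $\mu=\mathcal{L}_{\star}$ and $\rho=-\mathcal{L}_{\circ}$ in Eqs.~\eqref{eq:defi:TPA REP1}--\eqref{eq:defi:TPA REP2} yields precisely Eq.~\eqref{PTPA1} together with the auxiliary identity $2(x\circ y-y\circ x)\star z=y\circ(x\star z)-x\circ(y\star z)$. To separate this auxiliary identity into Eqs.~\eqref{TZAO3} and \eqref{TZAO4}, I will combine it with the transposed Leibniz rule $2z\cdot[x,y]=[z\cdot x,y]+[x,z\cdot y]$ applied in $(A,\cdot,[-,-])$. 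Expanding both sides using $x\cdot y=x\star y+y\star x$ and $[x,y]=x\circ y-y\circ x$, and then invoking Eq.~\eqref{PTPA1} (with the relabellings $(x,y,z)\mapsto(z,x,y)$ and $(x,y,z)\mapsto(z,y,x)$) as a substitution tool to eliminate every term of the form $(u\star v+v\star u)\circ w$, the right-hand side collapses so that the identity reduces to $2(x\circ y)\star z-2(y\circ x)\star z=x\circ(y\star z)-y\circ(x\star z)$. Comparing this with the auxiliary identity produces Eq.~\eqref{TZAO3} outright, and then substituting Eq.~\eqref{TZAO3} back into the auxiliary identity forces Eq.~\eqref{TZAO4}.

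For the converse direction, given a TZAO algebra $(A,\star,\circ)$, the first representation compatibility \eqref{eq:defi:TPA REP1} for $(\mathcal{L}_{\star},-\mathcal{L}_{\circ},A)$ is exactly Eq.~\eqref{PTPA1}, while the second compatibility \eqref{eq:defi:TPA REP2} reduces to $0=0$ because Eqs.~\eqref{TZAO4} and \eqref{TZAO3} annihilate its two sides separately. To verify the transposed Leibniz rule for $(A,\cdot,[-,-])$, I run the same expansion of $2z\cdot[x,y]-[z\cdot x,y]-[x,z\cdot y]$ as above; after applying Eq.~\eqref{PTPA1} to consolidate the right-hand side, the remaining discrepancy between the two sides is a combination of $(x\circ y)\star z-(y\circ x)\star z$ and $x\circ(y\star z)-y\circ(x\star z)$, both of which vanish by Eqs.~\eqref{TZAO4} and \eqref{TZAO3}.

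The principal obstacle is the bookkeeping in the Leibniz-rule expansion: after the three substitutions via Eq.~\eqref{PTPA1}, several mixed $\star$-$\circ$ terms must match up precisely to yield the compact form above, and the signs introduced by the anti-pre-Lie operation coupled with the asymmetry of the Zinbiel product demand careful tracking. No deeper algebraic insight is required beyond the standard trick of rewriting $(u\star v+v\star u)\circ w$ through Eq.~\eqref{PTPA1}, but essentially all the work lies in this bookkeeping; the converse direction reuses the same expansion almost verbatim.
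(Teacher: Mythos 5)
Your proposal is correct and follows essentially the same route as the paper: extract Eq.~(\ref{PTPA1}) and the auxiliary identity (\ref{TZAO2}) from the representation axioms, expand the transposed Leibniz rule and use Eq.~(\ref{PTPA1}) to reduce it so that comparison with (\ref{TZAO2}) yields Eq.~(\ref{TZAO3}), then substitute back to obtain Eq.~(\ref{TZAO4}); your intermediate identity $2(x\circ y)\star z-2(y\circ x)\star z=x\circ(y\star z)-y\circ(x\star z)$ checks out. The converse as you describe it (REP1 is exactly (\ref{PTPA1}), REP2 degenerates to $0=0$ via (\ref{TZAO3})--(\ref{TZAO4}), and the same expansion verifies the transposed Leibniz rule) is precisely the ``proved similarly'' the paper leaves implicit.
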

\begin{proof}
   Since $(\mathcal{L}_{\star},-\mathcal{L}_{\circ},A)$ is a representation of $(A,\cdot,[-,-])$, Eq.~(\ref{PTPA1}) and the following equation hold:
     \begin{equation}\label{TZAO2}
        2(x\circ y-y\circ x)\star z=y\circ(x\star z)-x\circ(y\star z),\;\;\forall x,y,z\in A.
    \end{equation}
    Thus for all $x,y,z\in A$, we have
    \begin{eqnarray*}
        0&&=2[x,y]\cdot z-[z\cdot x,y]-[x,z\cdot y]\\
        &&\overset{(\ref{PTPA1}),(\ref{TZAO2})}{=}y\circ(x\star z)-x\circ(y\star z)+(z\cdot x)\circ y+x\circ(z\star y)-(z\cdot y)\circ x\\
        &&\ \ \ \ -y\circ(z\star x)-(z\cdot x)\circ y+y\circ(z\cdot x)-x\circ(z\cdot y)+(z\cdot y)\circ x\\
        &&=2y\circ(x\star z)-2x\circ(y\star z).
    \end{eqnarray*}
Hence Eq.~(\ref{TZAO3}) holds. Substituting  Eq.~(\ref{TZAO3}) into Eq.~(\ref{TZAO2}), we get Eq.~(\ref{TZAO4}). Thus $(A,\star,\circ)$ is a TZAO algebra. The converse part is proved similarly.
\end{proof}

Hence  we get the following conclusion.

\begin{cor}
    Let $A$ be a vector space with  two bilinear operations $\star,\circ:A\otimes A\rightarrow A$. Then the following conditions are equivalent:
    \begin{enumerate}
        \item $(A,\star,\circ)$ is a   TZAO algebra.
        \item The triple $(A,\cdot,[-,-])$ is a transposed Poisson algebra with a representation $(\mathcal{L}_{\star},-\mathcal{L}_{\circ}$,
        $A)$, where $\cdot$ and $[-,-]$ are respectively defined by Eqs.~(\ref{comm op}) and (\ref{sub-adj}).
        \item There is a transposed Poisson algebra structure on $A\oplus A$ in which the commutative associative product $\cdot$ is defined by Eq.~(\ref{Zinbiel axiom}) and the Lie bracket $[-,-]$ is defined by Eq.~(\ref{anti-pre-Lie axiom}).
    \end{enumerate}
\end{cor}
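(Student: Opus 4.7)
My plan is to prove the corollary by establishing the chain (1) $\Leftrightarrow$ (2) $\Leftrightarrow$ (3), leveraging the preceding proposition on TZAO algebras together with the characterizations of Zinbiel and anti-pre-Lie algebras recalled in Section \ref{sec:1}, and the semi-direct product description of representations of transposed Poisson algebras recalled at the beginning of this section.

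First, for (1) $\Leftrightarrow$ (2), I would observe that the preceding proposition already establishes both directions of the correspondence between TZAO algebras and transposed Poisson algebras equipped with a representation of the form $(\mathcal{L}_{\cdot},-\mathcal{L}_{\circ},A)$, provided the Zinbiel and anti-pre-Lie axioms are encoded as representation properties of the sub-adjacent pieces. Proposition \ref{pro:Zinbiel} asserts that $(A,\star)$ is a Zinbiel algebra iff $(A,\cdot)$ with $\cdot$ from Eq.~(\ref{comm op}) is a commutative associative algebra and $(\mathcal{L}_{\star},A)$ is a representation of it; symmetrically, Proposition \ref{pro:anti-pre-Lie} asserts that $(A,\circ)$ is anti-pre-Lie iff $(A,[-,-])$ with $[-,-]$ from Eq.~(\ref{sub-adj}) is a Lie algebra and $(-\mathcal{L}_{\circ},A)$ is a representation of it. Combining these equivalences with the preceding proposition converts (1) into (2) and vice versa.

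Next, for (2) $\Leftrightarrow$ (3), I would apply the semi-direct product fact for transposed Poisson algebras: $(\mu,\rho,V)$ is a representation of $(A,\cdot,[-,-])$ iff $A\oplus V$ becomes a transposed Poisson algebra under the operations defined by Eqs.~(\ref{eq:SDASSO}) and (\ref{eq:SDLie}). Specializing to $V=A$, $\mu=\mathcal{L}_{\star}$, $\rho=-\mathcal{L}_{\circ}$, the product reduces to
\[
(x,a)\cdot(y,b)=(x\star y+y\star x,\ x\star b+y\star a),
\]
which is exactly Eq.~(\ref{Zinbiel axiom}), and the bracket reduces to
\[
[(x,a),(y,b)]=(x\circ y-y\circ x,\ y\circ a-x\circ b),
\]
which is exactly Eq.~(\ref{anti-pre-Lie axiom}). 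Hence the semi-direct product of (2) is literally the algebra described in (3).

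There is no substantial obstacle: all the content sits in the preceding proposition, in Propositions \ref{pro:Zinbiel} and \ref{pro:anti-pre-Lie}, and in the semi-direct product characterization for representations of transposed Poisson algebras. The only care required is in sign bookkeeping when substituting $\rho=-\mathcal{L}_{\circ}$ into Eq.~(\ref{eq:SDLie}) to recover Eq.~(\ref{anti-pre-Lie axiom}); this is routine. What the corollary really does is package the piecewise equivalences into a single clean three-way statement analogous to those obtained for the other seven algebraic structures in this section.
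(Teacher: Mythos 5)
Your proposal is correct and follows essentially the same route the paper intends: the corollary is stated as an immediate consequence of the preceding TZAO proposition, combined with Propositions \ref{pro:Zinbiel} and \ref{pro:anti-pre-Lie} to translate the representation conditions into the Zinbiel and anti-pre-Lie axioms, and with the semi-direct product characterization of representations of transposed Poisson algebras to pass between (2) and (3). Your sign check that Eq.~(\ref{eq:SDLie}) with $\rho=-\mathcal{L}_{\circ}$ reproduces Eq.~(\ref{anti-pre-Lie axiom}) is exactly the only verification needed.
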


\subsection{TALO algebras}

\begin{defi}
    A \textbf{TALO algebra} is a triple $(A,\star,[-,-])$, such that $(A,\star)$ is an anti-Zinbiel algebra, $(A,[-,-])$ is a Lie algebra, and Eq.~(\ref{TZLO}) holds.
\end{defi}

\begin{pro}
    Let $(A,\cdot,[-,-])$ be a transposed Poisson algebra and $(A,\star)$ be a compatible anti-Zinbiel algebra of $(A,\cdot)$. If $(-\mathcal{L}_{\star},\mathrm{ad},A)$ is a representation of $(A,\cdot,[-,-])$, then $(A,\star,[-,-])$ is a TALO algebra. Conversely, let $(A,\star,[-,-])$ be a TALO algebra and $(A,\cdot)$ be the sub-adjacent commutative associative algebra of $(A,\star)$. Then $(A,\cdot,[-,-])$ is a transposed  Poisson algebra with a representation $(-\mathcal{L}_{\star},\mathrm{ad},A)$. In this case, we say $(A,\cdot,[-,-])$ is the \textbf{sub-adjacent transposed Poisson algebra} of $(A,\star,[-,-])$, and $(A,\star,[-,-])$ is a \textbf{compatible  TALO algebra} of $(A,\cdot,[-,-])$.
\end{pro}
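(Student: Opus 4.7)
The plan is to mirror the argument of Proposition~\ref{pro:PZL} (the TZLO case) with careful attention to the sign flip coming from using $\mu=-\mathcal{L}_{\star}$ instead of $\mu=\mathcal{L}_{\star}$. Substituting $(\mu,\rho)=(-\mathcal{L}_{\star},\mathrm{ad})$ into the representation compatibility axioms~\eqref{eq:defi:TPA REP1} and~\eqref{eq:defi:TPA REP2} translates them into the working identities
\begin{equation*}
2x\star[y,v]=-[x\cdot y,v]+[y,x\star v],\qquad 2[x,y]\star v=[x,y\star v]-[y,x\star v],
\end{equation*}
where $x\cdot y=x\star y+y\star x$ throughout.

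For the forward direction, I would expand $0=2[x,y]\cdot z-[z\cdot x,y]-[x,z\cdot y]$ (the transposed Leibniz rule) as $0=2[x,y]\star z+2z\star[x,y]-[z\cdot x,y]-[x,z\cdot y]$ and substitute using the two translated identities exactly as in the TZLO proof. The same cancellations occur, but the sign flip leaves the weaker intermediate relation $[y,x\star z]+2[z\cdot x,y]=0$. Expanding $[z\cdot x,y]=[z\star x,y]+[x\star z,y]$, swapping $x\leftrightarrow z$ and eliminating yields $3[y,x\star z]=0$, hence $[y,x\star z]=0$ by the characteristic zero assumption. Substituting this back into the two translated identities (and noting that $[x\cdot y,v]=[x\star y,v]+[y\star x,v]=0$ once $[a,b\star c]=0$ is known) gives $x\star[y,v]=0$ and $[x,y]\star v=0$. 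Thus Eq.~\eqref{TZLO} holds and $(A,\star,[-,-])$ is a TALO algebra.

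For the converse, I would verify each requirement directly from Eq.~\eqref{TZLO}. Proposition~\ref{pro:anti-Zinbiel} already supplies that $(A,\cdot)$ is commutative associative and that $(-\mathcal{L}_{\star},A)$ is a representation of $(A,\cdot)$, while $(\mathrm{ad},A)$ is the adjoint representation of $(A,[-,-])$. The transposed Leibniz rule reduces to $0=0$ since both $2z\cdot[x,y]=2(z\star[x,y]+[x,y]\star z)$ and $[z\cdot x,y]+[x,z\cdot y]$ vanish identically by Eq.~\eqref{TZLO}, and the two translated compatibility identities collapse to $0=0$ in the same way. The main obstacle of the whole proof is the swap-and-eliminate maneuver in the forward direction: unlike the TZLO case, the direct cancellation no longer terminates with an isolated bracket $[y,x\star z]$, so one must exploit the $(x,z)$-symmetry together with characteristic zero to isolate it before the remaining triviality relations can be derived.
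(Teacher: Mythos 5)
Your proposal is correct and follows essentially the same route as the paper: the two translated representation identities, the transposed Leibniz rule expansion yielding the intermediate relation (the paper writes it as $[y,x\cdot z+z\star x]=0$, equivalent to your $[y,x\star z]+2[z\cdot x,y]=0$), the $x\leftrightarrow z$ symmetrization with division by $3$ to kill $[y,x\star z]$, and back-substitution to obtain Eq.~(\ref{TZLO}), with the converse checked directly as you describe.
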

\begin{proof}
    Since $(-\mathcal{L}_{\star},\mathrm{ad},A)$ is a representation of $(A,\cdot,[-,-])$, Eq.~(\ref{TZLO2}) and the following equation hold:
    \begin{equation}\label{TALO1}
        2x\star[y,z]=[z,x\star y+y\star x]+[y,x\star z],\;\;\forall x,y,z\in A.
    \end{equation}
    Thus for all $x,y,z\in A$, we have
    \begin{eqnarray*}
        0&&=2[x,y]\cdot z-[z\cdot x,y]-[x,z\cdot y]\\
        &&\overset{(\ref{TALO1}),(\ref{TZLO2})}{=}[x,y\star z]-[y,x\star z]-[z\cdot x,y]+[x,z\star y]-[z\cdot x,y]-[x,z\cdot y]\\
        &&=[y,x\cdot z+z\star x].
    \end{eqnarray*}
Hence we get
    \begin{equation}\label{TALO4}
        [y,x\cdot z+z\star x]=0,
    \end{equation}
    \begin{equation}\label{TALO5}
        [y,z\cdot x+x\star z]=0.
    \end{equation}
    Adding them together, we get
    \begin{equation}\label{TALO6}
        [y,z\cdot x]=0.
    \end{equation}
    Combining it with Eq.~(\ref{TALO4}), we get
    \begin{equation}
        [x,y\star z]=0.
    \end{equation}
Then by Eqs.~(\ref{TZLO2}) and (\ref{TALO1}), we get Eq.~(\ref{TZLO}).   Thus $(A,\star,[-,-])$ is a TALO algebra. The converse part is proved similarly.
\end{proof}

\begin{rmk}
    For a TALO algebra $(A,\star,[-,-])$, the sub-adjacent transposed Poisson algebra $(A,\cdot,[-,-])$ is also trivial in the sense of Eq.~(\ref{trivial TPA}). Hence in this case, it is a Poisson algebra.
\end{rmk}

Moreover  we get the following conclusion.

\begin{cor}
    Let $A$ be a vector space with  two bilinear operations $\star,\circ:A\otimes A\rightarrow A$. Then the following conditions are equivalent:
    \begin{enumerate}
        \item $(A,\star,[-,-])$ is a TALO algebra.
        \item The triple $(A,\cdot,[-,-])$  is a transposed Poisson algebra with a representation $(-\mathcal{L}_{\star},\mathrm{ad}$,
        $A)$, where $\cdot$ is defined by Eq.~(\ref{comm op}).
        \item There is a transposed Poisson algebra structure on $A\oplus A$ in which the commutative associative product $\cdot$ is defined by Eq.~(\ref{Zinbiel axiom}) and the Lie bracket $[-,-]$ is defined by Eq.~(\ref{Lie axiom}).
    \end{enumerate}
\end{cor}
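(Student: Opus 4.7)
The plan is to establish the three-way equivalence by showing (1) $\Leftrightarrow$ (2) and (2) $\Leftrightarrow$ (3); both reductions use machinery already built up in the paper, so no new identities need to be discovered.

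For (1) $\Leftrightarrow$ (2), I would simply repackage the immediately preceding proposition. If $(A,\star,[-,-])$ is a TALO algebra, then $(A,\star)$ is by definition anti-Zinbiel, so by Proposition \ref{pro:anti-Zinbiel} the operation $\cdot$ defined by Eq.~(\ref{comm op}) turns $A$ into a commutative associative algebra and $(-\mathcal{L}_{\star},A)$ is a representation of $(A,\cdot)$; combining this with the Lie algebra structure $(A,[-,-])$ and the compatibility established in the preceding proposition gives a transposed Poisson algebra $(A,\cdot,[-,-])$ with representation $(-\mathcal{L}_{\star},\mathrm{ad},A)$. Conversely, if (2) holds, then the representation of $(A,\cdot)$ by $-\mathcal{L}_{\star}$ together with $\cdot$ given by Eq.~(\ref{comm op}) forces $(A,\star)$ to be anti-Zinbiel by Proposition \ref{pro:anti-Zinbiel}, while the preceding proposition shows $(A,\star,[-,-])$ satisfies the TALO axioms.

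For (2) $\Leftrightarrow$ (3), I would invoke the general semi-direct product correspondence recalled at the beginning of this section: a triple $(\mu,\rho,V)$ is a representation of a transposed Poisson algebra $(A,\cdot,[-,-])$ if and only if $A\oplus V$ carries a transposed Poisson structure with multiplications given by Eqs.~(\ref{eq:SDASSO}) and (\ref{eq:SDLie}). Specializing $V=A$, $\mu=-\mathcal{L}_{\star}$, $\rho=\mathrm{ad}$, and substituting $\cdot$ from Eq.~(\ref{comm op}), the formula (\ref{eq:SDASSO}) becomes exactly Eq.~(\ref{anti-Zinbiel axiom}) and the formula (\ref{eq:SDLie}) becomes exactly Eq.~(\ref{Lie axiom}). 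Thus (2) and (3) are literally the same statement expressed in representation-theoretic versus semi-direct product language.

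There is no genuine obstacle: the entire content of the corollary is a bookkeeping exercise using Proposition \ref{pro:anti-Zinbiel}, the preceding proposition on TALO algebras, and the semi-direct product characterization of representations of transposed Poisson algebras. The only care required is to track signs correctly, since $\mu=-\mathcal{L}_{\star}$ carries a sign that must propagate into Eq.~(\ref{anti-Zinbiel axiom}) in the form of the $-x\star b-y\star a$ summand, matching the anti-Zinbiel semi-direct product formula rather than the Zinbiel one.
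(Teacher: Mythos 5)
Your proposal is correct and is essentially the argument the paper intends: the corollary is stated without a separate proof as an immediate consequence of the preceding TALO proposition, Proposition \ref{pro:anti-Zinbiel} (which, as you note, is what recovers the anti-Zinbiel structure from the representation $(-\mathcal{L}_{\star},A)$ in the direction (b)$\Rightarrow$(a)), and the semi-direct product characterization of representations of transposed Poisson algebras, which is exactly the bookkeeping you carry out. One point worth flagging: your sign-tracking correctly yields the product on $A\oplus A$ given by Eq.~(\ref{anti-Zinbiel axiom}), whereas item (c) of the corollary as printed cites Eq.~(\ref{Zinbiel axiom}); the printed reference is a typo (compare the analogous PAL corollary, and observe that with the Zinbiel formula item (c) would instead characterize TZLO algebras), so your proof establishes the corrected, intended statement rather than the literal one.
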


\subsection{TAPO algebras}

\begin{defi}
    A \textbf{TAPO algebra} is a triple $(A,\star,\circ)$, such that $(A,\star)$ is an anti-Zinbiel algebra, $(A,\circ)$ is a pre-Lie algebra, and Eq.~(\ref{PTPA2}) and the following equations hold:
    \begin{equation}\label{TAPO1}
        2x\star(y\circ z)=-(x\star y+y\star x)\circ z+y\circ(x\star z),
    \end{equation}
    \begin{equation}\label{TAPO3}
    (z\star x+x\star z)\circ y-(z\star y+y\star z)\circ x=0,
\end{equation}
for all $x,y,z\in A.$
\end{defi}

\begin{pro}
    Let $(A,\cdot,[-,-])$ be a transposed Poisson algebra, $(A,\star)$ be a compatible anti-Zinbiel algebra of $(A,\cdot)$ and $(A,\circ)$ be a compatible pre-Lie algebra of $(A,[-,-])$. If  $(-\mathcal{L}_{\star},\mathcal{L}_{\circ},A)$ is a representation of $(A,\cdot,[-,-])$, then $(A,\star,\circ)$ is a TAPO algebra. Conversely, let $(A,\star,\circ)$ be a TAPO algebra, $(A,\cdot)$ be the sub-adjacent commutative associative algebra and $(A,[-,-])$ be the sub-adjacent Lie algebra of $(A,\circ)$. Then $(A,\cdot,[-,-])$ is a transposed Poisson algebra with a representation $(-\mathcal{L}_{\star},\mathcal{L}_{\circ},A)$. In this case, we say $(A,\cdot,[-,-])$ is the \textbf{sub-adjacent transposed Poisson algebra} of $(A,\star,\circ)$, and $(A,\star,\circ)$ is a \textbf{compatible   TAPO algebra} of $(A,\cdot,[-,-])$.
\end{pro}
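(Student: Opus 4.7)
The plan is to follow the same template as the preceding propositions in this section (notably those for TCPO, TZPO and TZAO algebras). The strategy has two steps: first translate the representation axioms for $(-\mathcal{L}_{\star},\mathcal{L}_{\circ},A)$ into explicit identities in $\star$ and $\circ$, and then derive the remaining condition via the transposed Leibniz rule.

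For the forward direction, applying Eq.~(\ref{eq:defi:TPA REP1}) with $\mu=-\mathcal{L}_{\star}$ and $\rho=\mathcal{L}_{\circ}$ evaluated on $z$, and substituting $x\cdot y=x\star y+y\star x$, yields exactly Eq.~(\ref{TAPO1}); likewise, Eq.~(\ref{eq:defi:TPA REP2}) together with $[x,y]=x\circ y-y\circ x$ yields exactly Eq.~(\ref{PTPA2}). To extract Eq.~(\ref{TAPO3}), I would compute
\[
2[x,y]\cdot z-[z\cdot x,y]-[x,z\cdot y],
\]
expanding each $\cdot$ and each $[-,-]$ in terms of $\star$ and $\circ$, then apply Eq.~(\ref{PTPA2}) to rewrite $2(x\circ y-y\circ x)\star z$ and apply Eq.~(\ref{TAPO1}) to rewrite the two pieces of $2z\star(x\circ y-y\circ x)$. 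The terms of the form $u\circ(v\star w)$ and $u\star(v\circ w)$ cancel pairwise, leaving a nonzero multiple of $(z\star y+y\star z)\circ x-(z\star x+x\star z)\circ y$; since the starting expression vanishes by the transposed Leibniz rule, Eq.~(\ref{TAPO3}) follows.

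For the converse, the sub-adjacent commutative associative and Lie structures on $A$ exist by Propositions \ref{pro:anti-Zinbiel} and \ref{pro:pre-Lie}. The compatibility conditions Eqs.~(\ref{eq:defi:TPA REP1}) and (\ref{eq:defi:TPA REP2}) for $(-\mathcal{L}_{\star},\mathcal{L}_{\circ},A)$ are precisely Eqs.~(\ref{TAPO1}) and (\ref{PTPA2}) read backwards, so $(-\mathcal{L}_{\star},\mathcal{L}_{\circ},A)$ becomes a representation of $(A,\cdot,[-,-])$ as soon as $(A,\cdot,[-,-])$ is shown to be a transposed Poisson algebra. To verify the transposed Leibniz rule I would repeat the rewriting of the forward direction: after substitution via Eqs.~(\ref{PTPA2}) and (\ref{TAPO1}), the expression $2[x,y]\cdot z-[z\cdot x,y]-[x,z\cdot y]$ collapses to a multiple of the left-hand side of Eq.~(\ref{TAPO3}), which vanishes by hypothesis.

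The principal obstacle is the bookkeeping in the cancellation yielding Eq.~(\ref{TAPO3}): with roughly a dozen terms to track, one must carefully account for the minus sign entering through $-\mathcal{L}_{\star}$, the antisymmetric combinations $x\circ y-y\circ x$, and the non-commutativity of $\star$, so that $z\star x$ and $x\star z$ must be distinguished throughout. Beyond this routine computation, no conceptually new ingredient is required.
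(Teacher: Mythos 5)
Your proposal is correct and follows essentially the same route as the paper: translate the two representation axioms with $\mu=-\mathcal{L}_{\star}$, $\rho=\mathcal{L}_{\circ}$ into Eqs.~(\ref{TAPO1}) and (\ref{PTPA2}), then expand $2[x,y]\cdot z-[z\cdot x,y]-[x,z\cdot y]$ using these identities to reduce it to $2(z\cdot y)\circ x-2(z\cdot x)\circ y$, which yields Eq.~(\ref{TAPO3}) in the forward direction and the transposed Leibniz rule in the converse. This matches the paper's proof, whose converse is likewise handled by the same computation run backwards.
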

\begin{proof}
    Since $(-\mathcal{L}_{\star},\mathcal{L}_{\circ},A)$ is a representation of $(A,\cdot,[-,-])$, Eqs.~(\ref{PTPA2}) and (\ref{TAPO1}) hold.
    Thus for all $x,y,z\in A$, we have
    \begin{eqnarray*}
        0&&=2[x,y]\cdot z-[z\cdot x,y]-[x,z\cdot y]\\
        &&\overset{(\ref{TAPO1}),(\ref{PTPA2})}{=}x\circ(y\star z)-y\circ(x\star z)-(z\cdot x)\circ y+x\circ(z\star y)+(z\cdot y)\circ x\\
        &&\ \ \ \ -y\circ(z\star x)-(z\cdot x)\circ y+y\circ(z\cdot x)-x\circ(z\cdot y)+(z\cdot y)\circ x\\
        &&=2(z\cdot y)\circ x-2(z\cdot x)\circ y.
    \end{eqnarray*}
Hence Eq.~(\ref{TAPO3}) holds, and thus $(A,\star,\circ)$ is a TAPO algebra. The converse  part is proved similarly.
\end{proof}

Hence  we get the following conclusion.

\begin{cor}
    Let $A$ be a vector space with  two bilinear operations $\star,\circ:A\otimes A\rightarrow A$. Then the following conditions are equivalent:
    \begin{enumerate}
        \item $(A,\star,\circ)$ is a TAPO algebra.
        \item The triple $(A,\cdot,[-,-])$   is a transposed Poisson algebra with a representation $(-\mathcal{L}_{\star},\mathcal{L}_{\circ}$,
        $A)$, where $\cdot$ and $[-,-]$ are respectively defined by Eqs.~(\ref{comm op}) and (\ref{sub-adj}).
        \item There is a transposed Poisson algebra structure on $A\oplus A$
        in which the commutative associative product $\cdot$ is defined by
        Eq.~(\ref{anti-Zinbiel axiom}) and the Lie bracket $[-,-]$ is defined by Eq.~(\ref{pre-Lie axiom}).
    \end{enumerate}
\end{cor}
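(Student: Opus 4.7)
The plan is to deduce all three equivalences by chaining previously established results, with essentially no new computation required. The equivalence of (1) and (2) is exactly the content of the preceding proposition: a triple $(A,\star,\circ)$ with $(A,\star)$ anti-Zinbiel and $(A,\circ)$ pre-Lie is a TAPO algebra precisely when the associated $(A,\cdot,[-,-])$, where $\cdot$ and $[-,-]$ are given by Eqs.~(\ref{comm op}) and (\ref{sub-adj}), is a transposed Poisson algebra admitting $(-\mathcal{L}_{\star},\mathcal{L}_{\circ},A)$ as a representation. So (1) $\Leftrightarrow$ (2) follows by citation of the preceding proposition, with the only care being that one must verify that $(A,\star)$ and $(A,\circ)$ do give anti-Zinbiel and pre-Lie structures in each direction, which is built into the hypotheses.

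For (2) $\Leftrightarrow$ (3), the plan is to unwind the semi-direct product construction for representations of transposed Poisson algebras. Recall that $(\mu,\rho,V)$ is a representation of a transposed Poisson algebra $(A,\cdot,[-,-])$ if and only if $A\oplus V$ carries the semi-direct product transposed Poisson algebra structure with multiplications given by Eqs.~(\ref{eq:SDASSO}) and (\ref{eq:SDLie}). Applying this with $V=A$, $\mu=-\mathcal{L}_{\star}$, and $\rho=\mathcal{L}_{\circ}$, Eq.~(\ref{eq:SDASSO}) collapses to Eq.~(\ref{anti-Zinbiel axiom}) — this is precisely the (1) $\Leftrightarrow$ (3) content of Proposition \ref{pro:anti-Zinbiel}, which identifies the anti-Zinbiel axiom with the semi-direct product of $(A,\cdot)$ along $-\mathcal{L}_{\star}$. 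Similarly, Eq.~(\ref{eq:SDLie}) collapses to Eq.~(\ref{pre-Lie axiom}) by the analogous (1) $\Leftrightarrow$ (3) piece of Proposition \ref{pro:pre-Lie}, which identifies the pre-Lie axiom with the semi-direct product of $(A,[-,-])$ along $\mathcal{L}_{\circ}$. Thus a transposed Poisson algebra structure on $A\oplus A$ whose commutative associative product is Eq.~(\ref{anti-Zinbiel axiom}) and whose Lie bracket is Eq.~(\ref{pre-Lie axiom}) is exactly a semi-direct product $A\ltimes_{-\mathcal{L}_{\star},\mathcal{L}_{\circ}}A$, giving the remaining equivalence.

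There is no real obstacle here: the corollary is essentially bookkeeping built on top of the preceding proposition and the two standard semi-direct product characterizations. The only point that requires vigilance is the sign convention, namely that the anti-Zinbiel semi-direct product in Eq.~(\ref{anti-Zinbiel axiom}) corresponds to the representation $-\mathcal{L}_{\star}$ (not $\mathcal{L}_{\star}$), which is consistent with both Proposition \ref{pro:anti-Zinbiel} and the preceding proposition defining TAPO algebras. Once this sign matching is verified, the three-way equivalence is immediate.
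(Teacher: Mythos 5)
Your argument is correct and is essentially the paper's intended route: the corollary carries no separate proof precisely because it is bookkeeping on top of the preceding TAPO proposition together with the semi-direct product characterization of representations of transposed Poisson algebras, with the sign convention $\mu=-\mathcal{L}_{\star}$, $\rho=\mathcal{L}_{\circ}$ matching Eqs.~(\ref{anti-Zinbiel axiom}) and (\ref{pre-Lie axiom}). The only wording to tighten is that in the direction (2)$\Rightarrow$(1) the anti-Zinbiel and pre-Lie properties are not literally hypotheses but are recovered from the component representations via Propositions \ref{pro:anti-Zinbiel} and \ref{pro:pre-Lie}, which you in effect already invoke.
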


\subsection{TAAO algebras}

\begin{defi}
    A \textbf{TAAO algebra} is a triple $(A,\star,\circ)$, such that $(A,\star)$ is an anti-Zinbiel algebra, $(A,\circ)$ is an anti-pre-Lie algebra, and Eqs.~(\ref{TZAO2}),(\ref{TAPO1}) and the following equation hold:
        \begin{equation}\label{TAAO3}
        (z\star x+x\star z)\circ y-(z\star y+y\star z)\circ x+x\circ(y\star z)-y\circ(x\star z)=0,\;\;\forall x,y,z\in A.
    \end{equation}
\end{defi}

\begin{pro}
    Let $(A,\cdot,[-,-])$ be a transposed Poisson algebra, $(A,\star)$ be a compatible anti-Zinbiel algebra of $(A,\cdot)$ and $(A,\circ)$ be a compatible anti-pre-Lie algebra of $(A,[-,-])$. If $(-\mathcal{L}_{\star},-\mathcal{L}_{\circ},A)$ is a representation of $(A,\cdot,[-,-])$,
     then $(A,\star,\circ)$ is a TAAO algebra.
    Conversely, let $(A,\star,\circ)$ be a TAAO algebra, $(A,\cdot)$ be the sub-adjacent commutative associative algebra of $(A,\star)$ and $(A,[-,-])$ be the sub-adjacent Lie algebra of $(A,\circ)$. Then $(A,\cdot,[-,-])$ is a transposed Poisson algebra with a representation $(-\mathcal{L}_{\star},-\mathcal{L}_{\circ},A)$. In this case, we say $(A,\cdot,[-,-])$ is the \textbf{sub-adjacent transposed Poisson algebra} of $(A,\star,\circ)$, and $(A,\star,\circ)$ is a \textbf{compatible TAAO algebra} of $(A,\cdot,[-,-])$.
\end{pro}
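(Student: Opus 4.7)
The plan is to follow the same template established for the TAPO, TZAO, PAA and companion propositions earlier in the section. For the forward direction, I would first read off from the assumption that $(-\mathcal{L}_{\star},-\mathcal{L}_{\circ},A)$ is a representation of the transposed Poisson algebra $(A,\cdot,[-,-])$: applying Eq.~(\ref{eq:defi:TPA REP1}) to the pair of linear maps $(-\mathcal{L}_{\star},-\mathcal{L}_{\circ})$ yields exactly Eq.~(\ref{TAPO1}), while Eq.~(\ref{eq:defi:TPA REP2}) yields Eq.~(\ref{TZAO2}). Since $(A,\star)$ is anti-Zinbiel and $(A,\circ)$ is anti-pre-Lie by hypothesis, it only remains to produce Eq.~(\ref{TAAO3}).

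To extract Eq.~(\ref{TAAO3}), the idea is to start from the transposed Leibniz rule
\begin{equation*}
2[x,y]\cdot z - [z\cdot x, y] - [x, z\cdot y] = 0,
\end{equation*}
expand each commutator $[-,-]$ via Eq.~(\ref{sub-adj}) and each product $\cdot$ via Eq.~(\ref{comm op}) to obtain a sum of twelve $\star$/$\circ$-terms, and then apply Eq.~(\ref{TAPO1}) to replace the pieces of the form $x\star(y\circ z)$ and Eq.~(\ref{TZAO2}) to replace the pieces of the form $(x\circ y - y\circ x)\star z$. Based on the analogous computation in the TAPO proof, I expect the residual terms to collapse to $2$ times Eq.~(\ref{TAAO3}), so dividing by $2$ gives the desired identity.

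For the converse, I would reverse this procedure: starting from a TAAO algebra $(A,\star,\circ)$, Propositions~\ref{pro:anti-Zinbiel} and \ref{pro:anti-pre-Lie} immediately guarantee that the sub-adjacent operations $\cdot$ and $[-,-]$ make $(A,\cdot)$ a commutative associative algebra and $(A,[-,-])$ a Lie algebra, with $(-\mathcal{L}_{\star},A)$ and $(-\mathcal{L}_{\circ},A)$ representations of each respectively. The defining identities Eqs.~(\ref{TAPO1}) and (\ref{TZAO2}) translate back into Eqs.~(\ref{eq:defi:TPA REP1}) and (\ref{eq:defi:TPA REP2}) for the triple $(-\mathcal{L}_{\star},-\mathcal{L}_{\circ},A)$. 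The only nontrivial check is that $(A,\cdot,[-,-])$ satisfies the transposed Leibniz rule; this is obtained by running the twelve-term expansion of $2[x,y]\cdot z - [z\cdot x, y] - [x, z\cdot y]$ in the forward direction and verifying that the result reduces via Eqs.~(\ref{TAPO1}), (\ref{TZAO2}), and (\ref{TAAO3}) to zero.

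The main obstacle I anticipate is purely bookkeeping: carefully tracking the twelve mixed $\star$/$\circ$ terms in the expansion and correctly pairing them so that the substitutions from Eqs.~(\ref{TAPO1}) and (\ref{TZAO2}) telescope into a clean multiple of Eq.~(\ref{TAAO3}). Once the symmetry patterns are matched exactly as in the TAPO and PAA proofs above, the argument becomes a near-direct transcription, so no new conceptual ingredient is required beyond the representation-theoretic reformulation already in place.
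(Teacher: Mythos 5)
Your proposal is correct and follows essentially the same route as the paper: read off Eqs.~(\ref{TAPO1}) and (\ref{TZAO2}) from the representation axioms for $(-\mathcal{L}_{\star},-\mathcal{L}_{\circ},A)$, expand $2[x,y]\cdot z-[z\cdot x,y]-[x,z\cdot y]$ via Eqs.~(\ref{comm op}) and (\ref{sub-adj}), and substitute those two identities so the expansion collapses to $\pm 2$ times the left-hand side of Eq.~(\ref{TAAO3}), with the converse obtained by running the same computation backwards (the paper indeed gets $-2$ times that expression, but since it is set to zero the sign is immaterial).
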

\begin{proof}
    Since $(-\mathcal{L}_{\star},-\mathcal{L}_{\circ},A)$ is a representation of $(A,\cdot,[-,-])$, Eqs.~(\ref{TZAO2}) and (\ref{TAPO1}) hold.
   Thus for all $x,y,z\in A$, we have
    \begin{eqnarray*}
        0&&=2[x,y]\cdot z-[z\cdot x,y]-[x,z\cdot y]\\
        &&\overset{(\ref{TZAO2}),(\ref{TAPO1})}{=}y\circ(x\star z)-x\circ(y\star z)-(z\cdot x)\circ y+x\circ(z\star y)+(z\cdot y)\circ x\\
        &&\ \ \ \ -y\circ(z\star x)-(z\cdot x)\circ y+y\circ(z\cdot x)-x\circ(z\cdot y)+(z\cdot y)\circ x\\
        &&=-2(z\cdot x)\circ y+2(z\cdot y)\circ x-2x\circ(y\star z)+2y\circ(x\star z).
    \end{eqnarray*}
Hence Eq.~(\ref{TAAO3}) holds, and thus $(A,\star,\circ)$ is a TAAO algebra. The converse part is proved similarly.
\end{proof}

Hence  we get the following conclusion.

\begin{cor}
    Let $A$ be a vector space with  two bilinear operations $\star,\circ:A\otimes A\rightarrow A$. Then the following conditions are equivalent:
    \begin{enumerate}
        \item $(A,\star,\circ)$ is a   TAAO algebra.
        \item The triple $(A,\cdot,[-,-])$ is a transposed Poisson algebra with a representation $(-\mathcal{L}_{\star},-\mathcal{L}_{\circ}$,
        $A)$, where $\cdot$ and $[-,-]$ are respectively defined by Eqs.~(\ref{comm op}) and (\ref{sub-adj}).
        \item There is a transposed Poisson algebra structure on $A\oplus A$ in which the commutative associative product $\cdot$ is defined by Eq.~(\ref{anti-Zinbiel axiom}) and the Lie bracket $[-,-]$ is defined by Eq.~(\ref{anti-pre-Lie axiom}).
    \end{enumerate}
\end{cor}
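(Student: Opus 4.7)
The proof is a three-way equivalence that reduces almost entirely to the preceding proposition on TAAO algebras together with the semi-direct product characterization of representations. Below is the plan.

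\textbf{Step 1: (1) $\Leftrightarrow$ (2).} Suppose $(A,\star,\circ)$ is a TAAO algebra. Since $(A,\star)$ is an anti-Zinbiel algebra, Proposition~\ref{pro:anti-Zinbiel} yields that $(A,\cdot)$ defined by Eq.~(\ref{comm op}) is a commutative associative algebra and $(-\mathcal{L}_{\star},A)$ is a representation of it. Similarly, since $(A,\circ)$ is an anti-pre-Lie algebra, Proposition~\ref{pro:anti-pre-Lie} yields that $(A,[-,-])$ defined by Eq.~(\ref{sub-adj}) is a Lie algebra (in particular $(A,\circ)$ is a compatible anti-pre-Lie algebra of $(A,[-,-])$) and $(-\mathcal{L}_{\circ},A)$ is a representation of it. The preceding proposition on TAAO algebras then gives that $(A,\cdot,[-,-])$ is a transposed Poisson algebra with representation $(-\mathcal{L}_{\star},-\mathcal{L}_{\circ},A)$. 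Conversely, if (2) holds, the same preceding proposition (its converse direction) delivers the TAAO identities, and Propositions~\ref{pro:anti-Zinbiel} and~\ref{pro:anti-pre-Lie} guarantee that $(A,\star)$ is anti-Zinbiel and $(A,\circ)$ is anti-pre-Lie.

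\textbf{Step 2: (2) $\Leftrightarrow$ (3).} Use the semi-direct product characterization: $(\mu,\rho,V)$ is a representation of a transposed Poisson algebra $(A,\cdot,[-,-])$ if and only if $A\oplus V$ carries a transposed Poisson algebra structure $A\ltimes_{\mu,\rho}V$ whose commutative associative product and Lie bracket are given by Eqs.~(\ref{eq:SDASSO}) and (\ref{eq:SDLie}). Taking $V=A$, $\mu=-\mathcal{L}_{\star}$, $\rho=-\mathcal{L}_{\circ}$, and substituting $x\cdot y=x\star y+y\star x$ and $[x,y]=x\circ y-y\circ x$, the semi-direct product multiplication becomes
\begin{equation*}
(x,a)\cdot(y,b)=(x\star y+y\star x,\,-x\star b-y\star a),
\end{equation*}
which is exactly Eq.~(\ref{anti-Zinbiel axiom}); and the semi-direct product bracket becomes
\begin{equation*}
[(x,a),(y,b)]=(x\circ y-y\circ x,\,y\circ a-x\circ b),
\end{equation*}
which is exactly Eq.~(\ref{anti-pre-Lie axiom}). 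Therefore (2) is equivalent to saying that $A\oplus A$, equipped with these two operations, is a transposed Poisson algebra, which is (3).

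\textbf{Main obstacle.} There is no real obstacle: the corollary is a packaging of the preceding proposition (for (1)$\Leftrightarrow$(2)) with the standard semi-direct product formalism for representations of transposed Poisson algebras (for (2)$\Leftrightarrow$(3)), completely parallel to the corresponding corollaries for the other seven algebraic structures already treated in this section. The only point requiring some care is to verify that substituting $\mu=-\mathcal{L}_{\star}$ and $\rho=-\mathcal{L}_{\circ}$ into the generic semi-direct product formulas reproduces exactly Eqs.~(\ref{anti-Zinbiel axiom}) and (\ref{anti-pre-Lie axiom}), but this is a direct computation.
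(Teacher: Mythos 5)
Your proposal is correct and follows exactly the route the paper intends: the corollary is stated as an immediate consequence of the preceding proposition on TAAO algebras (giving (1)$\Leftrightarrow$(2), with Propositions \ref{pro:anti-Zinbiel} and \ref{pro:anti-pre-Lie} supplying the anti-Zinbiel and anti-pre-Lie hypotheses), combined with the semi-direct product characterization of representations of transposed Poisson algebras, where substituting $\mu=-\mathcal{L}_{\star}$, $\rho=-\mathcal{L}_{\circ}$ into Eqs.~(\ref{eq:SDASSO}) and (\ref{eq:SDLie}) reproduces Eqs.~(\ref{anti-Zinbiel axiom}) and (\ref{anti-pre-Lie axiom}), giving (2)$\Leftrightarrow$(3). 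No gaps; this matches the paper's (implicit) argument.
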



\subsection{Summary}\

We summarize some facts on the 8 algebraic structures in the previous subsections respectively corresponding to the mixed splittings of operations of transposed Poisson algebras in terms of representations of transposed
Poisson algebras on the  spaces themselves in Table 2.

\begin{small}
    \begin{table}[!t]
        \caption{Splittings of transposed Poisson algebras on the spaces themselves}
        %
        %
        \begin{tabular}{p{2cm}p{2.5cm}p{7cm}}
            \hline
            Algebras & Notations & Representations of transposed
            Poisson

            algebras on  the spaces themselves  \\
            TCPO   & $(A,\cdot,\circ)$  & $(\mathcal{L}_{\cdot},\mathcal{L}_{\circ},A)$ \\

            TCAO  & $(A,\cdot,\circ)$ & $(\mathcal{L}_{\cdot},-\mathcal{L}_{\circ},A)$  \\

            TZLO & $(A,\star,[-,-])$ & $(\mathcal{L}_{\star},\mathrm{ad},A)$ \\

            TZPO & $(A,\star,\circ)$ & $(\mathcal{L}_{\star},\mathcal{L}_{\circ},A)$  \\

            TZAO & $(A,\star,\circ)$ & $(\mathcal{L}_{\star},-\mathcal{L}_{\circ},A)$ \\

            TALO & $(A,\star,[-,-])$ & $(-\mathcal{L}_{\star},\mathrm{ad},A)$ \\

            TAPO & $(A,\star,\circ)$ & $(-\mathcal{L}_{\star},\mathcal{L}_{\circ},A)$  \\

            TAAO & $(A,\star,\circ)$ & $(-\mathcal{L}_{\star},-\mathcal{L}_{\circ},A)$  \\
            \hline
        \end{tabular}
    \end{table}
\end{small}

\section{Mixed splittings of operations of transposed Poisson algebras in terms of representations on the dual spaces
and related algebraic structures}\label{sec:4}\

We introduce 8 algebraic structures respectively corresponding to the mixed splitting of  the commutative associative
 products and Lie brackets of transposed Poisson algebras interlacedly in three manners: the classical splitting, the second splitting and the un-splitting, in terms of representations of transposed Poisson algebras on the dual spaces.
The relationships between
transposed Poisson algebras with nondegenerate bilinear forms satisfying certain conditions and  some algebraic structures are given.

Recall that for a Lie algebra $(\mathfrak{g},[-,-])$, a pair $(\rho,V)$ is a representation if and only if $(\rho^{*},V^{*})$ is a representation. Hence by Propositions \ref{pro:pre-Lie} and \ref{pro:anti-pre-Lie}, we have the following equivalent characterizations of pre-Lie algebras and anti-pre-Lie algebras in terms of the representations of Lie algebras on the dual spaces respectively.

\begin{pro}\label{pro:pre-Lie2}
    Let $A$ be a vector space together with a bilinear operation $\circ:A\otimes A\rightarrow A$. Then the following conditions are equivalent:
    \begin{enumerate}
        \item $(A,\circ)$ is a pre-Lie algebra.
        \item $(A,\circ)$ is a Lie-admissible algebra such that $(\mathcal{L}^{*}_{\circ},A^{*})$ is a representation of the sub-adjacent Lie algebra $(A,[-,-])$.
        \item There is a Lie algebra structure on $A\oplus A^{*}$ defined by
        \begin{equation}\label{pre-Lie axiom2}
            [(x,a^{*}),(y,b^{*})]=(x\circ y-y\circ x, \mathcal{L}^{*}_{\circ}(x)b^{*}-\mathcal{L}^{*}_{\circ}(y)a^{*}),\;\;\forall x,y\in A,a^{*},b^{*}\in A^{*}.
        \end{equation}
    \end{enumerate}
\end{pro}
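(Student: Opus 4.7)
The plan is to reduce everything to Proposition \ref{pro:pre-Lie} by exploiting the elementary fact, recalled in Section \ref{sec:1}, that for a Lie algebra $(\mathfrak{g},[-,-])$ a pair $(\rho,V)$ is a representation if and only if its dual $(\rho^{*},V^{*})$ is. Indeed, by Proposition \ref{pro:pre-Lie}, condition (1) is equivalent to $(A,\circ)$ being Lie-admissible with $(\mathcal{L}_{\circ},A)$ a representation of the sub-adjacent Lie algebra $(A,[-,-])$. Applying the dualization fact to $(\rho,V)=(\mathcal{L}_{\circ},A)$, the latter is equivalent to $(A,\circ)$ being Lie-admissible with $(\mathcal{L}^{*}_{\circ},A^{*})$ a representation of $(A,[-,-])$. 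This establishes (1)$\Leftrightarrow$(2).

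For (2)$\Leftrightarrow$(3), I would invoke the general semi-direct product description \eqref{eq:SDLie}. Given $V=A^{*}$ and the candidate linear map $\mathcal{L}^{*}_{\circ}:A\to \mathfrak{gl}(A^{*})$, the bracket \eqref{pre-Lie axiom2} on $A\oplus A^{*}$ is precisely the semi-direct product bracket \eqref{eq:SDLie} on $A\ltimes_{\mathcal{L}^{*}_{\circ}}A^{*}$ in the case that the bracket on $A$ itself is the commutator $[x,y]=x\circ y-y\circ x$. Thus \eqref{pre-Lie axiom2} defines a Lie algebra on $A\oplus A^{*}$ exactly when two things happen simultaneously: $(A,[-,-])$ with $[x,y]=x\circ y-y\circ x$ is a Lie algebra (i.e.\ $(A,\circ)$ is Lie-admissible), and $(\mathcal{L}^{*}_{\circ},A^{*})$ is a representation of this Lie algebra. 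That is precisely condition (2).

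The only mildly nontrivial point will be to make sure that the semi-direct product criterion genuinely forces Lie-admissibility of $(A,\circ)$, and not merely the representation axioms on $A^{*}$. This is handled by restricting the Jacobi identity for \eqref{pre-Lie axiom2} to triples $(x,0),(y,0),(z,0)\in A\oplus A^{*}$, which collapses to the Jacobi identity for the commutator $[x,y]=x\circ y-y\circ x$ on $A$; conversely, evaluating the Jacobi identity on triples of the form $(x,0),(y,0),(0,c^{*})$ recovers the representation axiom $\mathcal{L}^{*}_{\circ}([x,y])=\mathcal{L}^{*}_{\circ}(x)\mathcal{L}^{*}_{\circ}(y)-\mathcal{L}^{*}_{\circ}(y)\mathcal{L}^{*}_{\circ}(x)$ on $A^{*}$. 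I expect no real obstacle; the argument is essentially a dualized copy of the proof of Proposition \ref{pro:pre-Lie}, and both implications run in parallel.
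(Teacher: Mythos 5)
Your proposal is correct and follows essentially the same route as the paper, which deduces this proposition from Proposition \ref{pro:pre-Lie} together with the fact that $(\rho,V)$ is a representation of a Lie algebra if and only if $(\rho^{*},V^{*})$ is, the equivalence with (3) being the standard semi-direct product criterion for Eq.~\eqref{eq:SDLie}. Your extra check that the Jacobi identity for \eqref{pre-Lie axiom2} on triples from $A$ forces Lie-admissibility is exactly the routine verification the paper leaves implicit.
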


\begin{pro}\label{pro:anti-pre-Lie2}
    Let $A$ be a vector space together with a bilinear operation $\circ:A\otimes A\rightarrow A$. Then the following conditions are equivalent:
    \begin{enumerate}
        \item $(A,\circ)$ is an anti-pre-Lie algebra.
        \item $(A,\circ)$ is a Lie-admissible algebra such that $(-\mathcal{L}^{*}_{\circ},A^{*})$ is a representation of the sub-adjacent Lie algebra $(A,[-,-])$.
        \item There is a Lie algebra structure on $A\oplus A^{*}$ defined by
        \begin{equation}\label{anti-pre-Lie axiom2}
            [(x,a^{*}),(y,b^{*})]=(x\circ y-y\circ x, \mathcal{L}^{*}_{\circ}(y)a^{*}-\mathcal{L}^{*}_{\circ}(x)b^{*}),\;\;\forall x,y\in A,a^{*},b^{*}\in A^{*}.
        \end{equation}
    \end{enumerate}
\end{pro}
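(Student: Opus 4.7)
The plan is to reduce this proposition to Proposition \ref{pro:anti-pre-Lie} by dualizing the representation. For the equivalence (1) $\Leftrightarrow$ (2), I would first recall from Proposition \ref{pro:anti-pre-Lie} that (1) is equivalent to $(A,\circ)$ being Lie-admissible together with $(-\mathcal{L}_{\circ},A)$ being a representation of the sub-adjacent Lie algebra $(A,[-,-])$. Using the sign convention $\langle\rho^{*}(x)v^{*},u\rangle=-\langle v^{*},\rho(x)u\rangle$ recalled in Section \ref{sec:1}, a direct check gives the identity $(-\mathcal{L}_{\circ})^{*}=-\mathcal{L}^{*}_{\circ}$. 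The general fact (recalled in Section \ref{sec:1}) that $(\rho,V)$ is a representation of a Lie algebra if and only if $(\rho^{*},V^{*})$ is (this functions as an \emph{iff} under the finite-dimensional hypothesis of the paper, since $(\rho^{*})^{*}=\rho$ canonically) then transports the representation condition on $A$ into the representation condition on $A^{*}$, yielding (2).

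For the equivalence (2) $\Leftrightarrow$ (3), I would invoke the general semi-direct product characterization of representations around Eq.~(\ref{eq:SDLie}): a linear map $\rho:\mathfrak{g}\rightarrow\mathfrak{gl}(V)$ gives a representation of a Lie algebra $(\mathfrak{g},[-,-])$ if and only if the formula $[(x,u),(y,v)]=([x,y],\rho(x)v-\rho(y)u)$ equips $\mathfrak{g}\oplus V$ with a Lie algebra structure. Specializing with $\mathfrak{g}=A$, $V=A^{*}$, $\rho=-\mathcal{L}^{*}_{\circ}$, and $[x,y]=x\circ y-y\circ x$ (coming from Lie-admissibility), the second component becomes $-\mathcal{L}^{*}_{\circ}(x)b^{*}+\mathcal{L}^{*}_{\circ}(y)a^{*}=\mathcal{L}^{*}_{\circ}(y)a^{*}-\mathcal{L}^{*}_{\circ}(x)b^{*}$, matching Eq.~(\ref{anti-pre-Lie axiom2}) verbatim. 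Conversely, if Eq.~(\ref{anti-pre-Lie axiom2}) defines a Lie algebra structure, then projecting onto the first component forces $(A,\circ)$ to be Lie-admissible, after which the second component has exactly the semi-direct product shape, and so $(-\mathcal{L}^{*}_{\circ},A^{*})$ is a representation.

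There is no substantive obstacle; this is entirely a matter of dual/sign bookkeeping, parallel to the proof of Proposition \ref{pro:pre-Lie2}. The only slightly delicate step is verifying $(-\mathcal{L}_{\circ})^{*}=-\mathcal{L}^{*}_{\circ}$ from the definition of $\rho^{*}$, and keeping track of the sign when rewriting $-\mathcal{L}^{*}_{\circ}(x)b^{*}+\mathcal{L}^{*}_{\circ}(y)a^{*}$ in the form appearing in Eq.~(\ref{anti-pre-Lie axiom2}).
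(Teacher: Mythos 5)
Your proposal is correct and follows the paper's own route: the paper derives Proposition \ref{pro:anti-pre-Lie2} directly from Proposition \ref{pro:anti-pre-Lie} via the fact that $(\rho,V)$ is a representation of a Lie algebra if and only if $(\rho^{*},V^{*})$ is, together with the semi-direct product characterization around Eq.~(\ref{eq:SDLie}). Your sign check $(-\mathcal{L}_{\circ})^{*}=-\mathcal{L}^{*}_{\circ}$ is exactly the bookkeeping implicit in the paper's one-line justification, so there is nothing to add.
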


Similarly, for a commutative associative algebra $(A,\cdot)$, $(\mu,V)$ is a representation if and only if $(-\mu^{*},V^{*})$ is a representation. Hence by Propositions \ref{pro:Zinbiel} and \ref{pro:anti-Zinbiel}, we have the following equivalent characterization of Zinbiel algebras and anti-Zinbiel algebras in terms of the representations of commutative associative algebras on the dual spaces respectively.

\begin{pro}\label{pro:Zinbiel2}
    Let $A$ be a vector space together with a bilinear operation $\star:A\otimes A\rightarrow A$. Then the following conditions are equivalent:
    \begin{enumerate}
        \item $(A,\star)$ is a Zinbiel algebra.
        \item $(A,\cdot)$ with the bilinear operation defined by Eq.~(\ref{comm op}) is a commutative associative algebra, and
        $(-\mathcal{L}^{*}_{\star},A^{*})$ is a representation of $(A,\cdot)$.
        \item There is a commutative associative algebra structure on $A\oplus A^{*}$ defined by
        \begin{equation}\label{Zinbiel axiom2}
            (x,a^{*})\cdot(y,b^{*})=(x\star y+y\star x,-\mathcal{L}^{*}_{\star}(x)b^{*}-\mathcal{L}^{*}_{\star}(y)a^{*}),\;\;\forall x,y\in A,a^{*},b^{*}\in A^{*}.
        \end{equation}
    \end{enumerate}
\end{pro}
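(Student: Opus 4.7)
The plan is to deduce this proposition as a straightforward dualization of Proposition \ref{pro:Zinbiel}, using the duality principle recalled in Section \ref{sec:1}: for a commutative associative algebra $(A,\cdot)$, a pair $(\mu,V)$ is a representation if and only if $(-\mu^{*},V^{*})$ is a representation (and applying this construction twice, after identifying $A^{**}$ with $A$, returns to $\mu$). Applied to $V=A$ and $\mu=\mathcal{L}_{\star}$, this means $(\mathcal{L}_{\star},A)$ is a representation of $(A,\cdot)$ precisely when $(-\mathcal{L}^{*}_{\star},A^{*})$ is a representation of $(A,\cdot)$.

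For the equivalence (1) $\Leftrightarrow$ (2), I would invoke Proposition \ref{pro:Zinbiel}(2), which states that $(A,\star)$ is a Zinbiel algebra if and only if $(A,\cdot)$, with $\cdot$ defined by Eq.~(\ref{comm op}), is commutative associative and $(\mathcal{L}_{\star},A)$ is a representation of $(A,\cdot)$. The duality observation above immediately converts this into the condition stated in (2).

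For (2) $\Leftrightarrow$ (3), I would verify that the multiplication in Eq.~(\ref{Zinbiel axiom2}) on $A\oplus A^{*}$ is precisely the semi-direct product multiplication given by Eq.~(\ref{eq:SDASSO}) for the pair $(-\mathcal{L}^{*}_{\star},A^{*})$. Indeed, setting $\mu=-\mathcal{L}^{*}_{\star}$ and $V=A^{*}$ in Eq.~(\ref{eq:SDASSO}) yields $(x,a^{*})\cdot(y,b^{*})=(x\cdot y,\,-\mathcal{L}^{*}_{\star}(x)b^{*}-\mathcal{L}^{*}_{\star}(y)a^{*})$, and substituting $x\cdot y=x\star y+y\star x$ recovers Eq.~(\ref{Zinbiel axiom2}). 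Since $A\ltimes_{-\mathcal{L}^{*}_{\star}}A^{*}$ is a commutative associative algebra if and only if $(-\mathcal{L}^{*}_{\star},A^{*})$ is a representation of $(A,\cdot)$, condition (3) is equivalent to (2).

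There is no substantive obstacle: the statement is a formal consequence of Proposition \ref{pro:Zinbiel} combined with the fundamental duality for representations of commutative associative algebras, so no fresh computation involving the Zinbiel identity is needed. The only minor verification, namely matching Eq.~(\ref{Zinbiel axiom2}) with the semi-direct product formula, is routine.
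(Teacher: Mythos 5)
Your proposal is correct and matches the paper's own route: the paper obtains Proposition \ref{pro:Zinbiel2} precisely by combining Proposition \ref{pro:Zinbiel} with the fact that $(\mu,V)$ is a representation of a commutative associative algebra if and only if $(-\mu^{*},V^{*})$ is, together with the semi-direct product characterization from Eq.~(\ref{eq:SDASSO}). No further comment is needed.
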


\begin{pro}\label{pro:anti-Zinbiel2}
    Let $A$ be a vector space together with a bilinear operation $\star:A\otimes A\rightarrow A$. Then the following conditions are equivalent:
    \begin{enumerate}
        \item $(A,\star)$ is an anti-Zinbiel algebra.
        \item $(A,\cdot)$ with the bilinear operation defined by Eq.~(\ref{comm op}) is a commutative associative algebra, and
        $(\mathcal{L}^{*}_{\star},A^{*})$ is a representation of $(A,\cdot)$.
        \item There is a commutative associative algebra structure on $A\oplus A^{*}$ defined by
        \begin{equation}\label{anti-Zinbiel axiom2}
            (x,a^{*})\cdot(y,b^{*})=(x\star y+y\star x,\mathcal{L}^{*}_{\star}(x)b^{*}+\mathcal{L}^{*}_{\star}(y)a^{*}),\;\;\forall x,y\in A,a^{*},b^{*}\in A^{*}.
        \end{equation}
    \end{enumerate}
\end{pro}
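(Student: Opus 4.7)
The plan is to reduce this to Proposition~\ref{pro:anti-Zinbiel}, which gives the ``primal'' characterization of anti-Zinbiel algebras in terms of the representation $(-\mathcal{L}_{\star},A)$ of the sub-adjacent commutative associative algebra $(A,\cdot)$, and then invoke the duality principle for representations of commutative associative algebras recalled in Section~\ref{sec:1}: a pair $(\mu,V)$ is a representation of $(A,\cdot)$ if and only if $(-\mu^{*},V^{*})$ is. The entire proposition is the dual-space counterpart of Proposition~\ref{pro:anti-Zinbiel}, running in complete parallel to Proposition~\ref{pro:Zinbiel2} for the Zinbiel case.

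First I would establish the equivalence of (a) and (b). By Proposition~\ref{pro:anti-Zinbiel}, (a) is equivalent to $(A,\cdot)$ being a commutative associative algebra together with $(-\mathcal{L}_{\star},A)$ being a representation of $(A,\cdot)$. Applying the duality principle to $\mu=-\mathcal{L}_{\star}$, this is equivalent to $(-(-\mathcal{L}_{\star})^{*},A^{*})=(\mathcal{L}_{\star}^{*},A^{*})$ being a representation of $(A,\cdot)$, which is exactly (b). For the converse direction, apply the duality to $\mu=\mathcal{L}_{\star}^{*}$ and use the double-dual identification in finite dimensions to recover $(-\mathcal{L}_{\star},A)$.

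Second I would establish the equivalence of (b) and (c) by direct application of the general semi-direct product construction Eq.~(\ref{eq:SDASSO}): a pair $(\mu,V)$ is a representation of a commutative associative algebra $(A,\cdot)$ if and only if $A\oplus V$ carries the semi-direct product commutative associative algebra structure $(x,u)\cdot(y,v)=(x\cdot y,\mu(x)v+\mu(y)u)$. Specializing to $V=A^{*}$ and $\mu=\mathcal{L}_{\star}^{*}$, and expanding $x\cdot y=x\star y+y\star x$ per Eq.~(\ref{comm op}), the semi-direct product multiplication becomes exactly Eq.~(\ref{anti-Zinbiel axiom2}), giving the equivalence.

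There is no substantive obstacle; the only care required is the bookkeeping of signs when applying the duality $(\mu,V)\leftrightarrow(-\mu^{*},V^{*})$. Specifically, passing from $-\mathcal{L}_{\star}$ to its negative dual produces $\mathcal{L}_{\star}^{*}$ with a plus sign, which is precisely what makes this proposition differ from Proposition~\ref{pro:Zinbiel2} (where a $-\mathcal{L}_{\star}^{*}$ appears instead), and it aligns with the sign convention in Proposition~\ref{pro:anti-Zinbiel2} for the Lie analogue.
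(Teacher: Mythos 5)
Your proposal is correct and follows essentially the same route as the paper, which obtains Proposition~\ref{pro:anti-Zinbiel2} directly from Proposition~\ref{pro:anti-Zinbiel} via the duality principle that $(\mu,V)$ is a representation of a commutative associative algebra if and only if $(-\mu^{*},V^{*})$ is, so that $\mu=-\mathcal{L}_{\star}$ dualizes to $\mathcal{L}_{\star}^{*}$, together with the semi-direct product characterization Eq.~(\ref{eq:SDASSO}). Your explicit remarks on the sign bookkeeping and the double-dual identification in finite dimensions are exactly the (implicit) content of the paper's argument.
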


For a representation $(\mu,\rho,V)$ of a transposed Poisson algebra $(A,\cdot,[-,-])$, $(-\mu^{*}, \rho^{*},V^{*})$ is not necessarily a representation of $(A,\cdot,[-,-])$. In fact, we have

\begin{pro}\label{pro:TPA dual rep}
    Let $(A,\cdot,[-,-])$ be a transposed Poisson algebra, $(\mu,V)$ be a representation of $(A,\cdot)$ and $(\rho,V)$ be a representation of $(A,[-,-])$. Then $(-\mu^{*}, \rho^{*},V^{*})$ is a representation of $(A,\cdot,[-,-])$ if and only if
        \begin{equation}\label{TPA dual rep1}
        2\rho(y)\mu(x)=\rho(x\cdot y)+\mu(x)\rho(y),
    \end{equation}
    \begin{equation}\label{TPA dual rep2}
        2\mu([x,y])=\mu(x)\rho(y)-\mu(y)\rho(x),
    \end{equation}
for all $x,y\in A$.
\end{pro}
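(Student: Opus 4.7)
The plan is to verify directly that the two compatibility conditions (\ref{eq:defi:TPA REP1}) and (\ref{eq:defi:TPA REP2}) in the definition of a representation of a transposed Poisson algebra, when applied to the candidate triple $(-\mu^*, \rho^*, V^*)$, translate by duality into exactly the two identities (\ref{TPA dual rep1}) and (\ref{TPA dual rep2}) on $(\mu,\rho,V)$. Note that by the facts recalled at the beginning of Section~\ref{sec:1}, the pair $(-\mu^*,V^*)$ is automatically a representation of $(A,\cdot)$ whenever $(\mu,V)$ is, and $(\rho^*,V^*)$ is automatically a representation of $(A,[-,-])$ whenever $(\rho,V)$ is; so the two component-representation requirements are free, and only the compatibility conditions need to be analyzed.

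The technical heart of the argument is a single identity for dual operators: applying the paper's pairing twice, for any $T,S\in\mathrm{End}(V)$, $v^*\in V^*$ and $u\in V$ one has
\begin{equation*}
\langle T^*S^* v^*, u\rangle
=-\langle S^* v^*, Tu\rangle
=\langle v^*, STu\rangle,
\end{equation*}
i.e.\ $T^*S^*=-(ST)^*$ under the sign convention used throughout. I would state this as a short lemma and then use it to unpack the compatibility axioms.

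Next, I would write (\ref{eq:defi:TPA REP1}) for the triple $(-\mu^*,\rho^*,V^*)$, which becomes
\begin{equation*}
-2\mu^*(x)\rho^*(y)=\rho^*(x\cdot y)-\rho^*(y)\mu^*(x),
\end{equation*}
pair both sides with an arbitrary $(v^*,u)\in V^*\times V$, apply the identity above to both $\mu^*(x)\rho^*(y)$ and $\rho^*(y)\mu^*(x)$, and observe that the resulting equation reads
$2\langle v^*,\rho(y)\mu(x)u\rangle=\langle v^*,(\rho(x\cdot y)+\mu(x)\rho(y))u\rangle$. Nondegeneracy of the pairing between $V$ and $V^*$ then gives (\ref{TPA dual rep1}). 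An entirely analogous unfolding of (\ref{eq:defi:TPA REP2}) for $(-\mu^*,\rho^*,V^*)$ yields (\ref{TPA dual rep2}). Both unfoldings are if-and-only-if, so the converse direction is handled simultaneously.

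There is no conceptual obstacle; the only point requiring care is the bookkeeping of the minus sign built into the paper's convention $\langle\rho^*(x)v^*,u\rangle=-\langle v^*,\rho(x)u\rangle$, together with the resulting order-reversal-with-sign rule $T^*S^*=-(ST)^*$. Once this is stated cleanly, the two compatibility conditions become routine pairings, and the proposition follows.
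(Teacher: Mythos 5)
Your proposal is correct and follows essentially the same route as the paper: the paper's proof also just pairs the two compatibility conditions for $(-\mu^{*},\rho^{*},V^{*})$ against arbitrary elements of $V^{*}\times V$ and reads off Eqs.~(\ref{TPA dual rep1})--(\ref{TPA dual rep2}) by nondegeneracy of the pairing, with the component representations $(-\mu^{*},V^{*})$ and $(\rho^{*},V^{*})$ being automatic as recalled in Section~\ref{sec:1}. Your explicit statement of the sign rule $T^{*}S^{*}=-(ST)^{*}$ is just a cleaner packaging of the same computation.
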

\begin{proof}
    Let $x,y\in A, u^{*}\in V^{*}, v\in V$. Then we have
    \begin{eqnarray*}
        &&\langle(\rho^{*}(x\cdot y)-\rho^{*}(y)\mu^{*}(x)+2\mu^{*}(x)\rho^{*}(y))u^{*}, v\rangle\\
        &&=\langle u^{*}, (-\rho(x\cdot y)-\mu(x)\rho(y)+2\rho(y)\mu(x))v\rangle,\\
        &&\langle (-\rho^{*}(x)\mu^{*}(y)+\rho^{*}(y)\mu^{*}(x)+2\mu^{*}([x,y]))u^{*},v\rangle\\
        &&=\langle u^{*},(-\mu(y)\rho(x)+\mu(x)\rho(y)-2\mu([x,y]))v\rangle.
    \end{eqnarray*}
    Hence the conclusion follows.
\end{proof}

\begin{pro}\label{pro:dual rep TPA}
    Let $(A,\cdot,[-,-])$ be a transposed Poisson algebra and
    $(\mu,\rho,V)$ be a representation of $(A,\cdot,[-,-])$. Then
    $(-\mu^{*},\rho^{*},V^{*})$ is a representation of
    $(A,\cdot,[-,-])$ if and only if
    \begin{equation}\label{eq:d0}
        \mu([x,y])=0,\ \rho(x\cdot y)=\mu(x)\rho(y),\;\;\forall x,y\in A.
    \end{equation}
    In particular, $(-\mathcal{L}^{*}_{\cdot},\mathrm{ad}^{*},A^{*})$
    is a representation of $(A,\cdot,[-,-])$ if and only if Eq.~(\ref{trivial TPA}) holds.
\end{pro}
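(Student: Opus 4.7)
The plan is to reduce the proposition to the criterion already established in Proposition \ref{pro:TPA dual rep} and then to compare the resulting operator identities with the defining identities (\ref{eq:defi:TPA REP1}) and (\ref{eq:defi:TPA REP2}) that already hold for $(\mu,\rho,V)$. The key observation I would exploit is that subtracting the ``original'' commutative-associative compatibility from its ``dual'' counterpart forces the operators $\mu(x)$ and $\rho(y)$ to commute, and from there everything else falls out algebraically.

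More concretely, I would proceed as follows. First, invoke Proposition \ref{pro:TPA dual rep} to restate that $(-\mu^{*},\rho^{*},V^{*})$ is a representation of $(A,\cdot,[-,-])$ exactly when Eqs.~(\ref{TPA dual rep1}) and (\ref{TPA dual rep2}) hold. Subtracting Eq.~(\ref{TPA dual rep1}) from Eq.~(\ref{eq:defi:TPA REP1}) yields $3(\mu(x)\rho(y)-\rho(y)\mu(x))=0$, so $\mu(x)\rho(y)=\rho(y)\mu(x)$ for all $x,y\in A$. Plugging this back into Eq.~(\ref{eq:defi:TPA REP1}) gives $\rho(x\cdot y)=\mu(x)\rho(y)$. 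Commutativity of $\cdot$ then forces $\mu(x)\rho(y)=\mu(y)\rho(x)$, so the right-hand side of Eq.~(\ref{TPA dual rep2}) vanishes and $\mu([x,y])=0$. Conversely, assuming the two identities of Eq.~(\ref{eq:d0}), Eq.~(\ref{eq:defi:TPA REP1}) collapses to $\mu(x)\rho(y)=\rho(y)\mu(x)$, and a short direct check verifies both Eqs.~(\ref{TPA dual rep1}) and (\ref{TPA dual rep2}) (the latter using commutativity of $\cdot$ together with $\rho(x\cdot y)=\mu(x)\rho(y)$ to get $\mu(x)\rho(y)=\mu(y)\rho(x)$).

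For the ``in particular'' statement, I specialise to $\mu=\mathcal{L}_{\cdot}$ and $\rho=\mathrm{ad}$. The condition $\mu([x,y])=0$ translates to $[x,y]\cdot z=0$, which, after relabeling and invoking commutativity of $\cdot$, is precisely $x\cdot[y,z]=0$. The condition $\rho(x\cdot y)=\mu(x)\rho(y)$ reads $[x\cdot y,z]=x\cdot[y,z]$; combined with what we just obtained, this forces $[x\cdot y,z]=0$, equivalently $[x,y\cdot z]=0$. Thus Eq.~(\ref{eq:d0}) implies Eq.~(\ref{trivial TPA}), and the converse is immediate by reading the two displays of Eq.~(\ref{trivial TPA}) in the opposite direction.

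The main obstacle is not conceptual but notational: there are four very similar operator identities (two for $(\mu,\rho,V)$, two for $(-\mu^{*},\rho^{*},V^{*})$) whose signs and operator orderings must be kept straight when combining them. Once the intermediate fact $\mu(x)\rho(y)=\rho(y)\mu(x)$ is isolated, the rest is routine and the specialisation to the adjoint representation is only a matter of rewriting.
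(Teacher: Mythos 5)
Your proposal is correct and takes essentially the same route as the paper: it reduces the statement to Proposition \ref{pro:TPA dual rep} and then compares Eqs.~(\ref{TPA dual rep1})--(\ref{TPA dual rep2}) with the defining identities (\ref{eq:defi:TPA REP1})--(\ref{eq:defi:TPA REP2}), which is exactly the comparison the paper dismisses as ``straightforward.'' Your explicit details (the cancellation giving $\mu(x)\rho(y)=\rho(y)\mu(x)$, which uses $3\neq 0$ and is covered by the characteristic-zero assumption, and the specialisation $\mu=\mathcal{L}_{\cdot}$, $\rho=\mathrm{ad}$ yielding Eq.~(\ref{trivial TPA})) are accurate.
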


\begin{proof}
    By the assumption that  Eqs.~(\ref{eq:defi:TPA REP1}) and (\ref{eq:defi:TPA REP2}) hold,
     it is straightforward to show that
    Eqs.~(\ref{TPA dual rep1}) and (\ref{TPA dual rep2}) hold if and only if
    Eq.~(\ref{eq:d0}) holds. Hence the conclusion follows from Proposition \ref{pro:TPA dual rep}.
\end{proof}


Hence  we get the following conclusion.

\begin{cor}
Let $A$ be a vector space with two bilinear operations $\cdot,[-,-]:A\otimes A\rightarrow A$. Then the following conditions are equivalent:
\begin{enumerate}
    \item $(A,\cdot)$ is a commutative associative algebra, $(A,[-,-])$ is a Lie algebra, and Eq.~(\ref{trivial TPA}) holds.
    \item $(A,\cdot,[-,-])$ is a  transposed Poisson algebra with a representation $(-\mathcal{L}^{*}_{\cdot},\mathrm{ad}^{*},A^{*})$.
    \item There is a transposed Poisson algebra structure on $A\oplus A^{*}$ in which the commutative associative product is defined by
    \begin{equation}\label{comm asso axiom2}
        (x,a^{*})\cdot(y,b^{*})=(x\cdot y,-\mathcal{L}^{*}_{\cdot}(x)b^{*}-\mathcal{L}^{*}_{\cdot}(y)a^{*}),
    \end{equation}
and the Lie bracket is defined by
\begin{equation}\label{Lie axiom2}
    [(x,a^{*}),(y,b^{*})]=([x,y],\mathrm{ad}^{*}(x)b^{*}-\mathrm{ad}^{*}(y)a^{*}),
\end{equation}
for all $x,y\in A, a^{*},b^{*}\in A^{*}$.
\end{enumerate}
    \end{cor}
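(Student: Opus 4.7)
The plan is to establish the equivalences (1)$\Leftrightarrow$(2) and (2)$\Leftrightarrow$(3) by leveraging two tools already set up in the paper: Proposition~\ref{pro:dual rep TPA}, which characterizes when the dual adjoint gives a representation, and the general semi-direct product construction for representations of transposed Poisson algebras described right after the definition of representations.

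For (1)$\Leftrightarrow$(2), I would first observe that if Eq.~(\ref{trivial TPA}) holds then the transposed Leibniz rule (\ref{eq:defi:transposed Poisson algebra}) is trivially satisfied, since both sides $2z\cdot[x,y]$ and $[z\cdot x,y]+[x,z\cdot y]$ vanish identically. Therefore the data of a commutative associative algebra $(A,\cdot)$, a Lie algebra $(A,[-,-])$ satisfying (\ref{trivial TPA}) is automatically a transposed Poisson algebra. Then by Proposition~\ref{pro:dual rep TPA}, specialized to the adjoint representation $(\mathcal{L}_{\cdot},\mathrm{ad},A)$, the triple $(-\mathcal{L}^{*}_{\cdot},\mathrm{ad}^{*},A^{*})$ is a representation precisely when (\ref{trivial TPA}) holds. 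Conversely, if (2) holds then $(A,\cdot,[-,-])$ carries its underlying commutative associative and Lie structures, and Proposition~\ref{pro:dual rep TPA} forces Eq.~(\ref{trivial TPA}).

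For (2)$\Leftrightarrow$(3), I would use the general fact recalled in Section~\ref{sec:3}: for a transposed Poisson algebra $(A,\cdot,[-,-])$, a triple $(\mu,\rho,V)$ is a representation if and only if the direct sum $A\oplus V$ with multiplications defined by Eqs.~(\ref{eq:SDASSO}) and (\ref{eq:SDLie}) is a (semi-direct product) transposed Poisson algebra $A\ltimes_{\mu,\rho}V$. Applying this with $V=A^{*}$, $\mu=-\mathcal{L}^{*}_{\cdot}$, $\rho=\mathrm{ad}^{*}$ and substituting into (\ref{eq:SDASSO}) and (\ref{eq:SDLie}) yields exactly the formulas (\ref{comm asso axiom2}) and (\ref{Lie axiom2}). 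Hence (2) and (3) are equivalent formulations of the same semi-direct product statement.

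The proof is essentially a direct application of previously established machinery; there is no real obstacle. The only point that requires a moment of attention is verifying that (\ref{trivial TPA}) collapses the transposed Leibniz rule to $0=0$, so that one does not need to impose the transposed Poisson compatibility separately in condition (1) --- it is a consequence of the two vanishing conditions alone.
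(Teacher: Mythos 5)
Your proposal is correct and follows essentially the same route the paper intends: the equivalence of (1) and (2) is exactly the ``in particular'' clause of Proposition~\ref{pro:dual rep TPA} (after noting that Eq.~(\ref{trivial TPA}) makes the transposed Leibniz rule hold trivially), and the equivalence of (2) and (3) is the semi-direct product characterization with $\mu=-\mathcal{L}^{*}_{\cdot}$, $\rho=\mathrm{ad}^{*}$, $V=A^{*}$. The only cosmetic refinement would be to note, for (3)$\Rightarrow$(2), that $A\cong A\oplus 0$ is a subalgebra of the structure in (3), so $(A,\cdot,[-,-])$ is itself a transposed Poisson algebra before invoking that characterization.
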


\begin{pro}
    Let $(A.\cdot,[-,-])$ be a transposed Poisson algebra. Suppose that there is a nondegenerate symmetric bilinear from $\mathcal{B}$ on $A$ such that it is invariant on both $(A,\cdot)$ and $(A,[-,-])$. Then
    Eq.~(\ref{trivial TPA}) holds. Conversely, suppose that $(A,\cdot,[-,-])$ is a transposed Poisson algebra and Eq.~(\ref{trivial TPA}) holds. Then there is a transposed Poisson algebra $A\ltimes_{-\mathcal{L}^{*}_{\cdot},\mathrm{ad}^{*}}A^{*}$, and the natural nondegenerate symmetric bilinear form $\mathcal{B}_{d}$ defined by Eq.~(\ref{B_{d}}) is  invariant on both the commutative associative algebra $A\ltimes_{-\mathcal{L}^{*}_{\cdot}}A^{*}$ and the Lie algebra $A\ltimes_{\mathrm{ad}^{*}}A^{*}$.
    \end{pro}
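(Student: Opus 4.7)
The plan is to split the statement into its forward implication (a nondegenerate doubly-invariant symmetric $\mathcal{B}$ forces Eq.~(\ref{trivial TPA})) and its converse (Eq.~(\ref{trivial TPA}) yields both the semidirect-product transposed Poisson structure on $A\oplus A^*$ and the double invariance of the natural pairing $\mathcal{B}_d$).

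For the forward direction, I would first pair both sides of the transposed Leibniz rule $2z\cdot[x,y]=[z\cdot x,y]+[x,z\cdot y]$ with an arbitrary $w\in A$ under $\mathcal{B}$. The left-hand side, by Eq.~(\ref{comm asso invariance}) followed by Eq.~(\ref{Lie invariance}) (and the symmetry of $\mathcal{B}$), collapses to $2\mathcal{B}(x,[y,z\cdot w])$; the right-hand side, by the same two invariances combined with the commutativity of $\cdot$, becomes $\mathcal{B}(x,z\cdot[y,w])+\mathcal{B}(x,[z\cdot y,w])$. Nondegeneracy of $\mathcal{B}$ then yields $2[y,z\cdot w]=z\cdot[y,w]+[z\cdot y,w]$, which combined with the transposed Leibniz rule itself collapses by elementary linear algebra to the Poisson-like compatibilities
\begin{equation*}
z\cdot[y,w]=[z\cdot y,w]=[y,z\cdot w].
\end{equation*}

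The second step — which I expect to be the real conceptual obstacle, since these compatibilities do not visibly force the triviality of the two operations' interaction — is to deduce $x\cdot[y,z]\equiv 0$. My plan is to set $f(x,y,z):=x\cdot[y,z]$ and exploit the two derived identities as follows. The identity $z\cdot[y,w]=[z\cdot y,w]$, applied with the roles of $y$ and $z$ exchanged (using commutativity of $\cdot$, so that $[z\cdot y,w]=[y\cdot z,w]=y\cdot[z,w]$), yields $z\cdot[y,w]=y\cdot[z,w]$, so $f$ is symmetric in its first two slots; similarly, rewriting $[y,z\cdot w]$ via $w\cdot z=z\cdot w$ turns the identity $[y,z\cdot w]=z\cdot[y,w]$ into $w\cdot[y,z]=z\cdot[y,w]$, so $f$ is symmetric in its first and third slots. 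These two transpositions generate $S_3$, forcing $f$ to be totally symmetric; but $f$ is antisymmetric under exchanging $y$ and $z$ since $[y,z]=-[z,y]$, so $2f=0$ and hence $f\equiv 0$ in characteristic zero, which is exactly Eq.~(\ref{trivial TPA}).

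For the converse, Proposition~\ref{pro:dual rep TPA} directly gives that Eq.~(\ref{trivial TPA}) makes $(-\mathcal{L}^{*}_{\cdot},\mathrm{ad}^{*},A^{*})$ a representation of $(A,\cdot,[-,-])$, so the transposed Poisson semidirect product $A\ltimes_{-\mathcal{L}^{*}_{\cdot},\mathrm{ad}^{*}}A^{*}$ exists. The invariance of $\mathcal{B}_{d}$ on the commutative associative algebra $A\ltimes_{-\mathcal{L}^{*}_{\cdot}}A^{*}$ and on the Lie algebra $A\ltimes_{\mathrm{ad}^{*}}A^{*}$ is then a routine bookkeeping check: unwinding the semidirect-product multiplications Eqs.~(\ref{eq:SDASSO}) and (\ref{eq:SDLie}) together with the defining identities of the dual representations reduces each invariance equation to the symmetry of the canonical pairing between $A$ and $A^{*}$, paralleling the analogous verifications carried out earlier in this paper. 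The substantive content of the proposition is therefore entirely in the forward direction.
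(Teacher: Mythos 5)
Your proof is correct. There is nothing detailed to compare it against: the paper disposes of this proposition with the single line ``it follows from a direct checking,'' so your write-up supplies the actual content. The forward direction — pairing the transposed Leibniz rule against $w$ under $\mathcal{B}$, using the two invariances to get $2[y,z\cdot w]=z\cdot[y,w]+[z\cdot y,w]$, solving this together with the transposed Leibniz rule to obtain $z\cdot[y,w]=[z\cdot y,w]=[y,z\cdot w]$, and then killing $f(x,y,z)=x\cdot[y,z]$ by the symmetric-versus-antisymmetric argument in $S_3$ — is sound in characteristic zero (the linear-algebra step uses $3\neq 0$, the last step $2\neq 0$). One small wording caveat: $f\equiv 0$ is only the half $x\cdot[y,z]=0$ of Eq.~(\ref{trivial TPA}); the other half $[x,y\cdot z]=0$ then follows from the identity $[y,z\cdot w]=z\cdot[y,w]$ you have already derived, so state that explicitly. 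A shorter ``direct check,'' arguably closer to what the authors intend given where the proposition sits, is available: the two invariances say exactly that $x\mapsto\mathcal{B}(x,-)$ is a linear isomorphism $A\rightarrow A^{*}$ intertwining $\mathcal{L}_{\cdot}$ with $-\mathcal{L}^{*}_{\cdot}$ and $\mathrm{ad}$ with $\mathrm{ad}^{*}$, so $(-\mathcal{L}^{*}_{\cdot},\mathrm{ad}^{*},A^{*})$ is isomorphic to the adjoint representation and hence is a representation of the transposed Poisson algebra, whereupon Proposition~\ref{pro:dual rep TPA} yields Eq.~(\ref{trivial TPA}) at once; your route is more elementary and self-contained, at the cost of the symmetrization trick. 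Your converse is exactly the expected one: Proposition~\ref{pro:dual rep TPA} produces the semidirect product, and the invariance of $\mathcal{B}_{d}$ on $A\ltimes_{-\mathcal{L}^{*}_{\cdot}}A^{*}$ and on $A\ltimes_{\mathrm{ad}^{*}}A^{*}$ is the general fact already recorded in Sections \ref{sec:1} and \ref{sec:2}, requiring no use of Eq.~(\ref{trivial TPA}).
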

\begin{proof}
    It follows from a direct checking.
    \end{proof}

Next we introduce 8 algebraic structures in the rest of this section  corresponding to the mixed splitting of  the commutative associative
 products and the Lie brackets of transposed Poisson algebras interlacedly in three manners: the classical splitting, the second splitting and the un-splitting, in terms of representations of transposed Poisson algebras on the dual spaces.
 We still use the principle given in the previous section to name them.

\subsection{TCPD algebras}

\begin{defi}
    A \textbf{TCPD algebra} is a triple $(A,\cdot,\circ)$, such that $(A,\cdot)$ is a commutative associative algebra, $(A,\circ)$ is a pre-Lie algebra, and the following equations hold:
    \begin{equation}\label{TCPD1}
        2x\circ(y\cdot z)=(z\cdot x)\circ y+z\cdot(x\circ y),
    \end{equation}
    \begin{equation}\label{TCPD2}
        2(x\circ y)\cdot z-2(y\circ x)\cdot z=x\cdot (y\circ z)-y\cdot(x\circ z),
    \end{equation}
        \begin{equation}\label{TCPD3}
        3y\circ(z\cdot x)-3x\circ(z\cdot y)-(z\cdot x)\circ y+(z\cdot y)\circ x=0,
    \end{equation}
for all $x,y,z\in A$.
\end{defi}

\begin{pro}
    Let $(A,\cdot,[-,-])$ be a transposed Poisson algebra  and $(A,\circ)$ be a compatible pre-Lie algebra of $(A,[-,-])$. If $(-\mathcal{L}^{*}_{\cdot},\mathcal{L}^{*}_{\circ}, A^{*})$ is a representation of $(A,\cdot,[-,-])$, then $(A,\cdot,\circ)$ is a TCPD algebra. Conversely, let $(A,\cdot,\circ)$ be a TCPD algebra  and  $(A,[-,-])$ be the sub-adjacent Lie algebra of $(A,\circ)$. Then $(A,\cdot,[-,-])$ is a transposed Poisson algebra with a representation $(-\mathcal{L}^{*}_{\cdot},\mathcal{L}^{*}_{\circ}, A^{*})$.
    In this case, we say $(A,\cdot,[-,-])$ is the \textbf{sub-adjacent transposed Poisson algebra} of $(A,\cdot,\circ)$, and $(A,\cdot,\circ)$ is a \textbf{compatible  TCPD algebra} of $(A,\cdot,[-,-])$.
    \end{pro}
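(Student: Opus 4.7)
The plan is to reduce the representation condition to two operator identities via Proposition \ref{pro:TPA dual rep}, and then handle the compatibility between the two operations via the transposed Leibniz rule.

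For both directions I would invoke Proposition \ref{pro:TPA dual rep} with $\mu=\mathcal{L}_{\cdot}$ and $\rho=\mathcal{L}_{\circ}$. The hypotheses of that proposition are automatic: $(\mathcal{L}_{\cdot},A)$ is the adjoint representation of $(A,\cdot)$, while $(\mathcal{L}_{\circ},A)$ is a representation of $(A,[-,-])$ by Proposition \ref{pro:pre-Lie} applied to the compatible pre-Lie algebra $(A,\circ)$. Evaluating the two operator identities in Proposition \ref{pro:TPA dual rep} on an arbitrary $z\in A$ and using the commutativity $x\cdot y=y\cdot x$ to perform a cyclic relabeling of variables, one obtains exactly the identities (\ref{TCPD1}) and (\ref{TCPD2}). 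Conversely, TCPD1 and TCPD2 are equivalent (operator-wise) to these two identities, so once $(A,\cdot,[-,-])$ is known to be a TPA, Proposition \ref{pro:TPA dual rep} immediately delivers the representation $(-\mathcal{L}_{\cdot}^{*},\mathcal{L}_{\circ}^{*},A^{*})$.

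The remaining content is the identity TCPD3. For the forward direction, I would derive it from the transposed Leibniz rule $2z\cdot[x,y]=[z\cdot x,y]+[x,z\cdot y]$. Expanding $[a,b]=a\circ b-b\circ a$ on the right gives $(z\cdot x)\circ y-y\circ(z\cdot x)+x\circ(z\cdot y)-(z\cdot y)\circ x$. On the left, TCPD2 lets one rewrite $2z\cdot[x,y]=2[x,y]\cdot z=x\cdot(y\circ z)-y\cdot(x\circ z)$. Next, applying TCPD1 after a cyclic relabeling of variables yields the substitutions $x\cdot(y\circ z)=2y\circ(z\cdot x)-(x\cdot y)\circ z$ and $y\cdot(x\circ z)=2x\circ(z\cdot y)-(x\cdot y)\circ z$, so the left-hand side collapses to $2y\circ(z\cdot x)-2x\circ(z\cdot y)$. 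Equating the two sides and collecting terms gives TCPD3 with its coefficients $3$ appearing naturally from the combination $2+1$.

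For the converse, the same chain of substitutions is run in reverse: starting from TCPD1, TCPD2, TCPD3, one first computes $2z\cdot[x,y]$ using TCPD2 and then TCPD1 to obtain $2y\circ(z\cdot x)-2x\circ(z\cdot y)$, and TCPD3 is precisely the identity that makes this equal to $[z\cdot x,y]+[x,z\cdot y]$. This verifies the transposed Leibniz rule, so $(A,\cdot,[-,-])$ is a TPA (commutative associativity of $\cdot$ is given, and the Jacobi identity for $[-,-]$ follows from Lie-admissibility of pre-Lie algebras). Proposition \ref{pro:TPA dual rep} then finishes the argument.

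I expect the main obstacle to be purely bookkeeping: tracking the correct cyclic relabelings of $(x,y,z)$ so that the operator identities of Proposition \ref{pro:TPA dual rep} match TCPD1 and TCPD2 in the exact form stated, and carefully combining the factors of $2$ from TCPD1--TCPD2 with the single factor on the right of the transposed Leibniz rule to produce the coefficients $3$ and the signs in TCPD3.
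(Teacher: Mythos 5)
Your proposal is correct and follows essentially the same route as the paper: the representation condition is converted into the operator identities of Proposition \ref{pro:TPA dual rep} (equivalently Eqs.~(\ref{TCPD1})--(\ref{TCPD2})), and then Eq.~(\ref{TCPD3}) is obtained from, respectively used to verify, the transposed Leibniz rule $2z\cdot[x,y]=[z\cdot x,y]+[x,z\cdot y]$ via exactly the substitutions you describe. Your bookkeeping (the relabelings, the cancellation of the $(x\cdot y)\circ z$ terms, and the coefficients $3=2+1$) matches the paper's computation, and your handling of the converse via Lie-admissibility and Proposition \ref{pro:TPA dual rep} is the intended "proved similarly" argument.
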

\begin{proof}
    Since $(-\mathcal{L}^{*}_{\cdot},\mathcal{L}^{*}_{\circ}, A^{*})$ is a representation of $(A,\cdot,[-,-])$, we get Eqs.~(\ref{TCPD1})-(\ref{TCPD2}).
      Thus   for all $x,y,z\in A$, we have
        \begin{eqnarray*}
            0&&=2z\cdot[x,y]-[z\cdot x,y]-[x,z\cdot y]\\
            &&\overset{(\ref{TCPD2})}{=}x\cdot(y\circ z)-y\cdot(x\circ z)-(z\cdot x)\circ y+y\circ(z\cdot x)-x\circ(z\cdot y)+(z\cdot y)\circ x\\
            &&\overset{(\ref{TCPD1})}{=}3y\circ(z\cdot x)-3x\circ(z\cdot y)-(z\cdot x)\circ y+(z\cdot y)\circ x.
        \end{eqnarray*}
    Hence Eq.~(\ref{TCPD3}) holds, and thus $(A,\cdot,\circ)$ is a TCPD algebra. The converse part is proved similarly.
    \end{proof}

Hence  we get the following conclusion.

\begin{cor}
    Let $A$ be a vector space with two bilinear operations $\cdot,\circ:A\otimes A\rightarrow A$. Then the following conditions are equivalent:
    \begin{enumerate}
        \item $(A,\cdot,\circ)$ is a TCPD algebra.
        \item The triple $(A,\cdot,[-,-])$   is a transposed Poisson algebra with a representation $(-\mathcal{L}^{*}_{\cdot},\mathcal{L}^{*}_{\circ}$,
        $A^{*})$, where $[-,-]$ is defined by Eq.~(\ref{sub-adj}).
        \item There is a transposed Poisson algebra structure on $A\oplus A^{*}$ in which the commutative associative product $\cdot$ is defined by Eq.~(\ref{comm asso axiom2}) and the Lie bracket $[-,-]$ is defined by Eq.~(\ref{pre-Lie axiom2}).
    \end{enumerate}
    \end{cor}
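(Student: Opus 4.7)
My plan is to derive this corollary by chaining together the preceding proposition about TCPD algebras with Proposition~\ref{pro:pre-Lie2} (the dual-side characterization of pre-Lie algebras via representations of the sub-adjacent Lie algebra on the dual space) and the semi-direct product construction for representations of transposed Poisson algebras.

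For the implication (1) $\Rightarrow$ (2), I would simply invoke the ``converse'' half of the preceding proposition: assuming $(A,\cdot,\circ)$ is TCPD, the bracket $[-,-]$ defined by Eq.~(\ref{sub-adj}) makes $(A,[-,-])$ the sub-adjacent Lie algebra of the pre-Lie algebra $(A,\circ)$, and the proposition hands us the transposed Poisson structure together with the representation $(-\mathcal{L}^{*}_{\cdot},\mathcal{L}^{*}_{\circ},A^{*})$. For (2) $\Rightarrow$ (1), suppose (2) holds. Then in particular $(\mathcal{L}^{*}_{\circ},A^{*})$ is a representation of the Lie algebra $(A,[-,-])$ whose bracket is the commutator $x\circ y-y\circ x$; by Proposition~\ref{pro:pre-Lie2}, $(A,\circ)$ is pre-Lie with $(A,[-,-])$ as its sub-adjacent Lie algebra. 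All hypotheses of the ``forward'' half of the preceding proposition are now met, and it yields that $(A,\cdot,\circ)$ is a TCPD algebra.

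For (2) $\Leftrightarrow$ (3), I would apply the semi-direct product description of representations of transposed Poisson algebras on $A\oplus A^{*}$. Specializing the general formulas Eqs.~(\ref{eq:SDASSO}) and (\ref{eq:SDLie}) to $(\mu,\rho)=(-\mathcal{L}^{*}_{\cdot},\mathcal{L}^{*}_{\circ})$, and using $[x,y]=x\circ y - y\circ x$, one recovers exactly Eqs.~(\ref{comm asso axiom2}) and (\ref{pre-Lie axiom2}); this shows (2) $\Rightarrow$ (3). Conversely, given a transposed Poisson algebra on $A\oplus A^{*}$ of the prescribed form, projecting onto the first summand yields the transposed Poisson structure on $A$, and the action on the second summand reads off the representation, so (3) $\Rightarrow$ (2).

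There is no real obstacle here: the genuine computational work, namely the passage between the dualized compatibility conditions Eqs.~(\ref{eq:defi:TPA REP1})--(\ref{eq:defi:TPA REP2}) and the TCPD identities Eqs.~(\ref{TCPD1})--(\ref{TCPD3}), has already been performed in the preceding proposition. The only remaining verifications are the routine unpacking of the semi-direct product formulas in terms of $\mathcal{L}^{*}_{\cdot}$ and $\mathcal{L}^{*}_{\circ}$, and the observation that Proposition~\ref{pro:pre-Lie2} applied to the representation $(\mathcal{L}^{*}_{\circ},A^{*})$ produces the pre-Lie structure on $(A,\circ)$; both are mechanical.
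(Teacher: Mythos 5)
Your proposal is correct and follows essentially the same route the paper intends: the equivalence (1)$\Leftrightarrow$(2) is exactly the two halves of the preceding TCPD proposition (with Proposition~\ref{pro:pre-Lie2} supplying, from the representation $(\mathcal{L}^{*}_{\circ},A^{*})$ of the Lie-admissible structure, that $(A,\circ)$ is pre-Lie), and (2)$\Leftrightarrow$(3) is the semi-direct product characterization of representations of transposed Poisson algebras specialized to $(\mu,\rho)=(-\mathcal{L}^{*}_{\cdot},\mathcal{L}^{*}_{\circ})$ on $A^{*}$, which recovers Eqs.~(\ref{comm asso axiom2}) and (\ref{pre-Lie axiom2}). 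No gaps; the paper states the corollary as an immediate consequence of exactly these ingredients.
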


\subsection{Anti-pre-Lie-Poisson algebras or TCAD algebras}

\begin{defi}\cite{LB2022}
    An \textbf{anti-pre-Lie Poisson algebra} or a {\bf TCAD algebra} is a triple $(A,\cdot,\circ)$, such that $(A,\cdot)$ is a commutative associative algebra, $(A,\circ)$ is an anti-pre-Lie algebra, and Eq.~(\ref{TCPD1}) and the following equation hold:
    \begin{equation}
        2(x\circ y)\cdot z-2(y\circ x)\cdot z=y\cdot(x\circ z)-x\cdot (y\circ z),\;\;\forall x,y,z\in A.
    \end{equation}
\end{defi}

\begin{pro}\cite{LB2022}\
    Let $(A,\cdot,[-,-])$ be a transposed Poisson algebra and  $(A,\circ)$ be a compatible anti-pre-Lie algebra of $(A,[-,-])$. If $(-\mathcal{L}^{*}_{\cdot},-\mathcal{L}^{*}_{\circ},A^{*})$  is a representation of $(A,\cdot,[-,-])$, then  $(A,\cdot,\circ)$ is an anti-pre-Lie Poisson algebra. Conversely,
    let $(A,\cdot,\circ)$ be an anti-pre-Lie Poisson algebra and $(A,[-,-])$ be the sub-adjacent Lie algebra of $(A,\circ)$. Then $(A,\cdot,[-,-])$ is a transposed Poisson algebra
    with a representation $(-\mathcal{L}^{*}_{\cdot},-\mathcal{L}^{*}_{\circ},A^{*})$.
        In this case, we say $(A,\cdot,[-,-])$ is the \textbf{sub-adjacent transposed Poisson algebra} of $(A,\cdot,\circ)$, and $(A,\cdot,\circ)$ is a \textbf{compatible anti-pre-Lie Poisson algebra} of $(A,\cdot,[-,-])$.
\end{pro}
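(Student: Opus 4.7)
The plan is to mirror, with appropriate sign changes, the argument given for the preceding TCPD proposition. Both directions reduce to translating the representation-compatibility axioms $2\mu(x)\rho(y)=\rho(x\cdot y)+\rho(y)\mu(x)$ and $2\mu([x,y])=\rho(x)\mu(y)-\rho(y)\mu(x)$, applied to the specific representation $(\mu,\rho)=(-\mathcal{L}^{*}_{\cdot},-\mathcal{L}^{*}_{\circ})$ on $A^{*}$, into identities among $\cdot$ and $\circ$ on $A$ itself via the natural pairing $\langle-,-\rangle$.

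For the forward direction, I would assume $(-\mathcal{L}^{*}_{\cdot},-\mathcal{L}^{*}_{\circ},A^{*})$ is a representation of the transposed Poisson algebra $(A,\cdot,[-,-])$. Applying the two compatibility axioms to an arbitrary covector and pairing with an arbitrary element of $A$ should yield Eq.~(\ref{TCPD1}) and the second defining equation $2(x\circ y)\cdot z-2(y\circ x)\cdot z=y\cdot(x\circ z)-x\cdot(y\circ z)$. Combined with the hypothesis that $(A,\circ)$ is a compatible anti-pre-Lie algebra of $(A,[-,-])$, this gives exactly the axioms of an anti-pre-Lie Poisson algebra.

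For the converse, I would start with an anti-pre-Lie Poisson algebra $(A,\cdot,\circ)$ and its sub-adjacent Lie bracket $[x,y]=x\circ y-y\circ x$, and verify the transposed Leibniz rule $2z\cdot[x,y]=[z\cdot x,y]+[x,z\cdot y]$. Expanding the right-hand side in terms of $\circ$, then invoking Eq.~(\ref{TCPD1}) on each term of the form $w\circ(u\cdot v)$ and using the second defining equation to reorganize the resulting terms, the expression should collapse to $2z\cdot[x,y]$, exactly as in the analogous step of the TCPD proof. Once $(A,\cdot,[-,-])$ is identified as a transposed Poisson algebra, the fact that $(-\mathcal{L}^{*}_{\cdot},-\mathcal{L}^{*}_{\circ},A^{*})$ is a representation is the dual restatement of the two defining equations already verified.

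The main obstacle is sign bookkeeping. The double negation in $(-\mathcal{L}^{*}_{\cdot},-\mathcal{L}^{*}_{\circ})$ interacts with the characteristic sign flip of anti-pre-Lie axioms (relative to pre-Lie), so when passing through duality one must track carefully which terms pick up which signs. Modulo this care, the computation is a routine verification following the template already established for PCA, PAA and TCPD in the preceding material, and no new ideas beyond Proposition~\ref{pro:TPA dual rep} are required.
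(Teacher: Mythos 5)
The paper itself gives no proof of this proposition (it is quoted from \cite{LB2022}), so your attempt can only be measured against the analogous arguments in the paper. Your forward direction is fine and essentially forced: by Proposition \ref{pro:TPA dual rep} with $\mu=\mathcal{L}_{\cdot}$, $\rho=-\mathcal{L}_{\circ}$, the hypothesis that $(-\mathcal{L}^{*}_{\cdot},-\mathcal{L}^{*}_{\circ},A^{*})$ is a representation translates exactly into Eq.~(\ref{TCPD1}) and $2(x\circ y)\cdot z-2(y\circ x)\cdot z=y\cdot(x\circ z)-x\cdot(y\circ z)$, and since an anti-pre-Lie Poisson algebra has only these two compatibility axioms, nothing further has to be derived (unlike the TCPD case, where a third identity must be produced from the transposed Leibniz rule).

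The gap is in the converse. Expanding $[z\cdot x,y]+[x,z\cdot y]$, replacing $y\circ(z\cdot x)$ and $x\circ(z\cdot y)$ by Eq.~(\ref{TCPD1}) and then applying the second axiom leaves you with $(z\cdot x)\circ y-(z\cdot y)\circ x+z\cdot[x,y]$, not $2z\cdot[x,y]$: the expression does not collapse by the steps you list, and the appeal to ``the analogous step of the TCPD proof'' is misplaced, since the paper's displayed TCPD computation runs in the opposite direction (from the transposed Leibniz rule to the extra axiom (\ref{TCPD3})), whereas here the whole content of the converse is that two axioms alone force the transposed Leibniz rule. The missing ingredient is the identity $(z\cdot x)\circ y-(z\cdot y)\circ x=z\cdot[x,y]$, which is not a face-value instance of either axiom; it follows only after exploiting the $y\leftrightarrow z$ symmetry of the left-hand side of Eq.~(\ref{TCPD1}): writing (\ref{TCPD1}) for $(x,y,z)$ and for $(x,z,y)$ and equating gives $(z\cdot x)\circ y+z\cdot(x\circ y)=(y\cdot x)\circ z+y\cdot(x\circ z)$; subtracting the same relation with $x$ and $y$ interchanged and using the second axiom yields $(z\cdot x)\circ y-(z\cdot y)\circ x=2z\cdot[x,y]-z\cdot[x,y]=z\cdot[x,y]$. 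With this in hand your computation closes, since also $x\circ(y\cdot z)-y\circ(x\cdot z)=\tfrac{1}{2}\bigl((z\cdot x)\circ y-(z\cdot y)\circ x+z\cdot[x,y]\bigr)=z\cdot[x,y]$, so $[z\cdot x,y]+[x,z\cdot y]=2z\cdot[x,y]$; after that, the claim that $(-\mathcal{L}^{*}_{\cdot},-\mathcal{L}^{*}_{\circ},A^{*})$ is a representation is indeed just Proposition \ref{pro:TPA dual rep} read backwards. So the proposition is correct, but your sketch omits this symmetrization step, which is precisely where the converse has real content.
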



\begin{ex}\cite{LB2022}
    Let $(A,\cdot)$ be a commutative associative algebra with a derivation $P$.
    Then there is an anti-pre-Lie algebra $(A,\circ)$ defined by
    \begin{equation}
        x\circ y=P(x\cdot y)+P(x)\cdot y,\;\;\forall x,y\in A.
    \end{equation}
    Moreover, $(A,\cdot,\circ)$ is an anti-pre-Lie Poisson algebra and for the sub-adjacent transposed Poisson algebra $(A,\cdot,[-,-])$, the following equation holds:
    \begin{equation}
        [x,y]=P(x)\cdot y-x\cdot P(y),\;\;\forall x,y\in A.
    \end{equation}
\end{ex}

Moreover  we get the following conclusion.

\begin{cor}
    Let $A$ be a vector space with two bilinear operations $\cdot,\circ:A\otimes A\rightarrow A$. Then the following conditions are equivalent:
    \begin{enumerate}
        \item $(A,\cdot,\circ)$ is an anti-pre-Lie Poisson algebra.
        \item The triple $(A,\cdot,[-,-])$ is a transposed Poisson algebra with a representation $(-\mathcal{L}^{*}_{\cdot}$,
        $-\mathcal{L}^{*}_{\circ},A^{*})$, where $[-,-]$ is defined by Eq.~(\ref{sub-adj}).
        \item There is a transposed Poisson algebra structure on $A\oplus A^{*}$, in which the commutative associative product $\cdot$ is defined by Eq.~(\ref{comm asso axiom2}) and the Lie bracket $[-,-]$ is defined by  Eq.~(\ref{anti-pre-Lie axiom2}).
    \end{enumerate}
\end{cor}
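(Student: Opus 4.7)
The plan is to establish the three-way equivalence by proving $(1) \Leftrightarrow (2)$ and $(2) \Leftrightarrow (3)$ separately, leveraging the immediately preceding proposition (characterizing compatible anti-pre-Lie Poisson algebras via the representation $(-\mathcal{L}^*_\cdot, -\mathcal{L}^*_\circ, A^*)$) together with the semi-direct product characterization of representations of transposed Poisson algebras.

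First I would handle $(1) \Leftrightarrow (2)$. To go from $(1)$ to $(2)$, I use that an anti-pre-Lie algebra is automatically Lie-admissible (by the skew-symmetry built into Eq.~(\ref{eq:defi:anti-pre-Lie algebras2})), so the bracket defined by Eq.~(\ref{sub-adj}) is a Lie bracket and the preceding proposition supplies the transposed Poisson structure together with its representation on $A^*$. For the reverse direction, starting from $(2)$, the representation $(-\mathcal{L}^*_\cdot, -\mathcal{L}^*_\circ, A^*)$ in particular gives a representation $(-\mathcal{L}^*_\circ, A^*)$ of the sub-adjacent Lie algebra of $(A,\circ)$; Proposition~\ref{pro:anti-pre-Lie2} then forces $(A,\circ)$ to be an anti-pre-Lie algebra, and the preceding proposition delivers the anti-pre-Lie Poisson structure.

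Next I would prove $(2) \Leftrightarrow (3)$ by specializing the general semi-direct product construction for representations of transposed Poisson algebras recalled at the start of Section~\ref{sec:3}. Setting $V = A^*$, $\mu = -\mathcal{L}^*_\cdot$, $\rho = -\mathcal{L}^*_\circ$ and substituting into Eqs.~(\ref{eq:SDASSO}) and (\ref{eq:SDLie}) yields precisely Eqs.~(\ref{comm asso axiom2}) and (\ref{anti-pre-Lie axiom2}) once the bracket is rewritten via $[x,y] = x\circ y - y\circ x$. Conversely, reading off the second components of the formulas in $(3)$ recovers exactly the action of $(-\mathcal{L}^*_\cdot, -\mathcal{L}^*_\circ, A^*)$, so the semi-direct product being a transposed Poisson algebra is equivalent to this triple being a representation.

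I expect no genuine obstacle: the statement is essentially a bookkeeping corollary. The one spot that requires care is tracking the sign conventions on the dual space. In particular, the minus sign built into $\rho = -\mathcal{L}^*_\circ$ must combine correctly with the $\rho(x)v - \rho(y)u$ appearing in Eq.~(\ref{eq:SDLie}) to reproduce the expression $\mathcal{L}^*_\circ(y)a^* - \mathcal{L}^*_\circ(x)b^*$ in Eq.~(\ref{anti-pre-Lie axiom2}), and analogously the two minus signs in the commutative associative formula must line up with Eq.~(\ref{comm asso axiom2}).
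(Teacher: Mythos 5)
Your proposal is correct and follows essentially the same (implicit) route as the paper: the equivalence of (a) and (b) is exactly the content of the preceding proposition from \cite{LB2022} combined with Proposition~\ref{pro:anti-pre-Lie2} (to recover the anti-pre-Lie structure from the representation $(-\mathcal{L}^{*}_{\circ},A^{*})$ of the sub-adjacent Lie algebra), while (b)$\Leftrightarrow$(c) is the semi-direct product characterization of representations of transposed Poisson algebras specialized to $\mu=-\mathcal{L}^{*}_{\cdot}$, $\rho=-\mathcal{L}^{*}_{\circ}$, $V=A^{*}$, with the signs matching Eqs.~(\ref{comm asso axiom2}) and (\ref{anti-pre-Lie axiom2}) as you checked. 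The only tiny imprecision is your parenthetical reason for Lie-admissibility: it follows from both defining identities of an anti-pre-Lie algebra (as recorded in Proposition~\ref{pro:anti-pre-Lie}), not from Eq.~(\ref{eq:defi:anti-pre-Lie algebras2}) alone, but this does not affect the argument.
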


\begin{pro}\cite{LB2022}\label{pro:anti-pre-Lie Poisson}\
    Let $(A.\cdot,[-,-])$ be a transposed Poisson algebra. Suppose that there is a nondegenerate symmetric bilinear from $\mathcal{B}$ on $A$ such that it is invariant on   $(A,\cdot)$ and a commutative 2-cocycle on $(A,[-,-])$. Then there is a compatible anti-pre-Lie Poisson algebra $(A,\cdot,\circ)$ in which $\circ$ is defined by Eq.~(\ref{bilinear form anti-pre-Lie}). Conversely, let $(A,\cdot,\circ)$ be an anti-pre-Lie Poisson algebra and the sub-adjacent transposed Poisson algebra be $(A,\cdot,[-,-])$. Then there is a transposed Poisson algebra $A\ltimes_{-\mathcal{L}^{*}_{\cdot},-\mathcal{L}^{*}_{\circ}}A^{*}$, and the natural nondegenerate symmetric bilinear form $\mathcal{B}_{d}$ defined by Eq.~(\ref{B_{d}}) is  invariant on  the commutative associative algebra $A\ltimes_{-\mathcal{L}^{*}_{\cdot}}A^{*}$ and a commutative 2-cocycle on the Lie algebra $A\ltimes_{-\mathcal{L}^{*}_{\circ}}A^{*}$.
\end{pro}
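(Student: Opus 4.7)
The plan is to follow the same pattern as the proof of Proposition \ref{bf PCP}, handling the two directions separately, and leveraging Proposition \ref{bf anti-pre-Lie} to do most of the Lie-theoretic work.

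For the forward direction, I start by invoking Proposition \ref{bf anti-pre-Lie}: since $\mathcal{B}$ is a nondegenerate commutative 2-cocycle on the Lie algebra $(A,[-,-])$, defining $\circ$ by Eq.~(\ref{bilinear form anti-pre-Lie}) yields a compatible anti-pre-Lie algebra $(A,\circ)$. What remains is to verify the two compatibility axioms of an anti-pre-Lie Poisson algebra, namely Eq.~(\ref{TCPD1}) and the TCAD relation. To verify Eq.~(\ref{TCPD1}), I pair the alleged identity with an arbitrary $w\in A$, apply invariance of $\mathcal{B}$ on $(A,\cdot)$ together with Eq.~(\ref{bilinear form anti-pre-Lie}), and reduce to $\mathcal{B}(y,\, 2z\cdot[x,w]-[z\cdot x,w]-[x,z\cdot y])=0$, which is exactly the transposed Leibniz rule applied to $z,x,w$; then nondegeneracy of $\mathcal{B}$ closes the argument. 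For the TCAD identity I again pair with $w$; after using Eq.~(\ref{bilinear form anti-pre-Lie}) and invariance on $(A,\cdot)$, the left side becomes $2(\mathcal{B}(y,[x,z\cdot w])-\mathcal{B}(x,[y,z\cdot w]))$, which by the commutative 2-cocycle relation (applied to $y,x,z\cdot w$) together with invariance equals $2\mathcal{B}(w,z\cdot[x,y])$. A parallel manipulation of the right side, using the transposed Leibniz rule to rewrite $[x,y\cdot w]-[y,x\cdot w]$ in terms of $y\cdot[x,w]-x\cdot[y,w]$ and then the cocycle plus invariance, yields the same expression, and nondegeneracy of $\mathcal{B}$ again concludes.

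The converse direction is a straightforward semi-direct product construction. Given an anti-pre-Lie Poisson algebra $(A,\cdot,\circ)$ with sub-adjacent transposed Poisson algebra $(A,\cdot,[-,-])$, the characterization of anti-pre-Lie Poisson algebras already established tells us that $(-\mathcal{L}^{*}_{\cdot},-\mathcal{L}^{*}_{\circ},A^{*})$ is a representation of $(A,\cdot,[-,-])$, so the semi-direct product $A\ltimes_{-\mathcal{L}^{*}_{\cdot},-\mathcal{L}^{*}_{\circ}}A^{*}$ is indeed a transposed Poisson algebra. The claim that $\mathcal{B}_{d}$ is invariant on $A\ltimes_{-\mathcal{L}^{*}_{\cdot}}A^{*}$ is a standard check already recorded for commutative associative algebras, and the claim that $\mathcal{B}_{d}$ is a commutative 2-cocycle on $A\ltimes_{-\mathcal{L}^{*}_{\circ}}A^{*}$ is precisely the converse assertion of Proposition \ref{bf anti-pre-Lie} applied to the sub-adjacent Lie algebra of $(A,\circ)$; neither requires fresh computation beyond unwinding definitions.

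The main obstacle is the verification of the TCAD identity from $\mathcal{B}$, since both the commutative 2-cocycle relation and the transposed Leibniz rule must be interleaved carefully; in particular one must choose the right triple of arguments in the cocycle identity so that the ``mixed'' terms $\mathcal{B}(z\cdot w,[x,y])$ reappear after the transposed Leibniz rule is applied on the right-hand side. Once this matching is done, the remainder is routine, and the rest of the proposition follows by the pattern established in Proposition \ref{bf PCP}.
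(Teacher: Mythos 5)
Your argument is correct. The paper itself gives no proof of this proposition (it is quoted from \cite{LB2022}), but your proof follows exactly the pattern of the paper's proof of Proposition \ref{bf PCP}, which is the intended route: invoke Proposition \ref{bf anti-pre-Lie} to get the compatible anti-pre-Lie structure, verify the two compatibility identities by pairing with an arbitrary $w$ and using invariance on $(A,\cdot)$, the defining relation \eqref{bilinear form anti-pre-Lie}, the commutative 2-cocycle identity and the transposed Leibniz rule, then close with nondegeneracy; the converse is the semidirect-product check via the representation $(-\mathcal{L}^{*}_{\cdot},-\mathcal{L}^{*}_{\circ},A^{*})$ together with the converse part of Proposition \ref{bf anti-pre-Lie}. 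Two harmless slips: in the reduction of Eq.~\eqref{TCPD1} the term $[x,z\cdot y]$ should read $[x,z\cdot w]$, and on the right-hand side of the second compatibility identity the cocycle is not actually needed --- the transposed Leibniz rule with multiplier $w$ gives $[x,y\cdot w]-[y,x\cdot w]=2w\cdot[x,y]$ directly, so invariance alone matches the left side (your detour via $y\cdot[x,w]-x\cdot[y,w]$ is closed by a second application of the Leibniz rule rather than by the cocycle).
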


\subsection{TZLD algebras}

\begin{defi}
    A \textbf{TZLD algebra} is a triple $(A,\star,[-,-])$, such that $(A,\star)$ is a Zinbiel algebra, $(A,[-,-])$ is a Lie algebra, and the following equations hold:
    \begin{equation}\label{TZLD1}
        2[y,x\star z]=[x\star y+y\star x,z]+x\star[y,z],
    \end{equation}
    \begin{equation}\label{TZLD2}
        2[x,y]\star z=x\star[y,z]-y\star[x,z],
    \end{equation}
        \begin{equation}\label{TZLD3}
       2z\star[x,y]+[y,x\star z+z\star x]-[x,y\star z+z\star y]=0,
    \end{equation}
for all $x,y,z\in A$.
\end{defi}

\begin{pro}
    Let $(A,\cdot,[-,-])$ be a transposed Poisson algebra and $(A,\star)$ be a compatible Zinbiel algebra of $(A,\cdot)$. If $(-\mathcal{L}^{*}_{\star},\mathrm{ad}^{*},A^{*})$ is a representation of $(A,\cdot,[-,-])$, then $(A,\star,[-,-])$ is a TZLD algebra. Conversely, let $(A,\star,[-,-])$ be a TZLD algebra and $(A,\cdot)$ be the sub-adjacent commutative associative algebra of $(A,\star)$. Then $(A,\cdot,[-,-])$ is a transposed Poisson algebra with a representation $(-\mathcal{L}^{*}_{\star},\mathrm{ad}^{*},A^{*})$. In this case, we say $(A,\cdot,[-,-])$ is the \textbf{sub-adjacent transposed Poisson algebra} of $(A,\star,[-,-])$, and $(A,\star,[-,-])$ is a \textbf{compatible TZLD algebra} of $(A,\cdot,[-,-])$.
\end{pro}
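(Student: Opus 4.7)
The plan is to prove both directions by first decoding the representation axiom into pointwise identities, and then relating the transposed Leibniz rule to the extra axiom \eqref{TZLD3}.

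The first step is to decompose $(-\mathcal{L}^{*}_{\star},\mathrm{ad}^{*},A^{*})$. By Proposition \ref{pro:Zinbiel2}, $(-\mathcal{L}^{*}_{\star},A^{*})$ is a representation of the commutative associative algebra $(A,\cdot)$ if and only if $(A,\star)$ is Zinbiel with $\cdot$ its symmetrized product, and $(\mathrm{ad}^{*},A^{*})$ is automatically a representation of any Lie algebra $(A,[-,-])$. The two mixed conditions \eqref{eq:defi:TPA REP1} and \eqref{eq:defi:TPA REP2}, under $\mu=-\mathcal{L}^{*}_{\star}$ and $\rho=\mathrm{ad}^{*}$, dualize via the natural pairing to \eqref{TZLD1} and \eqref{TZLD2} respectively (routine calculation: evaluate each operator on $w \in A$ paired with $u^{*}\in A^{*}$). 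Thus the representation hypothesis is equivalent to: $(A,\star)$ Zinbiel, $(A,[-,-])$ Lie, together with \eqref{TZLD1} and \eqref{TZLD2}.

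The core computation is an identity derived purely from \eqref{TZLD1}. Expanding $z\cdot x=z\star x+x\star z$ and $z\cdot y=z\star y+y\star z$, and applying \eqref{TZLD1} with appropriate variable substitutions to each of the four resulting Lie brackets $[y,z\star x]$, $[y,x\star z]$, $[x,z\star y]$, $[x,y\star z]$ (combined with the antisymmetries $[u,v]=-[v,u]$ and $[y,x]=-[x,y]$), one obtains after the $\tfrac12$ coefficients collapse
\begin{equation*}
[z\cdot x,y]+[x,z\cdot y]=2z\star[x,y]-x\star[y,z]+y\star[x,z].
\end{equation*}
Invoking \eqref{TZLD2}, the right-hand side becomes $2z\star[x,y]-2[x,y]\star z$. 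Hence the transposed Leibniz rule $2z\cdot[x,y]=[z\cdot x,y]+[x,z\cdot y]$ is equivalent, modulo \eqref{TZLD1} and \eqref{TZLD2}, to the single identity $[x,y]\star z=0$.

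From here both directions close quickly. For the forward direction, the TPA hypothesis forces $[x,y]\star z=0$; rewriting the transposed Leibniz rule by the $\cdot$-expansion and regrouping then yields \eqref{TZLD3}. For the converse, \eqref{TZLD3} can be rewritten as $2z\star[x,y]=[x,y\cdot z]-[y,x\cdot z]$, and subtracting the core identity above (after using commutativity of $\cdot$ to identify $[x,y\cdot z]-[y,x\cdot z]$ with $[z\cdot x,y]+[x,z\cdot y]$) yields $[x,y]\star z=0$; the transposed Leibniz rule then follows, and the representation condition is encoded by the dualizations of the first step. The main obstacle is the bookkeeping of the substitutions of \eqref{TZLD1}: care with the signs when rewriting $[z\cdot y,x]$ as $-[x,z\cdot y]$ and $z\star[y,x]$ as $-z\star[x,y]$ is essential, otherwise the coefficients do not collapse into the clean identity displayed above, and the bridge to \eqref{TZLD3} via the hidden consequence $[x,y]\star z=0$ cannot be built.
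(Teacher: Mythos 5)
Your proposal is correct, and the two claims it hinges on both check out, but the organization is genuinely different from the paper's. The paper proves the forward direction by one substitution chain: starting from $0=2z\cdot[x,y]-[z\cdot x,y]-[x,z\cdot y]$ it rewrites everything via \eqref{TZLD1}, \eqref{TZLD2} and the antisymmetrized form \eqref{TZLD4} of \eqref{TZLD1} until the expression becomes twice the left-hand side of \eqref{TZLD3}, and it dismisses the converse as ``similar''. You instead extract from \eqref{TZLD1} alone the identity $[z\cdot x,y]+[x,z\cdot y]=2z\star[x,y]-x\star[y,z]+y\star[x,z]$ (this is right: substituting the four instances of \eqref{TZLD1} gives $S=\tfrac12 S+z\star[x,y]-\tfrac12 x\star[y,z]+\tfrac12 y\star[x,z]$ for $S$ the left-hand side), and then note that modulo \eqref{TZLD1} and \eqref{TZLD2} the transposed Leibniz rule, Eq.~\eqref{TZLD3}, and the single identity $[x,y]\star z=0$ are all equivalent. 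This is consistent with the paper: your identity combined with \eqref{TZLD2} is exactly the paper's intermediate equality $2z\cdot[x,y]-[z\cdot x,y]-[x,z\cdot y]=4z\star[x,y]+2[y,x\cdot z]-2[x,y\cdot z]$, rearranged to $2[x,y]\star z=2z\star[x,y]+[y,x\cdot z]-[x,y\cdot z]$. What your route buys is a symmetric, explicit treatment of both directions (the ``converse is similar'' step becomes a one-line consequence of the same equivalence) and a structural fact the paper does not record: every TZLD algebra, equivalently every transposed Poisson algebra with a compatible Zinbiel structure making $(-\mathcal{L}^{*}_{\star},\mathrm{ad}^{*},A^{*})$ a representation, satisfies $[x,y]\star z=0$ for all $x,y,z$, i.e. $[A,A]\star A=0$ in characteristic zero; the paper's chain is marginally shorter for the direction it prints. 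One point to keep explicit in a write-up of the converse: first establish the transposed Poisson structure on $(A,\cdot,[-,-])$, and only then invoke the dualization of the compatibility conditions (as in Proposition \ref{pro:TPA dual rep}) to conclude that $(-\mathcal{L}^{*}_{\star},\mathrm{ad}^{*},A^{*})$ is a representation; the pairing computation is the same one you use in your first step, so this is an ordering issue, not a gap.
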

\begin{proof}
   Since $(-\mathcal{L}^{*}_{\star},\mathrm{ad}^{*},A^{*})$ is a representation of $(A,\cdot,[-,-])$, we get Eqs.~(\ref{TZLD1}) and (\ref{TZLD2}).
    By Eq.~(\ref{TZLD1}), we have
    \begin{equation}\label{TZLD4}
        x\star[y,z]-y\star[x,z]=2[y,x\star z]-2[x,y\star z],\;\;\forall x,y,z\in A.
    \end{equation}
   Thus  for all $x,y,z\in A$, we have
    \begin{eqnarray*}
        0&&=2z\cdot[x,y]-[z\cdot x,y]-[x,z\cdot y]\\
        &&\overset{(\ref{TZLD1}),(\ref{TZLD2})}{=}x\star[y,z]-y\star[x,z]+2z\star[x,y]\\
        &&\ \ \ \ +z\star[x,y]-2[x,z\star y]-z\star[y,x]+2[y,z\star x]\\
        &&=x\star[y,z]-y\star[x,z]+4z\star[x,y]-2[x,z\star y]+2[y,z\star x]\\
        &&\overset{(\ref{TZLD4})}{=} 4z\star[x,y]+2[y,x\star z+z\star x]-2[x,y\star z+z\star y].
    \end{eqnarray*}
Hence Eq.~(\ref{TZLD3}) holds, and thus $(A,\star,[-,-])$ is a TZLD algebra. The converse part is proved similarly.
\end{proof}

Hence  we get the following conclusion.

\begin{cor}
    Let $A$ be a vector space with two bilinear operations $\cdot,[-,-]:A\otimes A\rightarrow A$. Then the following conditions are equivalent:
    \begin{enumerate}
        \item $(A,\star,[-,-])$ is a  TZLD algebra.
        \item The triple $(A,\cdot,[-,-])$   is a  transposed Poisson algebra with a representation $(-\mathcal{L}^{*}_{\star},\mathrm{ad}^{*}$,
        $A^{*})$, where $\cdot$ is defined by Eq.~(\ref{comm op}).
        \item There is a transposed Poisson algebra structure on $A\oplus A^{*}$ in which the commutative associative product $\cdot$ is defined  by Eq.~(\ref{Zinbiel axiom2}) and the Lie bracket $[-,-]$ is defined by Eq.~(\ref{Lie axiom2}).
    \end{enumerate}
\end{cor}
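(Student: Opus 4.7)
The plan is to deduce the three-way equivalence by chaining the preceding proposition with the general semi-direct product principle for representations of transposed Poisson algebras. First I would note that the equivalence $(1)\Leftrightarrow(2)$ is already the content of the proposition stated immediately before the corollary: it asserts precisely that $(A,\star,[-,-])$ is a TZLD algebra if and only if the commutative associative algebra $(A,\cdot)$ obtained from $\star$ via Eq.~(\ref{comm op}) together with the Lie bracket $[-,-]$ forms a transposed Poisson algebra admitting $(-\mathcal{L}^{*}_{\star},\mathrm{ad}^{*},A^{*})$ as a representation. So no additional work is needed for that half.

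For the equivalence $(2)\Leftrightarrow(3)$, I would invoke the general fact recorded at the beginning of Section~\ref{sec:3}: a triple $(\mu,\rho,V)$ is a representation of a transposed Poisson algebra $(A,\cdot,[-,-])$ if and only if the direct sum $A\oplus V$ endowed with the multiplications defined by Eqs.~(\ref{eq:SDASSO}) and (\ref{eq:SDLie}) is a transposed Poisson algebra, denoted $A\ltimes_{\mu,\rho}V$. Specialising this principle to $V=A^{*}$, $\mu=-\mathcal{L}^{*}_{\star}$ and $\rho=\mathrm{ad}^{*}$, the semi-direct product commutative associative product from Eq.~(\ref{eq:SDASSO}) reduces to
\[
(x,a^{*})\cdot(y,b^{*})=(x\star y+y\star x,\,-\mathcal{L}^{*}_{\star}(x)b^{*}-\mathcal{L}^{*}_{\star}(y)a^{*}),
\]
which is exactly Eq.~(\ref{Zinbiel axiom2}) (after using $x\cdot y=x\star y+y\star x$), and the semi-direct product Lie bracket from Eq.~(\ref{eq:SDLie}) reduces to
\[
[(x,a^{*}),(y,b^{*})]=([x,y],\,\mathrm{ad}^{*}(x)b^{*}-\mathrm{ad}^{*}(y)a^{*}),
\]
which is exactly Eq.~(\ref{Lie axiom2}). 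Thus the transposed Poisson algebra structure described in (3) is literally the semi-direct product $A\ltimes_{-\mathcal{L}^{*}_{\star},\mathrm{ad}^{*}}A^{*}$, so its existence is equivalent to (2) by the semi-direct product principle.

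Combining the two equivalences $(1)\Leftrightarrow(2)$ (from the preceding proposition) and $(2)\Leftrightarrow(3)$ (from the semi-direct product principle, plus a pattern match of formulas) yields the corollary. There is no substantial obstacle: the only thing to check is that the specialisations of the generic semi-direct product formulas coincide term-by-term with Eqs.~(\ref{Zinbiel axiom2}) and (\ref{Lie axiom2}), which is immediate. The proof is therefore essentially a citation of two prior results plus a one-line identification of formulas.
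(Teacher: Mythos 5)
Your proposal is correct and matches the paper's (implicit) argument: the paper derives this corollary directly from the preceding proposition for the equivalence of (a) and (b), and from the semi-direct product characterization of representations of transposed Poisson algebras for the equivalence of (b) and (c), exactly as you do. The only detail left tacit in both is that for (c)$\Rightarrow$(b) one restricts the structure on $A\oplus A^{*}$ to the subalgebra $A$ to recover the transposed Poisson algebra $(A,\cdot,[-,-])$, which is immediate from the given formulas.
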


\subsection{TZPD algebras}

\begin{defi}
    A \textbf{TZPD algebra} is a triple $(A,\star,\circ)$, such that $(A,\star)$ is a Zinbiel algebra, $(A,\circ)$ is a pre-Lie algebra, and the following equations hold:
    \begin{equation}\label{TZPD1}
        2y\circ(x\star z)=(x\star y+y\star x)\circ z+x\star(y\circ z),
    \end{equation}
    \begin{equation}\label{TZPD2}
        2(x\circ y-y\circ x)\star z=x\star(y\circ z)-y\star(x\circ z),
    \end{equation}
    \begin{equation}\label{TZPD3}
    y\circ(x\star z+z\star x)-x\circ(y\star z+z\star y)+z\star(x\circ y-y\circ x)=0,
    \end{equation}
for all $x,y,z\in A$.
\end{defi}

\begin{pro}
    Let $(A,\cdot,[-,-])$ be a transposed Poisson algebra, $(A,\star)$ be a compatible Zinbiel algebra of $(A,\cdot)$  and $(A,\circ)$ be a compatible pre-Lie algebra of $(A,[-,-])$. If $(-\mathcal{L}^{*}_{\star},\mathcal{L}^{*}_{\circ},A^{*})$ is a representation of $(A,\cdot,[-,-])$, then   $(A,\star,\circ)$ is a TZPD algebra. Conversely, let $(A,\star,\circ)$ be a TZPD algebra, $(A,\cdot)$ be the sub-adjacent commutative associative algebra of $(A,\star)$  and $(A,[-,-])$ be the sub-adjacent Lie algebra of $(A,\circ)$. Then $(A,\cdot,[-,-])$ is a transposed Poisson algebra with  a representation $(-\mathcal{L}^{*}_{\star},\mathcal{L}^{*}_{\circ},A^{*})$. In this case, we say $(A,\cdot,[-,-])$ is the \textbf{sub-adjacent transposed Poisson algebra} of $(A,\cdot,\circ)$, and $(A,\cdot,\circ)$ is a \textbf{compatible   TZPD algebra} of $(A,\cdot,[-,-])$.
\end{pro}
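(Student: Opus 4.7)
The plan is to mirror the pattern of the proof of the TZPO algebra case, adjusted for the fact that we are working with the dual representation $(-\mathcal{L}^{*}_{\star},\mathcal{L}^{*}_{\circ},A^{*})$ rather than the adjoint one. The forward direction proceeds in two stages: first, show that the representation axioms of a transposed Poisson algebra, when translated via dualization, give exactly Eqs.~(\ref{TZPD1}) and (\ref{TZPD2}); second, use the transposed Leibniz rule together with the two identities just obtained to derive Eq.~(\ref{TZPD3}).

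For the first stage, I would pair Eq.~(\ref{eq:defi:TPA REP1}) applied to $(\mu,\rho)=(-\mathcal{L}^{*}_{\star},\mathcal{L}^{*}_{\circ})$ against an arbitrary $v^{*}\in A^{*}$ and $w\in A$. Using
\[
\langle \mathcal{L}^{*}_{\star}(x)v^{*},w\rangle=-\langle v^{*},x\star w\rangle,\qquad \langle \mathcal{L}^{*}_{\circ}(y)v^{*},w\rangle=-\langle v^{*},y\circ w\rangle,
\]
a direct transposition yields exactly Eq.~(\ref{TZPD1}) by nondegeneracy of the pairing. The same process applied to Eq.~(\ref{eq:defi:TPA REP2}) yields Eq.~(\ref{TZPD2}). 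This is routine linear algebra and mimics verbatim what is done for the TZPO, PCP, PCA etc. cases in the earlier sections.

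The main obstacle is stage two: extracting Eq.~(\ref{TZPD3}) from the transposed Leibniz rule $2z\cdot[x,y]=[z\cdot x,y]+[x,z\cdot y]$. The strategy is to expand both sides in terms of $\star$ and $\circ$, applying (\ref{TZPD2}) to rewrite $2[x,y]\star z$ and applying (\ref{TZPD1}) (with suitable relabeling $x\to z,\ y\to x,\ z\to y$ and $x\to z,\ y\to y,\ z\to x$) to rewrite $(z\cdot x)\circ y$ and $(z\cdot y)\circ x$. A crucial simplification occurs when one observes that Eq.~(\ref{TZPD1}) implies
\[
x\star(y\circ z)-y\star(x\circ z)=2y\circ(x\star z)-2x\circ(y\star z),
\]
because the two terms $(x\star y+y\star x)\circ z$ produced on each side coincide and cancel. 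Combining everything, the difference $2z\cdot[x,y]-[z\cdot x,y]-[x,z\cdot y]$ collapses to
\[
3\bigl[y\circ(x\star z+z\star x)-x\circ(y\star z+z\star y)+z\star(x\circ y-y\circ x)\bigr],
\]
which equals zero by the transposed Leibniz rule, yielding Eq.~(\ref{TZPD3}).

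The converse is then essentially the same calculation in reverse: since $(A,\star,\circ)$ is a TZPD algebra, Eqs.~(\ref{TZPD1})--(\ref{TZPD3}) hold, so the same identity shows $2z\cdot[x,y]-[z\cdot x,y]-[x,z\cdot y]=0$, which is precisely the transposed Leibniz rule for $(A,\cdot,[-,-])$. That $(\mathcal{-L}^{*}_{\star},\mathcal{L}^{*}_{\circ},A^{*})$ is indeed a representation of this transposed Poisson algebra follows from dualizing Eqs.~(\ref{TZPD1}) and (\ref{TZPD2}), exactly reversing the translation in stage one.
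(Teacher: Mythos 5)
Your proposal is correct and follows essentially the same route as the paper: dualize the two representation axioms to obtain Eqs.~(\ref{TZPD1})--(\ref{TZPD2}), note the consequence $x\star(y\circ z)-y\star(x\circ z)=2y\circ(x\star z)-2x\circ(y\star z)$ (the paper's Eq.~(\ref{TZPD4})), and expand $2z\cdot[x,y]-[z\cdot x,y]-[x,z\cdot y]$ to collapse it to three times the left-hand side of Eq.~(\ref{TZPD3}), with the converse obtained by running the same identity in reverse. This matches the paper's proof, including the factor $3$ in the final collapsed expression.
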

\begin{proof}
   Since $(-\mathcal{L}^{*}_{\star},\mathcal{L}^{*}_{\circ},A^{*})$ is a representation of $(A,\cdot,[-,-])$, we get Eqs.~(\ref{TZPD1})-(\ref{TZPD2}).
    By Eq.~(\ref{TZPD1}), we have
    \begin{equation}\label{TZPD4}
        x\star(y\circ z)-y\star(x\circ z)=2y\circ(x\star z)-2x\circ(y\star z),\;\;\forall x,y,z\in A.
    \end{equation}
    Thus for all $x,y,z\in A$, we have
    \begin{eqnarray*}
        0&&=2z\cdot[x,y]-[z\cdot x,y]-[x,z\cdot y]\\
        &&=2[x,y]\star z+2z\star[x,y]-(z\cdot x)\circ y+y\circ(z\cdot x)-x\circ(z\cdot y)+(z\cdot y)\circ x\\
        &&\overset{(\ref{TZPD2})}{=}x\star(y\circ z)-y\star(x\circ z)+2z\star(x\circ y)-2z\star(y\circ x)\\
        &&\ \ \ \ -(z\star x)\circ y-(x\star z)\circ y+y\circ(z\star x)+y\circ(x\star z)\\
        &&\ \ \ \ -x\circ(z\star y)-x\circ(y\star z)+(y\star z)\circ x+(z\star y)\circ x\\
        &&\overset{(\ref{TZPD1}),(\ref{TZPD4})}{=}3y\circ(x\star z+z\star x)-3x\circ(y\star z+z\star y)+3z\star(x\circ y-y\circ x).
    \end{eqnarray*}
Hence Eq.~(\ref{TZPD3}) holds, and thus $(A,\star,\circ)$ is a TZPD algebra. The converse part is proved similarly.
\end{proof}

Hence  we get the following conclusion.

\begin{cor}
    Let $A$ be a vector space with two bilinear operations $\star,\circ:A\otimes A\rightarrow A$. Then the following conditions are equivalent:
    \begin{enumerate}
        \item $(A,\star,\circ)$ is a  TZPD algebra.
        \item The triple $(A,\cdot,[-,-])$   is a  transposed Poisson algebra with a representation $(-\mathcal{L}^{*}_{\star},\mathcal{L}^{*}_{\circ}$,
        $A^{*})$, where $\cdot$ and $[-,-]$ are respectively defined by Eqs.~(\ref{comm op}) and (\ref{sub-adj}).
        \item There is a transposed Poisson algebra structure on $A\oplus A^{*}$ in which the commutative associative product $\cdot$ is defined  by Eq.~(\ref{Zinbiel axiom2}) and the Lie bracket $[-,-]$ is defined by Eq.~(\ref{pre-Lie axiom2}).
    \end{enumerate}
    \end{cor}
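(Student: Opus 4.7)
The plan is to derive this three-way equivalence as a routine consequence of the immediately preceding proposition together with the dual-space characterizations of Zinbiel and pre-Lie algebras (Propositions \ref{pro:Zinbiel2} and \ref{pro:pre-Lie2}) and the semi-direct product construction for representations of transposed Poisson algebras. The point is that the preceding proposition already does all the structural work; the corollary simply rephrases it starting from bare bilinear operations.

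For the equivalence (1)$\Leftrightarrow$(2): Assuming (1), $(A,\star)$ is a Zinbiel algebra, so by Proposition \ref{pro:Zinbiel2}, $(A,\cdot)$ with $\cdot$ defined by Eq.~(\ref{comm op}) is a commutative associative algebra and $(A,\star)$ is a compatible Zinbiel algebra of it. Similarly $(A,\circ)$ is a pre-Lie algebra, so $(A,[-,-])$ with $[-,-]$ defined by Eq.~(\ref{sub-adj}) is its sub-adjacent Lie algebra and $(A,\circ)$ is a compatible pre-Lie algebra. The preceding proposition then delivers (2). Conversely, assuming (2), extracting from the representation $(-\mathcal{L}^{*}_{\star},\mathcal{L}^{*}_{\circ},A^{*})$ the pieces $(-\mathcal{L}^{*}_{\star},A^{*})$ and $(\mathcal{L}^{*}_{\circ},A^{*})$ shows via Propositions \ref{pro:Zinbiel2} and \ref{pro:pre-Lie2} that $(A,\star)$ is a Zinbiel algebra and $(A,\circ)$ is a pre-Lie algebra compatible with $\cdot$ and $[-,-]$ respectively, so the preceding proposition yields (1).

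For (2)$\Leftrightarrow$(3): By the general semi-direct product characterization for representations of transposed Poisson algebras recalled in Section~\ref{sec:3}, the data of a transposed Poisson algebra $(A,\cdot,[-,-])$ together with a representation $(-\mathcal{L}^{*}_{\star},\mathcal{L}^{*}_{\circ},A^{*})$ is equivalent to the existence of the transposed Poisson algebra structure $A\ltimes_{-\mathcal{L}^{*}_{\star},\mathcal{L}^{*}_{\circ}}A^{*}$ on $A\oplus A^{*}$. It remains only to verify that the semi-direct product formulas (\ref{eq:SDASSO}) and (\ref{eq:SDLie}) specialize, with $\mu=-\mathcal{L}^{*}_{\star}$ and $\rho=\mathcal{L}^{*}_{\circ}$, to exactly the formulas in Eqs.~(\ref{Zinbiel axiom2}) and (\ref{pre-Lie axiom2}), which is immediate from the definitions of $\cdot$ and $[-,-]$ in (\ref{comm op}) and (\ref{sub-adj}).

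There is essentially no obstacle here: all nontrivial content has already been established in the preceding proposition and in Propositions \ref{pro:Zinbiel2} and \ref{pro:pre-Lie2}. The only bookkeeping point deserving care is the passage between "starting from a transposed Poisson algebra equipped with compatible substructures" (as in the proposition) and "starting from the two bilinear operations $\star$ and $\circ$ alone" (as in the corollary); this is resolved precisely by invoking the dual-space versions of the Zinbiel and pre-Lie characterizations to recover the sub-adjacent commutative associative and Lie structures.
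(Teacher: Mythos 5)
Your proposal is correct and matches the paper's (implicit) argument: the paper states this corollary without proof precisely because it follows, exactly as you describe, from the preceding TZPD proposition combined with Propositions \ref{pro:Zinbiel2} and \ref{pro:pre-Lie2} and the semi-direct product characterization of representations of transposed Poisson algebras, with the formulas \eqref{Zinbiel axiom2} and \eqref{pre-Lie axiom2} being the specializations of \eqref{eq:SDASSO} and \eqref{eq:SDLie} to $\mu=-\mathcal{L}^{*}_{\star}$, $\rho=\mathcal{L}^{*}_{\circ}$ on $V=A^{*}$. No gaps.
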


\begin{pro}
    Let $(A,\cdot,[-,-])$ be a transposed  Poisson algebra. Suppose that $\mathcal{B}$ is a nondegenerate skew-symmetric bilinear form on $A$ such that it is a Connes cocycle on $(A,\cdot)$ and a symplectic form on $(A,[-,-])$. Then there is a compatible TZPD algebra $(A,\star,\circ)$ in which $\star$ and $\circ$ are respectively defined by Eqs.~(\ref{bilinear form Zinbiel}) and (\ref{bilinear form pre-Lie}). Conversely, let $(A,\star,\circ)$ be a  TZPD algebra and the sub-adjacent transposed Poisson algebra be $(A,\cdot,[-,-])$. Then there is a transposed Poisson algebra $A\ltimes_{-\mathcal{L}^{*}_{\star},\mathcal{L}^{*}_{\circ}}A^{*}$, and the natural nondegenerate skew-symmetric bilinear form $\mathcal{B}_{p}$ defined by Eq.~(\ref{B_{p}})
    is a Connes cocycle on the commutative associative algebra $A\ltimes_{-\mathcal{L}^{*}_{\star}}A^{*}$ and a symplectic form on the Lie algebra $A\ltimes_{\mathcal{L}^{*}_{\circ}}A^{*}$.
\end{pro}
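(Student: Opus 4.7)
The plan mirrors that of Proposition~\ref{bf PCP}, now in the Connes/symplectic (skew-symmetric) setting rather than the invariant/commutative-2-cocycle (symmetric) one. In the forward direction, the Zinbiel/Connes-cocycle proposition in Section~\ref{sec:1} immediately makes $(A,\star)$, with $\star$ defined via Eq.~(\ref{bilinear form Zinbiel}), a compatible Zinbiel algebra of $(A,\cdot)$; likewise, the pre-Lie/symplectic proposition in Section~\ref{sec:1} makes $(A,\circ)$, with $\circ$ defined via Eq.~(\ref{bilinear form pre-Lie}), a compatible pre-Lie algebra of $(A,[-,-])$. It remains to verify the mixed axioms (\ref{TZPD1}) and (\ref{TZPD2}); once these hold, (\ref{TZPD3}) follows automatically, by exactly the same derivation as in the proof of the preceding proposition characterizing TZPD algebras via representations (which only uses (\ref{TZPD1}), (\ref{TZPD2}), and Eq.~(\ref{eq:defi:transposed Poisson algebra})).

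To verify (\ref{TZPD1}) and (\ref{TZPD2}), I pair both sides with an arbitrary $w\in A$ under $\mathcal{B}$, apply Eqs.~(\ref{bilinear form Zinbiel}) and (\ref{bilinear form pre-Lie}) to eliminate every $\star$ and $\circ$ from inside $\mathcal{B}$, and then exploit skew-symmetry of $\mathcal{B}$ together with commutativity of $\cdot$ and skew-symmetry of $[-,-]$. For (\ref{TZPD1}), the resulting identity inside $\mathcal{B}(z,-)$ reduces to $[x\cdot y,w]+[y,x\cdot w]-2x\cdot[y,w]=0$, which is precisely Eq.~(\ref{eq:defi:transposed Poisson algebra}). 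For (\ref{TZPD2}), the resulting identity reduces, after rewriting $[x,y]\cdot w=w\cdot[x,y]$ and $[x\cdot w,y]=-[y,x\cdot w]$, to the same transposed Leibniz rule. The nondegeneracy of $\mathcal{B}$ then delivers both axioms, and hence the TZPD structure.

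For the converse direction, let $(A,\star,\circ)$ be a TZPD algebra with sub-adjacent transposed Poisson algebra $(A,\cdot,[-,-])$. The corollary characterizing TZPD algebras already yields that $(-\mathcal{L}^{*}_{\star},\mathcal{L}^{*}_{\circ},A^{*})$ is a representation of $(A,\cdot,[-,-])$, producing the semi-direct product transposed Poisson algebra $A\ltimes_{-\mathcal{L}^{*}_{\star},\mathcal{L}^{*}_{\circ}}A^{*}$. That $\mathcal{B}_p$ from Eq.~(\ref{B_{p}}) is a Connes cocycle on the commutative associative algebra $A\ltimes_{-\mathcal{L}^{*}_{\star}}A^{*}$ is the converse half of the Zinbiel/Connes-cocycle proposition applied to $(A,\star)$; that it is a symplectic form on the Lie algebra $A\ltimes_{\mathcal{L}^{*}_{\circ}}A^{*}$ is the converse half of the pre-Lie/symplectic proposition applied to $(A,\circ)$. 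No interaction between $\star$ and $\circ$ is needed here since $\mathcal{B}_p$ only sees one operation at a time.

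The main obstacle lies in the forward direction: assembling the three input conditions (Connes cocycle, symplectic form, transposed Leibniz rule) in precisely the right combination so that all $\star$-$\circ$ cross terms cancel, with careful sign-tracking under skew-symmetry of $\mathcal{B}$ and under the minus sign in the definition of $\circ$ via Eq.~(\ref{bilinear form pre-Lie}). Axiom (\ref{TZPD3}) looks the most demanding, but once (\ref{TZPD1}) and (\ref{TZPD2}) are established, it is a purely formal consequence; the genuine computational content is the verification for (\ref{TZPD1}) and (\ref{TZPD2}) sketched above.
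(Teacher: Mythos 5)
Your proposal is correct and takes essentially the same route as the paper, whose own proof is just a pointer to the analogous argument for Proposition \ref{pro:anti-pre-Lie Poisson} in \cite{LB2022} (the same pattern as Proposition \ref{bf PCP}). Pairing Eqs.~(\ref{TZPD1})--(\ref{TZPD2}) against $\mathcal{B}$ so that nondegeneracy reduces them to the transposed Leibniz rule, deducing Eq.~(\ref{TZPD3}) exactly as in the representation-theoretic characterization of TZPD algebras, and getting the converse from the dual representation together with the Zinbiel/Connes-cocycle and pre-Lie/symplectic propositions is precisely the intended verification.
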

\begin{proof}
    It is similar to the proof of Proposition \ref{pro:anti-pre-Lie Poisson} given in \cite{LB2022}.
\end{proof}

\subsection{TZAD algebras}

\begin{defi}
    A \textbf{TZAD algebra} is a triple $(A,\star,\circ)$, such that $(A,\star)$ is a Zinbiel algebra, $(A,\circ)$ is an anti-pre-Lie algebra, and Eq.~(\ref{TZPD1}) and the following equations  hold:
    \begin{equation}\label{TZAD2}
        2(x\circ y-y\circ x)\star z=y\star(x\circ z)-x\star(y\circ z),
    \end{equation}
\begin{equation}\label{TZAD3}
    x\circ(y\star z)-y\circ(x\star z)-3x\circ(z\star y)+3y\circ(z\star x)+3z\star(x\circ y-y\circ x)=0,
\end{equation}
for all $x,y,z\in A$.
\end{defi}

\begin{pro}
    Let $(A,\cdot,[-,-])$ be a transposed Poisson algebra, $(A,\star)$ be a compatible Zinbiel algebra of $(A,\cdot)$ and $(A,\circ)$ be a compatible anti-pre-Lie algebra of $(A,[-,-])$. If $(-\mathcal{L}^{*}_{\star},-\mathcal{L}^{*}_{\circ},A^{*})$ is a representation of $(A,\cdot,[-,-])$, then   $(A,\star,\circ)$ is a TZAD algebra. Conversely, let $(A,\star,\circ)$ be a TZAD algebra, $(A,\cdot)$ be the sub-adjacent commutative associative algebra of $(A,\star)$ and $(A,[-,-])$ be the sub-adjacent Lie algebra of $(A,\circ)$. Then $(A,\cdot,[-,-])$ is a transposed Poisson algebra with a representation $(-\mathcal{L}^{*}_{\star},-\mathcal{L}^{*}_{\circ},A^{*})$.
In this case, we say $(A,\cdot,[-,-])$ is the \textbf{sub-adjacent transposed Poisson algebra} of $(A,\cdot,\circ)$, and $(A,\cdot,\circ)$ is a \textbf{compatible   TZAD algebra} of $(A,\cdot,[-,-])$.
\end{pro}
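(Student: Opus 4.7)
The plan is to follow the pattern of the preceding propositions in Section~\ref{sec:4} (especially the TZPD case), exploiting Proposition~\ref{pro:TPA dual rep} to convert the representation hypothesis into the compatibility axioms of a TZAD algebra. For the forward direction, I would first apply Proposition~\ref{pro:TPA dual rep} with $\mu=\mathcal{L}_{\star}$ and $\rho=-\mathcal{L}_{\circ}$, so that the dual representation in question is $(-\mathcal{L}_{\star}^{*},-\mathcal{L}_{\circ}^{*},A^{*})$. The two conditions (TPA dual rep1) and (TPA dual rep2) then rewrite verbatim as Eqs.~(\ref{TZPD1}) and (\ref{TZAD2}). Since $(A,\star)$ is Zinbiel and $(A,\circ)$ is anti-pre-Lie by hypothesis, the only axiom remaining to verify for a TZAD algebra is Eq.~(\ref{TZAD3}).

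To obtain Eq.~(\ref{TZAD3}) I would apply the transposed Leibniz rule in $(A,\cdot,[-,-])$, writing
$$0 = 2z\cdot[x,y]-[z\cdot x,y]-[x,z\cdot y]$$
and expanding via $x\cdot y=x\star y+y\star x$ and $[x,y]=x\circ y-y\circ x$. In the resulting expression I would substitute Eq.~(\ref{TZPD1}) into the two terms of shape $(u\star v+v\star u)\circ w$, producing $(z\star x+x\star z)\circ y=2x\circ(z\star y)-z\star(x\circ y)$ and $(z\star y+y\star z)\circ x=2y\circ(z\star x)-z\star(y\circ x)$; and I would use Eq.~(\ref{TZPD1}) once more, in the reformulated shape $y\star(x\circ z)-x\star(y\circ z)=2x\circ(y\star z)-2y\circ(x\star z)$, to eliminate the purely $\star(\circ z)$ terms that arise from applying (TZAD2) to $2(x\circ y-y\circ x)\star z$. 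After collecting like terms the expression becomes exactly $3$ times Eq.~(\ref{TZAD3}), which yields Eq.~(\ref{TZAD3}).

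For the converse, I would simply reverse this bookkeeping: given a TZAD algebra, the sub-adjacent $(A,\cdot)$ and $(A,[-,-])$ are commutative associative and Lie respectively, and running the computation of the previous paragraph backwards shows that $2z\cdot[x,y]-[z\cdot x,y]-[x,z\cdot y]=0$ holds by virtue of Eqs.~(\ref{TZPD1}), (\ref{TZAD2}) and (\ref{TZAD3}), so $(A,\cdot,[-,-])$ is a transposed Poisson algebra; then Propositions~\ref{pro:Zinbiel2} and \ref{pro:anti-pre-Lie2} supply that $(-\mathcal{L}_{\star}^{*},A^{*})$ and $(-\mathcal{L}_{\circ}^{*},A^{*})$ are representations of $(A,\cdot)$ and $(A,[-,-])$ respectively, while Proposition~\ref{pro:TPA dual rep} turns Eqs.~(\ref{TZPD1}) and (\ref{TZAD2}) back into the compatibility required for $(-\mathcal{L}_{\star}^{*},-\mathcal{L}_{\circ}^{*},A^{*})$ to be a representation of $(A,\cdot,[-,-])$. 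The only delicate step is the combinatorial bookkeeping in the forward calculation: one must keep track of signs and the factor $3$ on all four groups of terms of (\ref{TZAD3}), and in particular it is the specific use of (TZPD1) to rewrite $y\star(x\circ z)-x\star(y\circ z)$ rather than the symmetric application of (TZAD2) that generates the coefficient $1$ on $x\circ(y\star z)-y\circ(x\star z)$ against the coefficient $3$ on the remaining terms; this is where I expect the bulk of the verification effort to lie.
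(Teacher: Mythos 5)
Your proposal is correct and follows essentially the same route as the paper's proof: you obtain Eqs.~\eqref{TZPD1} and \eqref{TZAD2} from the representation hypothesis via Proposition~\ref{pro:TPA dual rep}, expand $2z\cdot[x,y]-[z\cdot x,y]-[x,z\cdot y]=0$ and use \eqref{TZAD2}, \eqref{TZPD1} and its consequence \eqref{TZPD4} to collect the terms into Eq.~\eqref{TZAD3}, and then reverse the bookkeeping for the converse, exactly as the paper does. The only slip is cosmetic: the collected expression is precisely the left-hand side of Eq.~\eqref{TZAD3} (with coefficients $1$ and $3$, as you yourself state correctly in your final paragraph), not three times Eq.~\eqref{TZAD3}, and this does not affect the conclusion.
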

\begin{proof}
    Since $(-\mathcal{L}^{*}_{\star},-\mathcal{L}^{*}_{\circ},A^{*})$ is a representation of $(A,\cdot,[-,-])$, we get Eqs.~(\ref{TZPD1}) and (\ref{TZAD2}).
    Thus for all $x,y,z\in A$, we have
    \begin{eqnarray*}
        0&&=2z\cdot[x,y]-[z\cdot x,y]-[x,z\cdot y]\\
        &&=2[x,y]\star z+2z\star[x,y]-(z\cdot x)\circ y+y\circ(z\cdot x)-x\circ(z\cdot y)+(z\cdot y)\circ x\\
        &&\overset{(\ref{TZAD2})}{=}-x\star(y\circ z)+y\star(x\circ z)+2z\star(x\circ y)-2z\star(y\circ x)\\
        &&\ \ \ \ -(z\star x)\circ y-(x\star z)\circ y+y\circ(z\star x)+y\circ(x\star z)\\
        &&\ \ \ \ -x\circ(z\star y)-x\circ(y\star z)+(y\star z)\circ x+(z\star y)\circ x\\
        &&\overset{(\ref{TZPD1}),(\ref{TZPD4})}{=}x\circ(y\star z-3z\star y)-y\circ(x\star z-3z\star x)+3z\star(x\circ y-y\circ x).
    \end{eqnarray*}
Hence Eq.~(\ref{TZAD3}) holds, and thus $(A,\star,\circ)$ is a TZAD algebra. The converse part is proved similarly.
\end{proof}

Hence  we get the following conclusion.

\begin{cor}
    Let $A$ be a vector space with two bilinear operations $\cdot,[-,-]:A\otimes A\rightarrow A$. Then the following conditions are equivalent:
    \begin{enumerate}
        \item $(A,\star,\circ)$ is a   TZAD algebra.
        \item The triple $(A,\cdot,[-,-])$   is a  transposed Poisson algebra with a representation $(-\mathcal{L}^{*}_{\star}$,
        $-\mathcal{L}^{*}_{\circ},A^{*})$, where $\cdot$ and $[-,-]$ are respectively defined by Eqs.~(\ref{comm op}) and (\ref{sub-adj}).
        \item There is a transposed Poisson algebra structure on $A\oplus A^{*}$ in which the commutative associative product $\cdot$ is defined by  Eq.~(\ref{Zinbiel axiom2}) and the Lie bracket $[-,-]$ is defined by  Eq.~(\ref{anti-pre-Lie axiom2}).
    \end{enumerate}
\end{cor}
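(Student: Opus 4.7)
The plan is to deduce this three-way equivalence essentially as a packaging of the preceding proposition together with the general theory of representations and semi-direct products developed earlier in the paper, in exactly the same way that the analogous corollaries in Sections 2 and 3 were deduced from their corresponding propositions.

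First I would establish (1)$\iff$(2). For (1)$\implies$(2), starting from a TZAD algebra $(A,\star,\circ)$, the defining condition that $(A,\star)$ is a Zinbiel algebra together with Proposition \ref{pro:Zinbiel2} gives that $(A,\cdot)$ with $\cdot$ from Eq.~(\ref{comm op}) is a commutative associative algebra and that $(-\mathcal{L}^{*}_{\star},A^{*})$ is a representation of $(A,\cdot)$; similarly, since $(A,\circ)$ is anti-pre-Lie, Proposition \ref{pro:anti-pre-Lie2} gives that $(A,[-,-])$ with $[-,-]$ from Eq.~(\ref{sub-adj}) is a Lie algebra and that $(-\mathcal{L}^{*}_{\circ},A^{*})$ is a representation of $(A,[-,-])$. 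The preceding proposition then guarantees that $(A,\cdot,[-,-])$ is a transposed Poisson algebra and that the two representations fit together to form a representation $(-\mathcal{L}^{*}_{\star},-\mathcal{L}^{*}_{\circ},A^{*})$ of $(A,\cdot,[-,-])$. For (2)$\implies$(1), run this argument in reverse: the compatibility conditions in the representation definition, specialized to $\mu=-\mathcal{L}^{*}_{\star}$ and $\rho=-\mathcal{L}^{*}_{\circ}$, force $(A,\star)$ to be Zinbiel and $(A,\circ)$ to be anti-pre-Lie via Propositions \ref{pro:Zinbiel2} and \ref{pro:anti-pre-Lie2}, and the preceding proposition then outputs the TZAD axioms.

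Next I would establish (2)$\iff$(3) by invoking the general semi-direct product construction for transposed Poisson algebras recalled just after Definition~\ref{defi:transposed Poisson algebra}'s representation analogue. Given (2), the representation $(-\mathcal{L}^{*}_{\star},-\mathcal{L}^{*}_{\circ},A^{*})$ yields a semi-direct product transposed Poisson algebra $A\ltimes_{-\mathcal{L}^{*}_{\star},-\mathcal{L}^{*}_{\circ}}A^{*}$ via Eqs.~(\ref{eq:SDASSO}) and (\ref{eq:SDLie}). Substituting $x\cdot y=x\star y+y\star x$ into Eq.~(\ref{eq:SDASSO}) reproduces exactly Eq.~(\ref{Zinbiel axiom2}), and substituting $[x,y]=x\circ y-y\circ x$ into Eq.~(\ref{eq:SDLie}) reproduces exactly Eq.~(\ref{anti-pre-Lie axiom2}). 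The converse direction is similarly immediate: reading off the semi-direct decomposition of the transposed Poisson algebra structure on $A\oplus A^{*}$ recovers the underlying transposed Poisson structure on $A$ together with the representation on $A^{*}$.

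I do not expect any real obstacle; the work is essentially bookkeeping. The only point requiring a bit of care is keeping the signs in the dual representations straight (so that $-\mathcal{L}^{*}_{\star}$ rather than $\mathcal{L}^{*}_{\star}$ appears, matching Proposition \ref{pro:Zinbiel2} and not Proposition \ref{pro:anti-Zinbiel2}, and $-\mathcal{L}^{*}_{\circ}$ rather than $\mathcal{L}^{*}_{\circ}$ appears, matching Proposition \ref{pro:anti-pre-Lie2} and not Proposition \ref{pro:pre-Lie2}), and verifying that the substitutions from $\cdot,[-,-]$ back to $\star,\circ$ in the semi-direct product formulas yield precisely Eqs.~(\ref{Zinbiel axiom2}) and (\ref{anti-pre-Lie axiom2}) rather than their siblings.
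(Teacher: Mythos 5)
Your proposal is correct and matches the paper's (implicit) argument: the corollary is stated there as an immediate consequence of the preceding TZAD proposition, combined with Propositions \ref{pro:Zinbiel2} and \ref{pro:anti-pre-Lie2} and the semi-direct product characterization of representations of transposed Poisson algebras, which is exactly the packaging you carry out. The sign bookkeeping you flag ($-\mathcal{L}^{*}_{\star}$, $-\mathcal{L}^{*}_{\circ}$ matching Eqs.~(\ref{Zinbiel axiom2}) and (\ref{anti-pre-Lie axiom2})) is the only point of care, and you handle it correctly.
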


\subsection{TALD algebras}

\begin{defi}
    A \textbf{TALD algebra} is a triple $(A,\star,[-,-])$, such that $(A,\star)$ is an anti-Zinbiel algebra, $(A,[-,-])$ is a Lie algebra, and Eq.~(\ref{TZLD2}) and the following equations hold:
    \begin{equation}\label{TALD1}
        2[y,x\star z]=x\star[y,z]-[x\star y+y\star x,z],
    \end{equation}
\begin{equation}\label{TALD3}
    x\star[y,z]-y\star[x,z]+2[x,z\star y]-2[y,z\star x]=0,
\end{equation}
for all $x,y,z\in A$.
\end{defi}

\begin{pro}
    Let $(A,\cdot,[-,-])$ be a transposed Poisson algebra and $(A,\star)$ be a compatible anti-Zinbiel algebra of $(A,\cdot)$. If $(\mathcal{L}^{*}_{\star},\mathrm{ad}^{*},A^{*})$ is a representation of $(A,\cdot,[-,-])$, then $(A,\star,[-,-])$ is a TALD algebra. Conversely, let $(A,\star,[-,-])$ be a TALD algebra and $(A,\cdot)$ be the sub-adjacent commutative associative algebra of $(A,\star)$. Then $(A,\cdot,[-,-])$ is a transposed Poisson algebra with a representation $(\mathcal{L}^{*}_{\star},\mathrm{ad}^{*},A^{*})$. In this case, we say $(A,\cdot,[-,-])$ is the \textbf{sub-adjacent transposed Poisson algebra} of $(A,\star,[-,-])$, and $(A,\star,[-,-])$ is a \textbf{compatible TALD algebra} of $(A,\cdot,[-,-])$.
\end{pro}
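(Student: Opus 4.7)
The plan is to follow the same template as the TZLD proof just above, adjusted to the anti-Zinbiel setting. For the forward direction, pairing the representation compatibility conditions \eqref{eq:defi:TPA REP1} and \eqref{eq:defi:TPA REP2} for $(\mu,\rho)=(\mathcal{L}^{*}_{\star},\mathrm{ad}^{*})$ against an arbitrary test vector $w\in A$ and using $\langle\mathcal{L}^{*}_{\star}(x)u^{*},w\rangle=-\langle u^{*},x\star w\rangle$, $\langle\mathrm{ad}^{*}(x)u^{*},w\rangle=-\langle u^{*},[x,w]\rangle$ together with $x\cdot y=x\star y+y\star x$ immediately yields Eqs.~\eqref{TZLD2} and \eqref{TALD1}. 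To derive \eqref{TALD3}, I would compute $2z\cdot[x,y]-[z\cdot x,y]-[x,z\cdot y]$, which vanishes by the transposed Leibniz rule. Writing $z\cdot[x,y]=z\star[x,y]+[x,y]\star z$, substituting $2[x,y]\star z$ via \eqref{TZLD2}, then $z\star[x,y]=2[x,z\star y]+[z\cdot x,y]$ via \eqref{TALD1} under $(x,y,z)\mapsto(z,x,y)$, and expanding $[z\cdot x,y]$ and $[x,z\cdot y]$ through the sub-adjacent identity, the expression should reduce to
\[
3[x,z\star y]-[y,z\star x]-[y,x\star z]-[x,y\star z]+x\star[y,z]-y\star[x,z]=0.
\]

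Next, applying \eqref{TALD1} separately under the substitutions $(x,y,z)\mapsto(z,x,y)$ and $(x,y,z)\mapsto(z,y,x)$ and combining the resulting identities yields the auxiliary relation $[x,z\star y]+[y,z\star x]=[y,x\star z]+[x,y\star z]$, which holds by \eqref{TALD1} alone. Substituting this into the displayed equation collapses the remaining combination to exactly \eqref{TALD3}. For the converse, starting from a TALD algebra, \eqref{TALD1} and \eqref{TZLD2} translate back (via the same pairing argument) into the compatibility conditions for $(\mathcal{L}^{*}_{\star},\mathrm{ad}^{*},A^{*})$ to represent the sub-adjacent commutative associative algebra and Lie algebra jointly as a transposed Poisson representation. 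The transposed Leibniz rule itself is then verified by reversing the forward computation: expand $2z\cdot[x,y]-[z\cdot x,y]-[x,z\cdot y]$ via \eqref{TZLD2} and \eqref{TALD1} to reach the same intermediate expression, then invoke \eqref{TALD3} together with the auxiliary identity to conclude it vanishes.

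The main obstacle will be the bookkeeping in coordinating the three distinct cyclic instantiations of \eqref{TALD1} with the skew-symmetry of $[-,-]$; in particular, identifying the auxiliary identity as the correct intermediate step requires noticing that the two applications of \eqref{TALD1} used to rewrite $z\star[x,y]$ differ precisely by the terms $[x,z\cdot y]-[z\cdot x,y]$, which is already visible in \eqref{TALD3}. This mirrors the role played by Eq.~\eqref{TZLD3} in the companion TZLD proof, so once the auxiliary identity is in hand the remainder is a mechanical calculation.
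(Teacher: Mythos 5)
Your proposal is correct and follows essentially the same route as the paper: dualize the two representation axioms to obtain Eqs.~\eqref{TZLD2} and \eqref{TALD1}, then expand $2z\cdot[x,y]-[z\cdot x,y]-[x,z\cdot y]=0$ using these identities to arrive at Eq.~\eqref{TALD3}, and reverse the computation for the converse. Your auxiliary identity is only a minor bookkeeping detour --- the paper instead substitutes \eqref{TALD1} directly for the terms $[z\cdot x,y]$ and $[z\cdot y,x]$, so the $z\star[x,y]$ terms cancel and \eqref{TALD3} appears at once --- but both computations are valid.
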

\begin{proof}
    Since $(\mathcal{L}^{*}_{\star},\mathrm{ad}^{*},A^{*})$ is a representation of $(A,\cdot,[-,-])$, we get Eqs.~(\ref{TZLD2}) and (\ref{TALD1}).
        Thus for all $x,y,z\in A$, we have
    \begin{eqnarray*}
        &&2z\cdot[x,y]-[z\cdot x,y]-[x,z\cdot y]\\
        &&\overset{(\ref{TALD1}),(\ref{TZLD2})}{=}x\star[y,z]-y\star[x,z]+2z\star[x,y]-z\star[x,y]+2[x,z\star y]\\
        &&\ \ \ \ +z\star[y,x]-2[y,z\star x]\\
        &&=x\star[y,z]-y\star[x,z]+2[x,z\star y]-2[y,z\star x].
    \end{eqnarray*}
Hence Eq.~(\ref{TALD3}) holds, and thus $(A,\star,[-,-])$ is a TALD algebra. The converse part is proved similarly.
\end{proof}

Hence  we get the following conclusion.

\begin{cor}
    Let $A$ be a vector space  with two bilinear operations $\star,[-,-]:A\otimes A\rightarrow A$. Then the following conditions are equivalent:
    \begin{enumerate}
        \item $(A,\star,[-,-])$ is a   TALD algebra.
        \item The triple $(A,\cdot,[-,-])$   is a transposed Poisson algebra with a representation $(\mathcal{L}^{*}_{\star},\mathrm{ad}^{*}$,
        $A^{*})$, where $\cdot$ is defined by Eq.~(\ref{comm op}).
        \item There is a transposed Poisson algebra structure on $A\oplus A^{*}$ in which the commutative associative product $\cdot$ is defined  by
        Eq.~(\ref{anti-Zinbiel axiom2}) and the Lie bracket $[-,-]$ is defined by Eq.~(\ref{Lie axiom2}).
    \end{enumerate}
\end{cor}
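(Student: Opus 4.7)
The plan is to derive the three-way equivalence by combining the proposition immediately preceding the corollary with the general semi-direct product characterization of representations of transposed Poisson algebras stated at the start of Section~\ref{sec:3}. Concretely, the equivalence (1)\,$\Leftrightarrow$\,(2) is essentially a restatement of the preceding proposition: if $(A,\star,[-,-])$ is a TALD algebra, define $\cdot$ by Eq.~(\ref{comm op}) and invoke the ``if/only if'' content of that proposition to obtain the transposed Poisson algebra $(A,\cdot,[-,-])$ together with the dual representation $(\mathcal{L}^{*}_{\star},\mathrm{ad}^{*},A^{*})$; conversely, start from (2) and read the proposition in the opposite direction.

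For (2)\,$\Leftrightarrow$\,(3), I will use the semi-direct product principle recalled after the definition of representations of transposed Poisson algebras: a triple $(\mu,\rho,V)$ is a representation of $(A,\cdot,[-,-])$ if and only if $A\oplus V$ carries a transposed Poisson algebra structure $A\ltimes_{\mu,\rho}V$ whose commutative product and Lie bracket are given by the standard formulas Eqs.~(\ref{eq:SDASSO}) and (\ref{eq:SDLie}). Specialising to $V=A^{*}$, $\mu=\mathcal{L}^{*}_{\star}$ and $\rho=\mathrm{ad}^{*}$, I will substitute $x\cdot y=x\star y+y\star x$ into Eq.~(\ref{eq:SDASSO}) to see that the resulting product coincides with Eq.~(\ref{anti-Zinbiel axiom2}), and substitute $\rho=\mathrm{ad}^{*}$ into Eq.~(\ref{eq:SDLie}) to see that the resulting bracket coincides with Eq.~(\ref{Lie axiom2}). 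This identification makes (2) and (3) two repackagings of the same datum.

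Since both equivalences reduce to invoking already established results plus matching formulas, no genuine calculation is needed; the only ``obstacle'' is bookkeeping. In particular, one has to ensure that the commutative product $\cdot$ entering (2) is consistently the one built from $\star$ via Eq.~(\ref{comm op}), so that the sub-adjacent commutative associative algebra referred to in the preceding proposition matches the one appearing on the $A$-component of Eq.~(\ref{anti-Zinbiel axiom2}); this is automatic from Proposition~\ref{pro:anti-Zinbiel2}, which guarantees that $(A,\cdot)$ is indeed commutative associative precisely when $(A,\star)$ is anti-Zinbiel. Once this consistency is noted, writing the corollary's proof amounts to a single sentence for each of the two equivalences, and I will conclude with a remark that under (3) the bilinear form $\mathcal{B}_{d}$ of Eq.~(\ref{B_{d}}) can be attached if one later wishes to discuss the associated invariance properties.
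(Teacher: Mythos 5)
Your proposal is correct and follows the same route the paper intends: the equivalence (1)$\Leftrightarrow$(2) is the content of the proposition immediately preceding the corollary (with Proposition~\ref{pro:anti-Zinbiel2} supplying that $(A,\star)$ is anti-Zinbiel once $(\mathcal{L}^{*}_{\star},A^{*})$ is a representation of $(A,\cdot)$), and (2)$\Leftrightarrow$(3) is the semi-direct product characterization $A\ltimes_{\mu,\rho}V$ specialized to $\mu=\mathcal{L}^{*}_{\star}$, $\rho=\mathrm{ad}^{*}$, $V=A^{*}$, whose formulas indeed reduce to Eqs.~(\ref{anti-Zinbiel axiom2}) and (\ref{Lie axiom2}). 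Only a small caveat: Proposition~\ref{pro:anti-Zinbiel2} is not the biconditional ``$(A,\cdot)$ commutative associative iff $(A,\star)$ anti-Zinbiel'' but requires the representation condition as well, which is however available in the direction you need, so the argument stands.
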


\begin{pro}
    Let $(A,\cdot,[-,-])$ be a transposed Poisson algebra. Suppose that $\mathcal{B}$ is a nondegenerate symmetric bilinear form on $A$ such that it is a commutative Connes cocycle on $(A,\cdot)$ and invariant on $(A,[-,-])$. Then there is a compatible TALD algebra $(A,\star,[-,-])$ in which $\star$ is defined by Eq.~(\ref{bilinear form anti-Zinbiel}). Conversely, let $(A,\star,[-,-])$ be a  TALD algebra and the sub-adjacent transposed Poisson algebra be $(A,\cdot,[-,-])$. Then there is a transposed Poisson algebra $A\ltimes_{\mathcal{L}^{*}_{\star},\mathrm{ad}^{*}}A^{*}$, and the natural nondegenerate  symmetric bilinear form $\mathcal{B}_{d}$ defined by Eq.~(\ref{B_{d}})
    is a commutative Connes cocycle on the commutative associative algebra $A\ltimes_{\mathcal{L}^{*}_{\star}}A^{*}$ and invariant on the Lie algebra $A\ltimes_{\mathrm{ad}^{*}}A^{*}$.
\end{pro}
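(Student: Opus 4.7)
The proposition has the same architecture as Proposition \ref{bf PCP} (forward: construct a compatible splitting from a bilinear form; converse: lift the bilinear form to a cocycle-plus-invariance pair on the semi-direct product). Accordingly, my plan is to mirror that proof, substituting commutative Connes cocycle for invariance on $(A,\cdot)$, and invariance for the commutative 2-cocycle condition on $(A,[-,-])$.

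For the forward direction, define $\star$ by $\mathcal{B}(x\star y,z)=-\mathcal{B}(y,x\cdot z)$. The earlier Proposition on bilinear forms for anti-Zinbiel algebras immediately gives that $(A,\star)$ is an anti-Zinbiel algebra whose sub-adjacent commutative associative algebra is $(A,\cdot)$; the identity $x\star y+y\star x=x\cdot y$ follows by pairing against an arbitrary $z$ and invoking the commutative Connes cocycle identity together with symmetry of $\mathcal{B}$ and nondegeneracy. It then suffices to verify the TALD axioms Eqs.~(\ref{TZLD2}), (\ref{TALD1}) and (\ref{TALD3}). For each identity, my strategy is to pair both sides with an arbitrary element $w\in A$ against $\mathcal{B}$, translate every occurrence of $\star$ into $\cdot$ via the defining identity, and translate invariance of $\mathcal{B}$ on $(A,[-,-])$ to move Lie brackets inside the pairing. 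After these translations, each TALD axiom reduces (up to sign) to the transposed Leibniz rule $2z\cdot[x,y]=[z\cdot x,y]+[x,z\cdot y]$ applied to $(A,\cdot,[-,-])$, from which the identity follows by nondegeneracy of $\mathcal{B}$.

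For the converse direction, given a TALD algebra $(A,\star,[-,-])$ with sub-adjacent transposed Poisson algebra $(A,\cdot,[-,-])$, the characterization result of the preceding subsection gives that $(\mathcal{L}^{*}_{\star},\mathrm{ad}^{*},A^{*})$ is a representation of $(A,\cdot,[-,-])$, so the semi-direct product transposed Poisson algebra $A\ltimes_{\mathcal{L}^{*}_{\star},\mathrm{ad}^{*}}A^{*}$ exists. The bilinear form $\mathcal{B}_d$ defined by Eq.~(\ref{B_{d}}) is manifestly symmetric and nondegenerate; its invariance on the Lie algebra $A\ltimes_{\mathrm{ad}^{*}}A^{*}$ is the standard coadjoint invariance computation, and the commutative Connes cocycle property on the commutative associative algebra $A\ltimes_{\mathcal{L}^{*}_{\star}}A^{*}$ unpacks into terms of the form $\langle x,\mathcal{L}^{*}_{\star}(y)c^{*}\rangle=-\langle y\star x,c^{*}\rangle$, which cancel in three-cyclic sums precisely because the un-dualized product $\star$ is part of an anti-Zinbiel structure with $x\star y+y\star x=x\cdot y$.

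The main obstacle should be bookkeeping in the forward direction: the TALD axiom Eq.~(\ref{TALD3}) mixes four $\star$-products with two Lie brackets, so after converting via $\mathcal{B}$ one gets six terms that must be rearranged using both the commutative Connes cocycle (to swap cyclic triples in $\cdot$) and invariance (to move brackets), before finally invoking the transposed Leibniz rule. I expect that, just as in Proposition \ref{bf PCP}, Eq.~(\ref{TALD3}) will turn out to be a consequence of Eqs.~(\ref{TZLD2}) and (\ref{TALD1}) modulo the transposed Leibniz rule, so only Eqs.~(\ref{TZLD2}) and (\ref{TALD1}) need to be checked independently; this is the shortcut I plan to exploit to keep the calculation to a manageable length.
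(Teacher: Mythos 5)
Your proposal is correct and matches the intended argument: the paper itself only says the proof is ``similar to the proof of Proposition \ref{pro:anti-pre-Lie Poisson} given in \cite{LB2022}'', and the model it has in mind is exactly the pattern of Proposition \ref{bf PCP} that you follow --- forward direction by defining $\star$ via Eq.~(\ref{bilinear form anti-Zinbiel}), invoking the anti-Zinbiel/commutative Connes cocycle proposition for the sub-adjacent structure, and checking Eqs.~(\ref{TZLD2}) and (\ref{TALD1}) by pairing with an arbitrary element and reducing, via invariance and nondegeneracy, to the transposed Leibniz rule, with Eq.~(\ref{TALD3}) then following as in the representation-theoretic proposition; converse via the representation $(\mathcal{L}^{*}_{\star},\mathrm{ad}^{*},A^{*})$ and the direct verification of the properties of $\mathcal{B}_{d}$ on the semi-direct product. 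Your shortcut for Eq.~(\ref{TALD3}) and the cancellation argument for the cyclic sums (which only needs $x\star y+y\star x=x\cdot y$) are both valid, so no gap remains.
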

\begin{proof}
It is similar to the proof of Proposition \ref{pro:anti-pre-Lie Poisson} given in \cite{LB2022}.
\end{proof}

\subsection{TAPD algebras}

\begin{defi}
    A \textbf{TAPD algebra} is a triple $(A,\star,\circ)$, such that $(A,\star)$ is an anti-Zinbiel algebra, $(A,\circ)$ is a pre-Lie algebra, and Eq.~(\ref{TZPD2}) and the following equations hold:
    \begin{equation}\label{TAPD1}
        2y\circ(x\star z)=x\star(y\circ z)-(x\star y+y\star x)\circ z,
    \end{equation}
\begin{equation}\label{TAPD3}
    x\circ(z\star y)-y\circ(z\star x)+z\star(x\circ y-y\circ x)+3y\circ(x\star z)-3x\circ(y\star z)=0,
\end{equation}
for all $x,y,z\in A$.
\end{defi}

\begin{pro}
    Let $(A,\cdot,[-,-])$ be a transposed Poisson algebra, $(A,\star)$ be a compatible anti-Zinbiel algebra of $(A,\cdot)$ and $(A,\circ)$ be a compatible pre-Lie algebra of $(A,[-,-])$. If $(\mathcal{L}^{*}_{\star},\mathcal{L}^{*}_{\circ},A^{*})$ is a representation of $(A,\cdot,[-,-])$, then   $(A,\star,[-,-])$ is a TAPD algebra. Conversely, let $(A,\star,\circ)$ be a TAPD algebra, $(A,\cdot)$ be the sub-adjacent commutative associative algebra of $(A,\star)$ and $(A,[-,-])$ be the sub-adjacent Lie algebra of $(A,\circ)$. Then $(A,\cdot,[-,-])$ is a transposed Poisson algebra with a representation $(\mathcal{L}^{*}_{\star},\mathcal{L}^{*}_{\circ},A^{*})$. In this case, we say $(A,\cdot,[-,-])$ is the \textbf{sub-adjacent transposed Poisson algebra} of $(A,\star,\circ)$, and $(A,\star,\circ)$ is a \textbf{compatible TAPD algebra} of $(A,\cdot,[-,-])$.
\end{pro}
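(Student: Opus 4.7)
The plan is to follow the template established by the preceding propositions in Section \ref{sec:4}: unpack the representation compatibility axioms into two bilinear identities on the splitting operations, and then extract a third identity from the transposed Leibniz rule. For the forward direction, I would begin by pairing the two compatibility axioms (\ref{eq:defi:TPA REP1}) and (\ref{eq:defi:TPA REP2}) for $(\mathcal{L}^{*}_{\star},\mathcal{L}^{*}_{\circ},A^{*})$ with an arbitrary $z\in A$ and using the defining relation $\langle\rho^{*}(x)a^{*},y\rangle=-\langle a^{*},\rho(x)y\rangle$; this routine unwinding (identical in spirit to the TZPD, TALD and TZAD cases) produces exactly Eqs.~(\ref{TZPD2}) and (\ref{TAPD1}). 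It then remains to derive Eq.~(\ref{TAPD3}).

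For Eq.~(\ref{TAPD3}), I would expand the transposed Leibniz rule $2z\cdot[x,y]=[z\cdot x,y]+[x,z\cdot y]$ through $x\cdot y=x\star y+y\star x$ and $[x,y]=x\circ y-y\circ x$. The resulting identity contains two terms of the form $(a\star b+b\star a)\circ c$, namely $(z\star x+x\star z)\circ y$ and $(z\star y+y\star z)\circ x$; by (\ref{TAPD1}) these become $z\star(x\circ y)-2x\circ(z\star y)$ and $z\star(y\circ x)-2y\circ(z\star x)$ respectively. It also contains a term $2(x\circ y-y\circ x)\star z$, which by (\ref{TZPD2}) becomes $x\star(y\circ z)-y\star(x\circ z)$. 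A final application of (\ref{TAPD1}) together with its version obtained by swapping $x$ and $y$ gives $x\star(y\circ z)-y\star(x\circ z)=2y\circ(x\star z)-2x\circ(y\star z)$. Substituting and collecting produces exactly the left-hand side of (\ref{TAPD3}) with its $\pm 3$ coefficients.

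The converse direction will run the same computation backwards. The anti-Zinbiel axiom for $(A,\star)$ makes $(A,\cdot)$ commutative associative by Proposition \ref{pro:anti-Zinbiel2}, and the pre-Lie axiom for $(A,\circ)$ makes $(A,[-,-])$ Lie by Proposition \ref{pro:pre-Lie2}; Eqs.~(\ref{TZPD2}) and (\ref{TAPD1}) again translate back to the two representation compatibilities for $(\mathcal{L}^{*}_{\star},\mathcal{L}^{*}_{\circ},A^{*})$, and then Eq.~(\ref{TAPD3}) combined with (\ref{TZPD2}) and (\ref{TAPD1}) forces the transposed Leibniz rule for $(A,\cdot,[-,-])$. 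The main obstacle will be the bookkeeping in the middle: one must apply (\ref{TAPD1}) in two different variable substitutions (using the $x\leftrightarrow y$ symmetry of $x\star y+y\star x$) and (\ref{TZPD2}) in exactly the right order so that the $y\circ(x\star z)$ and $x\circ(y\star z)$ contributions combine with the prescribed coefficients $\pm 3$; conceptually, no new idea beyond the strategy already exploited for the TZPD and TAPO propositions is needed.
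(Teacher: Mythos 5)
Your proposal is correct and follows essentially the same route as the paper: derive Eqs.~(\ref{TZPD2}) and (\ref{TAPD1}) by dualizing the representation conditions, obtain the auxiliary identity $x\star(y\circ z)-y\star(x\circ z)=2y\circ(x\star z)-2x\circ(y\star z)$ from Eq.~(\ref{TAPD1}) and its $x\leftrightarrow y$ swap (this is the paper's Eq.~(\ref{TZPD4})), and then expand $2z\cdot[x,y]-[z\cdot x,y]-[x,z\cdot y]$ and substitute to collect exactly Eq.~(\ref{TAPD3}) with the $\pm 3$ coefficients, the converse being the same computation read in reverse together with Propositions \ref{pro:anti-Zinbiel2} and \ref{pro:pre-Lie2}. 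This matches the paper's proof, which carries out precisely this calculation and disposes of the converse "similarly."
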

\begin{proof}
   Since $(\mathcal{L}^{*}_{\star},\mathcal{L}^{*}_{\circ},A^{*})$ is a representation of $(A,\cdot,[-,-])$, we get Eqs.~(\ref{TZPD2}) and (\ref{TAPD1}).
      By Eq.~(\ref{TAPD1}), Eq.~(\ref{TZPD4}) holds. Thus for all $x,y,z\in A$, we have
    \begin{eqnarray*}
        0&&=2z\cdot[x,y]-[z\cdot x,y]-[x,z\cdot y]\\
        &&=2[x,y]\star z+2z\star[x,y]-(z\cdot x)\circ y+y\circ(z\cdot x)-x\circ(z\cdot y)+(z\cdot y)\circ x\\
        &&\overset{(\ref{TZPD2})}{=}x\star(y\circ z)-y\star(x\circ z)+2z\star(x\circ y)-2z\star(y\circ x)\\
        &&\ \ \ \ -(z\star x)\circ y-(x\star z)\circ y+y\circ(z\star x)+y\circ(x\star z)\\
        &&\ \ \ \ -x\circ(z\star y)-x\circ(y\star z)+(y\star z)\circ x+(z\star y)\circ x\\
        &&\overset{(\ref{TAPD1}),(\ref{TZPD4})}{=}  x\circ(z\star y-3y\star z)-y\circ(z\star x-3x\star z)+z\star(x\circ y-y\circ x).
    \end{eqnarray*}
Hence Eq.~(\ref{TAPD3}) holds, and thus $(A,\star,\circ)$ is a TAPD algebra. The converse part is proved similarly.
\end{proof}

Hence  we get the following conclusion.

\begin{cor}
    Let $A$ be a vector space with two bilinear operations $\star,\circ:A\otimes A\rightarrow A$. Then the following conditions are equivalent:
    \begin{enumerate}
        \item $(A,\star,\circ)$ is a  TAPD algebra.
        \item The triple $(A,\cdot,[-,-])$   is a transposed Poisson algebra with a representation $(\mathcal{L}^{*}_{\star},\mathcal{L}^{*}_{\circ}$,
        $A^{*})$, where $\cdot$ and $[-,-]$ are respectively defined by Eqs.~(\ref{comm op}) and (\ref{sub-adj}).
        \item There is a transposed Poisson algebra structure on $A\oplus A^{*}$ in which the commutative associative product $\cdot$ is defined by  Eq.~(\ref{anti-Zinbiel axiom2}) and the Lie bracket $[-,-]$ is defined by Eq.~(\ref{pre-Lie axiom2}).
    \end{enumerate}
\end{cor}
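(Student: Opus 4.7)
The plan is to derive this corollary by concatenating three equivalences already essentially established in the paper. The equivalence between (1) and (2) is exactly the content of the immediately preceding proposition on TAPD algebras, so no new work is required there: one direction says that any TAPD algebra has a sub-adjacent transposed Poisson algebra with the stated dual representation, and the converse follows since, when the operations $\cdot$ and $[-,-]$ are produced from $\star$ and $\circ$ via Eqs.~(\ref{comm op}) and (\ref{sub-adj}), the hypothesis that $(\mathcal{L}^{*}_{\star},\mathcal{L}^{*}_{\circ},A^{*})$ is a representation of $(A,\cdot,[-,-])$ unpacks (by Propositions \ref{pro:anti-Zinbiel2} and \ref{pro:pre-Lie2}) precisely to the axioms of an anti-Zinbiel algebra on $(A,\star)$ and a pre-Lie algebra on $(A,\circ)$ together with the compatibility conditions Eqs.~(\ref{TZPD2}),~(\ref{TAPD1}),~(\ref{TAPD3}).

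For the equivalence between (2) and (3), I would use the general fact, recalled after the definition of a representation of a transposed Poisson algebra, that a triple $(\mu,\rho,V)$ is a representation of $(A,\cdot,[-,-])$ if and only if the semi-direct product $A\ltimes_{\mu,\rho}V$, with its multiplications defined by Eqs.~(\ref{eq:SDASSO}) and (\ref{eq:SDLie}), is a transposed Poisson algebra. Specializing to $V=A^{*}$, $\mu=\mathcal{L}^{*}_{\star}$, $\rho=\mathcal{L}^{*}_{\circ}$, the semi-direct product commutative associative multiplication on $A\oplus A^{*}$ matches exactly the formula in Eq.~(\ref{anti-Zinbiel axiom2}), and the semi-direct product Lie bracket matches exactly the formula in Eq.~(\ref{pre-Lie axiom2}). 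Hence (2) and (3) are the same statement written in two different languages.

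Combining these two blocks gives (1) $\Leftrightarrow$ (2) $\Leftrightarrow$ (3). The proof is then a short remark citing the preceding proposition for one equivalence and the semi-direct product construction together with Propositions \ref{pro:anti-Zinbiel2} and \ref{pro:pre-Lie2} for the other. No obstacle is expected; the only point that deserves a line of explicit verification is that, when $\mu=\mathcal{L}^{*}_{\star}$ and $\rho=\mathcal{L}^{*}_{\circ}$, the formulas Eqs.~(\ref{eq:SDASSO}) and (\ref{eq:SDLie}) literally reduce to Eqs.~(\ref{anti-Zinbiel axiom2}) and (\ref{pre-Lie axiom2}) respectively, which is immediate from the definitions of $\mathcal{L}^{*}_{\star}$ and $\mathcal{L}^{*}_{\circ}$.
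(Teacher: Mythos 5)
Your proposal is correct and is essentially the paper's (implicit) argument: the corollary is obtained by combining the preceding TAPD proposition (for (1)$\Leftrightarrow$(2), with Propositions \ref{pro:anti-Zinbiel2} and \ref{pro:pre-Lie2} supplying the anti-Zinbiel and pre-Lie structures in the converse direction) with the semi-direct product characterization of representations of transposed Poisson algebras, under which Eqs.~(\ref{eq:SDASSO}) and (\ref{eq:SDLie}) specialize to Eqs.~(\ref{anti-Zinbiel axiom2}) and (\ref{pre-Lie axiom2}), giving (2)$\Leftrightarrow$(3). One small precision: the representation identities dualize only to Eqs.~(\ref{TZPD2}) and (\ref{TAPD1}); Eq.~(\ref{TAPD3}) is then obtained by combining these with the transposed Leibniz rule of $(A,\cdot,[-,-])$, exactly as in the proof of the proposition you cite, so this does not affect the argument.
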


\subsection{TAAD algebras}

\begin{defi}
    A \textbf{TAAD algebra} is a triple $(A,\star,\circ)$, such that $(A,\star)$ is an anti-Zinbiel algebra, $(A,\circ)$ is an anti-pre-Lie algebra, and
    Eqs.~(\ref{TZAD2}), (\ref{TAPD1}) and the following equation hold:
    \begin{equation}\label{TAAD3}
        x\circ(y\star z+z\star y)-y\circ(x\star z+z\star x)+z\star(x\circ y-y\circ x)=0,\;\;\forall x,y,z\in A.
    \end{equation}
\end{defi}

\begin{pro}
    Let $(A,\cdot,[-,-])$ be a transposed Poisson algebra, $(A,\star)$ be a compatible anti-Zinbiel algebra of $(A,\cdot)$ and $(A,\circ)$ be a compatible anti-pre-Lie algebra of $(A,[-,-])$. If $(\mathcal{L}^{*}_{\star},-\mathcal{L}^{*}_{\circ},A^{*})$ is a representation of $(A,\cdot,[-,-])$, then $(A,\star,[-,-])$ is a TAAD algebra. Conversely, let $(A,\star,\circ)$ be a TAAD algebra, $(A,\cdot)$ be the sub-adjacent commutative associative algebra of $(A,\star)$ and $(A,[-,-])$ be the sub-adjacent Lie algebra of $(A,\circ)$. Then $(A,\cdot,[-,-])$ is a transposed Poisson algebra with a representation $(\mathcal{L}^{*}_{\star},-\mathcal{L}^{*}_{\circ},A^{*})$. In this case, we say $(A,\cdot,[-,-])$ is the \textbf{sub-adjacent transposed Poisson algebra} of $(A,\star,\circ)$, and $(A,\star,\circ)$ is a \textbf{compatible  TAAD algebra} of $(A,\cdot,[-,-])$.
\end{pro}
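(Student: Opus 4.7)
The plan is to mirror the structure of the TAPD proof just above, adapted to the pair $(-\mathcal{L}^{*}_{\star}, -\mathcal{L}^{*}_{\circ})$ replaced by $(\mathcal{L}^{*}_{\star}, -\mathcal{L}^{*}_{\circ})$. First I would unpack what it means for $(\mathcal{L}^{*}_{\star},-\mathcal{L}^{*}_{\circ},A^{*})$ to be a representation of the transposed Poisson algebra $(A,\cdot,[-,-])$. By Propositions \ref{pro:anti-Zinbiel2} and \ref{pro:anti-pre-Lie2}, the component actions are already representations of the sub-adjacent commutative associative algebra and Lie algebra respectively. The two compatibility conditions Eqs.~(\ref{eq:defi:TPA REP1}) and (\ref{eq:defi:TPA REP2}) applied to $\mu=\mathcal{L}^{*}_{\star}$ and $\rho=-\mathcal{L}^{*}_{\circ}$, after dualizing (pair with an arbitrary element and move stars across), should translate into precisely Eqs.~(\ref{TZAD2}) and (\ref{TAPD1}). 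This is a direct, short verification analogous to the one carried out for TAPO/TZAD.

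Next, to obtain the extra identity Eq.~(\ref{TAAD3}), I would mimic the TAPD/TZPD computation: start from the transposed Leibniz rule
\begin{equation*}
0 = 2z\cdot[x,y]-[z\cdot x,y]-[x,z\cdot y],
\end{equation*}
expand $\cdot$ and $[-,-]$ through Eqs.~(\ref{comm op}) and (\ref{sub-adj}) to rewrite the expression purely in terms of $\star$ and $\circ$, and then substitute Eq.~(\ref{TZAD2}) into the $2[x,y]\star z + 2z\star[x,y]$ piece and Eq.~(\ref{TAPD1}) into the six terms $-(z\cdot x)\circ y + y\circ(z\cdot x) - x\circ(z\cdot y) + (z\cdot y)\circ x$. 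After collecting like terms, I expect the sum to reduce (up to a positive scalar) to the left hand side of Eq.~(\ref{TAAD3}), as in the parallel TAPD derivation where the ``3'' coefficient came from combining the direct contribution with the two substitutions. The converse is obtained by reading the same chain of equivalences backwards: starting from a TAAD algebra, Eqs.~(\ref{TZAD2}) and (\ref{TAPD1}) give that $(\mathcal{L}^{*}_{\star},-\mathcal{L}^{*}_{\circ},A^{*})$ satisfies the compatibility conditions of a representation of $(A,\cdot,[-,-])$, and Eq.~(\ref{TAAD3}) ensures that the transposed Leibniz rule for $(A,\cdot,[-,-])$ itself holds, yielding a transposed Poisson structure.

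The only nontrivial step is the sign bookkeeping in the central calculation, because here the anti-Zinbiel side introduces a sign pattern that differs from the TAPD case: the terms coming from $y\star(x\circ z)-x\star(y\circ z)$ (from Eq.~(\ref{TZAD2})) add to, rather than cancel against, the analogous terms produced from Eq.~(\ref{TAPD1}). I would check carefully that after collecting, the coefficients on $x\circ(y\star z+z\star y)$, $y\circ(x\star z+z\star x)$ and $z\star(x\circ y-y\circ x)$ match Eq.~(\ref{TAAD3}). If the coefficient coming out is some nonzero multiple of Eq.~(\ref{TAAD3}), we can divide by it to conclude.

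In summary, the proof is a routine but sign-sensitive expansion along the transposed Leibniz rule, and the main obstacle is to be careful when substituting Eqs.~(\ref{TZAD2}) and (\ref{TAPD1}) so that all six $\circ$-terms reorganize into the symmetric combinations $x\star z+z\star x$, $y\star z+z\star y$ and $x\circ y-y\circ x$ appearing in Eq.~(\ref{TAAD3}); the converse then follows by the same computation read in reverse, using the fact that each step is an equivalence modulo the already-established anti-Zinbiel, anti-pre-Lie and mixed identities.
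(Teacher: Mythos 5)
Your proposal matches the paper's proof: the representation conditions for $(\mathcal{L}^{*}_{\star},-\mathcal{L}^{*}_{\circ},A^{*})$ dualize exactly to Eqs.~(\ref{TZAD2}) and (\ref{TAPD1}), and the paper then expands $0=2z\cdot[x,y]-[z\cdot x,y]-[x,z\cdot y]$ in terms of $\star$ and $\circ$, substituting Eq.~(\ref{TZAD2}) and then Eqs.~(\ref{TAPD1}) and (\ref{TZPD4}) (the latter being a consequence of Eq.~(\ref{TAPD1})), obtaining precisely the left-hand side of Eq.~(\ref{TAAD3}) with coefficient one, with the converse proved by the same computation read in reverse. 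Your outline is correct and follows essentially the same route.
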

\begin{proof}
    Since $(\mathcal{L}^{*}_{\star},-\mathcal{L}^{*}_{\circ},A^{*})$ is a representation of $(A,\cdot,[-,-])$, we get Eqs.~(\ref{TZAD2}) and (\ref{TAPD1}).
    Thus for all $x,y,z\in A$, we have
    \begin{eqnarray*}
        0&&=2z\cdot[x,y]-[z\cdot x,y]-[x,z\cdot y]\\
        &&=2[x,y]\star z+2z\star[x,y]-(z\cdot x)\circ y+y\circ(z\cdot x)-x\circ(z\cdot y)+(z\cdot y)\circ x\\
        &&\overset{(\ref{TZAD2})}{=}y\star(x\circ z)-x\star(y\circ z)+2z\star(x\circ y)-2z\star(y\circ x)\\
        &&\ \ \ \ -(z\star x)\circ y-(x\star z)\circ y+y\circ(z\star x)+y\circ(x\star z)\\
        &&\ \ \ \ -x\circ(z\star y)-x\circ(y\star z)+(y\star z)\circ x+(z\star y)\circ x\\
        &&\overset{(\ref{TAPD1}),(\ref{TZPD4})}{=}x\circ(y\star z+z\star y)-y\circ(x\star z+z\star x)+z\star(x\circ y-y\circ x).
    \end{eqnarray*}
Hence Eq.~(\ref{TAAD3}) holds, and thus $(A,\star,\circ)$ is a TAAD algebra. The converse part is proved similarly.
\end{proof}

Hence  we get the following conclusion.

\begin{cor}
    Let $A$ be a vector space with two bilinear operations $\star,\circ:A\otimes A\rightarrow A$. Then the following conditions are equivalent:
    \begin{enumerate}
        \item $(A,\star,\circ)$ is a   TAAD algebra.
        \item The triple $(A,\cdot,[-,-])$   is a transposed Poisson algebra with a representation $(\mathcal{L}^{*}_{\star},-\mathcal{L}^{*}_{\circ}$,
        $A^{*})$, where $\cdot$ and $[-,-]$ are respectively defined by Eqs.~(\ref{comm op}) and (\ref{sub-adj}).
        \item There is a transposed Poisson algebra structure on $A\oplus A^{*}$ in which the commutative associative product $\cdot$ is defined  by Eq.~(\ref{anti-Zinbiel axiom2}) and the Lie bracket $[-,-]$ is defined by Eq.~(\ref{anti-pre-Lie axiom2}).
    \end{enumerate}
\end{cor}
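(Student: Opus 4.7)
The plan is to prove the three-way equivalence by combining the proposition immediately preceding this corollary with the general semi-direct product construction for representations of transposed Poisson algebras. For the equivalence $(1)\Leftrightarrow(2)$, I would simply invoke that preceding proposition, which already established both directions of the passage between a TAAD algebra $(A,\star,\circ)$ and a transposed Poisson algebra $(A,\cdot,[-,-])$ with representation $(\mathcal{L}^{*}_{\star},-\mathcal{L}^{*}_{\circ},A^{*})$ on the dual space, provided $\cdot$ and $[-,-]$ are defined via Eqs.~(\ref{comm op}) and (\ref{sub-adj}).

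For the equivalence $(2)\Leftrightarrow(3)$, I would appeal to the general fact recalled in Section~\ref{sec:3}: a triple $(\mu,\rho,V)$ is a representation of a transposed Poisson algebra $(A,\cdot,[-,-])$ if and only if $A\oplus V$ carries the semi-direct product transposed Poisson algebra structure defined by Eqs.~(\ref{eq:SDASSO}) and (\ref{eq:SDLie}). Specializing to $V=A^{*}$, $\mu=\mathcal{L}^{*}_{\star}$, $\rho=-\mathcal{L}^{*}_{\circ}$, and substituting the definitions of $\cdot$ and $[-,-]$ in terms of $\star$ and $\circ$, the semi-direct product formulas reduce to precisely Eqs.~(\ref{anti-Zinbiel axiom2}) and (\ref{anti-pre-Lie axiom2}), matching the structure described in item~(3).

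For the converse direction of $(2)\Leftrightarrow(3)$, given a transposed Poisson algebra structure on $A\oplus A^{*}$ whose multiplications have the explicit form in (\ref{anti-Zinbiel axiom2}) and (\ref{anti-pre-Lie axiom2}), I would project onto the first component to recover the commutative associative algebra on $(A,\cdot)$ and the Lie algebra on $(A,[-,-])$; then by Propositions~\ref{pro:anti-Zinbiel2} and \ref{pro:anti-pre-Lie2}, $(A,\star)$ is forced to be an anti-Zinbiel algebra and $(A,\circ)$ an anti-pre-Lie algebra. The transposed Leibniz rule on $A\oplus A^{*}$, read off on mixed terms involving a single copy of $A^{*}$, encodes exactly the compatibility conditions (\ref{eq:defi:TPA REP1}) and (\ref{eq:defi:TPA REP2}) for $(\mathcal{L}^{*}_{\star},-\mathcal{L}^{*}_{\circ},A^{*})$ to be a representation of the sub-adjacent transposed Poisson algebra.

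There is no genuine obstacle here, since the seven preceding corollaries in Sections~\ref{sec:2}--\ref{sec:4} follow exactly the same template. The only real work is the pattern-matching verification that the specialized semi-direct product formulas coincide with Eqs.~(\ref{anti-Zinbiel axiom2}) and (\ref{anti-pre-Lie axiom2}), which is a routine substitution checking signs and the star/circ placements; I would therefore write the proof as one or two lines invoking the preceding proposition together with the semi-direct product principle.
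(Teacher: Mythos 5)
Your proposal is correct and matches the paper's (implicit) argument: the paper states this corollary as an immediate consequence of the preceding proposition on TAAD algebras together with the fact that $(\mu,\rho,V)$ is a representation of a transposed Poisson algebra if and only if the semi-direct product $A\ltimes_{\mu,\rho}V$ is a transposed Poisson algebra, with the specialization $\mu=\mathcal{L}^{*}_{\star}$, $\rho=-\mathcal{L}^{*}_{\circ}$, $V=A^{*}$ yielding exactly Eqs.~(\ref{anti-Zinbiel axiom2}) and (\ref{anti-pre-Lie axiom2}). Your routine verification of the formulas and the projection argument for the converse is precisely the intended reasoning.
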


\begin{pro}
    Let $(A,\cdot,[-,-])$ be a transposed Poisson algebra. Suppose that $\mathcal{B}$ is a nondegenerate symmetric bilinear form on $A$ such that it is a commutative Connes cocycle on $(A,\cdot)$ and a commutative 2-cocycle on $(A,[-,-])$. Then there is a compatible TAAD algebra $(A,\star,\circ)$ in which $\star$ and $\circ$ are respectively defined   by Eqs.~(\ref{bilinear form anti-Zinbiel}) and (\ref{bilinear form anti-pre-Lie}) . Conversely, let $(A,\star,\circ)$ be a   TAAD algebra and the sub-adjacent transposed Poisson algebra be $(A,\cdot,[-,-])$. Then there is a transposed Poisson algebra $A\ltimes_{\mathcal{L}^{*}_{\star},-\mathcal{L}^{*}_{\circ}}A^{*}$, and the natural nondegenerate symmetric bilinear form $\mathcal{B}_{d}$ defined by Eq.~(\ref{B_{d}})
    is a commutative Connes cocycle on the commutative associative algebra $A\ltimes_{\mathcal{L}^{*}_{\star}}A^{*}$ and a commutative 2-cocycle on the Lie algebra $A\ltimes_{-\mathcal{L}^{*}_{\circ}}A^{*}$.
\end{pro}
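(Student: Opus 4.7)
The proof follows the pattern of Proposition~\ref{bf PCP} in Section~\ref{sec:2} and of Proposition~\ref{pro:anti-pre-Lie Poisson}. My plan is to handle the two directions separately and reduce all non-trivial compatibilities to identities that are already available from the transposed Leibniz rule~(\ref{eq:defi:transposed Poisson algebra}) and the dual representation criterion of Proposition~\ref{pro:TPA dual rep}.

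For the forward direction, I first invoke the anti-Zinbiel bilinear form proposition to conclude that $(A,\star)$ with $\star$ defined by~(\ref{bilinear form anti-Zinbiel}) is an anti-Zinbiel algebra whose sub-adjacent commutative associative algebra is $(A,\cdot)$, and invoke Proposition~\ref{bf anti-pre-Lie} to conclude that $(A,\circ)$ with $\circ$ defined by~(\ref{bilinear form anti-pre-Lie}) is an anti-pre-Lie algebra whose sub-adjacent Lie algebra is $(A,[-,-])$. To verify the compatibilities~(\ref{TZAD2}) and~(\ref{TAPD1}), I pair each side against an arbitrary $w \in A$ using $\mathcal{B}$, rewrite every occurrence of $\star$ and $\circ$ back in terms of $\cdot$ and $[-,-]$ via their defining identities, and then use the invariance of $\mathcal{B}$ on $(A,\cdot)$ and the commutative 2-cocycle property of $\mathcal{B}$ on $(A,[-,-])$ to collapse the resulting expression into a cyclic combination governed by the transposed Leibniz rule, which vanishes. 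Once (\ref{TZAD2}) and (\ref{TAPD1}) are established, (\ref{TAAD3}) follows from exactly the computation used in the preceding proposition of this subsection, since that derivation only uses those two identities together with the anti-Zinbiel axiom.

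For the converse direction, the corollary stated immediately before the proposition supplies a representation $(-\mathcal{L}_{\star},-\mathcal{L}_{\circ},A)$ of the sub-adjacent transposed Poisson algebra $(A,\cdot,[-,-])$ on $A$. Applying Proposition~\ref{pro:TPA dual rep} with $\mu=-\mathcal{L}_{\star}$ and $\rho=-\mathcal{L}_{\circ}$, the two conditions~(\ref{TPA dual rep1}) and~(\ref{TPA dual rep2}) unpack verbatim into the TAAD axioms~(\ref{TAPD1}) and~(\ref{TZAD2}) respectively, hence $(\mathcal{L}^{*}_{\star},-\mathcal{L}^{*}_{\circ},A^{*})$ is automatically a representation of $(A,\cdot,[-,-])$ and the semi-direct product $A\ltimes_{\mathcal{L}^{*}_{\star},-\mathcal{L}^{*}_{\circ}}A^{*}$ is a transposed Poisson algebra. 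The claim about $\mathcal{B}_{d}$ then reduces to a direct computation: substituting triples $((x,a^{*}),(y,b^{*}),(z,c^{*}))$ into the cyclic sum~(\ref{Connes cocycle}) for the commutative associative piece and into~(\ref{2-cocycle}) for the Lie piece, the pure $A$-part vanishes (both cocycle identities are trivial there since $\mathcal{B}_{d}(A,A)=0$), while the mixed $A$--$A^{*}$ part collapses after applying the definitions of $\mathcal{L}^{*}_{\star}$ and $\mathcal{L}^{*}_{\circ}$ through the natural pairing.

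I expect the main obstacle to be the bookkeeping in the forward direction when translating the transposed Leibniz rule into the mixed $\star,\circ$ form through $\mathcal{B}$, because one must carefully track signs and exploit symmetry when moving arguments across the pairing; the saving observation is that the symmetric invariance of $\mathcal{B}$ on $(A,\cdot)$ provides a direct interchange of the two outer positions of $\star$ that mirrors the commutative 2-cocycle behaviour on the Lie side, so the $\star$- and $\circ$-compatibilities line up in parallel and reduce to the same transposed Leibniz identity.
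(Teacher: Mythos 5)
Your overall route coincides with the paper's intended one (the paper's proof is just the remark that it is ``similar to the proof of Proposition~\ref{pro:anti-pre-Lie Poisson} given in \cite{LB2022}''), but the forward direction as you describe it rests on a hypothesis that is not available. You twice invoke ``the invariance of $\mathcal{B}$ on $(A,\cdot)$'', whereas here $\mathcal{B}$ is assumed to be a \emph{commutative Connes cocycle} on $(A,\cdot)$, not invariant; invariance on $(A,\cdot)$ belongs to the PCA/TCAD propositions. Indeed a nondegenerate symmetric form cannot be both invariant and a Connes cocycle on $(A,\cdot)$ unless the product is zero: invariance plus symmetry makes all three terms of~\eqref{Connes cocycle} equal, so $3\mathcal{B}(x\cdot y,z)=0$ and hence $x\cdot y=0$ in characteristic zero. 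So the ``interchange of the two outer positions of $\star$'' you rely on in your last paragraph simply does not exist. The repair is short: by the final proposition of Section~\ref{sec:1} (from \cite{Gao}) and Proposition~\ref{bf anti-pre-Lie}, the operations defined by~\eqref{bilinear form anti-Zinbiel} and~\eqref{bilinear form anti-pre-Lie} are anti-Zinbiel and anti-pre-Lie with $x\star y+y\star x=x\cdot y$ and $x\circ y-y\circ x=[x,y]$ (this is exactly where the two cocycle hypotheses are consumed). After that, pairing~\eqref{TZAD2} and~\eqref{TAPD1} with an arbitrary $w$ and using only the defining identities $\mathcal{B}(x\star y,z)=-\mathcal{B}(y,x\cdot z)$ and $\mathcal{B}(x\circ y,z)=\mathcal{B}(y,[x,z])$ reduces both, via nondegeneracy, to the transposed Leibniz rule~\eqref{eq:defi:transposed Poisson algebra}; no interchange across $\mathcal{B}$ is needed. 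Your observation that~\eqref{TAAD3} then follows by the computation of the preceding proposition is correct, since that computation uses only~\eqref{TZAD2},~\eqref{TAPD1} and the transposed Leibniz rule of the sub-adjacent algebra.

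In the converse you also misquote the preceding corollary: a TAAD algebra yields the representation $(\mathcal{L}^{*}_{\star},-\mathcal{L}^{*}_{\circ},A^{*})$ on the dual space, not $(-\mathcal{L}_{\star},-\mathcal{L}_{\circ},A)$ on $A$ itself; the latter is the TAAO condition and does not follow from the TAAD axioms. This slip is harmless to the argument, because Proposition~\ref{pro:TPA dual rep} only requires $(-\mathcal{L}_{\star},A)$ to be a representation of $(A,\cdot)$ and $(-\mathcal{L}_{\circ},A)$ of $(A,[-,-])$, which hold by Propositions~\ref{pro:anti-Zinbiel} and~\ref{pro:anti-pre-Lie}, and then~\eqref{TPA dual rep1} and~\eqref{TPA dual rep2} with $\mu=-\mathcal{L}_{\star}$, $\rho=-\mathcal{L}_{\circ}$ are precisely~\eqref{TAPD1} and~\eqref{TZAD2}; alternatively the corollary's equivalence already gives the transposed Poisson structure on $A\oplus A^{*}$ directly. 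Your direct verification that $\mathcal{B}_{d}$ is a commutative Connes cocycle on $A\ltimes_{\mathcal{L}^{*}_{\star}}A^{*}$ and a commutative 2-cocycle on $A\ltimes_{-\mathcal{L}^{*}_{\circ}}A^{*}$ is fine, and is in fact the converse halves of the corresponding Section~\ref{sec:1} propositions, so it may simply be quoted.
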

\begin{proof}
It is similar to the proof of Proposition \ref{pro:anti-pre-Lie Poisson} given in \cite{LB2022}.
\end{proof}


\subsection{Summary}\

We summarize some facts on the 8 algebraic structures in the previous subsections respectively corresponding to the mixed splittings of operations of transposed Poisson algebras in terms of the representations of transposed Poisson algebras on the dual spaces in Table 3.

\begin{small}
    \begin{table}[!t]
        \caption{Splittings of transposed Poisson algebras on dual spaces}
        %
        %
\begin{tabular}{p{1.5cm}p{2cm}p{3.5cm}p{7cm}}
   \hline
    Algebras & Notations & Representations of

    transposed Poisson

    algebras  on

    the dual spaces & {Corresponding nondegenerate bilinear

    forms on transposed Poisson algebras} \\
    TCPD   & $(A,\cdot,\circ)$  & $(-\mathcal{L}^{*}_{\cdot},\mathcal{L}^{*}_{\circ},A^{*})$ & - \\

    TCAD  & $(A,\cdot,\circ)$ & $(-\mathcal{L}^{*}_{\cdot},-\mathcal{L}^{*}_{\circ},A^{*})$ & invariant,

    commutative 2-cocycle \\

    TZLD & $(A,\star,[-,-])$ & $(-\mathcal{L}^{*}_{\star},\mathrm{ad}^{*},A^{*})$ & -\\

    TZPD & $(A,\star,\circ)$ & $(-\mathcal{L}^{*}_{\star},\mathcal{L}^{*}_{\circ},A^{*})$ & Connes cocycle,

    symplectic form\\

    TZAD & $(A,\star,\circ)$ & $(-\mathcal{L}^{*}_{\star},-\mathcal{L}^{*}_{\circ},A^{*})$ & -\\

    TALD & $(A,\star,[-,-])$ & $(\mathcal{L}^{*}_{\star},\mathrm{ad}^{*},A^{*})$ & commutative Connes cocycle,

    invariant\\

    TAPD & $(A,\star,\circ)$ & $(\mathcal{L}^{*}_{\star},\mathcal{L}^{*}_{\circ},A^{*})$ & -\\

    TAAD & $(A,\star,\circ)$ & $(\mathcal{L}^{*}_{\star},-\mathcal{L}^{*}_{\circ},A^{*})$ & commutative Connes cocycle,

    commutative 2-cocycle\\
   \hline
\end{tabular}
\end{table}
\end{small}

\bigskip

{\bf Acknowledgments } This work is partially supported by NSFC
(11931009, 12271265, 12261131498), the Fundamental Research Funds for the
Central Universities and Nankai Zhide Foundation. The authors thank Professor Li Guo for
valuable discussion.

\end{document}